\documentclass[a4paper, 12pt]{amsart}

\usepackage{amsmath}
\usepackage{amssymb}
\usepackage{amsthm}
\usepackage[mathscr]{eucal}

\newcommand{\ANR}{\operatorname{ANR}}
\newcommand{\Ant}{\operatorname{Ant}}
\newcommand{\BV}{\operatorname{BV}}

\newcommand{\CAT}{\operatorname{CAT}}
\newcommand{\CBA}{\operatorname{CBA}}
\newcommand{\CBB}{\operatorname{CBB}}
\newcommand{\closure}{\operatorname{cl}}
\newcommand{\DC}{\operatorname{DC}}
\newcommand{\diam}{\operatorname{diam}}
\newcommand{\id}{\operatorname{id}}

\newcommand{\GCBA}{\operatorname{GCBA}}
\newcommand{\GH}{\operatorname{\mathrm{GH}}}
\newcommand{\LGC}{\operatorname{LGC}}
\newcommand{\ulim}{\operatorname{\lim_{\omega}}}
\newcommand{\Class}{\operatorname{\mathcal{C}}}

\newcommand{\R}{\operatorname{\mathbb{R}}}
\newcommand{\Z}{\operatorname{\mathbb{Z}}}
\newcommand{\N}{\operatorname{\mathbb{N}}}
\newcommand{\Sph}{\operatorname{\mathbb{S}}}

\newcommand{\Haus}{\operatorname{\mathcal{H}}}
\numberwithin{equation}{section}

\theoremstyle{plain}
\newtheorem{thm}{Theorem}[section]
\newtheorem{lem}[thm]{Lemma}  
\newtheorem{prop}[thm]{Proposition}
\newtheorem{cor}[thm]{Corollary}

\newtheorem{slem}[thm]{Sublemma}

\theoremstyle{definition}
\newtheorem{defn}{Definition}[section]
\newtheorem{exmp}{Example}[section]
\newtheorem{prob}{Problem}[section]
\newtheorem{assumption}{Assumption}[section]

\theoremstyle{remark}
\newtheorem{rem}{Remark}[section]

\begin{document} 

\title
[Wall singularity of spaces with an upper curvature bound ]
{Wall singularity of spaces with \\ an upper curvature bound}

\author
[Koichi Nagano]{Koichi Nagano}

\thanks{
Supported
by JSPS KAKENHI Grant Numbers 18740023, 20K03603, 25K06996}

\address
[Koichi Nagano]
{\endgraf 
Department of Mathematics, University of Tsukuba
\endgraf
Tennodai 1-1-1, Tsukuba, Ibaraki, 305-8571, Japan}

\email{nagano@math.tsukuba.ac.jp}

\date{January 30, 2026}

\keywords
{$\CAT(\kappa)$ space, Space with an upper curvature bound}
\subjclass
[2020]{53C20, 53C23}

\begin{abstract}
We study typical wall singularity of codimension one
for locally compact geodesically complete metric spaces 
with an upper curvature bound.
We provide a geometric structure theorem of codimension one singularity,
and a geometric characterization of codimension two regularity.
These give us necessary and sufficient conditions for singular sets to be of codimension at least two.
\end{abstract}

\maketitle

\tableofcontents

\section{Introduction}
\label{sec: i}

In Alexandrov geometry,
$\CBA$ spaces with curvature bounded above
and $\CBB$ spaces with curvature bounded below
are similarly defined 
for metric spaces
by the triangle comparison conditions;
however,
their properties are quite different from each other.
Generally speaking,
$\CBA$ spaces
are much more singular than $\CBB$ spaces.
For instance,
$\CBA$ spaces,
unlike $\CBB$ spaces,
may admit singular sets of codimension one 
in their interiors.
We focus on typical wall singularity of codimension one
for locally compact geodesically complete $\CBA$ spaces.

\subsection{Motivations}

A metric space is said to be $\CBA(\kappa)$
for $\kappa \in \R$
if it is locally $\CAT(\kappa)$;
namely,
every point has a $\CAT(\kappa)$ neighborhood.
A metric space is $\CBA$ 
if it is $\CBA(\kappa)$ for some $\kappa$.
A $\CBA(\kappa)$ space is $\GCBA(\kappa)$ 
if it is locally compact, separable,
and locally geodesically complete.
A metric space is $\GCBA$ 
if it is $\GCBA(\kappa)$ for some $\kappa$.
Lytchak and the author \cite{lytchak-nagano1, lytchak-nagano2}
studied fundamental geometric properties of $\GCBA$ spaces
in \cite{lytchak-nagano1},
and topological regularity of $\GCBA$ spaces
in \cite{lytchak-nagano2}.
This paper is devoted to a subsequent study
of them.

Throughout this paper,
we denote by $\dim$ the topological dimension,
and by $\dim_{\mathrm{H}}$ the Hausdorff dimension.
Let $X$ be a $\GCBA$ space.
Then $\dim X$ is equal to $\dim_{\mathrm{H}} X$,
and equal to the supremum of $m \in \N$ such that
$X$ admits an open subset homeomorphic to 
the Euclidean $m$-space $\R^m$
(\cite[Theorem 1.1]{lytchak-nagano1});
moreover,
every relatively compact open subset of $X$ has 
finite topological dimension (\cite[Subsection 5.3]{lytchak-nagano1}).

For a point $x$ in $X$,
we denote by $\Sigma_xX$
the space of directions at $x$ in $X$,
and by $T_xX$ 
the tangent space at $x$ in $X$.
For every $x \in X$,
the space of directions $\Sigma_xX$ is 
a compact, geodesically complete $\CAT(1)$ space,
and the tangent space $T_xX$ is 
a proper, geodesically complete $\CAT(0)$ space;
in particular,
both $\Sigma_xX$ and $T_xX$ are $\GCBA$ spaces.

For $m \in \N$,
we denote by $X^m$ the
\emph{$m$-dimensional part of $X$}
defined as the set of all points $x$ in $X$ with
$\dim T_xX = m$.
A point $x$ in $X$ belongs to $X^m$
if and only if
every sufficiently small neighborhood of $x$
has topological dimension $m$
(\cite[Theorem 1.2]{lytchak-nagano1}).

We say that a point $x$ in $X$ is 
\emph{$m$-regular} 
if the tangent space $T_xX$ is isometric to
an $\ell^2$-product metric space $\R^m \times Y$
for some $Y$.
A point in $X$ is 
\emph{$m$-singular} 
if it is not $m$-regular.
For a subset $A$ of $X$,
we denote by $R_m(A)$
the set of all $m$-regular points in $A$,
and by $S_m(A)$
the set of all $m$-singular points in $A$.
We call $R_m(A)$ the 
\emph{$m$-regular set in $A$},
and $S_m(A)$ the 
\emph{$m$-singular set in $A$}.
We notice that
if $\dim X = n$,
then $S_{n+1}(X)$ coincides with $X$
(see Lemma \ref{lem: ndr}).

A point in a topological space is said to be an 
\emph{$m$-manifold point}
if it has a neighborhood homeomorphic to $\R^m$.
A point is a
\emph{non-manifold point}
if it is not an $m$-manifold point for any $m$.
For an open subset $U$ of a topological space,
we denote by $S(U)$ the
\emph{topologically singular set in $U$}
defined as the set of all non-manifold points in $U$.

As a geometric structure theorem for $\GCBA$ spaces,
Lytchak and the author 
\cite[Theorems 1.2 and 1.3]{lytchak-nagano1}
proved that 
if $X^m$ is non-empty,
then there exists an open subset $M^m$ of $X$
satisfying the following properties:
(1) $M^m$ is a Lipschitz $m$-manifold, and open dense in $X^m$;
(2) $\dim_{\mathrm{H}} \left( \closure_X(X^m) - M^m \right) \le m-1$,
where $\closure_X$ is the closure of $X$;
(3) $\dim_{\mathrm{H}} S_m(M^m) \le m-2$;
(4) $M^m$ admits a $\DC$-structure,
and a $\BV_{\mathrm{loc}}$-Riemannian metric 
continuous on $R_m(M^m)$.
Lytchak and the author 
\cite[Theorem 1.6]{lytchak-nagano1} 
also proved that
for each $m \in \N$ the $m$-singular set 
$S_m(X)$ is countably $(m-1)$-rectifiable;
in particular,
if $\dim X = n$,
then the topological singular set $S(X)$ 
is countably $(n-1)$-rectifiable.

Let us consider the following problem:

\begin{prob}
\label{prob: filtration}
Let $X$ be a $\GCBA$ space of $\dim X = n$.
Let
\[
\emptyset \subset S_1(X) \subset
\cdots \subset S_m(X) \subset \cdots \subset 
S_{n-1}(X) \subset S_n(X) \subset X
\tag{$\ast$}
\]
be the filtration of $X$ made by geometric singular sets.
\begin{enumerate}
\item
Study the geometric structure of each stratum $S_m(X) - S_{m-1}(X)$.
\item
Investigate what does it happen in $X$
in the case where some stratum $S_m(X) - S_{m-1}(X)$ is empty.
\end{enumerate}
\end{prob}

By the virtue of the structure theorem in \cite{lytchak-nagano1}
mentioned above,
if $\dim X = n$,
then the highest stratum $X - S_n(X)$ in $(\ast)$,
the $n$-regular set $R_n(X)$ in $X$, 
is dense in a Lipschitz $n$-manifold $M^n$
open dense in $X$
satisfying
$\dim_{\mathrm{H}} \left( \closure_X(X^n) - M^n \right)
\le n-1$
and 
$\dim_{\mathrm{H}} S_n(M^n) \le n-2$;
moreover, 
$M^n$ admits a $\DC$-structure,
and a $\BV_{\mathrm{loc}}$-Riemannian metric
continuous on $R_n(M^n)$.

In this paper,
we study Problem \ref{prob: filtration}
for the second highest stratum
$S_n(X) - S_{n-1}(X)$
in $(\ast)$.
We provide a geometric structure theorem of codimension one singularity,
and a geometric characterization of codimension two regularity.
These give us 
necessary and sufficient conditions for singular sets to be of codimension at least two.

Our study in this paper can be compared with 
the theory in the realm of $\CBB$ spaces 
with lower curvature bounds.
For every $\CBB$ space without boundary,
the topologically singular set has Hausdorff codimension 
at least two
(\cite[Remark 10.6.1]{burago-gromov-perelman}).
Our study of singularity of $\GCBA$ spaces
seems to be the first attempt to deal with the geometric structure 
of codimension one singularity in metric geometry.
For $\CBB$ spaces with lower curvature bounds,
Perelman \cite{perelman1, perelman2}
and Perelman--Petrunin \cite{perelman-petrunin}
have been successful in providing filtrations
in which each stratum is a topological manifold
in a different method.
Fujioka \cite{fujioka1, fujioka2} has recently brought 
their method for $\CBB$ spaces to $\GCBA$ spaces.
Our method of giving the filtration $(\ast)$ 
for $\GCBA$ spaces is more geometrically direct
in order to elaborate on their singularity.

\subsection{Wall singularity}

Let $X$ be a $\GCBA$ space.
For $m \in \N$,
we say that a point $x$ in $X$ is 
\emph{$m$-wall} 
if it is $m$-singular,
and if it is $(m-1)$-regular.
For a subset $A$ of $X$,
we denote by $W_m(A)$
the set of all $m$-wall points in $A$,
and call it
\emph{$m$-wall set in $A$}.
We notice that
\[
W_m(A) = S_m(A) \cap R_{m-1}(A) = S_m(A) - S_{m-1}(A).
\]

Throughout this paper,
we denote by $\Haus^m$
the $m$-dimensional Hausdorff measure.
Let $T_l^0$ denote
the discrete metric space consisting of 
$l$ points with pairwise distance $\pi$,
and $T_l^1$ the Euclidean cone $C_0(T_l^0)$ over $T_l^0$.
We notice that for every $n$-wall point $x \in W_n(X^n)$ 
in the $n$-dimensional part $X^n$,
the tangent space $T_xX$ is isometric to $\R^{n-1} \times T_l^1$
for some $l \in \N$ with $l \ge 3$.

One of the main results is the following wall singularity theorem:

\begin{thm}\label{thm: wst}
Let $X$ be a $\GCBA$ space.
Then for every $n$-wall point $x \in W_n(X^n)$ in $X^n$,
and for every open neighborhood $U$ of $x$,
there exist a point $x_0 \in W_n(X^n)$ arbitrarily close to $x$,
and an open neighborhood $U_0$ of $x_0$ contained in $U$,
such that $x_0 \in S(U_0)$,
and $U_0$ is homeomorphic to $\R^{n-1} \times T_{l_0}^1$
for some $l_0 \in \N$ with $l_0 \ge 3$;
moreover,
we have
\[
0 < \Haus^{n-1} \left( S(U_0) \right) < \infty.
\]
\end{thm}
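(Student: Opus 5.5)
The plan is to localize near $x$ in a small $\CAT(\kappa)$ ball inside $X^n$ and find a nearby point where the branching number $l$ is locally constant. Then we use strainers to build a local product chart $\R^{n-1}\times T_l^1$.

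\textbf{Step 1: reduce to a point with locally maximal branching.} For $y\in W_n(X^n)$ near $x$, write $T_yX\cong\R^{n-1}\times T^1_{l(y)}$. The number $l(y)$ is the number of components of $\Sigma_yX$ minus the ``equatorial'' $\Sph^{n-2}$. Equivalently, it counts $(n-1)$-dimensional cross-sections, or it can be read off the $n$-dimensional volume density of $\Sigma_yX$, namely $l(y)/2$ times the volume of $\Sph^{n-1}$. By the upper semicontinuity of volume density in GCBA spaces from \cite{lytchak-nagano1}, $l$ is bounded on a compact neighborhood of $x$ in $W_n(X^n)$. Choose $x_0$ arbitrarily close to $x$ where the density attains a local maximum among nearby points. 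For this to make sense we need points of $S_n$ near $x_0$ to stay in $W_n$ or to have controlled density; we arrange this by working inside the open set $R_{n-1}$-part of $M^n$-type neighborhoods. Here $x_0$ should additionally be chosen in a portion of $W_n$ that is locally ``$(n-1)$-strained'': since $x$ is $(n-1)$-regular, there are $(n-1)$-strainers at $x$, and these persist on a neighborhood $V$. An upper-semicontinuous integer-valued function is constant on an open dense subset. Hence we may take $x_0$ in the interior, relative to $S_n(V)$, of a level set $\{l=l_0\}$, with $l_0\ge 3$ since $x_0$ is singular.

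\textbf{Step 2: build the chart.} The $(n-1)$-strainer map $F=(d(a_i,\cdot))_{i=1}^{n-1}\colon V\to\R^{n-1}$ is open and, by the strainer results of \cite{lytchak-nagano1, lytchak-nagano2}, is a Hurewicz-type fibration near $x_0$ whose fibers are one-dimensional. The fiber through a point $y$ has as space of directions the $l(y)$ directions orthogonal to $\R^{n-1}$. Thus near $x_0$ each fiber is a cone on $l_0$ points if it meets $S_n$, and an interval otherwise. We must show that each fiber meets $S_n$ in exactly one point. Two singular points on one small fiber would produce an embedded loop or a branching configuration contradicting either the local uniqueness of geodesics in $\CAT(\kappa)$ or the constancy of $l$. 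For surjectivity, openness of $F$ restricted to $S(V)$ follows from the locally constant $l_0$ together with the fact that $S_n$ separates. This gives $U_0\cong\R^{n-1}\times T^1_{l_0}$ with $S(U_0)=S_n(U_0)$ corresponding to $\R^{n-1}\times\{o\}$. This step, showing that the singular set is a topological graph over $\R^{n-1}$ with constant branching, is the main obstacle. I would first try the contradiction argument via blow-up: tangent cones at limit points are forced to be $\R^{n-1}\times T^1_{l}$ with $l$ exceeding $l_0$, or else to be split differently, contradicting the choice of $x_0$.

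\textbf{Step 3: measure bounds.} $S(U_0)$ is the image of an open subset of $\R^{n-1}$ under the inverse of $F|_{S(U_0)}$. $F$ is $1$-Lipschitz, and its section over $S$ is bi-Lipschitz by the strainer estimates, since strainer maps are bi-Lipschitz on $S_n$ restricted to fibers-with-one-point. Hence $\Haus^{n-1}(S(U_0))$ is positive and comparable to the Lebesgue measure of an open set in $\R^{n-1}$, after shrinking $U_0$ to have compact closure so that it is finite. Alternatively, finiteness follows from the countable $(n-1)$-rectifiability of $S_n(X)$ (\cite[Theorem 1.6]{lytchak-nagano1}) combined with local volume bounds.
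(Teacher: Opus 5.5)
There is a genuine gap, and it sits at the step you call the main obstacle.

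\textbf{The missing injectivity mechanism.} You need that near $x_0$ every small fiber slice of the $(n-1)$-strainer map $\varphi$ contains at most one singular point. Your suggested argument does not give this. The fibers $\Pi_y$ are not convex in $X$, so uniqueness of $\CAT(\kappa)$ geodesics says nothing about paths inside a fiber. Moreover, an H-shaped fiber tree with two trivalent vertices is compatible with $l\equiv 3$.

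For the same reason, the bi-Lipschitz property of $\varphi$ on the singular set that you invoke in Step~3 is not a general strainer estimate. It can fail near the given point $x$ (compare Remark~\ref{rem: awst}), and this is the real reason one must move from $x$ to $x_0$.

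The missing idea is the infinitesimal injectivity of Lemma~\ref{lem: infinj}. Suppose $x_k\to y$ with $x_k\in S_{n,\delta^{\ast}}$ and $\Vert\varphi(x_k)-\varphi(y)\Vert/d(x_k,y)$ small. Then the direction $y'_{x_k}$ is almost orthogonal to all strainer directions at $x_k$. By Proposition~\ref{prop: 1dr} and Lemma~\ref{lem: pmdsusp} it serves as an $n$-th strainer direction, so $x_k$ would be $(n,\delta^{\ast})$-regular, a contradiction.

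This bound is only pointwise, so the paper then applies Lytchak's open map theorem. This gives an open dense subset of the closed set $S_{n,\delta^{\ast}}$ on which $\varphi$ is a $(1+2\delta^{\ast})$-bi-Lipschitz embedding (Proposition~\ref{prop: regsing}). The paper chooses $x_0$ there and shrinks the ball so that all of $S_{n,\delta^{\ast}}(U_{r_0}(x_0))$ lies in this subset. This yields \eqref{eqn: nicea}, i.e.\ at most one $(n,\delta^{\ast})$-singular point per fiber, and also the bi-Lipschitz parametrization your Step~3 needs.

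\textbf{Step 1, surjectivity, and the chart.} Your Step~1 neither supplies this nor is needed. The sets $W_n$ and $R_{n-1}$ are not open, and $S_n$ is not closed. The function $l$ is defined only on $W_n$, while $S_n(V)$ may contain points that are $(n-1,\delta)$-regular but $(n-1)$-singular, whose densities need not be half-integers. So the fact that an upper semicontinuous function with finitely many values is locally constant on an open dense set cannot be applied to produce a level set of $l$ that is relatively open in $S_n(V)$.

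The paper instead works throughout with the relaxed sets $S_{n,\delta^{\ast}}$ (closed) and $R_{n-1,\delta}$ (open, Lemma~\ref{lem: mdopen}). Only after \eqref{eqn: nicea} is known does it run open--closed arguments based on Hausdorff convergence of fiber slices. These show that every slice near $x_0$ has exactly one vertex (Lemma~\ref{lem: projsing}), and that the branching number is constant (Lemma~\ref{lem: fibershape}). Lemma~\ref{lem: projsing} also yields the openness of $\varphi(S_{n,\delta^{\ast}}(U_r(x_0)))$, which your ``$S_n$ separates'' was meant to provide.

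Finally, a fibration with fibers homeomorphic to $T^1_{l_0}$ does not by itself give a product chart. You need a fiber coordinate that varies continuously across fibers. The paper uses the intrinsic fiber distance to the vertex together with a branch label (Lemmas~\ref{lem: choice} and~\ref{lem: projfiber}). Its continuity rests on the convergence $d_{\Pi_{y_k}}\to d_{\Pi_y}$, obtained from length continuity of $\DC$ curves (Lemma~\ref{lem: llstr2}).

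Note also that the identity to aim for is $S(U_0)=S_{n,\delta^{\ast}}(U_0)$, not $S(U_0)=S_n(U_0)$. Nothing excludes $n$-singular manifold points, e.g.\ cone points of angle larger than $2\pi$ inside a sheet.
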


\begin{rem}\label{rem: awst}
For every $l \in \N$ with $l \ge 4$,
there exists a purely $2$-dimensional $\GCBA$ space $X$
admitting a $2$-wall point $x \in W_2(X)$
at which $T_xX$ is isometric to $\R \times T_l^1$,
and of which every open neighborhood 
is not homeomorphic to $\R \times T_l^1$
(\cite[Examples 4.4 and 4.5]{nagano-shioya-yamaguchi1}).
This is the reason why we select a point $x_0$ 
avoiding $x$ in Theorem \ref{thm: wst}.
\end{rem}

We say that a separable metric space is 
\emph{purely $n$-dimensional}
if every non-empty open subset has topological dimension $n$.

Due to Theorem \ref{thm: wst},
we can describe a geometric characterization 
of codimension $2$ regularity
for $\GCBA$ spaces as follows:

\begin{thm}\label{thm: wrt}
Let $X$ be a $\GCBA$ space,
and let $U$ be a purely $n$-dimensional open subset of $X$.
Then the following are equivalent:
\begin{enumerate}
\item
the $n$-wall set $W_n(U)$ in $U$ is empty;
\item
there exists no open subset $U_0$ contained in $U$
for which $U_0$ is homeomorphic to $\R^{n-1} \times T_{l_0}^1$
for some $l_0 \in \N$ with $l_0 \ge 3$;
\item
$\dim S_n(U) \le n-2$;
\item
$\dim_{\mathrm{H}} S_n(U) \le n-2$;
\item
$\dim S(U) \le n-2$;
\item
$\dim_{\mathrm{H}} S(U) \le n-2$.
\end{enumerate}
\end{thm}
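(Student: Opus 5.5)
We organize the proof as a cycle of implications, with Theorem \ref{thm: wst} as the main input and the structure results of \cite{lytchak-nagano1} supplying the rest. Since $U$ is purely $n$-dimensional, every point of $U$ has all small neighborhoods of dimension $n$. By \cite[Theorem 1.2]{lytchak-nagano1} this gives $U \subset X^n$, i.e.\ $\dim T_xX = n$ for all $x \in U$. In particular $S_{n+1}(U)=U$ is irrelevant, and the $n$-wall points in $U$ are exactly $W_n(U) = W_n(X^n)\cap U$.

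The easy implications come first. Every $n$-regular point $x$ of $U\subset X^n$ has $T_xX \cong \R^n$, since $\dim T_xX = n$ and $T_xX$ is geodesically complete. Such points are manifold points by the topological regularity results of \cite{lytchak-nagano2} (or by the Lipschitz manifold structure $M^n$). Hence $S(U)\subset S_n(U)$, which gives (4)$\Rightarrow$(6) and (3)$\Rightarrow$(5). The inequality $\dim \le \dim_{\mathrm{H}}$ gives (4)$\Rightarrow$(3) and (6)$\Rightarrow$(5).

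For (5)$\Rightarrow$(2), suppose $U_0\subset U$ is homeomorphic to $\R^{n-1}\times T^1_{l_0}$ with $l_0\ge 3$. Then $S(U_0)$ contains the copy of $\R^{n-1}\times\{o\}$, since $l_0\ge3$ branches prevent these points from being manifold points (a local homology computation). So $\dim S(U)\ge n-1$, a contradiction. For (2)$\Rightarrow$(1), take $x\in W_n(U)$ and apply Theorem \ref{thm: wst} with the neighborhood $U$. It yields an open $U_0\subset U$ homeomorphic to $\R^{n-1}\times T^1_{l_0}$, contradicting (2).

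The substantive step is (1)$\Rightarrow$(4). If $W_n(U)=\emptyset$, then $S_n(U)=S_{n-1}(U)$. We then need $\dim_{\mathrm H}S_{n-1}(U)\le n-2$. This should follow from the rectifiability theorem \cite[Theorem 1.6]{lytchak-nagano1} applied with $m=n-1$: $S_{n-1}(X)$ is countably $(n-2)$-rectifiable, hence has Hausdorff dimension at most $n-2$.

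The main obstacle I expect is making sure the quoted results apply exactly as needed. One point is that countable $(m-1)$-rectifiability as stated in \cite{lytchak-nagano1} really yields $\dim_{\mathrm H}\le m-1$, which requires the union to be countable, with no exceptional null set of positive dimension. The other is the local-homology argument that the spine of $\R^{n-1}\times T^1_{l_0}$ consists of non-manifold points. If the rectifiability statement carries an $\Haus^{m-1}$-null exceptional set, I would instead combine (3) of the structure theorem, $\dim_{\mathrm H}S_n(M^n)\le n-2$, with $\dim_{\mathrm H}(\closure_X(X^n)-M^n)\le n-1$. In that case I would need to show that points of $U-M^n$ with no wall points nearby form a set of dimension at most $n-2$, again via an inductive tangent-cone argument.
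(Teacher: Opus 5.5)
Your proof is correct, but it closes the cycle differently from the paper.

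\textbf{Shared steps.} Both proofs get (1)$\Rightarrow$(3),(4) from Theorem~\ref{thm: srect} with $m=n-1$ (this is exactly Lemma~\ref{lem: nwcd}). Both also use Theorem~\ref{thm: wst} to turn a wall point into a chart $U_0\cong\R^{n-1}\times T^1_{l_0}$, as in the paper's (2)$\Rightarrow$(1) and Lemma~\ref{lem: cnwcd}.

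\textbf{Where the paper goes.} The paper proves (1)$\Rightarrow$(2) through Lemma~\ref{lem: nbngb}. That lemma takes a regularly topological singular set from Proposition~\ref{prop: gregsing}, and uses Sublemma~\ref{slem: snbngb} to produce a relaxed wall point inside a hypothetical $U_0$. It then appeals to Lemma~\ref{lem: rnwngb} to upgrade this to a genuine wall point. The paper proves (1)$\Rightarrow$(6) separately, via the relaxed Theorem~\ref{thm: dwrt} and the inclusion $S(U)\subset S_{n,\delta^{\ast}}(U)$.

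\textbf{Where you go.} You use the unrelaxed inclusion $S(U)\subset S_n(U)$. For $x\in U\subset X^n$, an $n$-regular point has $\Sigma_xX=\Sph^{n-1}$, so it is a manifold point by Proposition~\ref{prop: gfullstr}, exactly as in Corollary~\ref{cor: tsrect}. Then (1)$\Rightarrow$(4) immediately gives (6). Condition (5) rules out every branched chart, because the local homology at a spine point is $\Z^{l_0-1}$ in degree $n$, which is impossible at a manifold point when $l_0\ge 3$.

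\textbf{What each route buys.} Beyond Theorem~\ref{thm: srect} and that inclusion, your argument needs only Theorem~\ref{thm: wst}. You never need the step ``a relaxed wall point forces a genuine one.'' The paper takes this step from Lemma~\ref{lem: rnwngb}, but the argument written there proves only the reverse implications. The step does hold, either from $\Haus^{n-1}(S_{n-1}(X))=0$ together with Theorem~\ref{thm: dwst}, or from your own chain. The paper's detour does buy something: the relaxed characterization of Theorem~\ref{thm: dwrt}, which controls $S_{n,\delta^{\ast}}(U)$. Your argument does not address that.

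\textbf{Your two worries.} Theorem~\ref{thm: srect}, as quoted, has no exceptional null set and gives $\dim_{\mathrm{H}}S_{n-1}(X)\le n-2$ directly, so the fallback through $M^n$ is unnecessary. The local homology computation at spine points is standard and suffices as stated.
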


The codimension $2$ regularity of $\GCBA$ spaces
inherits the infinitesimal properties of the tangent spaces
and the spaces of directions.

\begin{thm}\label{thm: iwrt}
Let $X$ be a $\GCBA$ space,
and let $U$ be a purely $n$-dimensional open subset of $X$.
Then the following are equivalent:
\begin{enumerate}
\item
$\dim S(U) \le n-2$;
\item
$\dim S(T_xX) \le n-2$
for any $x \in U$;
\item
$\dim S(\Sigma_xX) \le n-3$
for any $x \in U$.
\end{enumerate}
\end{thm}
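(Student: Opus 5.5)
The plan is to reduce everything to the wall criterion of Theorem \ref{thm: wrt}, applied three times: to $U$, to the tangent cones $T_xX$, and to the spaces of directions $\Sigma_xX$. The link between the three levels is the splitting of tangent spaces, $T_v(T_xX) = \R \times T_\xi(\Sigma_xX)$ for $v = t\xi \ne 0$, together with the cone structure $T_xX = C_0(\Sigma_xX)$.

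For $U$ purely $n$-dimensional and $x \in U$, we first record that $T_xX$ is purely $n$-dimensional. This holds because $T_xX$ is a proper geodesically complete $\CAT(0)$ space with $\dim T_xX = n$, and every point of $X^m$ with $m<n$ would contradict pure dimensionality near $x$, using \cite[Theorem 1.2]{lytchak-nagano1} and the fact that dimension is semicontinuous under blow-up. Consequently $\Sigma_xX$ is purely $(n-1)$-dimensional. These claims need a short argument via the structure of $X^m$, and I would take them from \cite{lytchak-nagano1}.

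\emph{(1)$\Rightarrow$(2).} By Theorem \ref{thm: wrt}, condition (1) gives $W_n(U)=\emptyset$. Suppose $T_xX$ had an $n$-wall point $v$. If $v=o$ is the vertex, then $T_o(T_xX)=T_xX$, so $T_xX \cong \R^{n-1}\times T^1_l$ and $x$ itself is an $n$-wall point, a contradiction. If $v \ne o$, the set of $n$-wall points of $T_xX$ is a cone, so we pass to directions. The key step is semicontinuity of tangent cones. Wall points of $T_xX$ near $v$, scaled by $r\to0$, should be approximated by wall points of $X$ near $x$. To make this precise I would use Theorem \ref{thm: wst} inside $T_xX$. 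It produces an open set $U_0 \subset T_xX$ homeomorphic to $\R^{n-1}\times T^1_{l_0}$, and we may take $U_0$ away from $o$ and of small scale. Since $(\frac1r X, x)\to (T_xX,o)$ in the pointed Gromov--Hausdorff sense, and $\GCBA$ spaces with uniform bounds have local topological stability (the homotopy/ANR stability from \cite{lytchak-nagano2}), $X$ contains an open set homeomorphic to $\R^{n-1}\times T^1_{l_0}$ near $x$. This contradicts condition (2) of Theorem \ref{thm: wrt}. Hence $W_n(T_xX)=\emptyset$, and Theorem \ref{thm: wrt}, applied to $T_xX$, which is purely $n$-dimensional, gives $\dim S(T_xX)\le n-2$.

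\emph{(2)$\Leftrightarrow$(3).} Since $T_xX=C_0(\Sigma_xX)$, off the vertex we have $T_xX - \{o\}\cong (0,\infty)\times\Sigma_xX$. Manifold points therefore correspond: $t\xi$ is an $n$-manifold point if and only if $\xi$ is an $(n-1)$-manifold point. This is where the purely $(n-1)$-dimensional property of $\Sigma_xX$ is used to exclude other dimensions. So $S(T_xX)-\{o\} \cong (0,\infty)\times S(\Sigma_xX)$. The dimension formula $\dim((0,\infty)\times A)=\dim A+1$ holds here because the sets are $\sigma$-compact in a finite-dimensional $\GCBA$ space. Adding the single point $o$ does not raise a dimension that is at least $0$. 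This gives the equivalence, with the empty set handled trivially.

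\emph{(2)$\Rightarrow$(1).} If $\dim S(U)>n-2$, Theorem \ref{thm: wrt} gives an $n$-wall point $x\in U$. Then $T_xX\cong\R^{n-1}\times T^1_l$ with $l\ge3$, so $S(T_xX)=\R^{n-1}\times\{o\}$ has dimension $n-1$, contradicting (2).

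The main obstacle is (1)$\Rightarrow$(2): transferring a wall point at a nonvertex $v\in T_xX$ back to genuine wall structure in $X$. My first attempt would be the topological-stability route described above. As an alternative, I would argue directly that $\dim_{\mathrm H}$ of $S_n$ cannot increase under blow-up, using the upper semicontinuity of $\Haus^{n-1}$-densities of $S_n$ and the rectifiability from \cite[Theorem 1.6]{lytchak-nagano1}.
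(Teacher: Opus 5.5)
You correctly reduce the statement to Theorem~\ref{thm: wrt}, and your (2)$\Rightarrow$(1) is fine. However, the two remaining steps have genuine problems.

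\textbf{The step (1)$\Rightarrow$(2).} At a non-vertex wall point $v\in T_xX$ you rely on a ``local topological stability'' of $\GCBA$ spaces under pointed Gromov--Hausdorff convergence. No such result is available, and it is in fact false. Lemma~\ref{lem: limhm} only says that limits of homology manifolds are homology manifolds. Petersen's theorem only gives homotopy equivalence of compact $\LGC$ spaces. Neither produces open subsets homeomorphic to $\R^{n-1}\times T^1_{l_0}$.

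For a concrete failure, glue planes $P_2,P_3$ to a plane $P_1$ along the convex sets $\{y\le -x^2\}$ and $\{y\ge x^2\}$. This is a geodesically complete $\CAT(0)$ space whose tangent cone at the origin is $\R\times T^1_4$. Yet every non-manifold point other than the origin is a triple junction, so no open set near the origin is homeomorphic to $\R\times T^1_4$ (cf.\ Remark~\ref{rem: awst}).

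The missing idea is Lemma~\ref{lem: limnw}: emptiness of the $n$-wall set survives pointed limits of purely $n$-dimensional spaces. For $x_k\to v$ one argues by cases.
\begin{enumerate}
\item If small balls $U_s(x_k)$ lie in $R_{n,\delta^{\ast}}$, they are manifolds (Proposition~\ref{prop: gfullstr}). So the limit ball is a homology manifold (Lemma~\ref{lem: limhm}), which has no wall points (Proposition~\ref{prop: hmnw}).
\item Otherwise there are $(n,\delta^{\ast})$-singular points $y_k\to v$. These are $(n-1,\delta)$-regular by strainer stability (Lemma~\ref{lem: usstab}), hence relaxed wall points, and Theorem~\ref{thm: dwrt} converts this into genuine $n$-wall points upstairs.
\end{enumerate}
Applied to the blow-up $(\lambda X,x)\to(T_xX,o)$, this gives $W_n(T_xX)=\emptyset$. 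Your fallback via semicontinuity of $\Haus^{n-1}$-densities of $S_n$ is not an argument as stated.

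\textbf{The step (2)$\Leftrightarrow$(3).} Only one half of ``$t\xi$ is an $n$-manifold point iff $\xi$ is an $(n-1)$-manifold point'' holds, since $\R\times V$ can be a manifold when $V$ is not. Take $H$ a Poincar\'e homology $3$-sphere with a geodesically complete piecewise spherical $\CAT(1)$ metric, and $Z=\Sph^0\ast H$. Then $C_0(Z)=\R\times C_0(H)$ is homeomorphic to $\R^5$ by the double suspension theorem, while the two suspension points of $Z$ are non-manifold points.

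So you only have $S(T_xX)-\{o\}\subset(0,\infty)\times S(\Sigma_xX)$. This gives (3)$\Rightarrow$(2) but not (2)$\Rightarrow$(3). The paper avoids the problem by working with wall sets instead of manifold points. Wall sets correspond exactly, because $T_{t\xi}C_0(Z)\cong\R\times T_\xi Z$ is a metric splitting (Lemma~\ref{lem: conenw}). It then applies Theorem~\ref{thm: wrt} separately to $T_xX$ and to $\Sigma_xX$, which are purely $n$- and $(n-1)$-dimensional by Proposition~\ref{prop: pure}. Together with the blow-up argument above, this is Lemma~\ref{lem: iwrtnw}.
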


\subsection{Relaxed wall singularity}

To prove the results mentioned in Subsection 1.2,
we need a relaxed formulation for wall singularity.

Let $\delta \in (0,1)$.
Let $X$ be a $\GCBA$ space.
A point $x$ in $X$ is said to be \emph{$(m,\delta)$-regular}
if it is an $(m,\delta)$-strained point
(see Definitions \ref{defn: greg} and \ref{defn: mdstr}).
A point in $X$ is \emph{$(m,\delta)$-singular}
if it is not $(m,\delta)$-regular.
For a subset $A$ of $X$,
we denote 
by $R_{m,\delta}(A)$ 
the set of all $(m,\delta)$-regular points in $A$,
and by $S_{m,\delta}(A)$ 
the set of all $(m,\delta)$-singular points in $A$.
We call $R_{m,\delta}(A)$ and $S_{m,\delta}(A)$
the \emph{$(m,\delta)$-regular set in $A$} and 
the \emph{$(m,\delta)$-singular set in $A$},
respectively.
If $\dim X = n$,
then $S_{n+1,\delta}(X)$ coincides with $X$,
provided $\delta$ is sufficiently small
(see Lemma \ref{lem: ndr}).

Let $\delta, \epsilon \in (0,1)$ satisfy $\delta < \epsilon$.
A point in $X$ is \emph{$(m,\delta,\epsilon)$-wall}
if it is $(m,\epsilon)$-singular,
and if it is $(m-1,\delta)$-regular.
For a subset $A$ of $X$,
we denote 
by $W_{m,\delta,\epsilon}(A)$ 
the set of all $(m,\delta,\epsilon)$-wall points in $A$,
and call it the \emph{$(m,\delta,\epsilon)$-wall set in $A$}.
We notice that
\[
W_{m,\delta,\epsilon}(A) = S_{m,\epsilon}(A) \cap R_{m-1,\delta}(A)
= S_{m,\epsilon}(A) - S_{m-1,\delta}(A).
\]

Theorem \ref{thm: wst} follows from 
the following wall singularity theorem:

\begin{thm}\label{thm: dwst}
For every $n \in \N$,
there exist $\delta^{\ast} \in (0,1)$ 
and $\delta \in (0,\delta^{\ast})$ satisfying the following property:
Let $X$ be a $\GCBA$ space.
Then for every $x \in W_{n,\delta,\delta^{\ast}}(X^n)$,
and for every open neighborhood $U$ of $x$,
there exist a point $x_0 \in W_{n,\delta,\delta^{\ast}}(X^n)$ 
arbitrarily close to $x$,
and an open neighborhood $U_0$ of $x_0$ contained in $U$,
such that $x_0 \in S(U_0)$,
and $U_0$ is homeomorphic to $\R^{n-1} \times T_{l_0}^1$
for some $l_0 \in \N$ with $l_0 \ge 3$;
moreover,
we have $S(U_0) = S_{n,\delta^{\ast}}(U_0)$ and
\[
0 < \Haus^{n-1} \left( S(U_0) \right) < \infty.
\]
\end{thm}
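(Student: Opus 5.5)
Near a point $x \in W_{n,\delta,\delta^{\ast}}(X^n)$ I will build an $(n-1)$-dimensional strainer map and study its fibers. Since $x$ is $(n-1,\delta)$-regular, there is an $(n-1,\delta)$-strainer map $f = (d(a_1,\cdot),\dots,d(a_{n-1},\cdot))\colon V \to \R^{n-1}$ on a small neighborhood $V \subset U$ of $x$. By the results of \cite{lytchak-nagano1,lytchak-nagano2} (which I assume are recalled in the paper's preliminaries), $f$ is open and Lipschitz, and it is a Hurewicz-type fibration or local trivialization on suitable neighborhoods. Its fibers are $1$-dimensional, since $V \subset X^n$ after shrinking. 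For $\delta$ small, $f$ restricted to a small ball is a homotopy equivalence onto its image, with fibers that are compact, locally contractible, purely $1$-dimensional sets. I would therefore aim to show that each fiber near $x$ is a finite metric tree, namely a cone over finitely many points. The number $l$ of branches is controlled by the number of ends of $\Sigma$ at points of the fiber, and this number is bounded by $\mathcal{H}^{n-1}$ volume bounds on $\Sigma_yX$ together with packing (finite-diameter) arguments in $\CAT(1)$ spaces.

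Next I choose $x_0$. Define an upper semicontinuous integer-valued function on $S_{n,\delta^{\ast}}(V)$ by $y \mapsto \nu(y)$, the number of branches of the fiber $f^{-1}(f(y))$ at $y$. Equivalently, $\nu(y)$ is the number of components of $\Sigma_yX$ minus a small neighborhood of the strained $\Sph^{n-2}$, which is close to $T^0_{\nu(y)}$ by $\delta^{\ast}$-closeness. Upper semicontinuity comes from the Gromov--Hausdorff lower semicontinuity of tangent cones (\cite{lytchak-nagano1}). Hence the set where $\nu$ is locally maximal is open in the wall set, and I pick $x_0$ arbitrarily close to $x$ in it. Near $x_0$ two facts should then hold. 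First, the singular set $S_{n,\delta^{\ast}}$ is the graph of a Lipschitz section of $f$; this gives uniqueness of the branch point in each fiber via a convexity/angle argument, since two branch points in one fiber would force a geodesic between them lying in the wall, contradicting strainedness. Second, every fiber is a tripod-type cone with exactly $l_0 = \nu(x_0) \ge 3$ prongs. Combining $f$ with the radial distance to the singular section then yields a homeomorphism $U_0 \cong \R^{n-1}\times T^1_{l_0}$.

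The remaining conclusions follow from this description. $S(U_0) = S_{n,\delta^{\ast}}(U_0)$ is the image of the Lipschitz section, which is bi-Lipschitz to an open set of $\R^{n-1}$. Hence $0<\Haus^{n-1}(S(U_0))<\infty$. Off the section, points are $(n,\delta^{\ast})$-regular and hence manifold points (\cite{lytchak-nagano2}). On the section, local homology computed from the product structure shows these are non-manifold points. In particular $x_0 \in S(U_0)$ and $x_0 \in W_{n,\delta,\delta^{\ast}}(X^n)$.

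I expect the main obstacle to be the upper semicontinuity of $\nu$ and the stabilization near $x_0$, which Remark \ref{rem: awst} shows cannot be done at $x$ itself. I would first try to establish it by comparing $\Sigma_yX$ for $y$ near $x_0$ with $\Sigma_{x_0}X$ through convergence of rescaled balls to $T_{x_0}X$. The delicate part is ruling out extra prongs that appear only at small scales; for this I would use a maximality choice of $x_0$ together with a volume-counting contradiction.
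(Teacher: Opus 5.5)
\textbf{Gap: the single-vertex / Lipschitz-section step.} Your endgame matches the paper: a strainer map $\varphi$, fibers that are trees, and a map $(\varphi,\text{fiber distance to the singular point})$ into $\R^{n-1}\times T^1_{l_0}$. But the step everything rests on is not established. You need that near $x_0$ each fiber slice contains exactly one point of $S_{n,\delta^{\ast}}$, and that $\varphi|_{S_{n,\delta^{\ast}}}$ is bi-Lipschitz (your ``Lipschitz section''). This is what gives a single vertex per slice, a Lipschitz (hence continuous) projection onto the singular set, openness of the union of slices, and $\Haus^{n-1}(S(U_0))<\infty$.

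Your justification is that two branch points in one fiber would force a geodesic between them inside the wall, contradicting strainedness. This fails for two reasons. First, nothing places that geodesic, or the fiber arc between the two points, inside $S_{n,\delta^{\ast}}$. Second, strainedness only yields the pointwise statement of Lemma \ref{lem: infinj}: singular points of the fiber through $y$ cannot accumulate at $y$, but only at a scale depending on $y$ (through the radius of Proposition \ref{prop: 1dr}).

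The argument also never uses your choice of $x_0$. If it were valid, it would apply equally at the point of Remark \ref{rem: awst} and produce a product neighbourhood $\R\times T^1_l$ there, which does not exist. For the same reason, two of your intermediate claims are not available: that fibers near an arbitrary wall point $x$ are cones over finitely many points, and that $\varphi$ is a fibration or locally trivial. Near $x$, fiber slices are only finite trees with a uniformly bounded number of vertices (Lemma \ref{lem: llstr1}).

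\textbf{The missing idea.} What is needed is a passage from infinitesimal to local injectivity on an open dense set. Lemma \ref{lem: infinj} gives $\liminf \Vert\varphi(x_k)-\varphi(y)\Vert/d(x_k,y)\ge 1-\epsilon$ along the closed set $S_{n,\delta^{\ast}}$. Lytchak's open map theorem for metric spaces upgrades this to a local $(1+2\delta^{\ast})$-bi-Lipschitz embedding on an open dense subset of $S_{n,\delta^{\ast}}$ (Proposition \ref{prop: regsing}). The paper picks $x_0$ in that open dense set, which immediately gives at most one singular point per fiber, \eqref{eqn: nicea}. Constancy of the number of prongs on a small ball then follows from a connectedness argument using Hausdorff convergence of slices (Lemmas \ref{lem: projsing} and \ref{lem: fibershape}). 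No maximality of $\nu$ and no volume counting is needed.

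Your selection of $x_0$ by local constancy of $\nu$ might be made to exclude two singular points in one slice. Such a slice would have at least $2l_0-2>l_0$ ends, while the slice through $x_0$ at small radius has exactly $l_0$. But this requires continuity of the number of ends under the uniform bi-Lipschitz control of $d_{\Pi_y}$, and it still does not give the Lipschitz bound on the section.

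Finally, the $T^1_{l_0}$-coordinate should be the intrinsic distance $d_{\Pi_y}$ to the singular point. Its continuity in $y$ comes from the length convergence of Lemma \ref{lem: llstr2}. With the ambient distance, openness of the resulting map is not known (Remark \ref{rem: afterprojfiber}).
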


At the same time proving Theorem \ref{thm: wrt},
we obtain the following:

\begin{thm}\label{thm: dwrt}
For every $n \in \N$,
there exist $\delta^{\ast} \in (0,1)$ 
and $\delta \in (0,\delta^{\ast})$ satisfying the following:
Let $X$ be a $\GCBA$ space,
and let $U$ be a purely $n$-dimensional open subset of $X$.
Then the following are equivalent:
\begin{enumerate}
\item
the $(n,\delta,\delta^{\ast})$-wall set 
$W_{n,\delta,\delta^{\ast}}(U)$ in $U$ is empty;
\item
there exists no open subset $U_0$ contained in $U$
for which $U_0$ is homeomorphic to $\R^{n-1} \times T_{l_0}^1$
for some $l_0 \in \N$ with $l_0 \ge 3$
such that $S(U_0) = S_{n,\delta^{\ast}}(U_0)$ and
$\Haus^{n-1} \left( S(U_0) \right)$ is positive and finite;
\item
$\dim S_{n,\delta^{\ast}}(U) \le n-2$;
\item
$\dim_{\mathrm{H}} S_{n,\delta^{\ast}}(U) \le n-2$;
\item
the $n$-wall set $W_n(U)$ in $U$ is empty.
\end{enumerate}
\end{thm}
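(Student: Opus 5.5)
We prove the equivalences in the cycle (1)$\Rightarrow$(3)$\Rightarrow$(4)$\Rightarrow$(2)$\Rightarrow$(1), and then attach (5) through (1)$\Leftrightarrow$(5). Throughout, $\delta^{\ast}$ and $\delta$ are the constants from Theorem \ref{thm: dwst}, possibly shrunk so that the following facts from the preliminaries hold:
\begin{itemize}
\item every $(m,\delta)$-regular point is $(m,\delta^{\ast})$-regular, so that $S_{m,\delta^{\ast}}(A) \subset S_{m,\delta}(A)$;
\item an $m$-regular point is $(m,\epsilon)$-regular for every $\epsilon$;
\item the strained-point estimates of \cite{lytchak-nagano1} apply, that is, $\dim_{\mathrm{H}} S_{m,\epsilon}(X) \le m-1$ and $S_{m,\epsilon}$ is closed.
\end{itemize}

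For (1)$\Rightarrow$(3), we use that $U$ is purely $n$-dimensional, so $S_{n+1,\delta}(U)=U$ by Lemma \ref{lem: ndr}. If $W_{n,\delta,\delta^{\ast}}(U)=\emptyset$, then every $(n,\delta^{\ast})$-singular point of $U$ is $(n-1,\delta)$-singular, hence $S_{n,\delta^{\ast}}(U)\subset S_{n-1,\delta}(U)$. The latter set has Hausdorff dimension at most $n-2$ by the quantitative rectifiability/dimension estimate for $(m,\delta)$-singular sets in \cite{lytchak-nagano1}. Since topological dimension is bounded by Hausdorff dimension, this yields (3) and (4) simultaneously.

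For (4)$\Rightarrow$(2), suppose $U_0\subset U$ is an open set as in (2). Then $S(U_0)=S_{n,\delta^{\ast}}(U_0)$ has positive $\Haus^{n-1}$-measure, which contradicts $\dim_{\mathrm{H}}\le n-2$. For (3)$\Rightarrow$(2) the argument is the same: $S(U_0)$ is the singular set of $\R^{n-1}\times T^1_{l_0}$, namely the copy of $\R^{n-1}$. It is closed with nonempty interior in codimension one, so it has topological dimension $n-1$, which contradicts $\dim S_{n,\delta^{\ast}}(U)\le n-2$.

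For (2)$\Rightarrow$(1), we argue by contraposition, and Theorem \ref{thm: dwst} does the work. Suppose $x\in W_{n,\delta,\delta^{\ast}}(U)$. Since $U$ is purely $n$-dimensional and open, $U\subset \closure_X(X^n)$. We must first move $x$ into $X^n$ while preserving the wall property. The set of $(n-1,\delta)$-regular points is open. The set $S_{n,\delta^{\ast}}$ is closed, but it is not open, so we cannot simply perturb. This is the main obstacle. What I would try first: near $x$, the $(n-1,\delta)$-strainer gives a bi-Lipschitz open map to $\R^{n-1}$. If a neighborhood of $x$ in $U$ contained no $(n,\delta^{\ast})$-singular point of $X^n$, then $X^n$ near $x$ would be an open $(n,\delta^{\ast})$-regular set, hence a manifold. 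Its closure would then meet $x$ only through $(n,\delta^{\ast})$-regular limits, contradicting singularity of $x$ via the semicontinuity of strainer quality. If this fails, I would alternatively establish directly from the proof of Theorem \ref{thm: dwst} that it works for $x\in W_{n,\delta,\delta^{\ast}}(U)$ with $U$ purely $n$-dimensional. Once a point of $W_{n,\delta,\delta^{\ast}}(X^n)\cap U$ is found, Theorem \ref{thm: dwst} provides the forbidden $U_0\subset U$, so (2) fails.

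Finally, for (5): if $W_n(U)\neq\emptyset$, take $x\in W_n(U)$. At an $n$-wall point in the $n$-dimensional part, $T_xX$ is isometric to $\R^{n-1}\times T^1_l$ with $l\ge3$. Hence $x$ is $(n-1,\delta)$-regular but has no $(n,\delta^{\ast})$-strainer, because a small-$\delta^{\ast}$ $n$-strainer at $x$ would force an extra Euclidean factor in $T_xX$. Thus $x\in W_{n,\delta,\delta^{\ast}}(U)$, which gives (1)$\Rightarrow$(5). Conversely, assume (5). Any point of $W_{n,\delta,\delta^{\ast}}(U)$ would, by the argument above, produce $U_0$ and $x_0\in W_{n,\delta,\delta^{\ast}}(X^n)\cap U_0$. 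Then $S(U_0)=S_{n,\delta^{\ast}}(U_0)$ has positive $\Haus^{n-1}$-measure. On the other hand, (5) gives $S_n(U)\subset S_{n-1}(U)$, and this set has $\dim_{\mathrm{H}}\le n-2$ by \cite[Theorem 1.6]{lytchak-nagano1}. Together with $S_{n,\delta^{\ast}}\subset S_n$, this yields a contradiction, which proves (5)$\Rightarrow$(1).
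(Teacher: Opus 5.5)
Your decomposition is the same as the paper's, and most of it is correct:
\begin{itemize}
\item (1)$\Rightarrow$(3),(4) via $S_{n,\delta^{\ast}}(U)\subset S_{n-1,\delta}(U)\subset S_{n-1}(U)$ and Theorem~\ref{thm: srect}, as in Lemma~\ref{lem: rnwcd};
\item (2)$\Rightarrow$(1) via Theorem~\ref{thm: dwst};
\item (1)$\Rightarrow$(5) via $W_n(U)\subset W_{n,\delta,\delta^{\ast}}(U)$, which is Lemma~\ref{lem: walldwall}.
\end{itemize}
Your (3),(4)$\Rightarrow$(2) is fine. Your explicit (5)$\Rightarrow$(1), through $S_{n,\delta^{\ast}}(U)\subset S_n(U)=S_{n-1}(U)$ and Theorem~\ref{thm: srect}, is also correct. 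It is actually cleaner than the corresponding paragraph of Lemma~\ref{lem: rnwngb}. As written, that paragraph derives ``$W_n(U)\neq\emptyset\Rightarrow$ some $U_0$ exists,'' which is the converse of the (3)$\Rightarrow$(2) it claims.

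The step you flag as the main obstacle in (2)$\Rightarrow$(1) is not an obstacle, and the workaround you sketch for it does not work.
\begin{itemize}
\item \emph{Why there is no obstacle.} A point lies in $X^n$ if and only if all its sufficiently small neighborhoods have topological dimension $n$ (\cite[Theorem 1.2]{lytchak-nagano1}, quoted in the introduction). Every nonempty open subset of a purely $n$-dimensional $U$ has dimension $n$. Hence $U\subset X^n$, not merely $U\subset\closure_X(X^n)$, so $W_{n,\delta,\delta^{\ast}}(U)\subset W_{n,\delta,\delta^{\ast}}(X^n)$. Theorem~\ref{thm: dwst} therefore applies verbatim; its proof in the paper in fact starts from an arbitrary $x\in W_{n,\delta,\delta^{\ast}}(U)$.
\item \emph{Why your heuristic fails.} It uses semicontinuity in the wrong direction. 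By Lemma~\ref{lem: mdopen}, $R_{n,\delta^{\ast}}$ is open and $S_{n,\delta^{\ast}}$ is closed. So an $(n,\delta^{\ast})$-singular point can be a limit of $(n,\delta^{\ast})$-regular points, and no contradiction arises from ``$(n,\delta^{\ast})$-regular limits.''
\end{itemize}
Note also that your (1)$\Rightarrow$(5) already silently uses $x\in X^n$ when it reads off $T_xX\cong\R^{n-1}\times T^1_l$.

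One further point of precision. The sentence ``a small-$\delta^{\ast}$ $n$-strainer would force an extra Euclidean factor in $T_xX$'' is only a heuristic. An $(n,\delta^{\ast})$-suspender gives no exact splitting. To get a $\delta^{\ast}$ depending only on $n$, uniformly in $l\ge 3$ and in $X$, you need the ultralimit argument of Lemma~\ref{lem: walldwall}. That argument produces an exact $n$-suspender in a limit $\Sph^{n-2}\ast T^0_\omega$ with $T^0_\omega$ having at least $3$ points, contradicting Proposition~\ref{prop: m0susp}.
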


\subsection{Applications}

We discuss some applications
based on the geometric characterization 
of codimension $2$ regularity mentioned above.

The author proved 
a \emph{volume sphere theorem for $\CAT(1)$ homology manifolds}
(\cite[Theorem 1.2]{nagano4}):
For every $m \in \N$,
there exists $\delta \in (0,\infty)$ such that
if a compact $\CAT(1)$ homology $m$-manifold $Z$ satisfies
\[
\Haus^m \left( Z \right)
< \frac{3}{2} \Haus^m \left( \Sph^m \right) + \delta,
\]
then $Z$ is homeomorphic to $\Sph^m$.

As a generalization,
we obtain the following sphere theorem:

\begin{thm}\label{thm: nwvptcat}
For every $m \in \N$,
there exists $\delta \in (0,\infty)$ satisfying the following property:
If a purely $m$-dimensional, compact, geodesically complete
$\CAT(1)$ space $Z$ with $\dim S(Z) \le n-2$
satisfies
\begin{equation}
\Haus^m \left( Z \right)
< \frac{3}{2} \Haus^m \left( \Sph^m \right) + \delta,
\label{eqn: nwvptcata}
\end{equation}
then $Z$ is homeomorphic to $\Sph^m$.
\end{thm}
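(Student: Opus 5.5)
Note first that the hypothesis $\dim S(Z) \le n-2$ should be read as $\dim S(Z) \le m-2$, since $Z$ is purely $m$-dimensional. The plan is to reduce to the volume sphere theorem for $\CAT(1)$ homology manifolds (\cite[Theorem 1.2]{nagano4}). For this we need to show that, under the volume pinching \eqref{eqn: nwvptcata} and the codimension two hypothesis, $Z$ is a homology $m$-manifold. Then the cited theorem, applied with its constant $\delta$ (or a smaller one), gives that $Z$ is homeomorphic to $\Sph^m$.

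\textbf{Step 1: transfer regularity to the tangent spaces.} Since $Z$ is a compact geodesically complete $\CAT(1)$ space, it is $\GCBA$ with $U = Z$ purely $m$-dimensional. Theorem \ref{thm: iwrt} then turns $\dim S(Z) \le m-2$ into $\dim S(\Sigma_xZ) \le m-3$ for every $x \in Z$, and $\dim S(T_xZ)\le m-2$.

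\textbf{Step 2: local volume control at every point.} The volume bound together with the $\CAT(1)$ monotonicity of the volume ratio $\Haus^m(B(x,r))/\Haus^m(B^m_1(r))$ (as in \cite{nagano4}, where $B^m_1(r)$ is an $r$-ball in $\Sph^m$) gives
\[
\Haus^{m-1}(\Sigma_xZ) < \tfrac{3}{2}\Haus^{m-1}(\Sph^{m-1}) + \delta'
\]
for every $x$, with $\delta'\to 0$ as $\delta\to 0$. Since $\Sigma_xZ$ is a compact, geodesically complete, purely $(m-1)$-dimensional $\CAT(1)$ space with $\dim S(\Sigma_xZ)\le (m-1)-2$, we can argue by induction on $m$: the statement for $m-1$ forces $\Sigma_xZ \cong \Sph^{m-1}$. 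The base cases $m=1,2$ are handled directly. For $m=1$, a circle of length $< 3\pi+\delta$ with no branch points is a circle. For $m=2$, $\Sigma_x$ is a graph without vertices of valence $\ne 2$, since such vertices would be codimension one walls; hence it is a disjoint union of circles each of length $\ge 2\pi$, and the length bound forces a single circle. Once every $\Sigma_xZ$ is a sphere, $Z$ is a homology manifold (indeed a topological manifold, by \cite{lytchak-nagano2}), and \cite[Theorem 1.2]{nagano4} finishes the proof.

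\textbf{Main obstacle.} The delicate point is the inductive step, specifically that a sphere bound on $\Sigma_xZ$ with the codimension two condition really prevents $\Sigma_xZ$ from being, for example, a join or a union of spheres glued along a codimension $\ge 2$ set. Such a space has volume at least $2\Haus^{m-1}(\Sph^{m-1})$, so the constant $\frac{3}{2}$ is what excludes it. If induction on spaces of directions is awkward, I would try the alternative route through Theorem \ref{thm: wrt}: $W_m(Z)=\emptyset$ means every point is either $m$-regular or has codimension $\ge 2$ singularity, so local homology groups $H_*(Z,Z-x)$ coincide with those of $T_xZ$. I would then compute these by removing $S(T_xZ)$, which has codimension $\ge 2$ and so does not disconnect or alter top homology. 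This reproduces the homology manifold property directly, with the volume bound used only to exclude tangent cones with more than one top-dimensional ``sheet'', each of which contributes at least $\Haus^{m-1}(\Sph^{m-1})$ to the measure of the space of directions.
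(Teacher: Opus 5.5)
Your main argument is correct. It reaches the key intermediate fact, that every space of directions $\Sigma_xZ$ is a sphere, by a different route from the paper.

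\textbf{Shared frame.} Both proofs use the same outer steps:
relative volume comparison gives $\Haus^{m-1}(\Sigma_xZ)<\bigl(\frac32+\delta/\Haus^m(\Sph^m)\bigr)\Haus^{m-1}(\Sph^{m-1})$;
Theorem \ref{thm: iwrt} with Proposition \ref{prop: pure} makes $\Sigma_xZ$ a purely $(m-1)$-dimensional, compact, geodesically complete $\CAT(1)$ space with $\dim S(\Sigma_xZ)\le m-3$;
Theorem \ref{thm: loctopreg} turns sphere-like links into a topological manifold structure on $Z$;
and the volume sphere theorem for $\CAT(1)$ homology manifolds \cite{nagano4} finishes.

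\textbf{Where the routes differ.} You invoke the theorem itself in dimension $m-1$, with $\delta_m$ chosen below both $\delta_{m-1}\Haus^m(\Sph^m)/\Haus^{m-1}(\Sph^{m-1})$ and the homology-manifold constant. The paper instead applies Lemma \ref{lem: wnwvpcat} to the links, through Proposition \ref{prop: nwmfdrvolg}. That lemma is a compactness/contradiction argument in a fixed dimension $k$:
\begin{enumerate}
\item volumes below $\frac32$ and tripod-free spaces are disposed of by the volume and capacity sphere theorems of \cite{lytchak-nagano2};
\item a limit of volume exactly $\frac32\Haus^k(\Sph^k)$ is identified by the rigidity Theorem \ref{thm: just3/2} as a triplex, since the alternative $\Sph^{k-1}\ast T_3^0$ is excluded by the limit stability of codimension-two regularity (Proposition \ref{prop: limn-2});
\item Petersen's homotopic stability \cite{petersen} transfers this back, giving only a homotopy equivalence with a sphere.
\end{enumerate}

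\textbf{What each route buys.} Your induction dispenses with the rigidity theorem, the limit-stability results, the capacity sphere theorem and Petersen's theorem. It also gives links homeomorphic, not merely homotopy equivalent, to spheres. The paper's route gives a non-recursive statement in each dimension (Lemma \ref{lem: wnwvpcat} already makes $Z$ itself a homotopy sphere) and puts Proposition \ref{prop: limn-2} to use. Your induction would equally prove Proposition \ref{prop: nwmfdrvolg}, and hence Theorem \ref{thm: nwatrcat}.

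\textbf{The ``main obstacle'' and the fallback.} The obstacle you describe is not one. Once the hypotheses are verified for $\Sigma_xZ$, the inductive hypothesis alone excludes joins and glued spheres. Your fallback route, however, would not work as sketched. Codimension-two singularity does not force $\Sigma_xZ$ to have the homology of a sphere: for $m=3$ a link without singularities could a priori be a closed surface of positive genus. So the volume bound must do much more than exclude several ``sheets'', and that is exactly what the lower-dimensional sphere theorem supplies.

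\textbf{Base case.} Tidy the case $m=1$. $S(Z)=\emptyset$ makes $Z$ a closed $1$-manifold whose components each have length $\ge 2\pi$. Then $\Haus^1(Z)<3\pi+\delta$ with $\delta<\pi$ leaves a single circle. The case $m=2$ is then just the inductive step, not a separate base case.
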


We denote by $\omega_0^n(r)$ 
the $n$-dimensional Hausdorff measure of a metric ball in $\R^n$
of radius $r$.
For a point $p$ in a metric space,
we denote by $U_r(p)$ the open metric ball of radius $r$
around $p$.
For a metric space $X$,
we define a non-negative value $\mathcal{G}_0^n(X) \in [0,\infty]$ by
\begin{equation}
\mathcal{G}_0^n(X) := \limsup_{t \to \infty} 
\frac{\Haus^n \left( U_t(p) \right)}{\omega_0^n(t)}
\label{eqn: evg}
\end{equation}
for some base point $p$ in $X$.
We remark that
$\mathcal{G}_0^n(X)$ does not depend on the choice of base points.
We call $\mathcal{G}_0^n(X)$ the 
(\emph{upper})
\emph{$n$-dimensional Euclidean volume growth of $X$}.

Let $X$ be a proper, geodesically complete $\CAT(0)$ space.
A relative volume comparison of Bishop--Gromov type
(\cite[Proposition 6.3]{nagano2}, \cite[Proposition 3.2]{nagano4})
for $\CAT(0)$ spaces tells us that
if the $n$-dimensional part $X^n$ is non-empty,
then the following hold:
(1)
$\mathcal{G}_0^n(X) \ge 1$;
(2)
if $\mathcal{G}_0^n(X)$ is finite,
then for any $p \in X^n$ the limit superior in \eqref{eqn: evg} 
turns out to be the limit.

The author proved 
an \emph{asymptotic topological regularity theorem 
for $\CAT(0)$ homology manifolds}
(\cite[Theorem 1.3]{nagano5}):
For every $n \in \N$,
there exists $\delta \in (0,\infty)$ such that
if a $\CAT(0)$ homology $n$-manifold $X$ satisfies
\[
\mathcal{G}_0^n \left( X \right)
< \frac{3}{2} + \delta,
\]
then $X$ is homeomorphic to $\R^n$.

As a generalization,
we obtain:

\begin{thm}\label{thm: nwatrcat}
For every $n \in \N$,
there exists $\delta \in (0,\infty)$ satisfying the following property:
If a purely $n$-dimensional, proper, geodesically complete
$\CAT(0)$ space with $\dim S(X) \le n-2$ satisfies
\begin{equation}
\mathcal{G}_0^n \left( X \right)
< \frac{3}{2} + \delta,
\label{eqn: nwatrcata}
\end{equation}
then $X$ is homeomorphic to $\R^n$.
\end{thm}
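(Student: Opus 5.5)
We prove Theorem \ref{thm: nwatrcat} by reducing it to the asymptotic topological regularity theorem for $\CAT(0)$ homology manifolds (\cite[Theorem 1.3]{nagano5}), in the same way Theorem \ref{thm: nwvptcat} reduces to the volume sphere theorem \cite[Theorem 1.2]{nagano4}. Take $\delta$ to be the constant from \cite[Theorem 1.3]{nagano5}, shrunk if necessary so that $\delta < 1/2$. Let $X$ be as in the statement. It then suffices to show that $X$ is a homology $n$-manifold.

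First, since $X$ is purely $n$-dimensional and $\dim S(X) \le n-2$, Theorem \ref{thm: wrt} applied to $U = X$ gives $W_n(X) = \emptyset$. By Theorem \ref{thm: iwrt}, we also get $\dim S(\Sigma_xX) \le n-3$ and $\dim S(T_xX) \le n-2$ for every $x \in X$. The next step uses the Bishop--Gromov comparison (\cite[Proposition 6.3]{nagano2}, \cite[Proposition 3.2]{nagano4}): the volume growth bounds the volume of every tangent cone, giving
\[
\frac{\Haus^{n-1}(\Sigma_xX)}{\Haus^{n-1}(\Sph^{n-1})} \le \mathcal{G}_0^n(X) < \frac{3}{2} + \delta
\]
for all $x \in X$. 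So every space of directions $\Sigma_xX$ is a purely $(n-1)$-dimensional, compact, geodesically complete $\CAT(1)$ space. It has codimension-two singular set and volume below $\frac{3}{2}\Haus^{n-1}(\Sph^{n-1}) + \delta$.

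Now argue by induction on $n$, with Theorem \ref{thm: nwvptcat} in dimension $m = n-1$ as the tool; in the statement of Theorem \ref{thm: nwvptcat} the hypothesis should be read as $\dim S(Z) \le m-2$. We may take $\delta$ no larger than the constant of Theorem \ref{thm: nwvptcat} for $m = n-1$. Applying it gives that $\Sigma_xX$ is homeomorphic to $\Sph^{n-1}$ for every $x$. Small punctured balls in a $\GCBA$ space are homotopy equivalent to $\Sigma_xX$ (\cite{lytchak-nagano2}), so the local homology of $X$ at $x$ is $\Z$ in degree $n$ and $0$ otherwise. Hence $X$ is a homology $n$-manifold, and \cite[Theorem 1.3]{nagano5} yields $X \approx \R^n$.

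The delicate point is the volume step: we must check that the Euclidean volume growth genuinely bounds the normalized volume of $\Sigma_xX$ at every point, including points not in the top stratum. Pure $n$-dimensionality gives $\dim T_xX = n$ everywhere. Monotonicity of $\Haus^n(U_t(x))/\omega_0^n(t)$ in $t$ then bounds the small-scale density, which equals the normalized volume of $\Sigma_xX$, by the large-scale limit $\mathcal{G}_0^n(X)$.

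There is one caveat. Theorem \ref{thm: nwvptcat} itself must be established first, presumably by the parallel argument: show that $Z$ is a homology manifold by induction on $m$ using Theorem \ref{thm: iwrt}, then invoke \cite[Theorem 1.2]{nagano4}. The base cases in low dimension are handled directly: in dimension $1$, a $\CAT(1)$ graph with volume less than $3\pi + \delta$ and no branch points is a circle.
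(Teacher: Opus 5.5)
Your proof is correct. Its skeleton matches the paper's: bound link volumes by $\mathcal{G}_0^n(X)$, pass codimension-two regularity to links via Theorem \ref{thm: iwrt} and Proposition \ref{prop: pure}, conclude that $X$ is a manifold, and invoke \cite{nagano5}. Where you differ is the dimension-$(n-1)$ input.

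The paper proves this theorem through Proposition \ref{prop: nwmfdrvolg}. There, links are only shown to be \emph{homotopy equivalent} to $\Sph^{n-1}$, by Lemma \ref{lem: wnwvpcat}. That lemma is a compactness argument using Theorem \ref{thm: just3/2}, the limit-closedness result Proposition \ref{prop: limn-2}, and Petersen's stability theorem. Theorem \ref{thm: loctopreg} then makes $X$ a topological manifold.

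You instead apply Theorem \ref{thm: nwvptcat} in dimension $n-1$, which is legitimately available since the paper proves it independently. This gives links homeomorphic to $\Sph^{n-1}$, so $X$ is a homology manifold by the link criterion (Proposition \ref{prop: hmrecog}). That is exactly the hypothesis of the homology-manifold version \cite[Theorem 1.3]{nagano5}.

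Your suggested inductive proof of Theorem \ref{thm: nwvptcat} also works. Links of $Z$ inherit the volume bound by relative volume comparison and codimension-two regularity by Theorem \ref{thm: iwrt}. By induction they are spheres, so $Z$ is a homology manifold, and \cite[Theorem 1.2]{nagano4} finishes. The case $m=1$ is a single circle, since two circles would have total length at least $4\pi$. This makes Lemma \ref{lem: wnwvpcat} unnecessary, and with it Proposition \ref{prop: limn-2}, Theorem \ref{thm: just3/2} and Petersen's theorem.

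What each route buys: yours outsources all compactness to the homology-manifold sphere theorem \cite[Theorem 1.2]{nagano4}, used in every lower dimension. The paper's proof of this theorem needs only a homotopy-type statement about links and never uses \cite[Theorem 1.2]{nagano4}. The price is redoing a limit argument in the codimension-two setting.

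Three bookkeeping points. First, Theorem \ref{thm: nwvptcat} uses an additive constant, so your $\delta$ must be shrunk so that $\delta\,\Haus^{n-1}(\Sph^{n-1})$ is at most that constant. Second, purity of $\Sigma_xX$ comes from Proposition \ref{prop: pure}, not merely from $\dim T_xX=n$. Third, the main argument needs no induction on $n$, only Theorem \ref{thm: nwvptcat} for $m=n-1$; the case $n=1$ is trivial since then $S(X)=\emptyset$.
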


\subsection{Sketch of the proof of the wall singularity theorem}

We explain a sketch of 
the proof of Theorem \ref{thm: dwst} on the wall singularity.

For $n \in \N$,
let $\delta^{\ast} \in (0,1)$ be sufficiently small,
and let $\delta \in (0,\delta^{\ast})$ be small enough for $\delta^{\ast}$.
Let $X$ be a $\GCBA$ space.
Take an $(n,\delta,\delta^{\ast})$-wall point 
$x \in W_{n,\delta,\delta^{\ast}}(X^n)$,
and an open neighborhood $U$ of $x$ in $X$.
We may assume that $U$ is purely $n$-dimensional.

\emph{Step} 1.
If an open neighborhood $V$ of $x$ contained in $U$
is sufficiently small,
then we can find an $(n-1,\delta)$-strainer map
$\varphi \colon V \to \R^{n-1}$
with $\varphi = (d_{p_1},\dots,d_{p_{n-1}})$,
where each $d_{p_i}$ is the distance function from $p_i$.
From the regularity of strainer maps
with distance coordinates,
it follows that
$\varphi$ has differential at all points in $V$,
and $\varphi$ is $(1+O_n(\delta))$-Lipschitz
and $(1+O_n(\delta))$-open,
where $O_n(\delta)$ is a positive constant 
depending only on $n$ and $\delta$
with $O_n(\delta) \to 0$ as $\delta \to 0$.

\emph{Step} 2.
For a point $y \in V$,
we denote by $\Pi_y$ the fiber of $\varphi$ through $y$
defined by $\Pi_y := \varphi^{-1}(\{\varphi(y)\})$.
Note that the fiber $\Pi_y$ is uniformly locally geometrically contractible
(\cite[Theorems 1.11 and 9.1]{lytchak-nagano1}).
In addition,
we see that
the intrinsic metric $d_{\Pi_y}$ on $\Pi_y$
is $(1+O_n(\delta))$-Lipschitz equivalent to the ambient metric on $X$.
If a sequence $(y_k)$ in $V$ converges to $y$,
then $d_{\Pi_{y_k}}$ is converges to $d_{\Pi_u}$ uniformly.

\emph{Step} 3.
Due to the Lytchak open map theorem \cite[Proposition 1.1]{lytchak2},
we can find an $(n,\delta^{\ast})$-singular point 
$x_0 \in S_{n,\delta^{\ast}}(V)$
arbitrarily close to the given point $x$
such that
for some sufficiently small open metric ball $U_{r_0}(x_0)$,
and for some $l_0 \in \N$ with $l \ge 3$,
the following hold for all $r \in (0,r_0/10)$:
(1)
The restriction $\varphi |_{S_{n,\delta^{\ast}}(U_r(x_0))}$ is 
a bi-Lipschitz embedding into $\R^n$.
(2)
For every $y \in S_{n,\delta^{\ast}}(U_r(x_0))$,
the slice $\Pi_y \cap U_r(x_0)$ of the fiber
is homeomorphic to $T_{l_0}^1$,
and isometric to a metric tree with single vertex $y$.
(3)
The subset $Y_r(x_0)$ of $U$ defined by
\[
Y_r(x_0) := \bigcup 
\left\{ \, \Pi_y \cap U_{r_0}(x_0) \mid 
y \in S_{n,\delta^{\ast}}(U_r(x_0)) \, \right\}
\]
is open in $U$.
Based on the properties (1)--(3),
we define a map
$\Phi$ from $Y_r(x_0)$ to $\R^{n-1} \times T_{l_0}^1$
by $\Phi(y) := \left( \varphi(y), d_{\Pi_y}(y_0,y) \right)$.
Then the map $\Phi$ is an open embedding.

Thus we finish the proof of Theorem \ref{thm: dwst}.

\subsection{Outline of this paper}

In Section 2,
we preprare basic concepts in the geometry 
of metric spaces with curvature bounded above.

From Section 3 to Section 6,
we discuss locally geodesically complete
open subsets of $\CBA$ spaces.
In Section 3,
we study a multi-fold spherical suspension structure
for $\CAT(1)$ spaces.
In Section 4,
we study a condition to recognize 
an almost multi-fold spherical suspension structure.
In Section 5,
we discuss the local geometric regularity
of locally geodesically complete open subsets of $\CBA$ spaces.
In Section 6,
we study the regularity of maps on them with distance coordinates.
The part from Section 3 to Section 6 for $\GCBA$ spaces
has been already discussed by 
Lytchak and the author \cite{lytchak-nagano1}.
For the proofs of our wall singularity theorems,
we need to generalize the studies in \cite{lytchak-nagano1}
slightly,
especially,
when we control the relaxed terms $\delta^{\ast}$ and $\delta$.

From Section 7,
we research the wall singularity of $\GCBA$ spaces.
In Section 7,
we discuss the structure of singular sets in $\GCBA$ spaces.
In Section 8,
we study the fibers of strainer maps with distance coordinates.
In Section 9,
we prove Theorems \ref{thm: dwst} and \ref{thm: wst}
on the wall singularity.
In Section 10,
we discuss a geometric characterization of codimension $2$ regularity,
and prove Theorems \ref{thm: wrt}, \ref{thm: iwrt},
and \ref{thm: dwrt}.
In Section 11,
we provide applications,
and show Theorems \ref{thm: nwvptcat}
and \ref{thm: nwatrcat}.

\subsection*{Acknowledgments}

The author would like to express his gratitude to
Alexander Lytchak 
for valuable discussions 
in continuous private communications.
The author would like to thank 
Tadashi Fujioka,
Takashi Shioya, and Takao Yamaguchi 
for their interests in this work.

\section{Preliminaries}

We refer the readers to 
\cite{alexander-kapovitch-petrunin-0},
\cite{alexander-kapovitch-petrunin}, 
\cite{alexandrov-berestovskii-nikolaev},
\cite{ballmann}, 
\cite{bridson-haefliger}, 
\cite{burago-burago-ivanov},
\cite{buyalo-schroeder} 
for basic facts in metric geometry,
especially on spaces with curvature bound above.

\subsection{Metric spaces}

Let $r \in (0,\infty)$.
For a point $p$ in a metric space,
we denote by $U_r(p)$, $B_r(p)$, and $S_r(p)$
the open metric ball of radius $r$ around $p$,
the closed one, and the metric sphere, respectively.

Let $X$ be a metric space with metric $d_X$.
Let $d_X \wedge \pi$ be the $\pi$-truncated metric on $X$
defined by $d_X \wedge \pi := \min \{ d_X, \pi \}$.
The 
\emph{Euclidean cone $C_0(X)$ over $X$}
is defined as the cone 
$[0,\infty) \times X / \{ 0 \} \times X$
over $X$
equipped with the Euclidean metric
$d_{C_0(X)}$ given by
\[
d_{C_0(X)} \left( [(t_1,x_1)], [(t_2,x_2)] \right)^2 := 
t_1^2 + t_2^2 - 2t_1t_2 \cos \left( (d_X \wedge \pi) (x_1,x_2) \right).
\]
We write an element $[(t,x)]$ in $C_0(X)$ as $tx$,
and denote by $0$ the vertex of $C_0(X)$.
We set $|tx| := t$.
For metric spaces $Y$ and $Z$,
we denote by $Y \ast Z$ the spherical join of $Y$ and $Z$.
Note that $C_0(Y \ast Z)$ is isometric to 
the $\ell^2$-direct product metric space
$C_0(Y) \times C_0(Z)$
of $C_0(Y)$ and $C_0(Z)$.

\subsection{Maps between metric spaces}

Let $c \in (0,\infty)$.
Let $X$ be a metric space with metric $d_X$,
and $Y$ a metric space with metric $d_Y$.
A map $f \colon X \to Y$ is said to be
\emph{$c$-Lipschitz} 
if $d_Y(f(x_1),f(x_2)) \le cd_X(x_1,x_2)$
for all $x_1, x_2 \in X$.
A map $f \colon X \to Y$ is said to be
\emph{$c$-bi-Lipschitz} 
if $f$ is $c$-Lipschitz,
and if $d_X(x_1,x_2) \le c d_Y(f(x_1),f(x_2))$
for all $x_1, x_2 \in X$
(consequently, $c \in [1,\infty)$).
A $1$-bi-Lipschitz homeomorphism
is nothing but an isometry,
and a $1$-bi-Lipschitz embedding is 
an isometric embedding.
A map $f \colon X \to Y$ is 
\emph{$c$-open}
if for any $r \in (0,\infty)$ and $x \in X$
such that $B_{cr}(x)$ is complete,
the ball $U_r(f(x))$ in $Y$ is contained in the image $f(U_{cr}(x))$
of the ball $U_{cr}(x)$ in $X$.
In the case where $X$ is complete,
if a map $f \colon X \to Y$ is $c$-open,
then $f$ is surjective.
Moreover,
in the case where $X$ is complete,
a $c$-Lipschitz map $f \colon X \to Y$
is a $c$-bi-Lipschitz homeomorphism for some $c \in [1,\infty)$
if and only if $f$ is an injective $c$-open map.

\subsection{Geodesic metric spaces}

Let $X$ be a metric space.
A \emph{geodesic $\gamma \colon I \to X$} 
means an isometric embedding from an interval $I$.
For a pair of points $p, q$ in $X$,
a \emph{geodesic $pq$ in $X$ from $p$ to $q$}
means the image of an isometric embedding
$\gamma \colon [a,b] \to X$
from a bounded closed interval $[a,b]$
with $\gamma(a) = p$ and $\gamma(b) = q$.

For $r \in (0,\infty]$,
a metric space $X$ is said to be
\emph{$r$-geodesic}
if every pair of points in $X$ with distance smaller than $r$
can be joined by a geodesic in $X$.
A metric space is 
\emph{geodesic}
if it is $\infty$-geodesic.
A geodesic metric space is proper
if and only if
it is complete and locally compact.

For $r \in (0,\infty]$,
a subset $C$ of a metric space is said to be 
\emph{$r$-convex}
if $C$ itself is $r$-geodesic as a metric subspace, 
and if every geodesic joining two points in $C$
is contained in $C$.
A subset $C$ of a metric space is 
\emph{convex}
if $C$ is $\infty$-convex.

\subsection{Convergences of metric spaces}

Let
$d_{\GH}$ denote the Gromov--Hausdorff distance
between metric spaces.
Let $X$ be a metric space with metric $d_X$,
and $Y$ a metric space with metric $d_Y$.
For $\epsilon \in (0,\infty)$,
a map
$f \colon X \to Y$ 
is said to be an \emph{$\epsilon$-approximation}
if 
\[
\sup_{x_1, x_2 \in X}
\left\vert 
d_Y \left( f(x_1), f(x_2) \right) - d_X(x_1,x_2)
\right\vert < \epsilon,
\]
and if $\bigcup_{p \in f(X)} U_r(p)$ coincides with $Y$.
If $d_{\GH}(X,Y) < \epsilon$ for $\epsilon > 0$,
then there exists a $2\epsilon$-approximation
$f \colon X \to Y$.
Conversely,
if there exists an $\epsilon$-approximation $f \colon X \to Y$, 
then $d_{\GH}(X,Y) < 2\epsilon$.
We say that a sequence $(X_k)$ of metric spaces 
converges to a metric space $X$ 
\emph{in the Gromov--Hausdorff topology}
if $\lim_{k \to \infty} d_{\GH}(X_k,X) = 0$.
We say that a sequence $(X_k,p_k)$ of pointed geodesic metric spaces 
converges to a pointed metric space $(X,p)$
\emph{in the pointed Gromov--Hausdorff topology}
if for every $r \in (0,\infty)$
there exists a sequence $(\epsilon_k)$ in $(0,\infty)$
with $\lim_{k \to \infty}\epsilon_k = 0$
such that for each $k$
there exists an
$\epsilon_k$-approximation $\varphi_k \colon B_r(p) \to B_r(p_k)$
with $\varphi_k(p) = p_k$.

A \emph{non-principal ultrafilter $\omega$ on $\N$}
is a finitely additive probability measure on $\N$ such that
$\omega(A) = \{0,1\}$ for every subset $A$,
and $\omega(E) = 0$ if $E$ is finite.
If $k \in A$ with $\omega(A) = 1$,
then $k$ is said to be \emph{$\omega$-large}.
If $f \colon \N \to B$ is a sequence in a compact metric space $B$,
then there exists a unique $x$ in $B$ such that
$\omega(f^{-1}(U)) = 1$ 
for every neighborhood $U$ of $x$;
the unique $x$ is called the \emph{$\omega$-limit of $f$},
and denoted by $\ulim f$.

Throughout this paper,
we fix a non-principal ultrafilter $\omega$ on $\N$.
Let $(X_k,p_k)$ be a sequence of pointed metric spaces
with metrics $d_{X_k}$.
We denote by $X_{\omega}^0$
the set of all sequences $(x_k)$
with $x_k \in X_k$ 
such that
$d_{X_k}(p_k,x_k)$ are uniformly bounded.
Let 
$d_{\omega} \colon 
X_{\omega}^0 \times X_{\omega}^0 \to [0,\infty)$
be the function defined by
$d_{\omega}((x_k),(y_k)) := \ulim d_{X_k}(x_k,y_k)$.
We define 
$\ulim (X_k,p_k)$ as the quotient
metric space $(X_{\omega}^0,d_{\omega}) / d_{\omega}=0$,
called the \emph{ultralimit of $(X_k,p_k)$ 
with respect to $\omega$}.
For $\xi = [(x_k)] \in \ulim X_k$,
we write $x_k \to \xi$.
Note that
if $\diam X_k$ are uniformly bounded,
then $\ulim X_k$ does not depend on the choices of
$p_k$.
If each $X_k$ is complete,
then so is $\ulim X_k$.
If each $X_k$ is $r$-geodesic,
then so is $\ulim X_k$.
If each $X_k$ is geodesically complete,
then so is $\ulim X_k$.
If a sequence $(X_k)$ of compact metric spaces 
converges to some compact metric space $X$ 
in the Gromov--Hausdorff topology,
then $\ulim X_k$ is isometric to $X$.
If a sequence $(X_k,p_k)$ of pointed proper geodesic metric spaces 
converges to a pointed metric space $(X,p)$
in the pointed Gromov--Hausdorff topology,
then $\ulim X_k$ is isometric to $X$ and 
$p_k \to p$.

\subsection{CAT$\boldsymbol{(\kappa)}$ spaces}

For $\kappa \in \R$,
we denote by $M_{\kappa}^n$ 
the simply connected, complete Riemannian $n$-manifold 
of constant curvature $\kappa$,
and denote by $D_{\kappa}$ the diameter of $M_{\kappa}^n$.
A metric space $X$ is said to be 
$\CAT(\kappa)$
if $X$ is $D_{\kappa}$-geodesic,
and if every geodesic triangle in $X$ with perimeter 
smaller than $2D_{\kappa}$
is not thicker than the comparison triangle 
with the same side lengths in $M_{\kappa}^2$.

Let $X$ be a $\CAT(\kappa)$ space.
Every pair of points in $X$ with distance smaller than $D_{\kappa}$
can be uniquely joined by a geodesic.
Let $p \in X$.
For every $r \in (0,D_{\kappa}/2]$,
the balls $U_r(p)$ and $B_r(p)$ are convex.
Along the geodesics emanating from $p$,
for every $r \in (0,D_{\kappa})$
the balls $U_r(p)$ and $B_r(p)$ are contractible inside themselves.
Every open subset of $X$ is an $\ANR$ 
(\cite{kramer}).
For $x, y \in U_{D_{\kappa}}(p) - \{p\}$,
we denote by $\angle pxy$
the angle at $p$
between $px$ and $py$.
Put $\Sigma_p'X := \{ \, px \mid x \in U_{D_{\kappa}}(p) - \{p\} \, \}$.
The angle $\angle$ at $p$
is a pseudo-metric on $\Sigma_p'X$.
The 
\emph{space of directions $\Sigma_pX$ at $p$}
is defined as the $\angle$-completion of
the quotient metric space $\Sigma_p'X / \angle = 0$.
For $x \in U_{D_{\kappa}}(p) - \{p\}$,
we denote by $x_p' \in \Sigma_pX$
the starting direction of $px$ at $p$.
The 
\emph{tangent space $T_pX$ at $p$} 
is defined as $C_0(\Sigma_pX)$.
The space $\Sigma_pX$ is $\CAT(1)$,
and the space $T_pX$ is $\CAT(0)$.
In fact,
for a metric space $\Sigma$,
the Euclidean cone $C_0(\Sigma)$ 
is $\CAT(0)$ if and only if $\Sigma$ is $\CAT(1)$.
For two metric spaces $Y$ and $Z$,
the spherical join $Y \ast Z$ is $\CAT(1)$
if and only if $Y$ and $Z$ are $\CAT(1)$.

Let $\ulim (X_k,p_k)$ be the ultraimit of a sequence of 
pointed $\CAT(\kappa)$ spaces.
If for some $r \in (0,D_{\kappa}/2)$
we take $x_k, y_k \in U_{D_{\kappa}/2}(p_k) - B_r(p_k)$,
then we have the following upper semi-continuity of angles:
\begin{equation}
\ulim \angle p_kx_ky_k \le \angle pxy.
\label{eqn: uscangle}
\end{equation}

\subsection{Geodesically complete CAT$\boldsymbol{(\kappa)}$ spaces}

We refer the readers to \cite{lytchak-nagano1} for 
the basic properties of $\GCBA$ spaces, that is,
locally compact, separable, locally geodesically complete
metric spaces with an upper curvature bound.
Recall that
a $\CAT(\kappa)$ space is said to be 
\emph{locally geodesically complete}
(or has \emph{geodesic extension property})
if every geodesic defined on a compact interval can be extended to
a local geodesic beyond endpoints.
A $\CAT(\kappa)$ space is 
\emph{geodesically complete}
if every geodesic can be extended to a local geodesic defined on $\R$.
Every locally geodesically complete, complete $\CAT(\kappa)$ space
is geodesically complete.
The geodesical completeness for compact (resp.~proper) 
$\CAT(\kappa)$ spaces
is preserved under the (resp.~pointed) Gromov--Hausdorff limit.

Let $X$ be a proper, geodesically complete $\CAT(\kappa)$ space.
For every $p \in X$,
the space $\Sigma_pX$ is compact and geodesically complete,
and $T_pX$ is proper and geodesically complete.
In fact,
for a $\CAT(1)$ space $\Sigma$,
the Euclidean cone $C_0(\Sigma)$ is geodesically complete
if and only if $\Sigma$ is geodesically complete
and not a singleton.
For two $\CAT(1)$ spaces $Y$ and $Z$,
the spherical join $Y \ast Z$ is geodesically complete
if and only if
$Y$ and $Z$ are geodesically complete
and not a singleton.

\subsection{Dimension of CAT$\boldsymbol{(\kappa)}$ spaces}

The \emph{geometric dimension} $\dim_{\mathrm{G}}$ 
is defined as
the smallest function from the class of $\CBA$ spaces
to $\N \cup \{\infty\}$
such that
(1) $\dim_{\mathrm{G}} X = 0$ if and only if $X$ is discrete;
(2) $\dim_{\mathrm{G}} X \ge 1 + \dim_{\mathrm{G}} \Sigma_pX$ 
for all $p \in X$.
In this case,
we have
\[
\dim_{\mathrm{G}} X = 1 + \sup_{p \in X} \dim_{\mathrm{G}} \Sigma_pX.
\]

Let $X$ be a $\CBA$ space.
Then $\dim_{\mathrm{G}} X$ coincides the supremum
of covering dimensions $\dim B$ of compact subsets $B$ of $X$;
if in addition $X$ is separable,
then $\dim_{\mathrm{G}} X = \dim X$
(\cite[Theorem A]{kleiner}).

Let $\ulim (X_k,p_k)$ be the ultraimit of a sequence of 
pointed $\CAT(\kappa)$ spaces.
Then we have the following (\cite[Lemma 11.1]{lytchak2}):
\begin{equation}
\dim_{\mathrm{G}} \left( \ulim (X_k,p_k) \right) 
\le \ulim \dim_{\mathrm{G}} X_k.
\label{eqn: lscdim}
\end{equation}

Let $X$ be a $\GCBA$ space.
Every relatively compact open subset of $X$
has finite covering dimension
(see \cite[Subsection 5.3]{lytchak-nagano1}).
The covering dimension $\dim X$ 
is equal to the Hausdorff dimension $\dim_{\mathcal{H}}X$ of $X$,
and equal to the supremum of $m$
such that $X$ has an open subset $U$ homeomorphic to $\R^m$
(\cite[Theorem 1.1]{lytchak-nagano1}).

We say that a separable metric space is
\emph{pure-dimensional}
if it is purely $n$-dimensional for some $n$.

We have the following characterization 
(\cite[Proposition 8.1]{lytchak-nagano2}):

\begin{prop}\label{prop: pure}
\emph{(\cite{lytchak-nagano2})}
Let $X$ be a proper, geodesically complete,
geodesic $\CAT(\kappa)$ space.
Let $W$ be a connected open subset of $X$.
Then the following are equivalent:
\begin{enumerate}
\item
$W$ is pure-dimensional;
\item
for every $p \in W$ the space $\Sigma_pX$ is pure-dimensional;
\item
for every $p \in W$ the space $T_pX$ is pure-dimensional.
\end{enumerate}
\end{prop}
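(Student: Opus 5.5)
The plan is to reduce everything to the local dimension function $x \mapsto \dim T_xX$. By the results of \cite{lytchak-nagano1} quoted in the introduction, $x \in X^m$ if and only if every sufficiently small neighborhood of $x$ has dimension $m$. I would prove $(2)\Leftrightarrow(3)$ by a cone argument, and $(1)\Leftrightarrow(3)$ by blow-up and ultralimit arguments. Connectedness of $W$ is used only at the end, to make the local dimension constant.

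For $(2)\Leftrightarrow(3)$ I would work with $\Sigma=\Sigma_pX$ and $T_pX=C_0(\Sigma)$. Away from the vertex, $C_0(\Sigma)$ is locally isometric, up to a bi-Lipschitz factor, to $(a,b)\times V$ with $V$ open in $\Sigma$. Since $\Sigma$ is a compact $\GCBA$ space, $\dim((a,b)\times V)=1+\dim V$. Hence an open subset of $\Sigma$ of dimension $k<n-1$ produces an open subset of $T_pX$ of dimension $k+1<n$, and conversely. For an open set containing the vertex $0$, I would shrink it to avoid $0$; this is possible because $T_pX$ is geodesically complete, so $\Sigma$ is not a singleton and $\{0\}$ is not open.

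For $(3)\Rightarrow(1)$, fix $p\in W$ with $T_pX$ purely $n$-dimensional, so $p\in X^n$. Then every small neighborhood of $p$ has dimension exactly $n$, and I must show it contains no open set of smaller dimension. Suppose $x_k\to p$ with $x_k\in X^m$ and $m<n$. By the structure theorem I may take $x_k$ inside the Lipschitz manifold $M^m$.
\begin{itemize}
\item Rescale $X$ at $p$ by $t_k=d(p,x_k)$. The ultralimit is $T_pX$, and $x_k$ converges to a point $v$ with $|v|=1$.
\item By the lower semicontinuity of dimension \eqref{eqn: lscdim}, applied to small balls around $x_k$, the limit ball around $v$ should have dimension at most $m<n$. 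This contradicts the purity of $T_pX$.
\end{itemize}
The rescaled balls around $x_k$ are not of uniform size. I would fix this by choosing $x_k$ in $M^m$ so that $U_{\epsilon t_k}(x_k)\subset X^m$ for a fixed $\epsilon$, which uses that $X^m$ contains open dense pieces near $x_k$ relative to $W$. This gives that each point of $W$ has a purely $\dim T_pX$-dimensional neighborhood. The set $\{p\in W\mid \dim T_pX=n\}$ is therefore open for every $n$, so by connectedness it is all of $W$, and $W$ is pure-dimensional.

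For $(1)\Rightarrow(3)$, I would argue in the opposite direction. If $T_pX$ contains an open set $O$ of dimension $k<n$, I pick $v\in O$ with $v\neq 0$ lying in the $k$-regular manifold part of $T_pX$. Then I approximate $v$ by points $x_k\in X$ via the Gromov–Hausdorff approximations coming from rescaling at $p$. I expect this step to be the main obstacle, because dimension is only lower semicontinuous under limits. The plan here is to use geodesic completeness and local homology, as in \cite{lytchak-nagano2}. A nonvanishing local homology class near $x_k$ in a degree $j>k$ (available because $W$ is purely $n$-dimensional) should pass to the limit and force nonvanishing $H_j$ near $v$. That is impossible in a $k$-dimensional open set, giving the contradiction.
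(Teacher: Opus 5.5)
Your cone argument for $(2)\Leftrightarrow(3)$ and the final connectedness step are fine. Both implications involving (1), however, have genuine gaps.

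\textbf{The step $(3)\Rightarrow(1)$.} The whole difficulty is hidden in choosing $x_k\in M^m$ with $U_{\epsilon t_k}(x_k)\subset X^m$ for a fixed $\epsilon$. That $M^m$ is open and dense in $X^m$ gives no quantitative control. The set $O:=\{z\in X\mid \dim T_zX\le m\}$ is open, by upper semicontinuity of $z\mapsto\dim T_zX$. A priori, $O$ could approach $p$ only through pieces of size $o(d(p,\cdot))$, and ruling this out is the actual content of the implication. You can repair it by moving the base point, using that (3) holds at every point of $W$, not only at $p$:
\begin{enumerate}
\item Take $B_{2r}(p)\subset W$, a point $x\in O$ with $d(x,p)<r$, and a nearest point $y$ of the closed set $X\setminus O$ to $x$. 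Then $y\in W$ and $\dim T_yX>m$.
\item Put $s:=d(x,y)$. The points $x_t\in yx$ with $d(y,x_t)=t\le s$ satisfy $U_t(x_t)\subset U_s(x)\subset O$.
\item Blow up at $y$ along $t\to 0$. The rescaled balls of radius $1/2$ around $x_t$ are convex and lie in $O$, so they have dimension $\le m$.
\item By \eqref{eqn: lscdim}, $T_yX$ then contains an open ball of dimension $\le m<\dim T_yX$. This contradicts (3) at $y$.
\end{enumerate}
Hence $O\cap W$ is also closed in $W$.

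\textbf{The step $(1)\Rightarrow(3)$.} The local homology step is only a hope, for three reasons.
\begin{itemize}
\item There is no general principle that nonvanishing local homology of the approximants survives in an ultralimit. This is the same phenomenon that makes \eqref{eqn: lscdim} point in the wrong direction for you.
\item Lemma \ref{lem: limhm} relies on the rigid homology-manifold structure, which you do not have.
\item You would also need nonzero top local homology at every point of $W$, supported at the scale $t_k$. You do not justify this.
\end{itemize}

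What this direction really needs is a non-collapsing statement at a uniform scale. Here is one way to get it.
\begin{enumerate}
\item Choose an open set of $T_pX\setminus\{0\}$ on which the local dimension is constantly $k<n$, and a $k$-regular point $w$ in it. By Proposition \ref{prop: fullstr}, some $(k,\delta)$-strainer map $\varphi$ is injective on a ball $U_\rho(w)$.
\item By Lemma \ref{lem: usstab}, the approximating tuples give $(k,\delta)$-strainer maps $\varphi_j$ around points $w_j\to w$ of the rescaled spaces. These live on balls of uniform rescaled radius, with straining radius bounded below.
\item Suppose $\varphi_j$ were not injective on such a ball. By the dichotomy \cite[Lemma 9.2]{lytchak-nagano1}, its fibers there would contain connected sets of rescaled diameter bounded below. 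These pass to the limit inside a fiber of $\varphi$ and contradict the injectivity of $\varphi$ on $U_\rho(w)$.
\item So $\varphi_j$ is injective, and by Lemma \ref{lem: wopen} it is open there. Hence $W$ contains open sets homeomorphic to open subsets of $\R^k$ with $k<n$, contradicting (1).
\end{enumerate}
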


We also know the following property
(\cite[Lemma 2.3]{nagano4}):

\begin{lem}
\label{lem: limpd}
\emph{(\cite{nagano4})}
Let $(X_k,p_k)$
be a sequence of pointed 
proper geodesic $\GCBA$ $\CAT(\kappa)$ spaces
converging to some pointed metric space $(X,p)$
in the pointed Gromov--Hausdorff topology.
If all the spaces $X_k$
are purely $n$-dimensional,
then the space $X$ is purely $n$-dimensional.
\end{lem}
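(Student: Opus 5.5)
The plan is to prove the two inequalities separately: every non-empty open subset of $X$ has topological dimension at most $n$, and every non-empty open ball $U_r(q)$ in $X$ has dimension at least $n$.

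First I would record what passes to the limit. The limit $X$ is a proper, geodesically complete $\CAT(\kappa)$ space: properness and $\CAT(\kappa)$ survive pointed Gromov--Hausdorff convergence, and geodesic completeness is preserved under pointed limits of proper spaces, as recalled in the preliminaries. In particular $X$ is $\GCBA$. Hence $\dim = \dim_{\mathrm{H}} = \dim_{\mathrm{G}}$ on open subsets of $X$ (Kleiner and \cite[Theorem 1.1]{lytchak-nagano1}).

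\emph{Upper bound.} Since $X$ is isometric to $\ulim (X_k,p_k)$, the lower semicontinuity \eqref{eqn: lscdim} gives $\dim_{\mathrm{G}} X \le \ulim \dim_{\mathrm{G}} X_k = n$. Therefore every open subset of $X$ has dimension at most $n$.

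\emph{Lower bound.} I would argue by volume. I need two ingredients, both uniform in $k$ on a fixed compact region around the base points.

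\begin{enumerate}
\item A lower density bound. At every point $x$ with $\dim T_x X_k = n$, the volume comparison for geodesically complete $\CAT(\kappa)$ spaces (\cite{nagano2}, \cite{lytchak-nagano1}) gives $\Haus^n(B_s(x)) \ge c\, s^n$ for $s$ below a threshold depending only on $\kappa$. Because each $X_k$ is purely $n$-dimensional, such points are dense in $X_k$. By inner regularity of $\Haus^n$ the bound then extends to all centres $x \in X_k$, after halving the radius.
\item An upper density bound. I want $\Haus^n(B_s(x)) \le C s^n$ for all $x \in B_R(p_k)$ and $s \le s_0$, with $C$ independent of $k$. I would get this from the Lytchak--Nagano estimate of volume in terms of $\dim \le n$ and the capacity (packing numbers) of a ball. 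The capacities of $B_{R+1}(p_k)$ are uniformly bounded because $B_{R+1}(p_k)$ converges to the compact ball $B_{R+1}(p)$.
\end{enumerate}

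With these bounds, the measures $\Haus^n \llcorner X_k$ have uniformly bounded mass on $B_R(p_k)$. Pushing them forward by approximations and extracting a subsequence, they converge weakly to a Radon measure $\mu$ on $X$. Passing the ball bounds to the limit (using open and closed balls appropriately) gives
\[
c\, s^n \le \mu(B_s(y)) \le C s^n
\]
for all $y \in B_R(p)$ and $s \le s_0$.

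The upper bound together with a Vitali-type covering argument yields $\mu \le C' \Haus^n$ on $X$. Now suppose some ball $U_r(q)$ had $\dim U_r(q) \le n-1$. Then $\dim_{\mathrm{H}} U_r(q) \le n-1$, so $\Haus^n(U_r(q)) = 0$. This forces $\mu(U_r(q)) = 0$, contradicting $\mu(U_r(q)) \ge c (r/2)^n > 0$. Hence every non-empty open subset of $X$ has dimension exactly $n$.

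The main obstacle is the uniform upper density bound in (2) and its transfer to $\mu \le C' \Haus^n$. The lower bound is local and essentially automatic, but the upper bound requires the constants of the Lytchak--Nagano volume estimate to depend only on $n$, $\kappa$ and the capacity, not on the individual space.

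If the measure route gives trouble, I would fall back on a covering-dimension argument. Suppose $U_r(q)$ has dimension at most $n-1$. Pull back a fine finite open cover of $B_{r/2}(q)$ of multiplicity at most $n$, with a Lebesgue number $\lambda$, via $\eta$-approximations with $\eta \ll \lambda$. This gives covers of $B_{r/3}(q_k)$ by sets of small diameter and multiplicity at most $n$. That contradicts a uniform lower bound on the $(n-1)$-Urysohn width of $r$-balls around $n$-dimensional points. That width bound I would again derive from strainer maps $(d_{a_1},\dots,d_{a_n})$, which are $(1+O_n(\delta))$-Lipschitz and open onto Euclidean balls of radius comparable to $r$.
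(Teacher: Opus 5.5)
The paper gives no proof of this lemma; it imports it from \cite[Lemma 2.3]{nagano4}. So there is no internal argument to compare with, and your proposal has to stand as an independent proof. Its main (volume) route does.

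\textbf{What works.} The upper bound via \eqref{eqn: lscdim} is exactly right. The lower bound by the mass distribution principle is the correct mechanism, because volume is the information that survives the limit uniformly in scale. The Bishop--Gromov type monotonicity turns infinitesimal $n$-dimensionality at each point into $\Haus^n(B_s(x)) \ge c(n,\kappa)s^n$ at a fixed scale. Compared with quoting a volume convergence theorem, you only need one-sided control of the limit measure. Two simplifications:
\begin{itemize}
\item Purity already forces $X_k = X_k^n$, by the characterization of $X^m$ via small neighbourhoods recalled in the introduction. So the density and radius-halving step is unnecessary.
\item $\mu(B_s(y)) \le C s^n$ gives $\mu \le C'\Haus^n$ directly from the definition of $\Haus^n$ by covering. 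No Vitali argument is needed.
\end{itemize}

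\textbf{Two points to tighten.}
\begin{itemize}
\item The uniform capacity bound does not follow from Gromov--Hausdorff convergence alone. Capacity is a doubling condition at every scale, while convergence only bounds packing numbers at scales above the approximation error. You need the geodesic-completeness argument: extending geodesics from $y$ and using $\CAT(\kappa)$ comparison turns a $t/2$-separated set in $B_t(y)$ into a roughly $\rho/2$-separated set in $B_\rho(y)$. Hence packing at one fixed scale controls doubling at all smaller scales.
\item Ingredient (2) is the genuinely non-elementary input. It must be quoted from the Lytchak--Nagano volume estimates with constants depending only on $n$, $\kappa$ and the capacity. It cannot be extracted from the convergence itself.
\end{itemize}

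\textbf{The fallback has a genuine gap.} An $(n,\delta)$-strainer at a point of $B_{r/3}(q_k)$ only controls a neighbourhood of size comparable to its straining radius. Nothing prevents every straining radius in $B_{r/3}(q_k)$ from tending to $0$ as $k \to \infty$. In that case your strainer maps give an $(n-1)$-width lower bound only at scales far below the fixed mesh of the pulled-back covers, and no contradiction follows. A uniform lower bound on straining radii would essentially already prove the lemma, since comparison angles at a fixed scale pass to the limit. That is why a scale-invariant input is needed, either volume monotonicity or something topological such as local homology read off at a fixed scale via geodesic retractions. Drop the fallback, or replace its strainer step by such an input.
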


\section{Spherical structure of CAT(1) spaces}

In this section,
we formulate a condition 
to find a multi-fold spherical suspension structure
for $\CAT(1)$ spaces.
We refer to \cite{lytchak2} for related studies of Lytchak
for geodesically complete $\CAT(1)$ spaces.

\subsection{Lune lemmas for CAT(1) spaces}

Let $Z$ be a $\CAT(1)$ space with metric $d_Z$.
We say that two subsets $A_1, A_2$ of $Z$ are
\emph{perpendicular}
if for each $(z_1,z_2) \in A_1 \times A_2$
we have $d_Z(z_1,z_2) = \pi/2$.

Let $A_1, A_2$ be mutually perpendicular subsets of $Z$.
Let $A_1 \ast A_2$ be the spherical join of 
the metric subspaces $A_1$ and $A_2$.
We define a continuous map 
$\Theta \colon A_1 \ast A_2 \to Z$ by
$\Theta(t, z_1,z_2) := 
\gamma_{z_1z_2}(t)$,
where 
$\gamma_{z_1z_2} \colon [0,\pi/2] \to Z$ 
is a (unique) geodesic from $z_1$ to $z_2$.
We call $\Theta$ the \emph{ruling map} joining $A_1$ and $A_2$.
Let $z_1 \in A_1$ and $z_2 \in A_2$,
and let $z_1^{\ast} = [(0,z_1,\cdot)]$ and 
$z_2^{\ast} = [(\pi/2,\cdot,z_2)]$
be the points in $A_1 \ast A_2$ corresponding to
$z_1$ and $z_2$, respectively.
For the geodesic $z_1^{\ast}z_2^{\ast}$ of length $\pi/2$
in $A_1 \ast A_2$,
the ruling map $\Theta$ sends $z_1^{\ast}z_2^{\ast}$
to $z_1z_2$ isometrically.

We review the following lune lemma,
which can be verified similarly to
the original lune lemma shown by Ballmann--Brin 
\cite[Lemma 2.5]{ballmann-brin}.

\begin{lem}\label{lem: lunelem}
Let $Z$ be a $\CAT(1)$ space.
Let $\{p_1,q_1\}$ and $\{p_2,q_2\}$
be mutually perpendicular subsets of $Z$.
Assume $d_Z(p_1,q_1) \ge \pi$.
Then the ruling map $\Theta_0 \colon \{p_1,q_1\} \ast \{p_2,q_2\} \to Z$
joining $\{p_1,q_1\}$ and $\{p_2,q_2\}$
is an isometric embedding.
If in addition $d_Z(p_2,q_2) < \pi$,
then $\{p_1,q_1\}$ and $p_2q_2$ are perpendicular, 
and the ruling map 
$\Theta \colon \{p_1,q_1\} \ast p_2q_2 \to Z$
joining $\{p_1,q_1\}$ and $p_2q_2$
is also an isometric embedding.
\end{lem}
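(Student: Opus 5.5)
The plan is to follow Ballmann--Brin's argument for the original lune lemma: first build the four quarter-lunes, then glue them.

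\emph{Step 1: basic configuration.} Since $Z$ is $\CAT(1)$, we have $d_Z(p_1,q_1) \le$ any bound needed only through perpendicularity. Indeed, $d_Z(p_1,p_2) + d_Z(p_2,q_1) = \pi \le d_Z(p_1,q_1)$, so by the triangle inequality $d_Z(p_1,q_1) = \pi$. It follows that for each $z \in \{p_2,q_2\}$ the concatenation of the unique geodesics $p_1z$ and $zq_1$ (each of length $\pi/2 < \pi$) is a geodesic of length $\pi$ from $p_1$ to $q_1$.

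\emph{Step 2: the quarter-lune with apex $z$.} Take the triangle with vertices $p_1$, $z$, $z'$, where $\{z,z'\} = \{p_2,q_2\}$. If $d_Z(p_2,q_2) < \pi$, it has perimeter $\pi/2 + \pi/2 + d(p_2,q_2) < 2\pi$. We then want the comparison triangle in $\Sph^2$ to be a spherical triangle with two sides $\pi/2$ meeting at the pole. The key claim is that the geodesic triangles $p_1p_2q_2$ and $q_1p_2q_2$ are isometric to their comparison triangles, i.e.\ they span flat spherical sectors. To show this, take a point $w$ on $p_2q_2$ and compare. The $\CAT(1)$ inequality gives $d(p_1,w) \le \pi/2$ and $d(q_1,w) \le \pi/2$. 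On the other hand $d(p_1,w) + d(w,q_1) \ge d(p_1,q_1) = \pi$. Hence both are equalities, so $\{p_1,q_1\}$ is perpendicular to $p_2q_2$, which gives the second assertion's perpendicularity. Equality in the comparison at every point then forces the standard rigidity: a $\CAT(1)$ triangle of perimeter $< 2\pi$ in which a vertex–side comparison is an equality bounds a convex region isometric to its comparison triangle (the flat triangle lemma, Bridson--Haefliger II.2.9 in the $\CAT(1)$ version). Each sector $\{p_1\} \ast p_2q_2$ and $\{q_1\} \ast p_2q_2$ is therefore isometrically embedded by $\Theta$.

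\emph{Step 3: gluing.} The two sectors share the side $p_2q_2$. The union map $\Theta$ on $\{p_1,q_1\} \ast p_2q_2$ is then a local isometry on each half, and we must show it preserves distances between points $x = \Theta(t,p_1,w)$ and $y = \Theta(s,q_1,w')$ in opposite halves. The inequality $d(x,y) \le$ the model distance holds because $\Theta$ maps model geodesics to broken curves of the same length. For the reverse inequality I would use $d(p_1,q_1) = \pi$: for any $x$ in the $p_1$-half, the distances satisfy $d(p_1,x) + d(x,q_1) = \pi$ with $d(x,q_1)$ equal to the model value. A model-comparison argument (lift a geodesic $xy$ and project to the spherical lune via distance to $p_1$ and the angular coordinate) should give $d(x,y) \ge$ the model value. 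This is the step I expect to be the main obstacle, and I would handle it as Ballmann--Brin do. Observe that every point of the image lies on a geodesic of length $\pi$ from $p_1$ to $q_1$. Then apply the first assertion (the four-point case) to the perpendicular pairs $\{p_1,q_1\}$ and $\{w,w'\}$, which reduces the general case to geodesics through the poles.

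\emph{Step 4: the four-point case.} For the first assertion, with $d(p_2,q_2)$ possibly $\ge \pi$, the join $\{p_1,q_1\} \ast \{p_2,q_2\}$ is a circle of length $2\pi$. We need only check that antipodal pairs have distance $\ge \pi$ and that the remaining pairs realize the model distances. Points on adjacent quarter-arcs are joined by the broken path, and comparison in the triangle $p_1p_2q_1$, which has perimeter $2\pi$ and needs a limiting argument, together with $d(p_1,q_1) = \pi$ gives the lower bound. Antipodal points $x, \bar x$ satisfy $d(x,\bar x) \ge \pi$ by comparing with the endpoints through $d(p_1,q_1) = \pi$, using the triangle inequality along the circle. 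We then set $d = \min(\text{model}, \pi)$, taking care since distances in the join are truncated at $\pi$.
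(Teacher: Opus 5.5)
Your Steps 1 and 2 are sound: they give the perpendicularity of $\{p_1,q_1\}$ and $p_2q_2$, and the two spherical sectors via the flat triangle lemma. The upper bound in Step 3 is also sound, and so is your reduction of the second assertion to the first one applied to $\{p_1,q_1\}$ and $\{w,w'\}$. But that reduction moves the whole difficulty into Step 4, and Step 4 does not prove it.

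First, the model is misidentified. Put $\alpha := d_Z(p_2,q_2)$; the triangle inequality through $p_1$ gives $\alpha \le \pi$. When $\alpha<\pi$, which is exactly the case your reduction needs since $d_Z(w,w')<\pi$, the join $\{p_1,q_1\} \ast \{p_2,q_2\}$ is not a circle of length $2\pi$. Take $x \in p_1p_2$ with $d_Z(p_1,x)=t$ and $y \in q_1q_2$ with $d_Z(q_1,y)=s$. The required distance $D_0$ satisfies $\cos D_0 = -\cos t\cos s + \sin t \sin s \cos\alpha$, so $D_0<\pi-|t-s|$ whenever $t,s>0$.

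Second, the lower bounds cannot come from the sources you name. The triangle $p_1p_2q_1$ is degenerate, because $p_1p_2 \cup p_2q_1$ is itself a geodesic from $p_1$ to $q_1$, so comparison there yields nothing. The triangle inequality is too weak: already for $\alpha=\pi$ and $t=s=\pi/4$ you need $d_Z(x,y)=\pi$, whereas routing through $\{p_1,q_1\}$ or $\{p_2,q_2\}$ gives only $d_Z(x,y)\ge\pi/2$. The inequality $d_Z(x,y)\ge D_0$ for points on opposite quarter-arcs is the heart of the lune lemma, and your proposal leaves it unproved.

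The missing idea is to apply the $\CAT(1)$ inequality at an interior point of a side whose distance to the opposite vertex you already control; this bounds the comparison angle from below.
\begin{itemize}
\item Let $0<t,s<\pi/2$ and suppose $d_Z(x,y)<D_0$. Since $D_0\le\pi-|t-s|$, the triangle $\triangle xq_1y$ has perimeter $<2\pi$. Its side $xq_1$ is the geodesic $xp_2\cup p_2q_1$, which contains $p_2$ at distance $\pi/2$ from $q_1$.
\item Let $\theta$ be the angle at $\tilde q_1$ of the comparison triangle. The $\CAT(1)$ inequality gives $\sin s\cos\theta = \cos d(\tilde p_2,\tilde y) \le \cos d_Z(p_2,y)$.
\item On the other hand $\cos d_Z(p_2,y)\le \sin s\cos\alpha$. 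For $\alpha<\pi$ this is the exact distance in the spherical sector spanned by $q_1,p_2,q_2$ from your Step 2. For $\alpha=\pi$ it follows from $d_Z(p_2,y)\ge d_Z(p_2,q_2)-d_Z(q_2,y)=\pi/2+s$.
\item Hence $\cos\theta\le\cos\alpha$, so $\cos d_Z(x,y) = -\cos t\cos s+\sin t\sin s\cos\theta \le \cos D_0$, a contradiction.
\end{itemize}
For points on arcs meeting at $p_1$ or $q_1$, use the sectors when $\alpha<\pi$, and the bound $d_Z(x,y)\ge \pi - d_Z(x,p_2) - d_Z(y,q_2)$ when $\alpha=\pi$. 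Together with the argument above, this proves the first assertion, and your Step 3 reduction then gives the second.
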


The lune lemma can be generalized as follows
(\cite[Lemma 4.1]{lytchak2}):

\begin{lem}\label{lem: ruling}
\emph{(\cite{lytchak2})}
Let $Z$ be a $\CAT(1)$ space.
If $C_1$ and $C_2$ are mutually perpendicular, 
closed $\pi$-convex subsets of $Z$,
then the ruling map
$\Theta \colon C_1 \ast C_2 \to Z$
is an isometric embedding.
\end{lem}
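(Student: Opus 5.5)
Since this lemma is attributed to Lytchak \cite[Lemma 4.1]{lytchak2}, I would prove it by the same strategy as the lune lemma (Lemma \ref{lem: lunelem}). The plan is to show that the ruling map $\Theta$ preserves distances between arbitrary pairs of points of $C_1 \ast C_2$. Write a point of the join as $[(t,z_1,z_2)]$ with $t \in [0,\pi/2]$. Its image is the point at distance $t$ from $z_1$ on the geodesic $z_1z_2$, which has length $\pi/2$ by perpendicularity, so $\Theta$ is well defined.

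Fix two points $u = [(s,a_1,a_2)]$ and $v = [(t,b_1,b_2)]$. I will split the comparison into two cases.

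\emph{Case $d(a_1,b_1) < \pi$ and $d(a_2,b_2) < \pi$.} The geodesics $a_ib_i$ are unique and, by $\pi$-convexity, lie in $C_i$. Every point of $a_1b_1$ is then at distance $\pi/2$ from every point of $a_2b_2$. Consider the geodesic quadrilateral with vertices $a_1,b_1,b_2,a_2$. In the model join $a_1b_1 \ast a_2b_2$ this configuration is a piece of $\Sph^3$, namely the join of two arcs. I would show that the ruling map restricted to $a_1b_1 \ast a_2b_2$ is an isometric embedding in two steps.
\begin{enumerate}
\item First restrict to $\{a_1\} \ast a_2b_2$ and $a_1b_1 \ast \{a_2\}$. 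Each is the union of geodesics of length $\pi/2$ from one point to a geodesic segment, all of them perpendicular to that segment. Using CAT(1) comparison together with the reverse inequality coming from the $\pi/2$ distances, this is a convex spherical triangle isometric to its model, via the standard rigidity argument for the case of equality in comparison.
\item Then apply the same rigidity to the join of the two arcs, for instance by viewing it as a union of triangles $\{x\}\ast a_2b_2$ with $x \in a_1b_1$.
\end{enumerate}

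\emph{Case of a far pair, e.g.\ $d(a_1,b_1) \ge \pi$.} Here the model distance $d(u,v)$ in the join is $\pi$ whenever $s=t=0$, and in general it equals the distance computed through a point of $C_2$. The upper bound $d(\Theta u,\Theta v) \le d(u,v)$ is immediate, by concatenating the image geodesics through some point of $C_2$. For the lower bound I would argue as in Lemma \ref{lem: lunelem}. Suppose $d(\Theta u,\Theta v) < d(u,v) \le \pi$. Then the geodesic $\Theta u\,\Theta v$, together with the rulings to $a_2$ (or a suitable point of $C_2$), forms triangles of perimeter $< 2\pi$. Comparison with these triangles would force $d(a_1,b_1) < \pi$, which is a contradiction. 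The lune lemma applied to $\{a_1,b_1\}$ and $\{a_2,b_2\}$ handles the basic configuration.

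The main obstacle is the rigidity step: proving that the ruled surface $\{a_1\}\ast a_2b_2$ is genuinely isometric to a spherical triangle. This requires the comparison to hold with equality, and one has to check that the resulting flat-spherical pieces glue consistently, so that distances between points on different rulings are realized inside the model join. I would handle this via the standard flat-triangle lemma in CAT(1) spaces (Bridson--Haefliger), applied to the triangles $a_1 x y$ with $x,y \in a_2b_2$. Here the angle at $a_1$ equals $d(x,y)$ by first variation, and both angles at $x$ and $y$ are at least $\pi/2$. This forces equality in the comparison and hence isometry.
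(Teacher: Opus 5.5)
The paper gives no proof of this lemma; it quotes it from Lytchak. Your argument therefore has to stand on its own, and it has a genuine gap in step (2) of your first case, which is where the whole content of the lemma sits.

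Flatness of every slice $\Theta(\{x\}\ast a_2b_2)$ and $\Theta(a_1b_1\ast\{y\})$ says nothing yet about $d_Z(\Theta u,\Theta v)$ for $u=[(s,x,y)]$, $v=[(t,x',y')]$ with $x\neq x'$, $y\neq y'$, $s,t\in(0,\pi/2)$. These points lie in no common slice, and ``viewing the join as a union of triangles'' gives no inequality between them. Applying the flat triangle lemma to the triangles $a_1xy$ only re-proves flatness of one slice. The gluing you flag at the end is precisely what remains unproved.

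Half of it is easy. The slices give $\cos d(u,x')=\cos s\cos d(x,x')$ and $\cos d(u,y')=\sin s\cos d(y,y')$. Then $\CAT(1)$ comparison in $\triangle ux'y'$ applies: its perimeter is $<2\pi$ as for the model triangle in $\Sph^3$, $d(x',y')=\pi/2$, and $v\in x'y'$ at distance $t$ from $x'$. It gives $\cos d(u,v)\ge\cos t\cos d(u,x')+\sin t\cos d(u,y')$, and the right-hand side is exactly the join formula. So $\Theta$ does not increase distances. The reverse inequality is the real issue, and it needs an additional rigidity input.

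One way to supply it is through the angle at $x'$. By the flat triangle lemma it suffices that $\angle_{x'}(u,y')$ equal its comparison angle. Since the side lengths of $\triangle ux'y'$ are the model ones, that comparison angle is the corresponding angle in $\Sph^3$. In $\Sigma_{x'}Z$ let $\xi$ and $\eta_w$ be the directions of $x'x$ and $x'w$ for $w\in yy'$.
\begin{enumerate}
\item The flat slices $\Theta(xx'\ast\{w\})$ give $\angle(\xi,\eta_w)=\pi/2$.
\item The slice $\Theta(\{x'\}\ast yy')$ shows that the $\eta_w$ form a geodesic of length $d(y,y')<\pi$.
\item The direction of $x'u$ lies on the geodesic $\xi\eta_y$, since $u$ lies in the flat slice $\Theta(xx'\ast\{y\})$.
\end{enumerate}
Hence your step (1), run inside the $\CAT(1)$ space $\Sigma_{x'}Z$, makes $\{\xi\}\ast\{\eta_w\}$ a flat spherical triangle. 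This pins $\angle_{x'}(u,y')$ to its model value, and the flat triangle lemma for $\triangle ux'y'$ then gives $d(u,v)=d_{C_1\ast C_2}(u,v)$.

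Two smaller points.
\begin{enumerate}
\item In step (1), first variation gives the right angles at $x$ and $y$, because $d_{a_1}\equiv\pi/2$ on $xy$. It does not give the angle at $a_1$. One right angle is all the flat triangle lemma needs.
\item In your far case, the sketched contradiction is not an argument as stated. The path $a_1\to\Theta u\to\Theta v\to b_1$ only gives $d\ge\pi-s-t$, which is strictly less than the join distance once $s,t>0$ and $a_2\neq b_2$. You do not need it, though. The points $\Theta u,\Theta v$ lie in the image of $\{a_1,b_1\}\ast a_2b_2$, and $a_2b_2\subset C_2$ by $\pi$-convexity. That image is isometrically embedded by the second part of Lemma \ref{lem: lunelem}, and the first part of that lemma handles the case where both pairs are far.
\end{enumerate}
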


\subsection{Suspenders in CAT(1) spaces}

Let $Z$ be a $\CAT(1)$ space with metric $d_Z$.
We say that 
a point $p_0$ in $Z$ is a \emph{suspender} in $Z$
if there exists another point $q_0$ in $Z$
such that for every $z \in Z$ we have
\begin{equation}
d_Z(p_0,z) + d_Z(z,q_0) = \pi.
\label{eqn: sph}
\end{equation}
In this case,
we say that $p_0$ and $q_0$ are \emph{opposite} to each other.
By \eqref{eqn: sph},
if $p_0$ and $q_0$ are mutually opposite suspenders in $Z$,
then $d_Z(p_0,q_0) = \pi$.

\begin{exmp}\label{exmp: ex1susp}
If $Z$ is isometric to a spherical suspension
$\Sph^0 \ast Z_0$
for some $\CAT(1)$ space $Z_0$,
then the suspension points $p_0, q_0$ in $Z$ 
with $\Sph^0 = \{ p_0, q_0 \}$
are mutually opposite suspenders in $Z$.
\end{exmp}

Let $z$ be a point in $Z$.
A point
$\bar{z}$ in $Z$ is said to be an \emph{antipode of $z$}
if $d_Z(z,\bar{z}) \ge \pi$.
The set of all antipodes of $z$
is denoted by $\Ant(z)$,
and called the \emph{antipodal set of $z$}.
If $Z$ is geodesically complete,
then for every $z \in Z$
the antipodal set $\Ant(z)$ of $z$ is non-empty,
provided $Z$ is not a one-point space.

\begin{lem}\label{lem: p1susp}
Let $Z$ be a $\CAT(1)$ space.
Assume that there exists mutually opposite suspenders
$p_0$ and $q_0$ in $Z$.
Then the following hold:
\begin{enumerate}
\item
$\Ant(p_0) = \{q_0\}$ and $\Ant(q_0) = \{p_0\}$;
moreover,
for each point $z \in Z - \{p_0,q_0\}$
the points $p_0$ and $q_0$ are 
joined by a geodesic of length $\pi$
through $z$;
\item
for all $z_1, z_2 \in Z$ with $d_Z(z_1,z_2) \ge \pi$
we have
\begin{equation}
d_Z(z_1,p_0) + d_Z(p_0,z_2) = \pi,
\quad
d_Z(z_1,q_0) + d_Z(q_0,z_2) = \pi;
\label{eqn: p1suspa}
\end{equation}
in particular,
we have $\diam Z = \pi$. 
\end{enumerate}
\end{lem}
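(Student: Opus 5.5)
The whole argument rests on the defining identity \eqref{eqn: sph}, $d_Z(p_0,z)+d_Z(z,q_0)=\pi$ for all $z\in Z$, combined with the triangle inequality and the fact that in a $\CAT(1)$ space any two points at distance $<\pi$ are joined by a unique geodesic.

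For (1), we first determine $\Ant(p_0)$. Let $d_Z(p_0,z)\ge\pi$. By \eqref{eqn: sph}, $d_Z(p_0,z)\le\pi$, so $d_Z(p_0,z)=\pi$ and hence $d_Z(z,q_0)=0$, that is, $z=q_0$. Conversely, $d_Z(p_0,q_0)=\pi$, as already noted after \eqref{eqn: sph}. Therefore $\Ant(p_0)=\{q_0\}$, and symmetrically $\Ant(q_0)=\{p_0\}$.

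Now fix $z\in Z-\{p_0,q_0\}$ and put $a:=d_Z(p_0,z)$ and $b:=d_Z(z,q_0)$. Then $a+b=\pi$. Since $z\neq q_0$ we have $b>0$, so $a<\pi$; similarly $a>0$ and $b<\pi$. So $p_0z$ and $zq_0$ are unique geodesics. Their concatenation has length $\pi=d_Z(p_0,q_0)$, so it is a shortest path, hence a geodesic of length $\pi$ from $p_0$ to $q_0$ passing through $z$. The only subtle point is that $Z$ is only $\pi$-geodesic, so we must not take a geodesic $p_0q_0$ directly. Building the path from the two short pieces avoids this problem.

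For (2), let $d_Z(z_1,z_2)\ge\pi$. By the triangle inequality and \eqref{eqn: sph},
\[
\pi\le d_Z(z_1,z_2)\le d_Z(z_1,p_0)+d_Z(p_0,z_2)
\quad\text{and}\quad
\pi\le d_Z(z_1,z_2)\le d_Z(z_1,q_0)+d_Z(q_0,z_2).
\]
Adding these two inequalities and applying \eqref{eqn: sph} to $z_1$ and to $z_2$, the combined right-hand side equals $2\pi$. Hence both inequalities are equalities, which gives \eqref{eqn: p1suspa} and also shows $d_Z(z_1,z_2)=\pi$.

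It remains to show $\diam Z=\pi$. For any $z_1,z_2$, if $d_Z(z_1,z_2)\ge\pi$ we have just shown it equals $\pi$. Otherwise it is $<\pi$ trivially. So $\diam Z\le\pi$, and the pair $p_0,q_0$ attains $\pi$.

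No step is a real obstacle. The only care needed is in (1), where the geodesic through $z$ has to be built from the two unique short geodesics.
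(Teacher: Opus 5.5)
Your proof is correct and follows the paper's argument. Part (1) comes directly from \eqref{eqn: sph} together with concatenating the short geodesics $p_0z$ and $zq_0$, and part (2) from adding the two triangle inequalities and applying \eqref{eqn: sph} at $z_1$ and $z_2$; you also supply details the paper leaves implicit, namely why the summed inequalities force equality and why $\diam Z = \pi$.
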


\begin{proof}
Since $d_Z(p_0,q_0) = \pi$,
from \eqref{eqn: sph} we derive
$\Ant(p_0) = \{q_0\}$ and $\Ant(q_0) = \{p_0\}$.
Let $z \in Z-\{p_0,q_0\}$.
Then $z \in U_{\pi}^{\ast}(p_0) \cap U_{\pi}^{\ast}(q_0)$.
From \eqref{eqn: sph} we conclude that the union 
$p_0z \cup zq_0$ of the geodesics $p_0z$ and $zq_0$ 
is a geodesic of length $\pi$ joining $p_0$ and $q_0$.
Take $z_1, z_2 \in Z$ with $d_Z(z_1,z_2) \ge \pi$.
By \eqref{eqn: sph},
we have
$d_Z(p_0,z_1) + d_Z(z_1,q_0) = \pi$
and 
$d_Z(p_0,z_2) + d_Z(z_2,q_0) = \pi$.
These equalities lead to \eqref{eqn: p1suspa}.
\end{proof}

\begin{rem}\label{rem: afterp1susp0}
Let $p_0, q_0 \in Z$ satisfy $d_Z(p_0,q_0) = \pi$.
If the equalities \eqref{eqn: p1suspa} hold
for all $z_1, z_2 \in Z$ with $d_Z(z_1,z_2) \ge \pi$,
then $\Ant(p_0) = \{q_0\}$ and $\Ant(q_0) = \{p_0\}$.
In general, the converse is false.
\end{rem}

\begin{rem}\label{rem: afterp1susp}
Assume that $Z$ is geodesically complete.
Let $p_0, q_0 \in Z$ satisfy $d_Z(p_0,q_0) = \pi$.
Then the following are equivalent:
\begin{enumerate}
\item
$p_0$ and $q_0$ are mutually opposite suspenders in $Z$;
\item
$\Ant(p_0) = \{q_0\}$ and $\Ant(q_0) = \{p_0\}$;
\item
the equalities \eqref{eqn: p1suspa} hold
for all $z_1, z_2 \in Z$ with $d_Z(z_1,z_2) \ge \pi$.
\end{enumerate}
\end{rem}

\subsection{Round subsets of CAT(1) spaces}

Let $Z$ be a $\CAT(1)$ space with metric $d_Z$.
We say that
a subset $A$ of $Z$ is
\emph{symmetric}
if $A$ contains $\bigcup_{p \in A} \Ant(p)$.

Lytchak \cite{lytchak2} studied rigid properties derived from
the existence of symmetric subsets
of geodesically complete $\CAT(1)$ spaces.
Instead of the geodesical completeness,
we introduce the following condition.

We say that a symmetric subset $A$ of $Z$ is 
\emph{round}
if every point in $A$ is a suspender in $Z$.
From Lemma \ref{lem: p1susp} it follows that
if $Z$ admits a round subset,
then $\diam Z = \pi$.
A subset $A$ of $Z$ is round
if and only if
for each $p \in A$
there exists a unique point $q \in A$ such that
$p$ and $q$ are mutually opposite suspenders in $Z$.

\begin{exmp}\label{exmp: exround}
Assume that 
$Z$ isometrically splits as a spherical join $Z_1 \ast Z_2$.
Both the factors $Z_1$ and $Z_2$ are
symmetric in $Z$.
Assume in addition that $Z_1$ is isometric to $\Sph^{m-1}$.
For $l \in \{ 1, \dots, m \}$,
let $Z_1^{l-1}$ be a subset of $Z_1$
such that $Z_1^{l-1}$ is isometric to $\Sph^{l-1}$.
Then $Z_1^{l-1}$ is round.
\end{exmp}

For a subset $A$ of $Z$,
we denote by $A^{\perp}$
the set of all points $z$ in $Z$ with $d_Z(z,A) \ge \pi/2$,
and call it the \emph{polar set of $A$}.
If $C$ is a closed $\pi$-convex subset of $Z$,
then for each $z \in Z - \left( C \cup C^{\perp} \right)$
there exists a unique foot point 
$z_0$ in $C$ satisfying $d_Z(z,z_0) = d_Z(z,C)$
(see \cite[Proposition II.2.4 and Exercise II.2.6]{bridson-haefliger}).

\begin{lem}\label{lem: polar}
Let $Z$ be a $\CAT(1)$ space.
Let $C$ be a round, closed $\pi$-convex subset of $Z$.
For a point $z$ in $Z - \left( C \cup C^{\perp} \right)$,
let $z_0$ be a unique foot point in $C$
satisfying $d_z(z,z_0) = d_Z(z,C)$.
Then there exists a point $z_0^{\perp}$ in $Y^{\perp}$
such that $z \in z_0z_0^{\perp}$ and 
$d_Z \left( z_0, z_0^{\perp} \right) = \pi/2$.
In particular,
if $Z-C$ is non-empty,
then $C^{\perp}$ is non-empty too.
\end{lem}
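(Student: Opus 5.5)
We want: for $z \notin C\cup C^\perp$ with foot point $z_0\in C$, there is $z_0^\perp\in C^\perp$ (the statement writes $Y^\perp$, presumably meaning $C^\perp$) with $z\in z_0z_0^\perp$ and $d_Z(z_0,z_0^\perp)=\pi/2$.

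The plan is to use roundness at the foot point. Since $C$ is round, $z_0$ is a suspender; let $\bar z_0\in C$ be its opposite suspender, so $d_Z(z_0,\bar z_0)=\pi$. By Lemma \ref{lem: p1susp}(1), $z\neq z_0,\bar z_0$ (as $z\notin C$) lies on a geodesic $\gamma$ of length $\pi$ from $z_0$ to $\bar z_0$ passing through $z$. Let $z_0^\perp:=\gamma(\pi/2)$ be its midpoint. It remains to show $z\in z_0z_0^\perp$, i.e. $t:=d_Z(z_0,z)\le\pi/2$, and $z_0^\perp\in C^\perp$.

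\emph{First, $t<\pi/2$.} Since $z\notin C^\perp$, we have $d_Z(z,C)<\pi/2$, and $d_Z(z,C)=d_Z(z,z_0)=t$. So $z$ lies on the initial subsegment of $\gamma$ of length $\pi/2$, which is the geodesic $z_0z_0^\perp$ (unique since the length is $<\pi$).

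\emph{Second, $d_Z(z_0^\perp,c)\ge\pi/2$ for every $c\in C$.} This is the main step. Fix $c\in C$. Its opposite suspender $\bar c$ also lies in $C$. By \eqref{eqn: sph} applied to the suspender $c$, we have $d_Z(c,z_0^\perp)+d_Z(z_0^\perp,\bar c)=\pi$, so it suffices to show $d_Z(z_0^\perp,c)$ and $d_Z(z_0^\perp,\bar c)$ are both $\ge\pi/2$, i.e.\ both equal $\pi/2$. The tool is the foot-point property: the function $s\mapsto d_Z(\gamma(s),C)$ equals $s$ near $s=0$, and we would like to show it equals $s$ for all $s\le \pi/2$. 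Use the first-variation/angle inequality: since $z_0$ is the foot point of $z$ on the $\pi$-convex set $C$, for every $c\in C$ with $d_Z(z_0,c)<\pi$ the angle at $z_0$ between $z_0z$ and $z_0c$ is $\ge\pi/2$ (standard projection property in $\CAT(1)$). Then $\CAT(1)$ comparison for the triangle $z_0,c,z_0^\perp$ (hinge with angle $\ge\pi/2$ and side $\pi/2$) gives $d_Z(c,z_0^\perp)\ge\pi/2$ via the spherical law of cosines: $\cos d\le\cos a\cos(\pi/2)+\sin a\sin(\pi/2)\cos\angle\le 0$. If $d_Z(z_0,c)=\pi$, then $c=\bar z_0$ by Lemma \ref{lem: p1susp}(1), and $d_Z(z_0^\perp,\bar z_0)=\pi/2$ directly. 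Thus $z_0^\perp\in C^\perp$; the symmetric use of $\bar c$ is then not even needed, but it confirms equality $\pi/2$. The expected obstacle is justifying the hinge comparison when the triangle perimeter may reach $2\pi$; one handles this by applying comparison to the subsegment up to $\gamma(\pi/2-\epsilon)$ and letting $\epsilon\to0$, or appealing to the lune lemma (Lemma \ref{lem: lunelem}) on $\{z_0,\bar z_0\}$.

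Finally, if $Z-C\neq\emptyset$, either some point lies in $C^\perp$ already, or a point $z\in Z-(C\cup C^\perp)$ exists and yields $z_0^\perp\in C^\perp$ as above; in either case $C^\perp\neq\emptyset$.
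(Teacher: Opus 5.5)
Your proposal is correct and follows essentially the same route as the paper. Both take the opposite suspender $\bar z_0$, use Lemma \ref{lem: p1susp} to get the length-$\pi$ geodesic through $z$, set $z_0^{\perp}$ to be its midpoint, and combine the foot-point bound $\angle_{z_0}(c,z_0^{\perp})\ge\pi/2$ with $\CAT(1)$ comparison.

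The perimeter worry you raise is harmless: if the triangle has perimeter $\ge 2\pi$, then $d_Z(c,z_0^{\perp})\ge 3\pi/2-d_Z(z_0,c)>\pi/2$ directly, so no limiting argument is needed.
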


\begin{proof}
The choice of $z_0$ implies $d_Z(z,z_0) \in (0,\pi/2)$.
Since $C$ is round,
there exists a unique point $\bar{z}_0$ in $C$
such that $z_0$ and $\bar{z}_0$ are mutually opposite suspenders in $Z$.
From Lemma \ref{lem: p1susp}
it follows that the union $z_0z \cup z\bar{z}_0$
of the geodesics $z_0z$ and $z\bar{z}_0$
is a geodesic $z_0\bar{z}_0$ of length $\pi$
joining $z_0$ and $\bar{z}_0$.
Choose the midpoint $z_0^{\perp}$ in $z_0\bar{z}_0$
between $z_0$ and $\bar{z}_0$.
The $\pi$-convexity of $C$ implies $z_0^{\perp} \in Z-C$.
To show $z_0^{\perp} \in C^{\perp}$,
take $p \in C - \{z_0,\bar{z}_0\}$.
For some unique $q \in C - \{z_0,\bar{z}_0\}$,
the points $p$ and $q$ are mutually opposite suspenders in $Z$.
The geodesics $z_0p$ and $z_0q$ of length $< \pi$
are contained in $C$.
From the choice of $z_0$ we derive
$\angle_{z_0} \left( p, z_0^{\perp} \right) \ge \pi/2$
and $\angle_{z_0} \left( q, z_0^{\perp} \right) \ge \pi/2$.
Hence
$d_Z \left( p, z_0^{\perp} \right) \ge \pi/2$ and 
$d_Z \left( q, z_0^{\perp} \right) \ge \pi/2$.
Therefore $z_0^{\perp} \in C^{\perp}$.
\end{proof}

The existence of a round closed $\pi$-convex subset
gives us a splitting.

\begin{lem}\label{lem: roundsplit}
Let $Z$ be a $\CAT(1)$ space.
Let $C$ be a round, closed $\pi$-convex subset of $Z$.
Then $C$ and the polar set $C^{\perp}$ of $C$ are perpendicular.
Moreover,
the ruling map $\Theta \colon C \ast C^{\perp} \to Z$
joining $C$ and $C^{\perp}$
is an isometry.
\end{lem}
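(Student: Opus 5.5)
The plan is to reduce everything to Lemma \ref{lem: ruling} by showing that $C^{\perp}$ is a closed $\pi$-convex subset perpendicular to $C$, and then to prove surjectivity of the ruling map with Lemma \ref{lem: polar}.

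\emph{Perpendicularity.} Take $p \in C$ and $w \in C^{\perp}$. By definition $d_Z(p,w) \ge \pi/2$. Let $q \in C$ be the point opposite to $p$, which exists since $C$ is round. Then $d_Z(q,w) \ge \pi/2$ as well. The suspender identity \eqref{eqn: sph} gives $d_Z(p,w) + d_Z(w,q) = \pi$, so both distances equal $\pi/2$. Hence $C$ and $C^{\perp}$ are perpendicular. (If $C^{\perp}$ is empty, Lemma \ref{lem: polar} forces $Z = C$, and the join is trivial.)

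\emph{Convexity and closedness of $C^{\perp}$.} Closedness is clear, since $C^{\perp}$ is defined by the closed condition $d_Z(\cdot, C) \ge \pi/2$.

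For $\pi$-convexity, let $w_1, w_2 \in C^{\perp}$ with $d_Z(w_1,w_2) < \pi$, and let $w$ lie on the geodesic $w_1w_2$. Fix $p \in C$ with opposite point $q$. Since $d_Z(p,w_i) = \pi/2$, the function $d_Z(p,\cdot)$ restricted to $w_1w_2$ has values at most $\pi$. Comparison in $\CAT(1)$ (convexity of the distance to a point along a geodesic of length $<\pi$, in the spherical sense) gives $d_Z(p,w) \le \pi/2$. The same argument with $q$ gives $d_Z(q,w) \le \pi/2$. Since $d_Z(p,w) + d_Z(w,q) = \pi$, both are equal to $\pi/2$, so $w \in C^{\perp}$.

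The same reasoning applied to any geodesic joining $w_1$ and $w_2$ shows every such geodesic lies in $C^{\perp}$. The remaining requirement is that $C^{\perp}$ be $\pi$-geodesic, which holds because $Z$ is $\pi$-geodesic and the geodesic stays inside $C^{\perp}$. One subtlety needs checking: the spherical comparison for points at distance exactly $\pi/2$ from the endpoints. For this I will use the comparison triangle $p w_1 w_2$ in $\Sph^2$, where both $w_i$ lie on the equator relative to $p$, so every point of the side $w_1w_2$ has distance $\le \pi/2$ from $p$.

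\emph{Isometric embedding and surjectivity.} Lemma \ref{lem: ruling} now applies and shows that $\Theta \colon C \ast C^{\perp} \to Z$ is an isometric embedding. It remains to show $\Theta$ is onto. Points of $C \cup C^{\perp}$ are in the image by definition. For $z \in Z - (C \cup C^{\perp})$, Lemma \ref{lem: polar} provides the foot point $z_0 \in C$ and a point $z_0^{\perp} \in C^{\perp}$ with $z \in z_0 z_0^{\perp}$ and $d_Z(z_0, z_0^{\perp}) = \pi/2$. This geodesic is exactly $\Theta$ of the join segment from $z_0$ to $z_0^{\perp}$, so $z$ is in the image of $\Theta$.

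The main obstacle I expect is the $\pi$-convexity of $C^{\perp}$, in particular handling the borderline case $d_Z = \pi/2$ and checking that the comparison argument applies even when the triangle's perimeter approaches $2\pi$. The perimeter here is at most $\pi/2 + \pi/2 + d_Z(w_1,w_2) < 2\pi$, so the comparison should be legitimate.
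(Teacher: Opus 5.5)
Your proposal is correct and follows the paper's proof essentially step by step. You get perpendicularity from the suspender identity and closedness of $C^{\perp}$ directly, then prove $\pi$-convexity by a direct $\CAT(1)$ comparison where the paper simply cites the lune lemma (Lemma \ref{lem: lunelem}); the isometric embedding comes from Lemma \ref{lem: ruling} and surjectivity from Lemma \ref{lem: polar}, exactly as in the paper.
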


\begin{proof}
We first show that 
$C$ and $C^{\perp}$ are perpendicular.
Take $p \in C$ and $z \in C^{\perp}$.
Choose $q \in C$ such that
$p$ and $q$ are mutually opposite suspenders in $Z$.
By Lemma \ref{lem: p1susp},
the union $pz \cup zq$ 
of the geodesics $pz$ and $zq$ is a geodesic of length $\pi$
joining $p$ and $q$. 
From the choice of $z \in C^{\perp}$
we derive $d_Z(p,z) = \pi/2$.
Hence $C$ and $C^{\perp}$ are perpendicular.

We next prove that $C^{\perp}$ is closed and $\pi$-convex.
Since $C$ is closed, so is $C^{\perp}$.
Take $z_1, z_2 \in C^{\perp}$ with $d_Z(z_1,z_2) < \pi$.
Choose $p_0, q_0 \in C$ such that
$p_0$ and $q_0$ are mutually opposite suspenders in $Z$.
Since $C$ and $C^{\perp}$ are perpendicular,
we already know $d_Z(p_0,z_i) = \pi/2$
and $d_Z(q_0,z_i) = \pi/2$ for all $i \in \{1,2\}$.
From Lemma \ref{lem: lunelem}
we conclude that 
$\{p_0,q_0\}$ and $z_1z_2$ are perpendicular.
Hence $z_1z_2$ are contained in $C^{\perp}$.
This implies the $\pi$-convexity of $C^{\perp}$.

By Lemma \ref{lem: ruling},
the ruling map $\Theta \colon C \ast C^{\perp} \to Z$ 
is an isometric embedding.
The surjectivity of $\Theta$ follows from Lemma \ref{lem: polar}. 
\end{proof}

\begin{rem}\label{rem: afterroundsplit}
If $Z$ is geodesically complete,
and if $C$ is a symmetric, closed $\pi$-convex subset of $Z$,
then $C$ and $C^{\perp}$ are perpendicular,
and the ruling map $\Theta \colon C \ast C^{\perp} \to Z$
is an isometry
(\cite[Proposition 4.2]{lytchak2}).
\end{rem}

\subsection{Multi-fold suspenders in CAT(1) spaces}

Let $Z$ be a $\CAT(1)$ space with metric $d_Z$.
Let $m \in \N$.
We say that an $m$-tuple $(p_1,\dots,p_m)$ of points in $Z$ is
an \emph{$m$-suspender} in $Z$
if there exists another $m$-tuple $(q_1,\dots,q_m)$ of points in $Z$
satisfying the following:
\begin{enumerate}
\item
for all $i \in \{ 1, \dots, m \}$,
the points $p_i$ and $q_i$ 
are mutually opposite suspenders in $Z$;
\item
for all distinct $i, j \in \{1, \dots, m\}$,
we have
$d_Z(p_i,p_j) = \pi/2$, 
$d_Z(p_i,q_j) = \pi/2$, 
and $d_Z(q_i,q_j) = \pi/2$.
\end{enumerate}
In this case, we say that $(p_1,\dots,p_m)$ and 
$(q_1,\dots,q_m)$ are \emph{opposite} to each other.

\begin{exmp}\label{exmp: exmsusp}
Assume that $Z$ isometrically splits as 
an $m$-fold spherical suspension $\Sph^{m-1} \ast Z_0$
for some $\CAT(1)$ space $Z_0$.
Let $(p_1,\dots,p_m)$ be an $m$-tuple 
of orthonormal points in $\Sph^{m-1}$.
For $i \in \{ 1, \dots, m \}$,
let $q_i \in \Sph^{m-1}$ 
be the unique antipode in $\Sph^{m-1}$ of $p_i$.
Then $(p_1,\dots,p_m)$ and $(q_1,\dots,q_m)$
are mutually opposite $m$-suspenders in $Z$.
\end{exmp}

From the existence of an $m$-suspenders,
we find an $m$-fold spherical suspension structure.
We begin with the following:

\begin{lem}\label{lem: roundcircle}
Let $Z$ be a $\CAT(1)$ space.
Let $p_0$ and $q_0$ are mutually opposite suspenders in $Z$.
Assume that
$p_1$ and $q_1$ are mutually opposite suspenders in $Z$
such that $d_Z(p_0,p_1) \in (0,\pi)$.
Then there exists a round, closed $\pi$-convex subset $C_1$ of $X$
containing $p_0, q_0$ and $p_1, q_1$
such that
$C_1$ is isometric to $\Sph^1$.
\end{lem}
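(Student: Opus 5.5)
Set $\theta := d_Z(p_0,p_1) \in (0,\pi)$. Since $p_0,q_0$ are mutually opposite suspenders, Lemma \ref{lem: p1susp} gives $d_Z(p_1,q_0) = \pi - \theta$. In the same way $d_Z(q_1,p_0) = \pi-\theta$ and $d_Z(q_1,q_0) = \theta$, because $p_1,q_1$ are opposite suspenders. All four of these distances are less than $\pi$, so the geodesics $p_0p_1$, $p_1q_0$, $q_0q_1$, $q_1p_0$ are unique. The candidate set is the closed loop
\[
C_1 := p_0p_1 \cup p_1q_0 \cup q_0q_1 \cup q_1p_0,
\]
which has length $2\pi$. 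First I show that $C_1$ is isometric to $\Sph^1$. By Lemma \ref{lem: p1susp}(1), $p_0p_1\cup p_1q_0$ is a geodesic of length $\pi$ from $p_0$ to $q_0$ through $p_1$. Likewise $p_0q_1\cup q_1q_0$ is a geodesic of length $\pi$. Both are also geodesics between the antipodal pair $p_1,q_1$ after re-splitting the loop, since $p_1p_0\cup p_0q_1$ has length $\theta+(\pi-\theta)=\pi=d_Z(p_1,q_1)$.

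To get the isometry, I parametrize the loop by arclength, $c\colon \R/2\pi\Z \to Z$, and prove $d_Z(c(s),c(t)) = |s-t|_{\Sph^1}$. Take $z=c(s)$, say on $p_0p_1$. The suspender identity \eqref{eqn: sph} for the pair $(p_0,q_0)$ gives $d_Z(z,p_0)+d_Z(z,q_0)=\pi$, and for $(p_1,q_1)$ it gives $d_Z(z,p_1)+d_Z(z,q_1)=\pi$. For a second point $w=c(t)$ on the opposite arc, Lemma \ref{lem: p1susp}(2) handles the case $d_Z(z,w)\ge\pi$. For the generic case I plan to use the lune lemma: $\{p_0,q_0\}$ is not a priori perpendicular to anything, but the loop is a bigon with angles $\theta$ and $\pi-\theta$, so comparison is the tool. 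Concretely, the geodesic from $z\in p_0p_1$ to its point $\bar z$ on the opposite arc at loop-distance $\pi$ should have length $\pi$, because $d_Z(z,\bar z)\ge\pi$ follows from $\bar z$ being an antipode. I would try to show this by noting that $z$ lies on a geodesic of length $\pi$ from $p_1$ to $q_1$, and $\bar z$ as well. Then $d(z,w)$ is at least the spherical value by the $\CAT(1)$ comparison for the triangle $z,w,\bar z$ with $d(z,\bar z)=\pi$. It is also at most the spherical value by the length of the arc. The cleanest form: for any $z$ on the loop, its loop-antipode $\bar z$ satisfies $d_Z(z,\bar z)=\pi$. Then, for any $w$, the sum of $d(z,w)$ and $d(w,\bar z)$ is at least $\pi$ (triangle inequality) and at most the arc lengths summing to $\pi$. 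So equality holds with each term at most its arc length, and both equal the arc lengths. Proving $d(z,\bar z)=\pi$ is therefore the key step, and I expect it to be the main obstacle. I would try the following. Take $z\in p_0p_1$ with $\bar z\in q_0q_1$. Apply the lune lemma \ref{lem: lunelem} in the bigon between $p_1$ and $q_1$, or use \eqref{eqn: sph} for $p_0$ to write $d(z,p_0)=a$ and $d(\bar z,q_0)=a$. The points $z,\bar z$ then lie on geodesics of length $\pi$ from $p_0$ to $q_0$, namely $p_0zq_0$ and $p_0\bar z q_0$ (via $q_1$). If $d(z,\bar z)<\pi$, then $\CAT(1)$ comparison on the triangle $p_0,z,\bar z$ together with the angle at $p_0$ being $\pi$ would be needed. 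The angle at $p_0$ between directions to $p_1$ and $q_1$ equals $\pi$, since $p_1p_0\cup p_0q_1$ is a geodesic. So the triangle is degenerate, which forces $d(z,\bar z)=a+(\pi-a)=\pi$. This works whenever the perimeter condition is met, and it is met by upper semicontinuity or by a limiting argument.

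Finally, for roundness: every point $z\in C_1$ lies on a geodesic of length $\pi$ joining the suspender pair $p_0,q_0$ through $z$, namely the half of the loop containing $z$. Combined with the isometry to $\Sph^1$, this makes $z$ and $\bar z$ opposite suspenders. To check $d(z,y)+d(y,\bar z)=\pi$ for every $y\in Z$, I use the $\CAT(1)$ comparison for the geodesic triangle with vertices $y$, $p_0$, $q_0$. By \eqref{eqn: sph} this triangle is a spherical lune, so distances from $y$ to points of the lune's sides are those in a spherical lune, which yields the identity. $C_1$ is then closed and symmetric, since $\Ant(z)=\{\bar z\}\subset C_1$ by Lemma \ref{lem: p1susp}. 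Being an isometric $\Sph^1$, it is $\pi$-convex because geodesics shorter than $\pi$ are unique.
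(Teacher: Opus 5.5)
Your proof that $C_1$ is isometric to $\Sph^1$ is sound, and more explicit than the paper's one-line assertion. You reduce to $d_Z(z,\bar z)=\pi$ for loop-antipodal points, and obtain this from the angle $\pi$ at $p_0$ between $p_0p_1$ and $p_0q_1$ by $\CAT(1)$ angle comparison in $\triangle p_0z\bar z$. No semicontinuity or limit is needed there: if $d_Z(z,\bar z)<\pi$, the perimeter $a+(\pi-a)+d_Z(z,\bar z)$ is automatically $<2\pi$.

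The genuine gap is the roundness step. There you use only two facts: $p_0,q_0$ are opposite suspenders, and $C_1\cong\Sph^1$. These do not imply roundness. Take $Z=\Sph^0\ast T_3^0$, three half-circles of length $\pi$ from $p_0$ to $q_0$. The union of two half-circles is an isometric $\Sph^1$ through $p_0,q_0$, and each of its points lies on a length-$\pi$ geodesic from $p_0$ to $q_0$. Yet the midpoint $m_1$ of one half-circle is not a suspender. Its only antipodes are the other two midpoints $m_2,m_3$, and
$d_Z(m_1,m_3)+d_Z(m_3,m_2)=2\pi=d_Z(m_1,m_2)+d_Z(m_2,m_3)$.

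So the hypothesis that $p_1,q_1$ are opposite suspenders must be used, and your sketch never uses it. There is a second problem. The ``triangle'' $y,p_0,q_0$ is degenerate with perimeter $2\pi$, so $\CAT(1)$ comparison says nothing about it. The lune it spans with the half of $C_1$ through $z$ has an undetermined angle at $p_0$. Bounding $d_Z(y,z)+d_Z(y,\bar z)$ from above needs joint control of $\angle_{p_0}(y,p_1)$ and $\angle_{p_0}(y,q_1)$, and only the second suspender pair provides that.

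What is missing is the paper's two-triangle comparison. Let $y\notin C_1$, $z\in p_0p_1$ with $d_Z(p_0,z)=s$, and $\bar z\in q_0q_1$ with $d_Z(q_0,\bar z)=s$. Compare $\triangle p_0p_1y$ with $\triangle q_0q_1y$, using three identities:
\begin{itemize}
\item $d_Z(q_0,q_1)=d_Z(p_0,p_1)$;
\item $d_Z(q_0,y)=\pi-d_Z(p_0,y)$;
\item $d_Z(q_1,y)=\pi-d_Z(p_1,y)$, which is where the pair $p_1,q_1$ enters.
\end{itemize}
Hence if $\tilde p_0\tilde p_1\tilde y\subset\Sph^2$ is a comparison triangle for the first, then $(-\tilde p_0)(-\tilde p_1)\tilde y$ is one for the second. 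The comparison points of $z$ and $\bar z$ are then $\tilde z$ and $-\tilde z$. Both perimeters are $<2\pi$ exactly because $y\notin p_0p_1\cup q_0q_1$, by uniqueness of geodesics of length $<\pi$. Therefore
$d_Z(y,z)+d_Z(y,\bar z)\le d_{\Sph^2}(\tilde y,\tilde z)+d_{\Sph^2}(\tilde y,-\tilde z)=\pi$,
and the reverse inequality follows from $d_Z(z,\bar z)=\pi$. The other edges follow by relabeling the configuration. With this in place of your lune argument, the rest of your proposal goes through: symmetry via Lemma~\ref{lem: p1susp}, closedness, and $\pi$-convexity.
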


\begin{proof}
Put $C_1 := p_0p_1 \cup p_1q_0 \cup q_0q_1 \cup q_1p_0$.
From Lemma \ref{lem: p1susp},
it follows that
for all distinct $i, j \in \{ 0, 1 \}$,
the unions $p_ip_j \cup p_jq_i$ and $p_iq_j \cup q_jq_i$
are geodesics of length $\pi$ joining $p_i$ and $q_i$.
Hence $Y_1$ is isometric to $\Sph^1$.

We prove the roundness of $C_1$.
Take $p \in C_1$.
By symmetry, we may assume $p \in p_0p_1$.
Since $C_1$ is isometric to $\Sph^1$,
there exists a unique point $q \in q_0q_1$ with $d_Z(p,q) = \pi$.
It suffices to show that
$p$ and $q$ are mutually opposite spherical points in $Z$.
Take $z \in Z-C_1$.
From Lemma \ref{lem: p1susp}
we conclude that
the unions $p_0z \cup zq_0$ and $p_1z \cup zq_1$
are geodesics of length $\pi$.
On the closed hemisphere $\Sph_+^2$,
there exists a point $\tilde{z}$ in the interior of $\Sph_+^2$,
and some points $\tilde{p}_0, \tilde{p}_1, \tilde{q}_0, \tilde{q}_1$
on the boundary equator of $\Sph_+^2$
such that
$\triangle \tilde{p}_0\tilde{p}_1\tilde{z}$
and
$\triangle \tilde{q}_0\tilde{q}_1\tilde{z}$
are comparison triangles for 
$\triangle p_0p_1z$ and $\triangle q_0q_1z$,
respectively.
Let $\tilde{p} \in \triangle \tilde{p}_0\tilde{p}_1\tilde{z}$
and $\tilde{q} \in \triangle \tilde{q}_0\tilde{q}_1\tilde{z}$
be comparison points for $p \in \triangle p_0p_1z$ 
and $q \in \triangle q_0q_1z$,
respectively.
On $\Sph_+^2$ we have
$d_{\Sph^2}(\tilde{p},\tilde{z}) + d_{\Sph^2}(\tilde{z},\tilde{q}) = \pi$,
where $d_{\Sph^2}$ is the standard metric on $\Sph^2$.
This together with the $\CAT(1)$ property of $Z$
implies $d_Z(p,z) + d_Z(z,q) = \pi$.
Thus 
$p$ and $q$ are mutually opposite spherical points in $Z$.
\end{proof}

We are going to prove the following:

\begin{prop}\label{prop: m0susp}
Let $Z$ be a $\CAT(1)$ space.
Assume that there exist mutually opposite $m$-suspenders
$(p_1,\dots,p_m)$ and $(q_1,\dots,q_m)$ in $Z$.
Then there exists 
a round, closed $\pi$-convex subset $C_{m-1}$ of $Z$
containing $(p_1,\dots,p_m)$ and $(q_1,\dots,q_m)$
such that $C_{m-1}$ is isometric to $\Sph^{m-1}$.
Moreover,
$Z$ isometrically splits as a spherical join
$C_{m-1} \ast C_{m-1}^{\perp}$,
where $C_{m-1}^{\perp}$ is the polar set of $C_{m-1}$.
\end{prop}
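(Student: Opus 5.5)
We argue by induction on $m$, proving the first assertion (existence of a round, closed $\pi$-convex $C_{m-1}\cong\Sph^{m-1}$ containing all $p_i,q_i$). The splitting $Z = C_{m-1}\ast C_{m-1}^{\perp}$ then follows immediately from Lemma \ref{lem: roundsplit}.

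For $m=1$, take $C_0 := \{p_1,q_1\}$. It is isometric to $\Sph^0$ because $d_Z(p_1,q_1)=\pi$. It is closed, and it is $\pi$-convex vacuously, since the only pair of distinct points has distance $\pi$. It is round because $\Ant(p_1)=\{q_1\}$ and $\Ant(q_1)=\{p_1\}$ by Lemma \ref{lem: p1susp}. For $m=2$, Lemma \ref{lem: roundcircle} with $d_Z(p_1,p_2)=\pi/2\in(0,\pi)$ gives $C_1\cong\Sph^1$ directly.

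For the inductive step, assume $C_{m-2}\cong\Sph^{m-2}$ is round, closed, $\pi$-convex, and contains $p_1,\dots,p_{m-1},q_1,\dots,q_{m-1}$. By Lemma \ref{lem: roundsplit}, $Z = C_{m-2}\ast C_{m-2}^{\perp}$. Since $p_m,q_m$ are at distance $\pi/2$ from each $p_i,q_i$ ($i<m$), and $C_{m-2}$ is a round sphere, they are at distance $\pi/2$ from all of $C_{m-2}$. To see this, write any $c\in C_{m-2}$ as lying on a great-circle path through the orthonormal frame and use the join metric, or argue directly with Lemma \ref{lem: p1susp}: $p_i c\cup c q_i$ is a geodesic of length $\pi$, and one iterates the lune lemma (Lemma \ref{lem: lunelem}). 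Hence $p_m,q_m\in C_{m-2}^{\perp}$.

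We then set $C_{m-1}:=C_{m-2}\ast\{p_m,q_m\}$, realized inside $Z$ by the ruling map. It is isometric to $\Sph^{m-2}\ast\Sph^0=\Sph^{m-1}$ by Lemma \ref{lem: ruling}, applied to the perpendicular closed $\pi$-convex sets $C_{m-2}$ and $\{p_m,q_m\}$. Indeed, inside $Z=C_{m-2}\ast C_{m-2}^{\perp}$, this is the subjoin $C_{m-2}\ast\{p_m,q_m\}$, which is closed and $\pi$-convex because $\{p_m,q_m\}$ is $\pi$-convex in $C_{m-2}^{\perp}$ and joins of $\pi$-convex subsets are $\pi$-convex in a spherical join.

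It remains to show roundness of $C_{m-1}$, which we expect to be the main obstacle: every point $p\in C_{m-1}$ must be a suspender in all of $Z$. A point $p\in C_{m-1}$ lies on a quarter-circle from some $c\in C_{m-2}$ to $p_m$ (or $q_m$). The points $c$, with its antipode $\bar c$, and $p_m,q_m$ are opposite suspenders with $d_Z(c,p_m)=\pi/2\in(0,\pi)$. Lemma \ref{lem: roundcircle} then yields a round circle through $c,p_m,\bar c,q_m$, which contains $p$. So $p$ is a suspender whose opposite point is its antipode in that circle, which is also its antipode in $C_{m-1}$. This also shows $C_{m-1}$ is symmetric: $\Ant(p)=\{\bar p\}\subset C_{m-1}$ by Lemma \ref{lem: p1susp}. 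Closedness follows from compactness of the image of the ruling map. Applying Lemma \ref{lem: roundsplit} completes the proof.
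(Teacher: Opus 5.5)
Your proof is correct and has the same skeleton as the paper's. Both argue by induction on $m$, show that the new pair $\{p_m,q_m\}$ is perpendicular to the previous sphere, and then use Lemma \ref{lem: ruling} for the embedded $\Sph^{m-1}$, Lemma \ref{lem: roundcircle} for roundness, and Lemma \ref{lem: roundsplit} for the splitting.

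The genuine difference is the perpendicularity step. In your indexing, the paper argues as follows:
\begin{itemize}
\item it takes $z\in C_{m-2}$ and its foot point $z_0$ on the equator spanned by $p_{\pm 2},\dots,p_{\pm(m-1)}$;
\item it applies the inductive hypothesis a second time, to the shifted tuple $(p_2,\dots,p_m)$, to get a second great sphere sharing that equator, so $d_Z(p_{\pm m},z_0)=\pi/2$;
\item it applies Lemma \ref{lem: lunelem} to the meridian $p_1z_0\ni z$.
\end{itemize}

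Your join-metric route, made precise, uses instead the splitting $Z=C_{m-2}\ast C_{m-2}^{\perp}$ that the induction already gives you. Write $p_m=\Theta(t,c_0,w)$. Then $\cos d_Z(p_m,c)=\cos t\,\cos d_Z(c_0,c)$ for $c\in C_{m-2}$. This vanishes at every frame point $p_i$, and no point of $\Sph^{m-2}$ is orthogonal to an orthonormal basis, so $t=\pi/2$. Hence $p_m,q_m\in C_{m-2}^{\perp}$. This uses the inductive hypothesis only once and needs no foot points, so it is the more economical argument.

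You should write it out this way, because as it stands this is the thinnest part of your proposal. Your alternative, ``$p_ic\cup cq_i$ plus the lune lemma'', does not work verbatim. The second part of Lemma \ref{lem: lunelem} needs a segment of length $<\pi$, and $p_ic\cup cq_i$ has length $\pi$. You would have to cut the meridian at its equator point and induct over the flag of great subspheres spanned by $p_{\pm1},\dots,p_{\pm k}$.

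By contrast, roundness, which you flagged as the main obstacle, is the easy part. It is exactly the paper's appeal to Lemma \ref{lem: roundcircle}. Closedness and $\pi$-convexity of $C_{m-1}$ are automatic for an isometrically embedded $\Sph^{m-1}$, by uniqueness of geodesics of length $<\pi$. So your join-convexity argument is unnecessary.
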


\begin{proof}
The proof is accomplished by induction on $m$.
The case of $m=1$ is obvious.
Assume that the proposition holds true for $m$.
Let $(p_1,\dots,p_m,p_{m+1})$ and $(q_1,\dots,q_m,q_{m+1})$ are
mutually opposite $(m+1)$-suspenders in $Z$.
For $i \in \{ 1, \dots, m, m+1 \}$,
put $p_{+i} := p_i$ and $p_{-i} := q_i$.
By the inductive assumption,
we find a round, closed $\pi$-convex subset 
$C_{m-1}$ of $X$ containing 
$(p_{\pm 1},\dots,p_{\pm m})$
such that
$Z$ isometrically splits as a spherical join
$C_{m-1} \ast C_{m-1}^{\perp}$,
where $C_{m-1}$ is isometric to $\Sph^{m-1}$.

We now prove that
$\{p_{+(m+1)},p_{-(m+1)}\}$ and $C_{m-1}$ are perpendicular.
If $m=1$, 
then this claim follows from the definition.
Let $m \ge 2$.
Take $z \in C_{m-1}$.
Since $C_{m-1}$ is isometric to $\Sph^{m-1}$,
for some $i \in \{ \pm 1, \dots, \pm m \}$
we have $d_Z(p_i,z) \le \pi/2$. 
We may assume $d_Z(p_{+1},z) \le \pi/2$.
As an equator of $C_{m-1}$,
we find a round, closed $\pi$-convex subset 
$C_{m-2}$ of $Z$ containing 
$(p_{\pm 2},\dots,p_{\pm m})$
such that $C_{m-2}$ is isometric to $\Sph^{n-2}$.
Let $z_0$ be a unique foot point in $C_{m-2}$
with $d_Z(z,z_0) = d_Z(z,C_{m-2})$.
By the assumption,
there exists another, 
round, closed $\pi$-convex subset $C_{m-1}'$
of $Z$ containing 
$(p_{\pm 2},\dots,p_{\pm (m+1)})$
such that $C_{m-1}'$ is isometric to $\Sph^{m-1}$.
Since $C_{m-2}$ is also an equator of $C_{m-1}'$,
we have
$d_Z(p_{\pm (m+1)},z_0) = \pi/2$.
Looking at 
$\triangle p_{\pm (m+1)}p_{+1}z_0$,
we obtain $d_Z(p_{\pm (m+1)},z) = \pi/2$;
indeed,
by Lemma \ref{lem: lunelem}
the ruling map
joining $\{p_{+(m+1)},p_{-(m+1)}\}$ and $p_{+1}z_0$
is an isometric embedding.
Thus $\{p_{+(m+1)},p_{-(m+1)}\}$ and $C_{m-1}$ are perpendicular.

From Lemma \ref{lem: ruling}, we conclude that
the ruling map
\[
\Theta_m \colon 
\{p_{+(m+1)},p_{-(m+1)}\} \ast C_{m-1} \to Z
\]
joining $\{p_{+(m+1)},p_{-(m+1)}\}$ and $C_{m-1}$
is an isometric embedding.
Let $C_m$ be the image of $\Theta_m$.
This is isometric to $\Sph^m$.
Lemma \ref{lem: roundcircle}
implies that
$C_m$ is a round, closed $\pi$-convex subset of $Z$,
and Lemma \ref{lem: roundsplit} does that
$Z$ isometrically splits as 
$C_m \ast C_m^{\perp}$.
\end{proof}

\subsection{Progress of spherical splittings of CAT(1) spaces}

Let $Z$ be a $\CAT(1)$ space with metric $d_Z$.
If $Z$ admits an $m$-suspenders,
then Proposition \ref{prop: m0susp} implies that
$Z$ isometrically splits as an $m$-fold spherical suspension.
The next lemma gives us a criterion
for an issue whether
the space $Z$ splits as an $(m+1)$-fold spherical suspension.

\begin{lem}\label{lem: pm0susp}
Let $Z$ be a $\CAT(1)$ space.
Let $(p_1,\dots,p_m)$ be an $m$-suspender in $Z$.
Assume that there exists a suspender $p_0$ in $Z$
with
\begin{equation}
\sum_{i=1}^m \cos^2 d_Z(p_i,p_0) < 1.
\label{eqn: pm0suspa}
\end{equation}
Then we find $p_{m+1} \in Z$
for which the $(m+1)$-tuple $(p_1,\dots,p_m,p_{m+1})$
is an $(m+1)$-suspender in $Z$.
\end{lem}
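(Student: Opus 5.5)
By Proposition \ref{prop: m0susp}, $Z$ isometrically splits as a spherical join $C_{m-1} \ast C_{m-1}^{\perp}$, where $C_{m-1}$ is a round, closed $\pi$-convex subset isometric to $\Sph^{m-1}$ containing the $p_i$ and their opposites $q_i$. The plan is to project the suspender $p_0$ onto the polar factor and show the projection is itself a suspender perpendicular to all $p_i,q_i$.

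\emph{Decomposing $p_0$.} Since $(p_1,\dots,p_m)$ are orthonormal in $C_{m-1} \cong \Sph^{m-1}$, write $p_0 = \Theta(t, w, u)$ with $w \in C_{m-1}$, $u \in C_{m-1}^{\perp}$, $t \in [0,\pi/2]$. The join metric gives $\cos d_Z(p_i,p_0) = \cos t \cos d_Z(p_i,w)$. Summing squares over the orthonormal frame gives $\sum_i \cos^2 d_Z(p_i,w) = 1$, hence
\[
\sum_{i=1}^m \cos^2 d_Z(p_i,p_0) = \cos^2 t .
\]
So \eqref{eqn: pm0suspa} forces $t>0$, i.e.\ $p_0 \notin C_{m-1}$, and $u$ is defined. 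Set $p_{m+1} := u$. The distances $d_Z(p_{m+1},p_i)$ and $d_Z(p_{m+1},q_i)$ all equal $\pi/2$ because $C_{m-1}$ and $C_{m-1}^{\perp}$ are perpendicular (Lemma \ref{lem: roundsplit}).

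\emph{$u$ is a suspender.} Let $\bar p_0$ be the point opposite to $p_0$. By Lemma \ref{lem: p1susp}, $\Ant(p_0)=\{\bar p_0\}$. In the join, the point $\Theta(\pi/2 - t, \bar w, \bar u)$ is at distance $\pi$ from $p_0$ whenever $\bar w$ is antipodal to $w$ in $\Sph^{m-1}$ and $d(u,\bar u)\ge\pi$ in $C_{m-1}^\perp$ (for $t<\pi/2$). Uniqueness of the antipode of $p_0$ therefore forces $u$ to have exactly one antipode $\bar u$ in $C_{m-1}^\perp$, and $\bar p_0 = \Theta(\bar w,\bar u)$. The same reasoning applied to the suspender $\bar p_0$ shows $\Ant(\bar u)\cap C_{m-1}^\perp=\{u\}$.

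To get the identity $d(u,y)+d(y,\bar u)=\pi$ for every $y \in C_{m-1}^\perp$, consider the spherical triangle obtained by joining $y$ with the great circle through $p_0$, $w$, $u$, $\bar p_0$. Use $d(p_0,y)+d(y,\bar p_0)=\pi$ together with the join formula $\cos d(p_0,y) = \sin t \cos d(u,y)$ and the analogous formula for $\bar p_0$. Taking $t\in(0,\pi/2)$, these force $d(u,y)+d(y,\bar u)=\pi$. The endpoint case $t=\pi/2$ is immediate, since then $u=p_0$. Since $C_{m-1}^\perp$ is a join factor, the identity then extends to all of $Z$ via the join formula: distances from $u$ to a point $\Theta(s,c,y)$ are computed as $\cos d = \sin s \cos d(u,y)$, and the sum identity persists. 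Thus $u,\bar u$ are opposite suspenders, and $(p_1,\dots,p_m,u)$ with opposites $(q_1,\dots,q_m,\bar u)$ is an $(m+1)$-suspender.

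\emph{Main obstacle.} The delicate step is the trigonometric verification that the suspender identity for $p_0$ transfers to $u$. The relation $\sin t\cos a + \sin t\cos b$ versus $\cos$ of a sum is not linear, so I will argue via the spherical comparison picture. If that fails, the fallback is to use $\Ant$-uniqueness and Remark-style equivalences, which, however, need geodesic completeness that we lack. As a safer alternative, I would instead apply Lemma \ref{lem: roundcircle} repeatedly: $p_0$ together with $w$ (which is round in $C_{m-1}$) spans a round circle, on which $u$ lies at distance $\pi/2$ from $w$. Since every point of a round set is a suspender, this gives $u$ as a suspender directly. This is the route I would try first.
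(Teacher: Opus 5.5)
Your argument is correct. Your written main line, which works in join coordinates, is a genuinely different route from the paper's. Your ``safer alternative'' is exactly the paper's proof. The paper takes the foot point $z_0$ of $p_0$ in $C_{m-1}$ (your $w$). It obtains $p_{m+1}\in C_{m-1}^{\perp}$ with $p_0\in z_0p_{m+1}$ from Lemma \ref{lem: polar} (your $u$). It notes that $C_1=p_0z_0\cup z_0q_0\cup q_0\bar z_0\cup\bar z_0p_0$ is round by Lemma \ref{lem: roundcircle}, and reads off the opposite point $q_{m+1}\in C_1\cap C_{m-1}^{\perp}$. The case $p_0\in C_{m-1}^{\perp}$ (your $t=\pi/2$) is treated separately.

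The step you flagged as the main obstacle is not one, and you do not need the comparison picture.
\begin{itemize}
\item \textbf{Existence of $\bar u$.} Write $q_0=\Theta(s,w',u')$. For $t\in(0,\pi/2)$, the equation $\cos d(p_0,q_0)=-1$ together with the join cosine formula forces $s=t$, $d(w,w')=\pi$ and $d(u,u')=\pi$. This is how you should obtain $\bar u:=u'$, rather than from uniqueness of $\Ant(p_0)$, which only gives at most one antipode.
\item \textbf{A slip.} In your convention $d(p_0,w)=t$, the antipode is $\Theta(t,\bar w,\bar u)$, not $\Theta(\pi/2-t,\bar w,\bar u)$.
\item \textbf{The suspender identity.} For $y\in C_{m-1}^{\perp}$, the relation $\cos d(p_0,y)=-\cos d(q_0,y)$ reads $\sin t\cos d(u,y)=-\sin t\cos d(\bar u,y)$. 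The relation is linear in the cosines, so since $\sin t>0$ you get $\cos d(u,y)=-\cos d(\bar u,y)$. All distances lie in $[0,\pi]$, because $\diam Z=\pi$ by Lemma \ref{lem: p1susp}, so this is already $d(u,y)+d(y,\bar u)=\pi$.
\item \textbf{Extension to all of $Z$.} The join formula extends the identity to every point of $Z$, as you say.
\end{itemize}

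The two routes buy different things. Your coordinate route identifies $p_{m+1}$ and $q_{m+1}$ explicitly as the $C_{m-1}^{\perp}$-components of $p_0$ and $q_0$. It also yields the identity $\sum_i\cos^2 d(p_i,p_0)=\cos^2 t$, which explains the hypothesis \eqref{eqn: pm0suspa}. The paper's route avoids coordinates and only reuses Lemmas \ref{lem: polar} and \ref{lem: roundcircle}, which it has already set up for the splitting and spherical-core results.
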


\begin{proof}
By Proposition \ref{prop: m0susp},
there exists a round, closed $\pi$-convex subset $C_{m-1}$ of $Z$
containing $(p_1,\dots,p_m)$
such that $Z$ splits as 
$C_{m-1} \ast C_{m-1}^{\perp}$,
where
$C_{m-1}$ is isometric to $\Sph^{m-1}$.
Take the suspender $q_0$ in $Z$ opposite to $p_0$.
The assumption \eqref{eqn: pm0suspa}
implies $p_0 \in Z-C_{m-1}$ and 
$q_0 \in Z-C_{m-1}$.

If $p_0 \in C_{m-1}^{\perp}$,
then $q_0 \in C_{m-1}^{\perp}$,
and hence by Lemma \ref{lem: roundsplit}
the $(m+1)$-tuple
$(p_1,\dots,p_m,p_0)$
is an $(m+1)$-suspenders in $Z$.

Next we assume 
$p_0 \not\in C_{m-1}^{\perp}$.
Let $z_0 \in C_{m-1}$
be a unique foot point with 
$d_Z(p_0,z_0) = d_Z(p_0,C_{m-1})$,
and take $\bar{z}_0 \in C_{m-1}$
such that $z_0$ and $\bar{z}_0$ are mutually opposite
suspenders in $Z$.
By Lemma \ref{lem: polar},
there exists $p_{m+1} \in C_{m-1}^{\perp}$
with $p_0 \in z_0p_{m+1}$ and $d_Z(z_0,p_{m+1}) = \pi/2$.
We denote by $C_1$ the union of the four geodesics
$p_0z_0$, $z_0q_0$, 
$q_0\bar{z}_0$, and $\bar{z}_0p_0$.
This is isometric to $\Sph^1$.
Lemma \ref{lem: roundcircle} leads to the roundness of $C_1$.
Hence there exists $q_{m+1} \in C_1 \cap C_{m-1}^{\perp}$
such that $p_{m+1}$ and $q_{m+1}$ 
are mutually opposite suspenders in $Z$.
From Lemma \ref{lem: roundsplit} it follows that
the $(m+1)$-tuple
$(p_1,\dots,p_m,p_{m+1})$
is an $(m+1)$-suspender in $Z$.
\end{proof}

\subsection{Spherical cores of CAT(1) spaces}

Let $Z$ be a $\CAT(1)$ space with metric $d_Z$.
We denote by $C_Z$
the set of all spherical points in $Z$.
We call $C_Z$
the \emph{spherical core} of $Z$.
The set $C_Z$ is possibly empty.

As an application of Proposition \ref{prop: m0susp},
we conclude the following:

\begin{prop}\label{prop: sphcore}
Let $Z$ be a $\CAT(1)$ space.
Assume that the spherical core $C_Z$ of $Z$ is non-empty.
Then $C_Z$ is isometric to a unit sphere
in some Hilbert space.
Moreover,
$Z$ isometrically splits as a spherical join
$C_Z \ast C_Z^{\perp}$.   
\end{prop}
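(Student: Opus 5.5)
"Spherical points" here means suspenders, so $C_Z$ is the set of all suspenders in $Z$. The plan is to show that $C_Z$ is a round, closed $\pi$-convex subset of $Z$ which is isometric to a unit Hilbert sphere, and then to apply Lemma \ref{lem: roundsplit}.

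\emph{Symmetry and roundness.} If $p\in C_Z$ has opposite suspender $q$, then $q$ is itself a suspender, so $q\in C_Z$. By Lemma \ref{lem: p1susp}, $\Ant(p)=\{q\}$. Hence $C_Z$ is symmetric, and since each of its points is a suspender, $C_Z$ is round.

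\emph{Closedness.} Let $p_k\in C_Z$ converge to $p$, with opposite suspenders $q_k$. For any $z$, $d_Z(p_k,z)+d_Z(z,q_k)=\pi$. Taking $z=p_l$ gives $d_Z(q_k,q_l)=d_Z(p_k,p_l)$, so $(q_k)$ is Cauchy. I will assume $Z$ is complete here (or pass to the completion; this needs checking), so $q_k\to q$. Passing to the limit in \eqref{eqn: sph} shows that $p$ and $q$ are opposite suspenders.

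\emph{$\pi$-convexity.} Take $p_0,p_1\in C_Z$ with $d_Z(p_0,p_1)<\pi$. If $p_0=p_1$ there is nothing to prove. Otherwise $d_Z(p_0,p_1)\in(0,\pi)$, and Lemma \ref{lem: roundcircle} gives a round circle $C_1\cong\Sph^1$ containing $p_0,p_1$ and their opposites. This circle contains the unique geodesic $p_0p_1$, and every point of $C_1$ is a suspender. Hence $p_0p_1\subset C_Z$, and $C_Z$ is $\pi$-convex. Lemma \ref{lem: roundsplit} then yields $Z=C_Z\ast C_Z^\perp$.

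\emph{Identifying $C_Z$ with a Hilbert sphere.} This is the main step. Finite configurations come from Proposition \ref{prop: m0susp}: any finite orthonormal family in $C_Z$, meaning points at pairwise distance $\pi/2$ together with their opposites, spans a round sphere $\Sph^{m-1}\subset C_Z$. Lemma \ref{lem: pm0susp} lets one enlarge such a family whenever some point of $C_Z$ is not in the span. By Zorn's lemma, choose a maximal orthonormal family $\{p_\alpha\}$ in $C_Z$, and define a map from the unit sphere of $\ell^2$ (over this index set) to $C_Z$ as follows. Each finitely supported unit vector maps into the finite-dimensional round sphere spanned by the corresponding $p_\alpha$'s; these embeddings are isometric and compatible, since nested spheres are joined by the ruling maps of Lemma \ref{lem: ruling}. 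The map then extends to the whole unit sphere by completeness and the closedness of $C_Z$.

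\emph{Surjectivity.} Let $p\in C_Z$. For each finite subfamily, the spherical join decomposition of Proposition \ref{prop: m0susp} together with the foot-point argument shows $\sum\cos^2 d_Z(p_\alpha,p)\le 1$. If for the full family this sum were $<1$, Lemma \ref{lem: pm0susp} would produce a new orthonormal element, contradicting maximality. So the sum equals $1$, and the distances from $p$ to finite spheres approach $0$; thus $p$ is a limit of points in the image and lies in the image. The delicate points in this step are the bookkeeping of the inequality \eqref{eqn: pm0suspa} for infinite families and the completeness issue.
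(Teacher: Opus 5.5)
\paragraph{Agreement with the paper.} Your symmetry and roundness step, the closedness argument (completeness of $Z$ is needed here, and the paper assumes it too), $\pi$-convexity via Lemma \ref{lem: roundcircle}, and the splitting via Lemma \ref{lem: roundsplit} follow the paper's proof. For the identification with a Hilbert sphere, the paper simply iterates Proposition \ref{prop: m0susp} and Lemma \ref{lem: pm0susp}. Your Zorn's lemma set-up, with the extension to the completed sphere, is a more careful version of that argument. It is fine up to the construction of the isometric embedding $\Phi$ of the unit sphere of $\ell^2(A)$ into $C_Z$.

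\paragraph{The gap: surjectivity for infinite families.} The problem arises when the maximal family $\{p_\alpha\}_{\alpha\in A}$ is infinite. Lemma \ref{lem: pm0susp} is a statement about a finite $m$-suspender. Apply it to a finite subfamily $F$ with $\sum_{\alpha\in F}\cos^2 d_Z(p_\alpha,p)<1$. It produces a suspender which, in its proof, is a point of $C_F^{\perp}$ on the half-circle from the foot point of $p$ in $C_F$ through $p$. This point is orthogonal only to the $p_\alpha$ with $\alpha\in F$, and nothing forces it to be orthogonal to the remaining $p_\beta$, $\beta\in A\setminus F$. So maximality is not contradicted. The infinite sum $\sum_{\alpha\in A}\cos^2 d_Z(p_\alpha,p)<1$ cannot be fed into the lemma at all. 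For finite $A$ your argument goes through. Note that the paper's ``inductive'' proof is equally silent about this limit step.

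\paragraph{A repair.} Run the proof of Lemma \ref{lem: pm0susp} with the image $C_\infty$ of $\Phi$ in place of $C_{m-1}$. First check the three properties of $C_\infty$:
\begin{itemize}
\item $C_\infty$ is closed, since it is an isometric image of a complete space.
\item It is $\pi$-convex, since great-circle arcs map to the unique geodesics.
\item It is round: $\Phi(-v)$ is at distance $\pi$ from $\Phi(v)$, so by Lemma \ref{lem: p1susp} it is the opposite suspender of $\Phi(v)$.
\end{itemize}
Now take $p\in C_Z- C_\infty$. Either $p\in C_\infty^{\perp}$, or Lemmas \ref{lem: polar} and \ref{lem: roundcircle} yield a suspender in $C_\infty^{\perp}$, namely the midpoint of the half-circle from the foot point of $p$ through $p$. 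By Lemma \ref{lem: roundsplit}, $C_\infty$ and $C_\infty^{\perp}$ are perpendicular. Hence this suspender is at distance $\pi/2$ from every $p_\alpha$, contradicting maximality. Therefore $C_Z=C_\infty$, and the bookkeeping with $\sum\cos^2$ becomes unnecessary.
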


\begin{proof}
First we prove that
$C_Z$ is a round, closed $\pi$-convex subset of $Z$.
The set $C_Z$ is symmetric in $Z$,
and also round in $Z$.
To show the closedness of $C_Z$,
we take a sequence $(p_k)$ in $C_Z$
converging to some $p \in Z$.
For each $k \in \N$,
choose $q_k \in C_Z$ such that
$p_k$ and $q_k$ are mutually opposite suspenders in $Z$.
By Lemma \ref{lem: roundcircle},
for all distinct $k, l \in \N$,
we have
$d_Z(q_k,q_l ) = d_Z(p_k, p_l)$.
Hence $(q_k)$ is a Cauchy sequence in $Z$.
Since $Z$ is assumed to be complete,
we find the limit point $q$ in $Z$ of $(q_k)$.
For every $z \in Z$,
we see
$d_Z(p,z) + d_Z(z,q) = \pi$.
This implies $p \in C_Z$, and hence $C_Z$ is closed in $Z$.
The $\pi$-convexity of $C_Z$ follows from Lemma \ref{lem: roundcircle}.

Assume that there exists an $m$-suspenders $(p_1,\dots,p_m)$ in $Z$.
By Proposition \ref{prop: m0susp},
there exists a round, closed $\pi$-convex subset $C_{m-1}$ of $Z$
containing $(p_1,\dots,p_m)$
such that $Z$ isometrically splits as
$C_{m-1} \ast C_{m-1}^{\perp}$,
where
$C_{m-1}$ is isometric to $\Sph^{m-1}$.
Assume that $C_Z - C_{m-1}$ is non-empty.
As shown above,
the spherical core $C_Z$ is a round, closed $\pi$-convex subset of $Z$.
Using Lemma \ref{lem: pm0susp},
we see that there exists a point $p_{m+1}$ in $Z$ for which
$(p_1,\dots,p_m,p_{m+1})$ is an $(m+1)$-suspender  in $Z$.
By Proposition \ref{prop: m0susp},
there exists a round, closed $\pi$-convex subset $C_m$ of $Z$
containing $(p_1,\dots,p_m,p_{m+1})$
such that $Z$ isometrically splits as 
$C_m \ast C_m^{\perp}$,
where
$C_m$ is isometric to $\Sph^m$.
Thus we have inductively proved the proposition.
\end{proof}

\begin{rem}\label{rem: aftersphcore}
It was shown in \cite[Corollary 4.4 and Remark 4.1]{lytchak2} that
Proposition \ref{prop: sphcore}
holds true for geodesically complete $\CAT(1)$ spaces.
\end{rem}

\section{Almost spherical structure of CAT(1) spaces}

In this section,
we study a condition to recognize an almost spherical suspension
structure for $\CAT(1)$ spaces.
We refer to \cite[Section 6]{lytchak-nagano1}
for related studies 
for compact geodesically complete $\CAT(1)$ spaces.

\subsection{Penetrability}

Let $Z$ be a $\CAT(1)$ space with metric $d_Z$.
We say that $Z$ is 
\emph{penetrable}
if there exists a dense subset $Z_0$ of $Z$
such that for each $p \in Z_0$
there exists $q \in Z_0$ with $d_Z(p,q) \ge \pi$.
If $Z$ is penetrable, then $\diam Z \ge \pi$.
If $Z$ has at least two path-connected components,
then it is penetrable.

The penetrability of $\CAT(1)$ spaces is closed 
under the ultralimit.
More precisely,
for every sequence of pointed 
penetrable $\CAT(1)$ spaces $(Z_k,z_k)$,
the ultralimit $\ulim (Z_k,z_k)$ is penetrable too.

We discuss the penetrability 
by reason of the following examples.

\begin{exmp}\label{exmp: gcpenet}
If $Z$ is geodesically complete,
then $Z$ is penetrable,
provided $Z$ contains at least two points.
Indeed, for each $p \in Z$,
and for each $z \in U_{\pi}^{\ast}(p)$,
there exists a geodesic $pq$ of length $\pi$
for some $q \in \Ant(p)$
as a prolongation of the geodesic $pz$.
\end{exmp}

\begin{exmp}\label{exmp: linkpenet}
Let $U$ be a locally geodesically complete open subset 
of a $\CAT(\kappa)$ space $X$.
Then for each $x \in U$
the space of directions $\Sigma_xX$ is penetrable.
Indeed, 
for the dense subset $\Sigma_x'X$ 
consisting of all genuine directions at $x$,
for each $\xi \in \Sigma_x'X$
there exists $\eta \in \Sigma_x'X$
with $d_{\Sigma_xX}(\xi,\eta) = \pi$.
\end{exmp}

There exists a geodesically complete $\CAT(1)$ space
possessing some point at which 
the space of directions is not geodesically complete.

\begin{exmp}\label{exmp: linknotgc}
Let $\Sph_+^m$ be a closed unit $m$-hemisphere 
with boundary equator $\Sph^{m-1}$.
For each $z \in \Sph^{m-1}$,
let us prepare a closed half line $[a_z,\infty)$
for some $a_z \in \R$.
Let
$Z := 
\Sph_+^m \sqcup (\bigsqcup_{z \in \Sph^{m-1}} [a_z,\infty)) / z=a_z$
be the quotient space obtained by attaching
$[a_z,\infty)$ to $\Sph_+^{m-1}$ at all $z \in \Sph^{m-1}$.
This becomes a geodesically complete $\CAT(1)$ space
equipped with the intrinsic metric induced from $\Sph_+^m$
and the half lines.
For each $z \in \Sph^{m-1}$ in $Z$,
the space of directions $\Sigma_zZ$ 
is the disjoint union of $\Sph_+^{m-1}$ and a single point.
\end{exmp}

For the sake of studies of the spaces of directions 
in locally geodesically complete 
open subsets of $\CAT(\kappa)$ spaces,
we consider penetrable $\CAT(1)$ spaces
instead of geodeically complete ones.

\subsection{Relaxed suspenders in CAT(1) spaces}

Let $\delta \in (0,1)$.
Let $Z$ be a $\CAT(1)$ space with metric $d_Z$.
We say that 
a point $p_0$ in $Z$ is a \emph{$(1,\delta)$-suspender} 
if there exists another point $q_0$ in $Z$
satisfying
\begin{equation}
\sup_{z \in Z} \left\{ \, d_Z(p_0,z) + d_Z(z,q_0) \, \right\} < \pi + \delta.
\label{eqn: dsph}
\end{equation}
In this case,
we say that $p_0$ and $q_0$ are \emph{opposite} to each other.

For penetrable $\CAT(1)$ spaces,
the notion of the $(1,\delta)$-suspenders 
is a perturbed one of the rigid $1$-suspenders. 
 
\begin{lem}\label{lem: o1dsusp}
Let $Z$ be a penetrable $\CAT(1)$ space.
Let $p_0$ and $q_0$ be mutually opposite 
$(1,\delta)$-suspenders in $Z$.
Then 
\begin{equation}
\inf_{z \in Z} \left\{ \, d_Z(p_0,z) + d_Z(z,q_0) \, \right\} > \pi - \delta.
\label{eqn: o1dsuspa}
\end{equation}
\end{lem}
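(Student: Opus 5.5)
We argue by contradiction, combining penetrability with the triangle inequality. Suppose there is $z \in Z$ with $d_Z(p_0,z) + d_Z(z,q_0) \le \pi - \delta$. The strict inequality \eqref{eqn: o1dsuspa} concerns an infimum, so we must also rule out near-equality. We therefore fix $z$ with $d_Z(p_0,z)+d_Z(z,q_0) < \pi - \delta + \epsilon$ for a small $\epsilon>0$ to be chosen, and aim for a contradiction. Strictness can be obtained by using the slack in the supremum condition \eqref{eqn: dsph}. Set
\[
s := \sup_{w \in Z} \{ d_Z(p_0,w)+d_Z(w,q_0) \} < \pi+\delta
\]
and $\eta := \pi+\delta-s>0$; we will show the infimum is at least $\pi - \delta + \eta$.

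The key step is the following. Since $Z$ is penetrable, there is a dense subset $Z_0$ in which every point has an antipode at distance $\ge \pi$ in $Z_0$. By continuity of the distance functions, we may move $z$ slightly to a point $z' \in Z_0$, changing the sum $d_Z(p_0,\cdot)+d_Z(\cdot,q_0)$ by at most an arbitrarily small amount. We then choose $\bar z \in Z_0$ with $d_Z(z',\bar z) \ge \pi$. The triangle inequality gives
\[
2\pi \le 2 d_Z(z',\bar z) \le d_Z(z',p_0)+d_Z(p_0,\bar z) + d_Z(z',q_0)+d_Z(q_0,\bar z).
\]
Regrouping the right-hand side, it equals
\[
\bigl(d_Z(p_0,z')+d_Z(z',q_0)\bigr) + \bigl(d_Z(p_0,\bar z)+d_Z(\bar z,q_0)\bigr) \le \bigl(d_Z(p_0,z')+d_Z(z',q_0)\bigr) + s.
\]
Hence $d_Z(p_0,z')+d_Z(z',q_0) \ge 2\pi - s = \pi - \delta + \eta$.

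To conclude, we let $z'\to z$ and use the continuity of $w \mapsto d_Z(p_0,w)+d_Z(w,q_0)$. This gives $d_Z(p_0,z)+d_Z(z,q_0) \ge \pi-\delta+\eta$ for every $z\in Z$. Taking the infimum yields $\inf \ge \pi - \delta + \eta > \pi-\delta$, which is \eqref{eqn: o1dsuspa}.

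The only delicate point is the strictness of the inequality, and it is handled by the fact that \eqref{eqn: dsph} bounds a supremum strictly, which provides the uniform slack $\eta$. The density argument is routine because the sum of the two distance functions is $2$-Lipschitz. The $\CAT(1)$ structure is not needed beyond the metric; penetrability is the essential input.
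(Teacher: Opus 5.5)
Your proof is correct and is essentially the paper's argument. Both extract a uniform slack from the strict supremum bound (your $\eta$, the paper's $a$), add the two triangle inequalities through $p_0$ and $q_0$ for a point of the dense set and its antipode, and pass to arbitrary $z$ by density; the contradiction setup in your first paragraph is never used and can be dropped.
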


\begin{proof}
By \eqref{eqn: dsph},
for some sufficiently small $a \in (0,\delta)$,
we have
\[
\sup_{z \in Z} \left\{ \, d_Z(p_0,z) + d_Z(z,q_0) \, \right\} 
< \pi + \delta - a.
\]
Take $z \in Z$.
Since $Z$ is penetrable,
we find a point $z_0 \in Z$
arbitrarily close to $z$,
and an antipode $\bar{z}_0 \in \Ant(z_0)$ of $z_0$.
Then
\[
\pi \le d_Z(z_0,p_0) + d_Z(p_0,\bar{z}_0),
\quad
\pi \le d_Z(z_0,q_0) + d_Z(q_0,\bar{z}_0).
\]
Summing up these inequalities,
we obtain
\[
d_Z(p_0,z_0) + d_Z(z_0,q_0) > \pi - \delta + a.
\]
Since $z_0$ can be chosen arbitrarily closely to $z$,
we see \eqref{eqn: o1dsuspa}.
\end{proof}

\begin{rem}\label{rem: aftero1dsusp}
Let $Z$ be a penetrable $\CAT(1)$ space.
From \eqref{eqn: dsph} and \eqref{eqn: o1dsuspa} it follows that
if $p_0$ and $q_0$ are mutually opposite $(1,\delta)$-suspenders in $Z$,
then $\vert d_Z(p_0,q_0) - \pi \vert < \delta$.
\end{rem}

We also have the following
(cf.~Lemma \ref{lem: p1susp}):

\begin{lem}\label{lem: p1dsusp}
Let $Z$ be a $\CAT(1)$ space.
If there exists mutually opposite $(1,\delta)$-suspenders
$p_0$ and $q_0$ in $Z$,
then the following hold: 
\begin{enumerate}
\item
$\diam \Ant(p_0) < 2\delta$ and $\diam \Ant(q_0) < 2\delta$;
\item
for all $z_1, z_2 \in Z$ with $d_Z(z_1,z_2) \ge \pi$
we have
\begin{equation}
d_Z(z_1,p_0) + d_Z(p_0,z_2) < \pi + 2\delta,
\quad
d_Z(z_1,q_0) + d_Z(q_0,z_2) < \pi + 2\delta;
\label{eqn: p1dsuspa}
\end{equation}
in particular,
we have $\diam Z \le \pi + 2\delta$. 
\end{enumerate}
\end{lem}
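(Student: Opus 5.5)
The argument is the perturbed version of the proof of Lemma \ref{lem: p1susp}, with the inequality \eqref{eqn: dsph} replacing the equality \eqref{eqn: sph}. The only inputs are the definition of $(1,\delta)$-suspender and the triangle inequality; no penetrability is needed.

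\emph{Step 1: part (2).} Let $z_1, z_2 \in Z$ with $d_Z(z_1,z_2) \ge \pi$. Apply \eqref{eqn: dsph} to $z_1$ and to $z_2$:
\[
d_Z(p_0,z_1) + d_Z(z_1,q_0) < \pi + \delta,
\quad
d_Z(p_0,z_2) + d_Z(z_2,q_0) < \pi + \delta .
\]
Adding these gives
\[
\bigl( d_Z(z_1,p_0) + d_Z(p_0,z_2) \bigr) + \bigl( d_Z(z_1,q_0) + d_Z(q_0,z_2) \bigr) < 2\pi + 2\delta .
\]
By the triangle inequality, each bracket is at least $d_Z(z_1,z_2) \ge \pi$. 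Hence each bracket is strictly less than $2\pi + 2\delta - \pi = \pi + 2\delta$, which is \eqref{eqn: p1dsuspa}.

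\emph{Step 2: the diameter bound.} Take any $z_1, z_2 \in Z$. If $d_Z(z_1,z_2) < \pi$, the bound $\pi + 2\delta$ holds trivially. Otherwise $d_Z(z_1,z_2) \le d_Z(z_1,p_0) + d_Z(p_0,z_2) < \pi + 2\delta$ by Step 1. Taking the supremum over pairs gives $\diam Z \le \pi + 2\delta$.

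\emph{Step 3: part (1).} Take $a, b \in \Ant(p_0)$, so $d_Z(p_0,a) \ge \pi$ and $d_Z(p_0,b) \ge \pi$. By \eqref{eqn: dsph},
\[
d_Z(a,q_0) < \pi + \delta - d_Z(p_0,a) \le \delta,
\]
and likewise $d_Z(b,q_0) < \delta$. The triangle inequality then gives $d_Z(a,b) < 2\delta$. For the diameter (a supremum) to be strictly below $2\delta$, I would use the slack $\sup_{z \in Z} \{ d_Z(p_0,z) + d_Z(z,q_0) \} < \pi + \delta$ exactly as in the proof of Lemma \ref{lem: o1dsusp}. This gives some $a_0 \in (0,\delta)$ such that every point of $\Ant(p_0)$ lies within $\delta - a_0$ of $q_0$, hence $\diam \Ant(p_0) \le 2\delta - 2a_0 < 2\delta$. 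The argument for $\Ant(q_0)$ is symmetric, exchanging $p_0$ and $q_0$.

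\emph{Main obstacle.} There is no substantial obstacle. The one point needing care is strictness: part (1) needs the strict inequality for the supremum, which the fixed slack $a_0$ provides. Part (2) is strict pointwise and needs no slack.
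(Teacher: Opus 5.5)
Your proof is correct and is essentially the paper's argument: part (1) follows from $d_Z(\bar{p}_0,q_0) < \delta$ for every antipode $\bar{p}_0$ of $p_0$ via \eqref{eqn: dsph}, and part (2) follows by adding the two instances of \eqref{eqn: dsph} at $z_1$ and $z_2$ and applying the triangle inequality. Your use of the slack in the strict supremum to get the strict bound $\diam \Ant(p_0) < 2\delta$ (rather than only $\le 2\delta$) supplies a detail the paper leaves implicit.
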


\begin{proof}
For every $\bar{p}_0 \in \Ant(p_0)$,
by \eqref{eqn: dsph},
we have $d_Z(\bar{p}_0,q_0) < \delta$.
Hence $\diam \Ant(p_0) < 2\delta$.
Similarly, 
we see $\diam \Ant(q_0) < 2\delta$

Take $z_1, z_2 \in Z$ with $d_Z(z_1,z_2) \ge \pi$.
By \eqref{eqn: dsph},
we have
\[
d_Z(p_0,z_1) + d_Z(z_1,q_0) < \pi + \delta,
\quad
d_Z(p_0,z_2) + d_Z(z_2,q_0) < \pi + \delta.
\]
These inequalities lead to \eqref{eqn: p1dsuspa}.
\end{proof}

\begin{rem}\label{rem: aftera1dsusp}
By Lemma \ref{lem: p1dsusp},
we see that
if a $\CAT(1)$ space
$Z$ admits mutually opposite $(1,\delta)$-suspenders $p_0$ and $q_0$,
then either $Z$ is a geodesic space
of $\diam Z \le \pi + 2\delta$,
or 
$Z$ has two geodesic-connected components $Z_+$ and $Z_-$
with $p_0 \in Z_+$, $q_0 \in Z_-$, and 
$\diam Z_{\pm} < 2\delta$.
If in addition $Z$ is geodesically complete,
then either $Z$ is geodesic or 
it is homeomorphic to $\Sph^0$.
\end{rem}

Assume that $Z$ is geodesically complete.
If a point in $Z$ has 
antipodal set of a sufficiently small diameter,
then it is a $(1,\delta)$-suspender.
More precisely, we see the following
(cf.~Remark \ref{rem: afterp1susp}):

\begin{lem}\label{lem: e1dsusp}
Let $Z$ be a geodesically complete $\CAT(1)$ space.
If a point $p$ in $Z$ satisfies $\diam \Ant(p) < \delta$,
then 
for every $q \in \Ant(p)$ we have
\begin{equation}
\sup_{z \in Z} \left\{ \, d_Z(p,z) + d_Z(z,q) \, \right\}< \diam Z + \delta;
\label{eqn: e1dsuspa}
\end{equation}
in particular,
if in addition $\diam Z = \pi$,
then $p$ and $q$ are mutually opposite $(1,\delta)$-suspenders in $Z$.
\end{lem}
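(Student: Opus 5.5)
Fix $z \in Z$ and $q \in \Ant(p)$; the aim is to bound $d_Z(p,z) + d_Z(z,q)$ by something strictly less than $\diam Z + \delta$. The idea is to extend a geodesic from $p$ through $z$ until it reaches an antipode of $p$, and then compare $q$ with that antipode using the hypothesis on $\diam \Ant(p)$.

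\emph{Trivial cases.} If $z = p$, the sum is $d_Z(p,q) \le \diam Z < \diam Z + \delta$. If $d_Z(p,z) \ge \pi$, then $z \in \Ant(p)$. Hence $d_Z(z,q) \le \diam \Ant(p) < \delta$, and the sum is $d_Z(p,z) + d_Z(z,q) < \diam Z + \delta$.

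\emph{Main case $0 < d_Z(p,z) < \pi$.} Here there is a unique geodesic $pz$. Since $Z$ is geodesically complete, it extends beyond $z$ to a local geodesic $\gamma \colon \mathbb{R} \to Z$ with unit speed, $\gamma(0) = p$ and $\gamma(d_Z(p,z)) = z$. Put $\bar{z} := \gamma(\pi)$. In a $\CAT(1)$ space, every local geodesic of length at most $\pi$ is a geodesic. Hence $\gamma|_{[0,\pi]}$ is minimizing, which gives
\[
d_Z(p,\bar{z}) = \pi, \qquad d_Z(p,z) + d_Z(z,\bar{z}) = \pi,
\]
so $\bar{z} \in \Ant(p)$. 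Since $q \in \Ant(p)$ as well, $d_Z(\bar{z},q) \le \diam \Ant(p) < \delta$. By the triangle inequality,
\[
d_Z(p,z) + d_Z(z,q) \le d_Z(p,z) + d_Z(z,\bar{z}) + d_Z(\bar{z},q) < \pi + \delta.
\]

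\emph{Comparing $\pi$ with $\diam Z$.} When $Z$ has at least two points, geodesic completeness gives $\diam Z \ge \pi$: any nontrivial geodesic extends to a local geodesic of length $\pi$, which is minimizing. Therefore $\pi + \delta \le \diam Z + \delta$, which yields \eqref{eqn: e1dsuspa}. If $Z$ is a single point, the main case cannot occur, and the trivial cases already cover everything. This proves \eqref{eqn: e1dsuspa} for every $z$.

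\emph{Strict inequality for the supremum.} The estimates above give a strict bound for each $z$, but the supremum must also be strict. The cases give the uniform bound
\[
d_Z(p,z) + d_Z(z,q) \le \max\{\, \diam Z,\ \pi \,\} + \diam \Ant(p)
\]
for every $z$. This is valid in the case $z = p$, in the case $d_Z(p,z) \ge \pi$ (where $d_Z(p,z) \le \diam Z$), and in the main case (which used $d_Z(\bar{z},q) \le \diam \Ant(p)$). Since $\pi \le \diam Z$ and $\diam \Ant(p) < \delta$, the right-hand side is a fixed number strictly less than $\diam Z + \delta$. So the supremum is strictly less than $\diam Z + \delta$, as \eqref{eqn: e1dsuspa} requires.

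\emph{The final clause.} If $\diam Z = \pi$, then \eqref{eqn: e1dsuspa} is exactly \eqref{eqn: dsph} for the pair $p, q$. Hence $p$ and $q$ are mutually opposite $(1,\delta)$-suspenders.

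The only non-routine step is the fact that local geodesics of length at most $\pi$ in a $\CAT(1)$ space are minimizing. This is standard (see \cite{bridson-haefliger}), so I do not expect a real obstacle.
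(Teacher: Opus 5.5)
Your proof is correct and follows the paper's argument: extend $pz$ by geodesic completeness to a geodesic of length $\pi$ ending at some $q_0 \in \Ant(p)$, then use $d_Z(q,q_0) \le \diam \Ant(p) < \delta$. Two small differences: the paper gets $\pi \le \diam Z$ directly from $\pi \le d_Z(p,q)$ (because $q \in \Ant(p)$), so your separate geodesic-completeness argument for $\diam Z \ge \pi$ is unnecessary, and your explicit uniform bound $\max\{\diam Z, \pi\} + \diam \Ant(p)$, which makes the supremum strict, is extra care that the paper leaves implicit.
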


\begin{proof}
Let $z \in Z$.
If $z = p$ or $z \in \Ant(p)$,
then we have \eqref{eqn: e1dsuspa}.
Assume $z \in U_{\pi}^{\ast}(p)$.
Since $Z$ is geodesically complete,
there exists a geodesic $pq_0$ of length $\pi$
through $z$ for some $q_0 \in \Ant(p)$.
Then we have
\[
d_Z(p,z) + d_Z(z,q) 
\le \pi + d_Z(q,q_0) < \pi + \delta \le d_Z(p,q) + \delta,
\]
and hence \eqref{eqn: e1dsuspa}.
\end{proof}

\subsection{Relaxed multi-fold suspenders in CAT(1) spaces}

Let $Z$ be a $\CAT(1)$ space with metric $d_Z$.
Let $m \in \N$ and $\delta \in (0,\pi)$.
We say that an $m$-tuple $(p_1,\dots,p_m)$ of points in $Z$ is
an \emph{$(m,\delta)$-suspender} 
if there exists another $m$-tuple $(q_1,\dots,q_m)$ of points in $Z$
satisfying the following:
\begin{enumerate}
\item
for all $i \in \{ 1, \dots, m \}$,
the points $p_i$ and $q_i$ 
are mutually opposite $(1,\delta)$-suspenders in $Z$;
\item
for all distinct $i, j \in \{ 1, \dots, m \}$,
we have
\begin{equation}
d_Z(p_i,p_j) < \frac{\pi}{2} + \delta,
\quad
d_Z(p_i,q_j) < \frac{\pi}{2} + \delta,
\quad
d_Z(q_i,q_j) < \frac{\pi}{2} + \delta.
\label{eqn: dortho}
\end{equation}

\end{enumerate}
In this case, we say that $(p_1,\dots,p_m)$ and 
$(q_1,\dots,q_m)$ are \emph{opposite} to each other.

\begin{rem}\label{rem: aftermdsusp}
If $Z$ admits a $(2,\delta)$-suspender,
then it is geodesic,
provided $\delta$ is small enough
(see Remark \ref{rem: aftera1dsusp}).
\end{rem}

For penetrable $\CAT(1)$ spaces,
the notion of the $(m,\delta)$-suspenders 
is a perturbed one of the rigid $m$-suspenders.

\begin{lem}\label{lem: omdsusp}
Let $Z$ be a penetrable $\CAT(1)$ space.
Let $(p_1,\dots,p_m)$ and $(q_1,\dots,q_m)$ be mutually opposite 
$(m,\delta)$-suspenders in $Z$.
Then 
\begin{enumerate}
\item
for all $i \in \{ 1, \dots, m \}$
we have
\begin{equation}
\inf_{z \in Z} \left\{ \, d_Z(p_i,z) + d_Z(z,q_i) \, \right\} > \pi - \delta;
\label{eqn: omdsuspa}
\end{equation}
\item
for all distinct $i, j \in \{ 1, \dots, m \}$, 
\begin{equation}
d_Z(p_i,p_j) > \frac{\pi}{2} - 2\delta,
\quad
d_Z(p_i,q_j) > \frac{\pi}{2} - 2\delta,
\quad
d_Z(q_i,q_j) > \frac{\pi}{2} - 2\delta.
\label{eqn: omdsuspb}
\end{equation}
\end{enumerate}
\end{lem}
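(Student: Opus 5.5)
Part (1) is immediate: for each $i$ the points $p_i$ and $q_i$ are mutually opposite $(1,\delta)$-suspenders in the penetrable space $Z$. Lemma \ref{lem: o1dsusp} then gives \eqref{eqn: omdsuspa} directly, so nothing new is needed there.

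For part (2), I fix distinct $i,j$ and prove the lower bound for $d_Z(p_i,p_j)$; the other two pairs follow by interchanging the roles of $p_i, q_i$ or of $p_j, q_j$, since the hypotheses are symmetric in this respect. The idea is to combine the upper bounds \eqref{eqn: dortho} with the near-suspension identity for the pair $(p_j,q_j)$, evaluated at the point $z = p_i$. By \eqref{eqn: omdsuspa} applied to the index $j$,
\[
d_Z(p_j,p_i) + d_Z(p_i,q_j) > \pi - \delta .
\]
By \eqref{eqn: dortho} we have $d_Z(p_i,q_j) < \pi/2 + \delta$. Substituting this gives
\[
d_Z(p_i,p_j) > \pi - \delta - \left(\frac{\pi}{2} + \delta\right) = \frac{\pi}{2} - 2\delta .
\]
For $d_Z(p_i,q_j)$ I use the same inequality but bound $d_Z(p_i,p_j) < \pi/2+\delta$ instead. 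For $d_Z(q_i,q_j)$ I take $z = q_i$ in \eqref{eqn: omdsuspa} for the index $j$ and use $d_Z(q_i,p_j) < \pi/2+\delta$. This last bound is the instance of \eqref{eqn: dortho} with the roles of $i$ and $j$ swapped, which is legitimate because the definition quantifies over all ordered pairs of distinct indices.

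There is no real obstacle here. The one point to check is that penetrability is required only in order to invoke Lemma \ref{lem: o1dsusp}, and that the strict inequalities are preserved. They are: the infimum in \eqref{eqn: omdsuspa} strictly exceeds $\pi-\delta$, so evaluating at any particular point still gives a strict inequality.
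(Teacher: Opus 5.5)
Your proof is correct and follows essentially the paper's argument: part (1) is Lemma \ref{lem: o1dsusp}, and part (2) combines the lower bound \eqref{eqn: omdsuspa} with the upper bounds \eqref{eqn: dortho}. The only cosmetic difference is that the paper writes $d_Z(p_i,p_j) \ge d_Z(p_i,q_i) - d_Z(p_j,q_i)$ and uses $d_Z(p_i,q_i) > \pi - \delta$, whereas you evaluate the infimum for the index $j$ directly at $z = p_i$ (or $z = q_i$), which avoids the explicit triangle inequality.
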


\begin{proof}
As shown in Lemma \ref{lem: o1dsusp},
we already know \eqref{eqn: omdsuspa}.
By \eqref{eqn: omdsuspa},
for all distinct $i, j \in \{ 1, \dots, m \}$
we have
\[
d_Z(p_i,p_j) \ge d_Z(p_i,q_i) - d_Z(p_j,q_i)
> \pi - \delta - d_Z(p_j,q_i)
> \frac{\pi}{2} - 2\delta.
\]
Similarly,
we have 
$d_Z(p_i,q_j) > \pi/2 - 2\delta$
and $d_Z(q_i,q_j) > \pi/2 - 2\delta$.
Hence we have \eqref{eqn: omdsuspb}.
\end{proof}

In the sequel, 
we will need the following:

\begin{lem}\label{lem: amdsusp}
Let $Z$ be a $\CAT(1)$ space.
Assume that $(p_1,\dots,p_m)$ and $(q_1,\dots,q_m)$ are
mutually opposite $(m,\delta)$-suspenders in $Z$.
For each $i \in \{ 1, \dots, m \}$,
take $\bar{p}_i \in \Ant(p_i)$.
Then $(p_1,\dots,p_m)$ and $(\bar{p}_1,\dots,\bar{p}_m)$ are
mutually opposite $(m,2\delta)$-suspenders in $Z$.
\end{lem}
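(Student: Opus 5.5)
The plan is to check the two defining conditions of an $(m,2\delta)$-suspender directly for the pair $(p_1,\dots,p_m)$, $(\bar{p}_1,\dots,\bar{p}_m)$. The basic input is Lemma \ref{lem: p1dsusp}: since $p_i$ and $q_i$ are mutually opposite $(1,\delta)$-suspenders and $\bar{p}_i \in \Ant(p_i)$, we have $d_Z(\bar{p}_i,q_i) < \delta$. Indeed, \eqref{eqn: dsph} with $z = \bar{p}_i$ gives $d_Z(p_i,\bar{p}_i) + d_Z(\bar{p}_i,q_i) < \pi + \delta$, and $d_Z(p_i,\bar{p}_i) \ge \pi$.

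\emph{First condition.} We show that $p_i$ and $\bar{p}_i$ are mutually opposite $(1,2\delta)$-suspenders. By the triangle inequality, for every $z \in Z$,
\[
d_Z(p_i,z) + d_Z(z,\bar{p}_i) \le d_Z(p_i,z) + d_Z(z,q_i) + d_Z(q_i,\bar{p}_i).
\]
Taking the supremum over $z$ and using \eqref{eqn: dsph} yields $\sup_z\{\cdots\} \le (\pi + \delta - a) + d_Z(q_i,\bar{p}_i) < \pi + 2\delta$, where $a>0$ is the slack from the strict supremum, chosen as in the proof of Lemma \ref{lem: o1dsusp}. The one point needing care is keeping this inequality strict, since a supremum of strict inequalities is not automatically strict. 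This is handled because the supremum in \eqref{eqn: dsph} is itself already strictly below $\pi+\delta$, and $d_Z(q_i,\bar{p}_i)<\delta$ is a fixed strict bound.

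\emph{Second condition.} For distinct $i,j$, the inequality $d_Z(p_i,p_j) < \pi/2 + \delta < \pi/2 + 2\delta$ is unchanged. For the mixed and barred distances we use
\[
d_Z(p_i,\bar{p}_j) \le d_Z(p_i,q_j) + d_Z(q_j,\bar{p}_j) < \tfrac{\pi}{2} + \delta + \delta,
\qquad
d_Z(\bar{p}_i,\bar{p}_j) \le d_Z(\bar{p}_i,q_i) + d_Z(q_i,q_j) + d_Z(q_j,\bar{p}_j).
\]
The first bound is fine. The second only gives $\pi/2 + 3\delta$, and this is the main obstacle.

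To get a bound of $2\delta$ for $d_Z(\bar{p}_i,\bar{p}_j)$, the plan is to use the suspender property of $p_j$ and $\bar{p}_j$ just established, applied with $z = \bar{p}_i$:
\[
d_Z(\bar{p}_i,\bar{p}_j) < \pi + 2\delta - d_Z(p_j,\bar{p}_i).
\]
It then suffices to show $d_Z(p_j,\bar{p}_i) \ge \pi/2$, or at least something close to it, which might hold approximately. We lower-bound this distance through $q_i$ using the $(1,\delta)$-suspender inequality with $z=p_j$: $d_Z(p_j,q_i) \ge \pi - d_Z(p_i,p_j) - \delta$, and hence $d_Z(p_j,\bar{p}_i) \ge d_Z(p_j,q_i) - \delta > \pi/2 - 3\delta$. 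This route also loses constants. If it does not close to exactly $2\delta$, I expect the intended argument to pair each term with the opposite suspender more cleverly: compare $\bar{p}_i$ with $q_i$ in only one slot, and use \eqref{eqn: dortho} for $d_Z(q_i,\bar{p}_j)$ after first bounding that term via $d_Z(\bar{p}_j,q_j)<\delta$. Failing that, I would accept a constant such as $3\delta$, which is harmless for the later applications.
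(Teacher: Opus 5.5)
Your checks of the first condition (including the strictness) and of $d_Z(p_i,\bar p_j)<\pi/2+2\delta$ are correct. They are also what the paper's one-line proof intends: the paper cites $\diam\Ant(p_i)<2\delta$ from Lemma~\ref{lem: p1dsusp}, whose real content is $d_Z(\bar p_i,q_i)<\delta$, and then uses triangle inequalities. However, the step you flag, $d_Z(\bar p_i,\bar p_j)<\pi/2+2\delta$, cannot be closed by any cleverer pairing, because it is false. So drop the speculation in your last paragraph. The pairing you propose there is the same three-term triangle inequality and again gives $\pi/2+3\delta$. In your side route, the lower bound $d_Z(p_j,q_i)\ge\pi-d_Z(p_i,p_j)-\delta$ is Lemma~\ref{lem: o1dsusp}, which needs penetrability, and that is not assumed here. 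The valid substitute is $d_Z(p_j,\bar p_i)\ge d_Z(p_i,\bar p_i)-d_Z(p_i,p_j)>\pi/2-\delta$, and it again yields only $3\delta$.

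Here is a counterexample. Let $Z$ be a circle of length $2\pi+2\epsilon$ with $0<\epsilon<\delta<1/10$; this is a geodesically complete $\CAT(1)$ space. Parametrize it by arclength and set $p_1=0$, $p_2=\pi/2+\epsilon$, $q_1=\pi+\epsilon$, $q_2=3\pi/2+2\epsilon$.
\begin{itemize}
\item Each $q_i$ is the exact antipode of $p_i$, so $d_Z(p_i,z)+d_Z(z,q_i)=\pi+\epsilon<\pi+\delta$ for every $z$.
\item We have $d_Z(p_1,p_2)=d_Z(q_1,q_2)=\pi/2+\epsilon$ and $d_Z(p_1,q_2)=d_Z(p_2,q_1)=\pi/2$.
\item Hence $(p_1,p_2)$ and $(q_1,q_2)$ are mutually opposite $(2,\delta)$-suspenders.
\end{itemize}
Now take $\bar p_1=\pi$ and $\bar p_2=3\pi/2+3\epsilon$. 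These satisfy $d_Z(p_1,\bar p_1)=d_Z(p_2,\bar p_2)=\pi$, so $\bar p_i\in\Ant(p_i)$. But $d_Z(\bar p_1,\bar p_2)=\pi/2+3\epsilon$, which exceeds $\pi/2+2\delta$ as soon as $\epsilon>2\delta/3$.

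So the correct statement has $(m,3\delta)$ in place of $(m,2\delta)$, and letting $\epsilon\to\delta$ shows the constant $3$ is sharp. Your argument proves exactly this corrected statement, and the paper's sketch yields nothing better. The lemma is only used in Lemma~\ref{lem: sr0} and Definition~\ref{defn: sr}, where replacing $2\delta$ by $3\delta$ is harmless. Your fallback is therefore the right resolution, but you should present it as a correction of the statement, not as a provisional weakening.
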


\begin{proof}
By Lemma \ref{lem: p1dsusp},
for all $i \in \{ 1, \dots, m \}$
we know that $\diam \Ant(p_i)$ is smaller than $2\delta$.
From this property we derive the claim.
\end{proof}

\subsection{Stability of relaxed multi-fold suspenders}

We first state the following stability under ultralimits:

\begin{lem}\label{lem: umdstab}
Let $(Z_{\omega},z_{\omega})$ 
be the ultralimit $\ulim(Z_k,z_k)$ of a sequence of 
pointed $\CAT(1)$ spaces.
Let $(p_1,\dots,p_m)$ and $(q_1,\dots,q_m)$ be 
mutually opposite $(m,\delta)$-suspenders in $Z_{\omega}$.
For each $i \in \{ 1, \dots, m \}$,
take $p_{i,k}, q_{i,k} \in Z_k$
with $p_{i,k} \to p_i$, $q_{i,k} \to q_i$.
Then for all $\omega$-large $k$
the pair of $(p_{1,k},\dots,p_{m,k})$ and $(q_{1,k},\dots,q_{m,k})$ are
mutually opposite $(m,\delta)$-suspenders in $Z_k$.
\end{lem}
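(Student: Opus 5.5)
Every condition in the definition of mutually opposite $(m,\delta)$-suspenders is a strict inequality. My plan is therefore to show that each condition in $Z_{\omega}$ forces the corresponding condition in $Z_k$ on an $\omega$-large set of indices. Since $\omega$ is finitely additive and there are only finitely many conditions (indexed by $i$ and by pairs $i\neq j$), the intersection of these $\omega$-large sets is still $\omega$-large, and that proves the lemma.

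\emph{Step 1: the near-orthogonality conditions \eqref{eqn: dortho}.} By the definition of $d_{\omega}$, we have $d_Z(p_i,p_j)=\ulim d_{Z_k}(p_{i,k},p_{j,k})$, and similarly for the pairs $(p_i,q_j)$ and $(q_i,q_j)$. Put $\eta := \pi/2+\delta-d_Z(p_i,p_j)>0$. The set of $k$ with $|d_{Z_k}(p_{i,k},p_{j,k})-d_Z(p_i,p_j)|<\eta$ is $\omega$-large, and for these $k$ the strict inequality $d_{Z_k}(p_{i,k},p_{j,k})<\pi/2+\delta$ holds. The same argument applies to the other two distances.

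\emph{Step 2: the $(1,\delta)$-suspender condition \eqref{eqn: dsph}.} Fix $i$. Set
\[
s:=\sup_{z\in Z_{\omega}}\{d(p_i,z)+d(z,q_i)\}<\pi+\delta,
\]
and choose $\epsilon>0$ with $s+2\epsilon<\pi+\delta$. I claim that for $\omega$-large $k$,
\[
\sup_{z\in Z_k}\{d(p_{i,k},z)+d(z,q_{i,k})\}\le s+\epsilon .
\]
Suppose not. Then, because $\omega$ is an ultrafilter, the set $A$ of indices $k$ for which some $z_k\in Z_k$ satisfies $d(p_{i,k},z_k)+d(z_k,q_{i,k})>s+\epsilon$ has $\omega(A)=1$. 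Choose such $z_k$ for $k\in A$, and arbitrary $z_k$ otherwise. If $\ulim d(z_{\omega}\text{-base},z_k)$ is finite, then $(z_k)$ defines a point $z\in Z_{\omega}$. By the definition of $d_{\omega}$ and the relations $p_{i,k}\to p_i$, $q_{i,k}\to q_i$, this point satisfies $d(p_i,z)+d(z,q_i)\ge s+\epsilon>s$, contradicting the choice of $s$.

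\emph{Main obstacle: witnesses $z_k$ that escape to infinity.} Since $p_{i,k}$ and $q_{i,k}$ stay at bounded distance from the base points, the remaining case is $\ulim d(p_{i,k},z_k)=\infty$. Here I would replace $z_k$ by a bounded witness. First I would try the $\pi$-geodesicity of $Z_k$. If $d(p_{i,k},z_k)<\pi$, the witness is already bounded. If $d(p_{i,k},z_k)\ge\pi$, I would move from $p_{i,k}$ toward $z_k$ along a geodesic, within the $\pi$-geodesic range, to a point $w_k$ with $d(p_{i,k},w_k)$ close to $\pi$. Since $d(w_k,q_{i,k})\ge d(p_{i,k},w_k)-d(p_{i,k},q_{i,k})$ and $d(p_{i,k},q_{i,k})$ is bounded, the aim is to obtain $d(p_{i,k},w_k)+d(w_k,q_{i,k})>s+\epsilon$ with $w_k$ uniformly bounded, and then finish as in Step 2. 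If $z_k$ lies at distance $\ge\pi$ in a part of $Z_k$ not reachable by geodesics from $p_{i,k}$, this reduction can fail; in that case I would fall back on the standing convention that distances $\ge\pi$ only enter through the truncation $d\wedge\pi$. I expect this escaping-witness case to be the only genuinely delicate point. The rest is the routine transfer of strict inequalities through $\omega$-limits.
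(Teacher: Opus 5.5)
Your Step 1 and the bounded-witness part of Step 2 are exactly the paper's argument. The paper only notes that the given tuples are $(m,\delta-a)$-suspenders for some small $a>0$ and transfers the strict inequalities; it is silent on the case you flag. That escaping case is a genuine gap, and neither of your repairs closes it.

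Stopping at a point $w_k$ with $d(p_{i,k},w_k)$ close to $\pi$ only gives $d(p_{i,k},w_k)+d(w_k,q_{i,k})\ge 2d(p_{i,k},w_k)-d(p_{i,k},q_{i,k})$. This is roughly $2\pi-d(p_{i,k},q_{i,k})$, hence possibly only about $\pi-\delta$, since all you know is $d(p_{i,k},q_{i,k})<\pi+\delta$. That does not beat $s+\epsilon$. Moreover, when $d(p_{i,k},z_k)\ge\pi$, a $\CAT(1)$ space need not contain any geodesic, or even any path, from $p_{i,k}$ to $z_k$.

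The truncation fallback has no basis: \eqref{eqn: dsph} is formulated with $d_Z$ itself, and truncating would not help anyway. In fact no argument can work, because the statement fails in this generality. Take $Z_k:=\Sph^1\sqcup\{w_k\}$ with the round metric on $\Sph^1$ and $d(w_k,x):=k\ge\pi$ for all $x\in\Sph^1$. This is $\CAT(1)$ in the paper's sense, since every pair of distinct points at distance $<\pi$, and every nondegenerate triangle of perimeter $<2\pi$, lies in $\Sph^1$. With base points in $\Sph^1$ the ultralimit is $\Sph^1$, where antipodal points $p,q$ are even genuine opposite suspenders. But in $Z_k$ we have $d(p,w_k)+d(w_k,q)=2k$ (and $2\pi$ after truncation). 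So the constant sequences $p,q$ are never $(1,\delta)$-suspenders in $Z_k$.

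What is missing is a hypothesis that excludes escaping witnesses. The simplest is $\sup_k\diam Z_k<\infty$. Then every sequence $(z_k)$ defines a point of $Z_{\omega}$, and your Step 2 is complete without a second case. This bound holds wherever the paper uses the lemma:
\begin{itemize}
\item spaces of directions carry the angle metric, so their diameter is at most $\pi$;
\item in Lemma~\ref{lem: pmdsusp} each $Z_k$ carries a $(1,\delta_k)$-suspender, so $\diam Z_k\le\pi+2\delta_k$ by Lemma~\ref{lem: p1dsusp}.
\end{itemize}
Alternatively, if the $Z_k$ are length spaces, pick $w_k$ on a path from $p_{i,k}$ to $z_k$ with $d(p_{i,k},w_k)=\pi+\delta$ exactly. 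Then $(w_k)$ stays bounded, and its limit $w$ satisfies $d(p_i,w)+d(w,q_i)\ge\pi+\delta>s$, a contradiction. The distance ``close to $\pi$'' in your proposal is too short for this.
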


\begin{proof}
If $(p_1,\dots,p_m)$ and $(q_1,\dots,q_m)$ are
mutually opposite $(m,\delta)$-suspenders in $Z_{\omega}$,
then for some sufficiently small $a \in (0,\delta)$
they are mutually opposite $(m,\delta-a)$-suspenders in $Z_{\omega}$.
This property leads to the lemma.
\end{proof}

We next claim that
every surjective $1$-Lipschitz map
between $\CAT(1)$ spaces
sends $(m,\delta)$-suspenders
to $(m,\delta)$-ones.

\begin{lem}\label{lem: slstab}
Let $\varphi \colon Z_1 \to Z_2$ 
be a surjective $1$-Lipschitz map
between $\CAT(1)$ spaces.
Let $(p_1,\dots,p_m)$ and $(q_1,\dots,q_m)$ be 
mutually opposite $(m,\delta)$-suspenders in $Z_1$.
Then the pair of $(\varphi(p_1),\dots,\varphi(p_m))$ and 
$(\varphi(q_1),\dots,\varphi(q_m))$
are mutually opposite $(m,\delta)$-suspenders in $Z_2$.
\end{lem}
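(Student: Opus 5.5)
The proof will check the two defining conditions of an $(m,\delta)$-suspender directly for the image tuples, using only that $\varphi$ is $1$-Lipschitz and surjective. Write $d_1, d_2$ for the metrics on $Z_1, Z_2$. Put $p_i' := \varphi(p_i)$ and $q_i' := \varphi(q_i)$.

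First I verify condition (2), the almost-orthogonality \eqref{eqn: dortho}. For distinct $i, j$ the $1$-Lipschitz property gives
\[
d_2(p_i',p_j') \le d_1(p_i,p_j) < \frac{\pi}{2} + \delta,
\]
and the same estimate holds for the pairs $(p_i',q_j')$ and $(q_i',q_j')$. No lower bounds are needed, since the definition only imposes upper bounds.

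Next I verify condition (1), namely that $p_i'$ and $q_i'$ are mutually opposite $(1,\delta)$-suspenders in $Z_2$, i.e.\ that \eqref{eqn: dsph} holds for them. The definition asks for a strict inequality on the supremum, so I first fix $a \in (0,\delta)$ with
\[
\sup_{z \in Z_1}\{ d_1(p_i,z) + d_1(z,q_i) \} < \pi + \delta - a,
\]
which is possible because the supremum is already strictly below $\pi + \delta$. Now take any $w \in Z_2$. By surjectivity there is $z \in Z_1$ with $\varphi(z) = w$. The $1$-Lipschitz property then gives
\[
d_2(p_i',w) + d_2(w,q_i') \le d_1(p_i,z) + d_1(z,q_i) < \pi + \delta - a.
\]
Taking the supremum over $w$ yields a bound of at most $\pi + \delta - a$, which is strictly less than $\pi + \delta$.

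The main point to watch is this strictness of the supremum bound, and the uniform margin $a$ handles it. Surjectivity is used only to realize every $w \in Z_2$ as an image $\varphi(z)$. No penetrability or $\CAT(1)$ geometry is required.
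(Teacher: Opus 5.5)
Your proof is correct and is the paper's argument: the paper's entire proof is the remark that the claim follows from surjectivity and the $1$-Lipschitz property, and you have checked this condition by condition. The margin $a$ is harmless but unnecessary, since surjectivity already gives $\sup_{w \in Z_2}\{d_2(\varphi(p_i),w)+d_2(w,\varphi(q_i))\} \le \sup_{z \in Z_1}\{d_1(p_i,z)+d_1(z,q_i)\} < \pi+\delta$ directly.
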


\begin{proof}
Since $f$ is surjective and $1$-Lipschitz,
we see the lemma.
\end{proof}

\begin{rem}\label{rem: afterslstab}
In order to obtain Lemmas \ref{lem: umdstab} and \ref{lem: slstab},
we formulate the notions of $\delta$-suspenders and
$(m,\delta)$-suspenders
by the one-sided open 
inequalities \eqref{eqn: dsph} and \eqref{eqn: dortho}.
\end{rem}

\subsection{Progress of relaxed multi-fold suspenders}

The next lemma tells us whether 
a relaxed $m$-fold suspender
can be extended to a relaxed $(m+1)$-fold one
(cf.~Lemma \ref{lem: pm0susp}).

\begin{lem}\label{lem: pmdsusp}
For every $\epsilon \in (0,1)$,
and for every $m \in \N$,
there exists $\delta \in (0,1)$ 
satisfying the following:
Let $Z$ be a penetrable $\CAT(1)$ space with metric $d_Z$,
and
let $(p_1,\dots,p_m)$ be an $(m,\delta)$-suspender in $Z$.
If there exists a $(1,\delta)$-suspender $p_0$ in $Z$
such that
\begin{equation}
\sum_{i=1}^m \cos^2 d_Z \left( p_i, p_0 \right) < 1-\epsilon,
\label{eqn: pmdsuspa}
\end{equation}
then there exists $p_{m+1} \in Z$
for which the $(m+1)$-tuple
$(p_1,\dots,p_m,p_{m+1})$
is an $(m+1,\epsilon)$-suspender in $Z$.
\end{lem}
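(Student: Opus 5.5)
We argue by contradiction via ultralimits, reducing to the rigid statement Lemma \ref{lem: pm0susp}. Fix $\epsilon$ and $m$, and suppose no $\delta$ works. Then for $\delta_k \to 0$ there are penetrable $\CAT(1)$ spaces $Z_k$, mutually opposite $(m,\delta_k)$-suspenders $(p_{1,k},\dots,p_{m,k})$ and $(q_{1,k},\dots,q_{m,k})$, and $(1,\delta_k)$-suspenders $p_{0,k}$ opposite to $q_{0,k}$, such that
\[
\sum_{i=1}^m \cos^2 d_{Z_k}(p_{i,k},p_{0,k}) < 1-\epsilon ,
\]
yet no $p_{m+1,k}$ makes $(p_{1,k},\dots,p_{m+1,k})$ an $(m+1,\epsilon)$-suspender. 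By Lemma \ref{lem: p1dsusp} we have $\diam Z_k \le \pi + 2\delta_k$, so the ultralimit $Z_\omega := \ulim Z_k$ does not depend on base points. It is a complete $\CAT(1)$ space, and it is penetrable because penetrability passes to ultralimits.

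Next we identify the rigid structure in the limit. Put $p_i := \ulim p_{i,k}$ and $q_i := \ulim q_{i,k}$ for $i = 0,\dots,m$. We claim that $p_i$ and $q_i$ are mutually opposite suspenders in $Z_\omega$. Every $z \in Z_\omega$ is represented by some sequence $(z_k)$, and \eqref{eqn: dsph} in $Z_k$ gives $d(p_i,z)+d(z,q_i) \le \pi$. Lemma \ref{lem: o1dsusp}, applied in each $Z_k$, gives the reverse inequality $\ge \pi$. Hence \eqref{eqn: sph} holds.

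Passing \eqref{eqn: dortho} and \eqref{eqn: omdsuspb} to the limit gives exact distances $\pi/2$ between $p_i,q_i$ and $p_j,q_j$ for distinct $i,j \ge 1$. So $(p_1,\dots,p_m)$ is an $m$-suspender in $Z_\omega$, and $p_0$ is a suspender. The strict inequality survives in the weak form $\sum_i \cos^2 d(p_i,p_0) \le 1-\epsilon < 1$. Lemma \ref{lem: pm0susp} (whose proof uses only completeness and $\CAT(1)$) now produces $p_{m+1} \in Z_\omega$ such that $(p_1,\dots,p_{m+1})$ is an $(m+1)$-suspender, with some opposite tuple $(q_1',\dots,q_{m+1}')$. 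The $q_i'$ for $i \le m$ may differ from the chosen $q_i$, but Lemma \ref{lem: p1susp} shows $\Ant(p_i) = \{q_i\}$, so $q_i' = q_i$.

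Finally we transfer back. Since the suspension equalities hold exactly, $(p_1,\dots,p_{m+1})$ and $(q_1,\dots,q_m,q_{m+1})$ are in particular mutually opposite $(m+1,\epsilon)$-suspenders in $Z_\omega$. Choose representatives $p_{m+1,k} \to p_{m+1}$ and $q_{m+1,k} \to q_{m+1}$. Lemma \ref{lem: umdstab} says that for $\omega$-large $k$ the tuples $(p_{1,k},\dots,p_{m+1,k})$ and $(q_{1,k},\dots,q_{m+1,k})$ are mutually opposite $(m+1,\epsilon)$-suspenders in $Z_k$. This contradicts the choice of the $Z_k$.

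The main obstacle is the identification step. One must verify that the supremum condition \eqref{eqn: dsph} is really inherited by every point of the ultralimit, and this is fine since each point of $Z_\omega$ is represented by a sequence in the $Z_k$. One must also check that the lower bound needs penetrability of each $Z_k$ and not only of $Z_\omega$; this is exactly what Lemma \ref{lem: o1dsusp} supplies levelwise. A secondary point is to check that Lemma \ref{lem: pm0susp} and Proposition \ref{prop: m0susp} apply to $Z_\omega$, which needs only completeness; if needed, the lune lemmas already used there guarantee that the required geodesics exist in $Z_\omega$.
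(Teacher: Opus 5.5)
Your proposal is correct and follows the paper's proof: you argue by contradiction via ultralimits, use Lemma \ref{lem: omdsusp} to obtain a rigid $m$-suspender and a rigid suspender $p_0$ in $Z_\omega$, extend with Lemma \ref{lem: pm0susp}, and transfer the resulting $(m+1)$-suspender back to $\omega$-large $k$ by Lemma \ref{lem: umdstab}. Your extra details fill in points the paper leaves implicit. In particular, the uniform bound $\diam Z_k \le \pi+2\delta_k$ from Lemma \ref{lem: p1dsusp} guarantees that every sequence in the $Z_k$ defines a point of $Z_\omega$, which is what lets the supremum condition be pulled back, and the identity $\Ant(p_i)=\{q_i\}$ pins down the opposite tuple.
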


\begin{proof}
Suppose that for some sequence $(\delta_k)$ in $(0,\epsilon)$
with $\delta_k \to 0$, 
there exists a sequence $(Z_k)$
of penetrable $\CAT(1)$ spaces with metric $d_{Z_k}$
such that each $Z_k$
admits an $(m,\delta_k)$-suspender 
$(p_{1,k}, \dots, p_{m,k})$,
and a $(1,\delta_k)$-suspender $p_{0,k}$
with
\[
\sum_{i=1}^m \cos^2 d_{Z_k} \left( p_{i,k}, p_{0,k} \right) < 1-\epsilon.
\]
Suppose in addition that
there exists no point $p_k$ in $Z_k$ 
for which $(p_{1,k}, \dots, p_{m,k},p_k)$ is
an $(m+1,\epsilon)$-suspender in $Z_k$.
Let $(Z_{\omega},z_{\omega})$
be the ultralimit $\ulim(Z_k,z_k)$ 
for some $z_k \in Z_k$.
For each $i \in \{ 1, \dots, m \}$,
take $p_i \in Z_{\omega}$
with $p_{i,k} \to p_i$.
Lemma \ref{lem: omdsusp} implies that
$(p_1,\dots,p_m)$ is an $m$-suspender in $Z_{\omega}$.
From the present assumption we derive
\[
\sum_{i=1}^m \cos^2 d_{Z_{\omega}} \left( p_i, p_0 \right) \le 1-\epsilon.
\]
By Lemma \ref{lem: pm0susp},
there exists $p_{m+1} \in Z_{\omega}$
such that $(p_1,\dots,p_m,p_{m+1})$
is an $(m+1)$-suspender in $Z_{\omega}$.
For each $k$,
we choose $p_{m+1,k} \in Z_k$
with $p_{m+1,k} \to p_{m+1}$.
From Lemma \ref{lem: umdstab},
it follows that for all $\omega$-large $k$
the $(m+1)$-tuple $(p_{1,k},\dots,p_{m,k},p_{m+1,k})$ 
is an $(m+1,\epsilon)$-suspender in $Z_k$.
This is a contradiction.
\end{proof}

\subsection{Dimensions versus relaxed multi-fold suspenders}

First we show the following:

\begin{lem}\label{lem: capmdsusp}
For every $n \in \N$
there exists $\delta \in (0,1)$
such that
if a penetrable $\CAT(1)$ space $Z$ 
of $\dim_{\mathrm{G}} Z \le n-1$
admits an $(m,\delta)$-suspender,
then $m \le n$. 
\end{lem}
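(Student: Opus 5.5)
I would argue by contradiction via ultralimits, as in the proof of Lemma \ref{lem: pmdsusp}. Fix $n$ and suppose the statement fails. Then there are a sequence $\delta_k \to 0$ and penetrable $\CAT(1)$ spaces $Z_k$ with $\dim_{\mathrm{G}} Z_k \le n-1$, each admitting an $(m_k,\delta_k)$-suspender with $m_k \ge n+1$. Discarding all but the first $n+1$ entries of each tuple, we may assume every $Z_k$ admits an $(n+1,\delta_k)$-suspender $(p_{1,k},\dots,p_{n+1,k})$, with opposite tuple $(q_{1,k},\dots,q_{n+1,k})$. This works because the defining conditions of an $(m,\delta)$-suspender restrict to subtuples. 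Take the ultralimit $(Z_\omega,z_\omega) := \ulim (Z_k, p_{1,k})$. By Lemma \ref{lem: p1dsusp} we have $\diam Z_k \le \pi + 2\delta_k$, so the choice of base points is harmless, and all $p_{i,k}$, $q_{i,k}$ define points $p_i$, $q_i$ of $Z_\omega$. The limit $Z_\omega$ is $\CAT(1)$ and penetrable, since penetrability is closed under ultralimits.

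Next I would show that $(p_1,\dots,p_{n+1})$ and $(q_1,\dots,q_{n+1})$ are mutually opposite rigid $(n+1)$-suspenders in $Z_\omega$.

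\emph{Suspender condition.} Each $z \in Z_\omega$ is an ultralimit of points $z_k \in Z_k$. By \eqref{eqn: dsph} and Lemma \ref{lem: o1dsusp} applied in the penetrable spaces $Z_k$,
\[
\pi - \delta_k < d_{Z_k}(p_{i,k},z_k) + d_{Z_k}(z_k,q_{i,k}) < \pi + \delta_k .
\]
Passing to the limit gives \eqref{eqn: sph} for $p_i$ and $q_i$.

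\emph{Orthogonality.} The upper bounds \eqref{eqn: dortho} and the lower bounds of Lemma \ref{lem: omdsusp} give, in the limit, the exact distances $\pi/2$ between distinct entries.

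By Proposition \ref{prop: m0susp}, $Z_\omega$ then contains an isometric copy of $\Sph^{n}$ as a closed convex subset $C_n$. Since $\dim_{\mathrm{G}}$ is monotone under taking subsets, this gives
\[
\dim_{\mathrm{G}} Z_\omega \ge \dim_{\mathrm{G}} \Sph^n = n .
\]

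On the other hand, the semicontinuity \eqref{eqn: lscdim} gives
\[
\dim_{\mathrm{G}} Z_\omega \le \ulim \dim_{\mathrm{G}} Z_k \le n-1 .
\]
This contradicts the previous bound, so some fixed $\delta$ works for all spaces with $\dim_{\mathrm{G}} Z \le n-1$.

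The main point to check is the lower bound $\dim_{\mathrm{G}} Z_\omega \ge n$. Geometric dimension is defined via spaces of directions, and I need monotonicity under subsets, or at least under closed convex subsets. The most direct route uses Kleiner's characterization: $\dim_{\mathrm{G}}$ equals the supremum of covering dimensions of compact subsets. The copy $C_n \cong \Sph^n$ is a compact subset of $Z_\omega$ of covering dimension $n$. This argument does not need $Z_\omega$ to be separable.

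A secondary point is that the lower bound of Lemma \ref{lem: o1dsusp} is only available because each $Z_k$ is penetrable. This is why penetrability appears in the hypothesis.
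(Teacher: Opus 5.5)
Your proof is correct and follows the paper's argument: you take an ultralimit of counterexamples with $\delta_k \to 0$, use Lemma~\ref{lem: omdsusp} to get a rigid suspender in the limit, apply Proposition~\ref{prop: m0susp} to obtain a round spherical subset, and reach a contradiction via \eqref{eqn: lscdim}. Two details differ slightly from the paper and are fine: truncating to $n+1$ entries replaces the paper's unexplained reduction to a finite $\omega$-limit of the $m_k$, and Kleiner's compact-subset characterization, applied to the embedded $\Sph^n$, justifies the bound $\dim_{\mathrm{G}} Z_\omega \ge n$ that the paper reads off from the join splitting.
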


\begin{proof}
Suppose that
for some sequences $(\delta_k)$ in $(0,1)$ with $\delta_k \to 0$
and $(m_k)$ in $\N$,
there exists a sequence $(Z_k)$ of penetrable
$\CAT(1)$ spaces of $\dim_G Z_k \le n-1$
such that
each $Z_k$ admits an 
$(m_k,\delta_k)$-suspender
for $m_k$ with $m_k \ge n+1$.
Let $(Z_{\omega},z_{\omega})$ be the ultralimit $\ulim (Z_k,z_k)$ 
for some $z_k \in Z_k$.
From \eqref{eqn: lscdim}
it follows that
$\dim_{\mathrm{G}} Z_{\omega} \le n-1$.
Set $m := \ulim m_k$.
Then $m \ge n+1$.
The limit $m$ is possibly infinite.
We may assume that $m$ is finite.
Lemma \ref{lem: omdsusp} implies that
$Z_{\omega}$ admits an $m$-suspender.
By Proposition \ref{prop: m0susp}, 
the ultralimit
$Z_{\omega}$ isometrically splits as $\Sph^{m-1} \ast Z_0$
for some $Z_0$.
This implies 
$\dim_{\mathrm{G}} Z_{\omega} \ge n$.
This is a contradiction.
\end{proof}

\begin{rem}\label{rem: aftercapmdsusp}
Similarly to Lemma \ref{lem: capmdsusp},
we see that
for every $m \in \N$
there exists $\delta \in (0,1)$
such that
if a penetrable $\CAT(1)$ space $Z$ 
admits an $(m,\delta)$-suspender,
then $m \le \dim_{\mathrm{G}} Z + 1$.
\end{rem}

In the sequel, we will use the following:

\begin{lem}\label{lem: fullsusp}
For every $n \in \N$, and for every $c \in [1,\infty)$,
there exists $\delta \in (0,1)$
such that
if a penetrable $\CAT(1)$ space $Z$ 
of $\dim_{\mathrm{G}} Z \le n-1$
admits an $(n,\delta)$-suspender $(p_1,\dots,p_n)$,
then no point $z$ in $Z$
satisfies 
\[
\max_{i \in \{ 1, \dots, n \}} 
\left\vert d_Z \left( p_i, z \right) - \frac{\pi}{2} \right\vert < c\delta.
\]
\end{lem}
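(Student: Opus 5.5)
We argue by contradiction via ultralimits, in the same way as Lemma \ref{lem: capmdsusp}. Fix $n$ and $c$. Suppose that for a sequence $\delta_k \to 0$ there are penetrable $\CAT(1)$ spaces $Z_k$ with $\dim_{\mathrm{G}} Z_k \le n-1$, mutually opposite $(n,\delta_k)$-suspenders $(p_{1,k},\dots,p_{n,k})$ and $(q_{1,k},\dots,q_{n,k})$ in $Z_k$, and points $z_k \in Z_k$ with $\vert d_{Z_k}(p_{i,k},z_k) - \pi/2 \vert < c\delta_k$ for all $i$. Let $(Z_\omega,z_\omega) := \ulim (Z_k,z_k)$. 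By Lemma \ref{lem: lemma-lscdim-type} inequality \eqref{eqn: lscdim}, we have $\dim_{\mathrm{G}} Z_\omega \le n-1$. Penetrability passes to the limit, and all diameters are at most $\pi + 2\delta_k$ by Lemma \ref{lem: p1dsusp}, so the limit points $p_i$, $q_i$ of $p_{i,k}$, $q_{i,k}$ exist. Since $\delta_k \to 0$, the suspender inequalities \eqref{eqn: dsph}, \eqref{eqn: dortho}, \eqref{eqn: omdsuspa} and \eqref{eqn: omdsuspb} become equalities in the limit. Therefore $(p_1,\dots,p_n)$ and $(q_1,\dots,q_n)$ are mutually opposite rigid $n$-suspenders in $Z_\omega$. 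Moreover, the limit point $z := z_\omega$ satisfies $d(p_i,z) = \pi/2$ for every $i$. Since each $p_i$ is a suspender, Lemma \ref{lem: p1susp} then gives $d(q_i,z) = \pi/2$ as well.

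Next we apply Proposition \ref{prop: m0susp}. It shows that $Z_\omega$ splits isometrically as $C_{n-1} \ast C_{n-1}^{\perp}$, where $C_{n-1} \cong \Sph^{n-1}$ contains all the $p_i, q_i$. The point $z$ is at distance $\pi/2$ from the $2n$ points $\pm e_i$ of $C_{n-1}$. In the join, the distance from $z$ to a point $e \in C_{n-1}$ is governed by $\cos d(z,e) = \cos t \,\langle e, \text{proj}(z)\rangle$, where $t$ is the join coordinate of $z$. Orthogonality to all $\pm e_i$ forces the $C_{n-1}$-component of $z$ to vanish, so $z \in C_{n-1}^{\perp}$. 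In particular $C_{n-1}^{\perp}$ is non-empty. Then $Z_\omega$ contains the join $\Sph^{n-1} \ast \{z\}$, which is a closed hemisphere of dimension $n$. Hence $\dim_{\mathrm{G}} Z_\omega \ge n$, contradicting $\dim_{\mathrm{G}} Z_\omega \le n-1$.

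The main obstacle is making the limit passage rigorous, namely showing that the relaxed $(n,\delta_k)$-suspenders converge to genuine suspenders. The upper bound in \eqref{eqn: dsph} passes to the limit directly. The lower bounds need penetrability through Lemma \ref{lem: omdsusp}, which is exactly why penetrability is assumed. One also needs to check that $z$ lies within bounded distance of the base points; this holds because all diameters are uniformly bounded. The dimension estimate at the end uses only the fact that a closed $n$-hemisphere has geometric dimension $n$, and that $\dim_{\mathrm{G}}$ is monotone under passing to closed subsets, which follows from Kleiner's characterization via compact subsets.
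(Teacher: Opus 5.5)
Your proposal is correct and follows the same route the paper indicates. You pass to an ultralimit, use Lemma~\ref{lem: omdsusp} to upgrade the relaxed suspenders to genuine mutually opposite $n$-suspenders with the limit of $z_k$ perpendicular to every $p_i$, and apply Proposition~\ref{prop: m0susp} to split off $\Sph^{n-1}$; this forces a non-empty polar factor, hence $\dim_{\mathrm{G}} Z_{\omega} \ge n$, contradicting \eqref{eqn: lscdim}, and your write-up correctly fills in the details that the paper leaves as a one-sentence sketch.
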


Lemma \ref{lem: fullsusp} can be shown 
by an ultralimit argument combined with
Proposition \ref{prop: m0susp}
and Lemma \ref{lem: omdsusp}.

\subsection{Almost spherical suspension structure}

If a $\CAT(1)$ space $Z$
isometrically splits as $\Sph^{m-1} \ast Z_0$,
then
we find mutually opposite $m$-suspenders in $Z$
contained in the spherical factor $\Sph^{m-1}$
(see Example \ref{exmp: exmsusp}).
Moreover,
if for some $m \in \N$ a $\CAT(1)$ space $Z$ satisfies
\[
d_{\GH}(Z,\Sph^{m-1} \ast Z_0) < \delta
\]
for some metric space $Z_0$,
then we find
mutually opposite $(m,10\delta)$-suspenders in $Z$.

In the following two propositions,
from the existence of relaxed multi-fold suspenders,
we derive an almost suspension structure
for penetrable $\CAT(1)$ spaces
(cf.~\cite[Subsections 6.3 and 13.2]{lytchak-nagano1}).

Let $\Class$ denote the set of all isometry classes
of compact penetrable $\CAT(1)$ spaces.
If a $\CAT(1)$ space $X \in \Class$ 
isometrically splits as a spherical join $Y \ast Z$,
then both the factors $Y$ and $Z$ belong to $\Class$.

We first state the following: 

\begin{prop}\label{prop: tmdsusp}
Let $\Class_0$
be a relatively compact subset of $\Class$
with respect to the Gromov--Hausdorff topology.
Then for every $\epsilon \in (0,1)$
there exists $\delta \in (0,1)$
such that 
if $Z \in \Class_0$ admits an $(m,\delta)$-suspender,
then there exists $Z_0 \in \Class$
satisfying
\[
d_{\GH}(Z, \Sph^{m-1} \ast Z) < \epsilon.
\]
\end{prop}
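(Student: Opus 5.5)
The plan is to argue by contradiction, using compactness in the Gromov--Hausdorff topology together with the rigid splitting of Proposition \ref{prop: m0susp}. (The displayed conclusion should of course read $d_{\GH}(Z,\Sph^{m-1}\ast Z_0)<\epsilon$.)

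First, $m$ is bounded. Since $\Class_0$ is relatively compact, the geometric dimensions of its members are uniformly bounded. Indeed, a Gromov--Hausdorff limit of spaces in $\Class_0$ has $\dim_{\mathrm G}$ bounded by the ultralimit of the dimensions by \eqref{eqn: lscdim}, and compactness of the closure gives a uniform bound $n-1$; I would justify this with a short contradiction argument, possibly using Lemma \ref{lem: capmdsusp}. Remark \ref{rem: aftercapmdsusp} then gives $m\le n$ once $\delta$ is small. So it suffices to treat each fixed $m\in\{1,\dots,n\}$ separately and take the minimum of the resulting $\delta$'s.

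Now fix $m$ and suppose the statement fails. Then there are $\delta_k\to0$ and $Z_k\in\Class_0$ carrying mutually opposite $(m,\delta_k)$-suspenders $(p_{1,k},\dots,p_{m,k})$ and $(q_{1,k},\dots,q_{m,k})$, yet $d_{\GH}(Z_k,\Sph^{m-1}\ast Z_0)\ge\epsilon$ for every $Z_0\in\Class$.

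By relative compactness, after passing to a subsequence $Z_k$ converges in the Gromov--Hausdorff topology to a compact metric space $Z$, which is isometric to the ultralimit $\ulim Z_k$. The space $Z$ is $\CAT(1)$ as an ultralimit of $\CAT(1)$ spaces, and it is penetrable because penetrability passes to ultralimits. Hence $Z\in\Class$.

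Next take limit points $p_i$ and $q_i$ in $Z$ with $p_{i,k}\to p_i$ and $q_{i,k}\to q_i$. The defining inequalities \eqref{eqn: dsph} and \eqref{eqn: dortho} pass to the limit with non-strict inequalities and $\delta=0$. The point to check is the suspender equation: every $z\in Z$ is a limit of points $z_k\in Z_k$, so $d(p_i,z)+d(z,q_i)\le\pi$. Combined with Lemma \ref{lem: omdsusp}(1), applied in each penetrable $Z_k$, this forces equality, $d(p_i,z)+d(z,q_i)=\pi$. Likewise Lemma \ref{lem: omdsusp}(2) gives exact distances $\pi/2$ between distinct members of the two tuples. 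Thus $(p_1,\dots,p_m)$ and $(q_1,\dots,q_m)$ are mutually opposite $m$-suspenders in $Z$.

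Proposition \ref{prop: m0susp} then gives an isometric splitting $Z\cong\Sph^{m-1}\ast Z_0$ with $Z_0=C_{m-1}^{\perp}$. Since $Z\in\Class$, the factor $Z_0$ also lies in $\Class$, by the remark preceding the proposition. This gives
\[
d_{\GH}(Z_k,\Sph^{m-1}\ast Z_0)\le d_{\GH}(Z_k,Z)\to0,
\]
which contradicts the choice of $Z_k$.

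The main obstacle is the limiting step: the strict one-sided inequalities must become exact equalities. This works only because the lower bounds of Lemma \ref{lem: omdsusp} use penetrability of each $Z_k$, and it is exactly why penetrability is built into $\Class$. A lesser technical point is the uniform dimension bound that makes $m$ range over a finite set; if that proves awkward, I would instead allow $m_k$ to vary and pass to $m=\ulim m_k$, using Lemma \ref{lem: capmdsusp} to ensure $m$ is finite.
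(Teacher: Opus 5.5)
Your main argument is the ultralimit argument the paper indicates. You pass to a Gromov--Hausdorff limit $Z\in\Class$, and you use Lemma \ref{lem: omdsusp} in the penetrable $Z_k$ to turn the one-sided $\delta_k$-inequalities into the exact equalities that define mutually opposite $m$-suspenders in $Z$. You then split $Z$ by Proposition \ref{prop: m0susp}. That part is correct, and so is your reading of the typo as $Z_0$.

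The preliminary reduction to finitely many $m$ is justified incorrectly. Relative compactness in $\Class$ gives no bound on $\dim_{\mathrm{G}}$, because no geodesic completeness is assumed. For example, let $Z_N$ be the wedge of $\Sph^1$ with a closed ball of radius $1/N$ in $\Sph^N$, glued at a point. Then $Z_N$ is a compact, penetrable $\CAT(1)$ space with $\dim_{\mathrm{G}} Z_N=N$, and $Z_N\to\Sph^1$. Inequality \eqref{eqn: lscdim} bounds the dimension of the limit by that of the approximants, not the other way round, so it cannot give such a bound. Your fallback via Lemma \ref{lem: capmdsusp} fails for the same reason, since that lemma needs a uniform bound $\dim_{\mathrm{G}} Z\le n-1$.

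The correct reason $m$ is bounded is metric. Take $\delta<\pi/8$. By Lemma \ref{lem: omdsusp}(2) and Remark \ref{rem: aftero1dsusp}, the $2m$ points $p_1,\dots,p_m,q_1,\dots,q_m$ are pairwise more than $\pi/4$ apart. A relatively compact family is uniformly totally bounded, so each member of $\Class_0$ is covered by $N(\pi/8)$ balls of radius $\pi/8$. Each such ball contains at most one of the points, hence $2m\le N(\pi/8)$. Equivalently, in your version with varying $m_k$: if $\ulim m_k$ were infinite, the compact limit would contain infinitely many points at pairwise distance $\ge\pi/2$, which is impossible. With this replacement your proof is complete.
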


For $n \in \N$,
let $\Class(n-1)$ denote the set of all isometry classes
of compact penetrable $\CAT(1)$ spaces 
of covering dimension $\le n-1$.
If a $\CAT(1)$ space $X \in \Class(n-1)$ 
isometrically splits as an $m$-fold spherical suspension 
$\Sph^{m-1} \ast Z_0$,
then $Z_0$ belong to $\Class(n-m-1)$.

Proposition \ref{prop: tmdsusp}
can be shown 
by an ultralimit argument combined with
Proposition \ref{prop: m0susp}
and Lemma \ref{lem: omdsusp}.
The details are left to the readers.

Similarly to Proposition \ref{prop: tmdsusp},
by Lemma \ref{lem: capmdsusp} we see the following:

\begin{prop}\label{prop: tndsusp}
For $n \in \N$,
let $\Class_0$
be a relatively compact subset of $\Class(n-1)$
with respect to the Gromov--Hausdorff topology.
Then for every $\epsilon \in (0,1)$
there exists $\delta \in (0,1)$
such that 
if $Z \in \Class_0$ 
admits an $(m,\delta)$-suspender,
then $m \le n$, and 
there exists $Z_0 \in \Class(n-m-1)$
satisfying
\[
d_{\GH} \left( Z, \Sph^{m-1} \ast Z_0 \right) < \epsilon;
\]
in particular,
if $\epsilon$ is sufficiently small with respect to $n$,
then $Z$ is homotopy equivalent to $\Sph^{m-1} \ast Z_0$.
\end{prop}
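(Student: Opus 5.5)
We argue by contradiction, using an ultralimit in the same way as Lemma \ref{lem: capmdsusp} and Proposition \ref{prop: tmdsusp}. First, the bound $m \le n$ comes directly from Lemma \ref{lem: capmdsusp}, because every $Z \in \Class_0$ is penetrable and $\dim_{\mathrm{G}} Z = \dim Z \le n-1$ by Kleiner's theorem. So we may take $\delta$ smaller than the constant given there; then only finitely many values $m \in \{1,\dots,n\}$ can occur, and it is enough to treat each fixed $m$ separately.

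Now fix $m$ and suppose the claim fails for some $\epsilon$. Then there are $\delta_k \to 0$ and spaces $Z_k \in \Class_0$ with $(m,\delta_k)$-suspenders $(p_{1,k},\dots,p_{m,k})$, opposite to $(q_{1,k},\dots,q_{m,k})$, such that $d_{\GH}(Z_k, \Sph^{m-1} \ast Z_0) \ge \epsilon$ for every $Z_0 \in \Class(n-m-1)$. Since $\Class_0$ is relatively compact, we may pass to a subsequence with $Z_k \to Z_\omega$ in the Gromov--Hausdorff topology; this limit is compact and coincides with the ultralimit. It is $\CAT(1)$ and penetrable, since penetrability is preserved under ultralimits, and by \eqref{eqn: lscdim} it has $\dim_{\mathrm{G}} Z_\omega \le n-1$, so $Z_\omega \in \Class(n-1)$. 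Let $p_i, q_i$ be the limits of $p_{i,k}, q_{i,k}$. Passing to the limit in \eqref{eqn: dsph} and \eqref{eqn: dortho}, together with the reverse inequalities of Lemma \ref{lem: omdsusp}, we get $d(p_i,z)+d(z,q_i)=\pi$ for every $z$ and all pairwise distances equal to $\pi/2$. Hence $(p_i)$ and $(q_i)$ are mutually opposite $m$-suspenders in $Z_\omega$. Proposition \ref{prop: m0susp} then gives an isometric splitting $Z_\omega = C_{m-1} \ast C_{m-1}^{\perp}$ with $C_{m-1} \cong \Sph^{m-1}$.

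Next we check that $Z_0 := C_{m-1}^\perp$ lies in $\Class(n-m-1)$. It is compact, being closed in the compact space $Z_\omega$. Its dimension is at most $n-m-1$, because $\dim_{\mathrm{G}}(\Sph^{m-1}\ast Z_0) = m + \dim_{\mathrm{G}} Z_0 \le n-1$. The main obstacle is penetrability of $Z_0$. The plan is to take a dense set of points of $Z_\omega$ having antipodes and project it to the join factor. If $z=\Theta(t,s,y)$ and $\bar z=\Theta(t',s',y')$ satisfy $d(z,\bar z)\ge\pi$, then the join distance formula forces $d(y,y')\ge \pi$ whenever $t,t'<\pi/2$. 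Since points with $t<\pi/2$ are dense, their $Z_0$-components give a dense subset of $Z_0$ in which every point has an antipode in $Z_0$. This yields $Z_\omega$ isometric to $\Sph^{m-1}\ast Z_0$ with $Z_0 \in \Class(n-m-1)$, so $d_{\GH}(Z_k,\Sph^{m-1}\ast Z_0) \to 0$, contradicting the choice of $Z_k$.

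For the homotopy statement, we use that $\Class(n-1)$ consists of compact $\CAT(1)$ spaces of dimension at most $n-1$ that are uniformly locally contractible: balls of radius $<\pi$ contract along geodesics, with a uniform contraction function. The same holds for $\Sph^{m-1}\ast Z_0$. By Petersen's theorem on GH-close, uniformly $\LGC$ finite-dimensional spaces, if $\epsilon$ is small depending only on $n$ and on this contractibility function, then $Z$ and $\Sph^{m-1}\ast Z_0$ are homotopy equivalent.
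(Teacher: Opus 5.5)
Your proposal is correct and follows the route the paper indicates. It argues by contradiction with ultralimits, using Lemma~\ref{lem: omdsusp} and Proposition~\ref{prop: m0susp}, with Lemma~\ref{lem: capmdsusp} giving $m \le n$ and Petersen's theorem giving the homotopy equivalence. You also supply the check, which the paper takes for granted, that $Z_0 \in \Class(n-m-1)$ (its dimension and penetrability).

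One notational fix: with the paper's convention $\Theta(t,s,y) = \gamma_{sy}(t)$, the $Z_0$-coordinate is defined exactly when $t > 0$, so your antipode condition should read $t, t' > 0$. Since $d(z,\bar z) \ge \pi$ forces $t = t'$ in the join formula, $\bar z$ then lies in the same region, and continuity of $z \mapsto y$ on $\{t > 0\}$ gives the density.
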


The homotopy equivalence stated above
follows from 
the Petersen homotopic stability theorem \cite[Theorem A]{petersen}
for $\LGC$ spaces.

\begin{rem}\label{rem: aftertndsusp}
In Proposition \ref{prop: tndsusp},
if $m=n$, then $Z_0$ must be empty,
hence $d_{\GH} \left( Z, \Sph^{n-1} \right) < \epsilon$.
If $m=n-1$, then $Z_0$ is a finite discrete metric space
consisting of at least two points.
\end{rem}

\section{Local geometric regularity}

We study the local geometric structure of locally geodesically complete
open subsets of $\CAT(\kappa)$ spaces.
We refer to \cite[Sections 5 and 7]{lytchak-nagano1}
for the same studies for $\GCBA$ spaces.

\subsection{Upper semi-continuity of spaces of directions}
\label{ssec: usc}

We mention the upper semi-continuity of spaces of directions,
already shown in \cite[Lemmas 13.1 and 13.2]{lytchak2} 
for geodesically complete spaces.

Let $X$ be a $\CAT(\kappa)$ space,
and let $U$ be a locally geodesically complete open subset of $X$.
For a point $x$ in $U$,
let $(x_k)$ be a sequence in $U$ converging to $x$.
Let $(X_{\omega},x_{\omega})$ be the ultralimit $\ulim (X,x_k)$
of the pointe metric spaces.
Let $\left( \ulim \Sigma_{x_k}X, \xi_{\omega} \right)$
the ultralimit of some sequence of the pointed spaces of directions
$\left( \Sigma_{x_k}X, \xi_k \right)$.
We define a surjective $1$-Lipschitz map 
$\varphi_{x_{\omega}} \colon 
\Sigma_{x_{\omega}}X_{\omega} \to \ulim \Sigma_{x_k}X$ 
in the following way:
For each $\eta_{\omega} \in \Sigma_{x_{\omega}}'X_{\omega}$,
pick $y_{\omega} \in U_{D_{\kappa}}(x_{\omega}) - \{x_{\omega}\}$ 
with $\eta_{\omega} = (y_{\omega})_{x_{\omega}}'$,
and take $y_k \in U_{D_{\kappa}}(x_k) - \{x_k\}$
with $y_k \to y$.
Let $\varphi_{x_{\omega}}'(\eta_{\omega})$ 
be the point in $\ulim \Sigma_{x_k}X$
with $(y_k)_{x_k}' \to \varphi_{x_{\omega}}'(\eta_{\omega})$.
Thus we find a well-defined map
$\varphi_{x_{\omega}}' \colon 
\Sigma_{x_{\omega}}X_{\omega}' \to \ulim \Sigma_{x_k}X$.
Since each open ball 
$U_r(x_k)$ is locally geodesically complete,
$\varphi_{x_{\omega}}'$ is surjective;
moreover,
$U_r(x)$ is locally geodesically complete too.
From \eqref{eqn: uscangle} it follows that
$\varphi_{x_{\omega}}'$ is $1$-Lipschitz.
Extending the map $\varphi_{x_{\omega}}'$ on
$\Sigma_{x_{\omega}}'X_{\omega}$
to the completion $\Sigma_{x_{\omega}}X_{\omega}$,
we obtain a surjective $1$-Lipschitz map 
$\varphi_{x_{\omega}} \colon 
\Sigma_{x_{\omega}}X_{\omega} \to \ulim \Sigma_{x_k}X$.

In summary,
we have the following upper semi-continuity:

\begin{lem}\label{lem: uusclink}
Let $U$ be a locally geodesically complete open subset 
of a $\CAT(\kappa)$ space.
For $x \in U$,
let $(x_k)$ be a sequence in $U$ converging to $x$.
Then
there exists a surjective $1$-Lipschitz map 
$\varphi_x \colon \Sigma_xX \to \ulim \Sigma_{x_k}X$ such that
for every point $y$ in $U_{D_{\kappa}}(x)-\{x\}$,
and for every sequence $(y_k)$ in $U_{D_{\kappa}}(x_k)-\{x_k\}$
with $y_k \to y$,
we have
$(y_k)_{x_k}' \to \varphi_x(y_x')$. 
\end{lem}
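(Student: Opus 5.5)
The plan is to assemble the map $\varphi_x$ in three steps, following the construction sketched just before the statement: define it on genuine directions, show it is well defined and $1$-Lipschitz via \eqref{eqn: uscangle}, then extend by completeness and prove surjectivity using local geodesic completeness. Fix $r\in(0,D_\kappa/2)$ small enough that $U_r(x)\subset U$ and that $U_r(x_k)\subset U$ for all large $k$.

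\emph{Definition and well-definedness.} For $y\in U_{D_\kappa}(x)-\{x\}$ choose $y_k\to y$ with $y_k\ne x_k$. Since $\Sigma_{x_k}X$ has diameter at most $\pi$, the ultralimit $\ulim (y_k)'_{x_k}$ exists in $\ulim\Sigma_{x_k}X$, with base points irrelevant. To see that it depends only on $y'_x$, I will replace $y$ by a point $\hat y$ on $xy$ with $d(x,\hat y)=s$ small and fixed, and $y_k$ by the point $\hat y_k$ on $x_ky_k$ with $d(x_k,\hat y_k)=s$. By uniqueness and continuous dependence of geodesics in $\CAT(\kappa)$ spaces, $\hat y_k\to\hat y$, and $(\hat y_k)'_{x_k}=(y_k)'_{x_k}$. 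Now let $y,z$ be two points with $d(x,y),d(x,z)\ge s$, and take $y_k\to y$, $z_k\to z$. Apply \eqref{eqn: uscangle} to the constant sequence $X_k=X$ with base points $x_k$. The ultralimit $\ulim(X,x_k)$ is canonically $X$ near $x$, with $x_k\to x$, since $x_k\to x$ in $X$. This gives
\[
\ulim \angle_{x_k}(y_k,z_k)\le \angle_x(y,z).
\]
Taking $y'_x=z'_x$ shows well-definedness and independence of the approximating sequences. For general $y,z$, the same inequality says that $\varphi_x'$ is $1$-Lipschitz on $\Sigma'_xX$. Since the target is a complete space, $\varphi_x'$ extends to a $1$-Lipschitz map $\varphi_x$ on the completion $\Sigma_xX$. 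The convergence property asserted in the lemma holds by construction.

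\emph{Surjectivity.} This is the main obstacle. Because $\Sigma_xX$ is compact in the intended applications, I will argue directly for a general complete target instead. Points of the form $\ulim (y_k)'_{x_k}$ with $d(x_k,y_k)=s$ are dense in $\ulim\Sigma_{x_k}X$: any $\xi_k\in\Sigma_{x_k}X$ is approximated by genuine directions, and local geodesic completeness of $U$ lets us extend every short geodesic from $x_k$ to length exactly $s$ inside $U_r(x_k)$. Given such $y_k$, the points stay in a bounded region. I will then pass to the ultralimit $y_\omega=\ulim y_k$ in $\ulim(X,x_k)$, which I identify with the local picture at $x$. Identifying this ultralimit with a genuine point of $X$ requires local compactness or completeness of $B_r(x)$. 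I would handle this by working in the ultralimit space $X_\omega$, as the preceding paragraph does, and defining $\varphi_{x_\omega}$ on $\Sigma_{x_\omega}X_\omega$. When $X$ is proper near $x$, $y_\omega$ is an honest point $y\in X$ with $y_k\to y$, so $\ulim(y_k)'_{x_k}=\varphi_x(y'_x)$. The image therefore contains a dense set.

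\emph{Closing the argument.} If $\Sigma_xX$ is compact, the image of $\varphi_x$ is compact and dense, hence everything. In general, I plan to show that the image of the closed ball of radius $s$ is closed. Take a Cauchy family of limit directions realized by points $y^{(j)}$ at distance $s$. The angle comparison gives $d(y^{(i)},y^{(j)})\lesssim s\,\angle$ up to the lower-semicontinuity error of angles. The main gap is controlling $d(y^{(i)},y^{(j)})$ from above by the target distance. I expect to address it by using that geodesics of length $s$ from nearby $x_k$ stay uniformly close by the $\CAT(\kappa)$ comparison, so that the limit points $y^{(j)}$ converge in $X$.
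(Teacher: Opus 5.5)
You construct $\varphi_x'$, prove it is well defined and prove the $1$-Lipschitz bound correctly. One correction: $\ulim(X,x_k)$ is not canonically $X$ near $x$ unless some ball around $x$ is compact (see Remark~\ref{rem: auusclink}). You do not need that identification. The inequality $\ulim\angle_{x_k}(y_k,z_k)\le\angle_x(y,z)$ follows directly from comparison angles at the fixed scale $s$, or from the angle-preserving isometric embedding $X\hookrightarrow\ulim(X,x_k)$.

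\textbf{The gap is surjectivity.} Your final paragraph cannot close it, because without local compactness at $x$ the assertion for $\Sigma_xX$ is false. Let $X$ be the metric tree obtained from $\R$ by attaching a ray at each point $1/k$, and take $x=0$, $x_k=1/k$. This is a complete, geodesically complete $\CAT(\kappa)$ space. Here $\Sigma_xX$ has two points, while $\ulim\Sigma_{x_k}X$ has three points at mutual distance $\pi$, so no surjection exists at all. The failure is exactly at the step you flagged. The points $y_k$ at distance $s$ from $x_k$ on the attached rays have no $\omega$-limit in $X$. Completeness of $B_r(x)$, which this tree has, does not help; what is needed is compactness of a ball around $x$.

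\textbf{How the paper avoids this.} The paper carries out the construction on $\Sigma_{x_\omega}X_\omega$ with $X_\omega=\ulim(X,x_k)$. There $[(y_k)]$ is always a point, so every element $\ulim\xi_k$ is hit exactly and surjectivity is immediate. This space coincides with $\Sigma_xX$ when a neighborhood of $x$ is compact, which is the $\GCBA$ setting where the lemma is used later.

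In that locally compact case your own argument already finishes the proof. You need neither compactness of $\Sigma_xX$ nor a closedness step:
\begin{enumerate}
\item Choose $(y_k)'_{x_k}$ within $1/k$ of $\xi_k$ with $d(x_k,y_k)=s$. Then $\ulim(y_k)'_{x_k}=\ulim\xi_k$ exactly, not just approximately.
\item The point $y:=\ulim y_k$ lies in the compact ball, with $d(x,y)=s$.
\item Your angle estimate holds equally for $\omega$-convergent sequences, so it gives $\varphi_x(y'_x)=\ulim\xi_k$.
\end{enumerate}
Drop the last paragraph. Either add local compactness near $x$ as a hypothesis, or state the result for $\Sigma_{x_\omega}X_\omega$.
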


\begin{rem}
\label{rem: auusclink}
Let $X$ be a $\CAT(\kappa)$ space.
Let $(x_k)$ be a sequence in $X$ converging to some $x \in X$.
Let $(X_{\omega},x_{\omega})$ be the ultralimit $\ulim (X,x_k)$.
The map
$\varphi_{\omega} \colon X \to X_{\omega}$
given by $\varphi_{\omega}(y) := [y]$
is an isometric embedding 
satisfying $\varphi_{\omega}(x) = x_{\omega}$.
In general, 
the map $\varphi_{\omega}$ is not surjective.
\end{rem}

\subsection{Infinitesimal geometric regularity}

Recall that an $m$-regular point in a $\GCBA$ space
means a point at which the space of directions isometrically splits as
an $m$-fold spherical suspension.
An $m$-singular point means a non-$m$-regular point.
We now introduce a relaxed infinitesimal geometric regularity, 
briefly mentioned in Section \ref{sec: i}.

\begin{defn}\label{defn: greg}
Let $m \in \N$ and $\delta \in (0,1)$.
Let $U$ be a locally geodesically complete open subset
of a $\CBA$ space $X$.
We say that a point $x$ in $U$ is 
\emph{$(m,\delta)$-regular}
if there exists an $(m,\delta)$-suspender in $\Sigma_xX$.
A point in $U$ is \emph{$(m,\delta)$-singular}
if it is not $(m,\delta)$-regular.
For a subset $A$ of $U$,
we denote by $R_{m,\delta}(A)$
the set of all $(m,\delta)$-regular points in $A$,
and by $S_{m,\delta}(A)$
the set of all $(m,\delta)$-singular points.
We call $R_{m,\delta}(A)$
the \emph{$(m,\delta)$-regular set in $A$},
and
$S_{m,\delta}(A)$
the \emph{$(m,\delta)$-singular set in $A$}.
\end{defn}

Let $m \in \N$ and $\delta \in (0,1)$.
Let $U$ be a locally geodesically complete open subset
of a $\CAT(\kappa)$ space,
and let $A$ be a subset of $U$.
Then
we have $R_m(A) \subset R_{m,\delta}(A)$;
in particular,
$S_{m,\delta}(A) \subset S_m(A)$.
For every $n \in \N$ with $m \le n$,
we have $R_{n,\delta}(A) \subset R_{m,\delta}(A)$.
For every $\epsilon \in (0,1)$ 
with $\delta \le \epsilon$,
we have $R_{m,\delta}(A) \subset R_{m,\epsilon}(A)$.

From the upper semi-continuity of spaces of directions,
we deduce:

\begin{lem}\label{lem: mdopen}
Let $U$ be a locally geodesically complete open subset
of a $\CAT(\kappa)$ space $X$,
and $A$ a subset of $U$.
Then $R_{m,\delta}(A)$ is open in $A$;
in particular,
$S_{m,\delta}(A)$ is closed in $A$.
\end{lem}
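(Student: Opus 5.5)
We show that $R_{m,\delta}(A)$ is open in $A$ by contradiction; closedness of $S_{m,\delta}(A)$ in $A$ is then immediate. Suppose $x \in R_{m,\delta}(A)$ and there is a sequence $(x_k)$ in $S_{m,\delta}(A)$ with $x_k \to x$. Since $x$ is $(m,\delta)$-regular, $\Sigma_xX$ contains mutually opposite $(m,\delta)$-suspenders $(p_1,\dots,p_m)$ and $(q_1,\dots,q_m)$. We will push these to the ultralimit $\ulim \Sigma_{x_k}X$ and then pull them back to $\Sigma_{x_k}X$ for $\omega$-large $k$.

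\emph{Step 1: the map $\varphi_x$.} Since $U$ is locally geodesically complete and $x_k \to x$ in $U$, Lemma \ref{lem: uusclink} gives a surjective $1$-Lipschitz map $\varphi_x \colon \Sigma_xX \to \ulim \Sigma_{x_k}X$ (for some choice of base points $\xi_k$). The spaces $\Sigma_xX$ and $\ulim \Sigma_{x_k}X$ are $\CAT(1)$: the first as a space of directions, the second as an ultralimit of $\CAT(1)$ spaces.

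\emph{Step 2: push forward.} By Lemma \ref{lem: slstab}, the tuples $(\varphi_x(p_1),\dots,\varphi_x(p_m))$ and $(\varphi_x(q_1),\dots,\varphi_x(q_m))$ are mutually opposite $(m,\delta)$-suspenders in $\ulim \Sigma_{x_k}X$.

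\emph{Step 3: pull back.} Choose $p_{i,k}, q_{i,k} \in \Sigma_{x_k}X$ with $p_{i,k} \to \varphi_x(p_i)$ and $q_{i,k} \to \varphi_x(q_i)$. By Lemma \ref{lem: umdstab}, for $\omega$-large $k$ these tuples are mutually opposite $(m,\delta)$-suspenders in $\Sigma_{x_k}X$. Hence $x_k$ is $(m,\delta)$-regular for such $k$, which contradicts $x_k \in S_{m,\delta}(A)$. So $R_{m,\delta}(A)$ is open in $A$.

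The only point that needs care is that the definitions are stable in both directions. In Lemma \ref{lem: slstab} this holds because surjectivity makes the supremum in \eqref{eqn: dsph} over the target $Z_2$ be taken over images of points of $Z_1$, and being $1$-Lipschitz keeps each sum below $\pi+\delta$. In Lemma \ref{lem: umdstab} it holds because the inequalities \eqref{eqn: dsph} and \eqref{eqn: dortho} are strict, which leaves room to absorb the ultralimit error. Both facts are already established, so the argument does not need more than these.
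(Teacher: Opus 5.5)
Your argument is the paper's proof: you push an $(m,\delta)$-suspender of $\Sigma_xX$ forward by the surjective $1$-Lipschitz map $\varphi_x$ of Lemma~\ref{lem: uusclink} and pull it back to $\Sigma_{x_k}X$ for $\omega$-large $k$, and you only make explicit the stability Lemmas~\ref{lem: slstab} and~\ref{lem: umdstab}, which the paper uses tacitly. Be aware, though, that the step carrying the real weight is not the one you single out but the surjectivity of $\varphi_x$, which needs $X$ to be locally compact near $x$ (so that $\ulim(X,x_k)$ coincides with $X$ near $x$): for $X=\R$ with a half-line attached at each point $1/k$ one has $\Sigma_0X\cong\Sph^0$ but $\Sigma_{1/k}X\cong T_3^0$, so $0\in R_{1,\delta}(X)$ is a limit of points of $S_{1,\delta}(X)$, and both Lemma~\ref{lem: uusclink} and the statement fail in this generality (a defect your proof shares with the paper's, harmless in the $\GCBA$ setting where the lemma is applied).
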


\begin{proof}
Suppose that for some $x \in R_{m,\delta}(A)$
there exists a sequence $(x_k)$ in $S_{m,\delta}(A)$ 
with $x_k \to x$.
Then we find an $(m,\delta)$-suspender
$(\xi_1,\dots,\xi_m)$ in $\Sigma_xX$.
By Lemma \ref{lem: uusclink},
the $m$-tuple $\left( \varphi_x(\xi_1), \dots, \varphi_x(\xi_m) \right)$
in $\ulim \Sigma_{x_k}X$
is an $(m,\delta)$-suspender.
Hence for $\omega$-large $k$
we find an $(m,\delta)$-suspender 
$\left( \xi_{1,k}, \dots, \xi_{m,k} \right)$
in $\Sigma_{x_k}X$.
This is a contradiction.
\end{proof}

Lemma \ref{lem: capmdsusp} implies the following:

\begin{lem}\label{lem: ndr}
For every $n \in \N$,
there exists $\delta \in (0,1)$
such that
if a locally geodesically complete open subset $U$
of a $\CAT(\kappa)$ space $X$
satisfies $\dim_{\mathrm{G}} U \le n$,
then the set $R_{n+1,\delta}(U)$ is empty;
in particular,
$S_{n+1,\delta}(U)$ coincides with $U$.
\end{lem}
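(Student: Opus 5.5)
The plan is to read the statement directly off Lemma \ref{lem: capmdsusp}, applied pointwise to the spaces of directions. Fix $n \in \N$ and let $\delta \in (0,1)$ be the constant that Lemma \ref{lem: capmdsusp} provides for this $n+1$. That lemma then says: every penetrable $\CAT(1)$ space $Z$ with $\dim_{\mathrm{G}} Z \le n$ that admits an $(m,\delta)$-suspender satisfies $m \le n$. We will show that, with this $\delta$, no point of $U$ is $(n+1,\delta)$-regular.

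Let $x \in U$. Since $U$ is a locally geodesically complete open subset of $X$, Example \ref{exmp: linkpenet} shows that $\Sigma_xX$ is penetrable. For the dimension bound, we use the defining property of the geometric dimension, $\dim_{\mathrm{G}} X \ge 1 + \dim_{\mathrm{G}} \Sigma_pX$ at every point. We apply it to $U$ itself, viewed as a $\CBA$ space; its spaces of directions agree with those of $X$ because $U$ is open. This gives $\dim_{\mathrm{G}} \Sigma_xX \le \dim_{\mathrm{G}} U - 1 \le n-1 \le n$. (The weaker bound $n$ already suffices.)

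Now suppose $x$ were $(n+1,\delta)$-regular. By Definition \ref{defn: greg}, $\Sigma_xX$ would then admit an $(n+1,\delta)$-suspender. Lemma \ref{lem: capmdsusp}, applied with $n+1$ in place of $n$, would force $n+1 \le n$, which is a contradiction. Hence $R_{n+1,\delta}(U) = \emptyset$, and so $S_{n+1,\delta}(U) = U$ by the definition of the singular set as the complement. Note that $\delta$ depends only on $n$, as the statement requires.

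The only delicate point is the bookkeeping of indices: using the dimension bound $\le n-1$, one could even conclude that $R_{n,\delta'}(U)$ is empty for the corresponding $\delta'$. We do not need that stronger claim. We only need $\delta$ small enough for Lemma \ref{lem: capmdsusp} at level $n+1$, so the main obstacle is merely justifying $\dim_{\mathrm{G}} \Sigma_xX \le \dim_{\mathrm{G}} U - 1$ for an open subset $U$. This follows from the minimality in the definition of $\dim_{\mathrm{G}}$ together with the locality of spaces of directions.
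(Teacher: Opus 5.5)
Your index bookkeeping is off by one, and as written the argument does not close. Lemma \ref{lem: capmdsusp} applied at level $n+1$ says: a penetrable $\CAT(1)$ space $Z$ with $\dim_{\mathrm{G}} Z \le n$ that admits an $(m,\delta)$-suspender satisfies $m \le n+1$, not $m \le n$. So an $(n+1,\delta)$-suspender in $\Sigma_xX$ only yields $n+1 \le n+1$, which is no contradiction.

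In fact no argument using only $\dim_{\mathrm{G}} \Sigma_xX \le n$ can work. The unit sphere $\Sph^n$ is penetrable, has geometric dimension $n$, and carries a rigid $(n+1)$-suspender (Example \ref{exmp: exmsusp}). Hence it carries an $(n+1,\delta)$-suspender for every $\delta$. Your claim that ``the weaker bound $n$ already suffices'' is therefore false.

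Your side remark fails for the same reason. The bound $\dim_{\mathrm{G}} \Sigma_xX \le n-1$ does not make $R_{n,\delta'}(U)$ empty. For $U$ open in $\R^n$ we have $\Sigma_xX = \Sph^{n-1}$, which carries $n$-suspenders, so every point is $(n,\delta')$-regular.

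The repair uses exactly what you already proved. Take $\delta$ from Lemma \ref{lem: capmdsusp} for $n$ itself, not for $n+1$. Feed it the sharp bound $\dim_{\mathrm{G}} \Sigma_xX \le n-1$, which you correctly derived from $\dim_{\mathrm{G}} U = 1 + \sup_{p} \dim_{\mathrm{G}} \Sigma_pX$ and the locality of spaces of directions. By Example \ref{exmp: linkpenet}, $\Sigma_xX$ is penetrable. An $(n+1,\delta)$-suspender in $\Sigma_xX$ then forces $n+1 \le n$, a contradiction.

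This is precisely the paper's proof. The drop of one dimension in passing from $U$ to $\Sigma_xX$ is the essential input, not an optional strengthening.
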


\begin{proof}
For $n \in \N$,
let $\delta \in (0,1)$ be small enough.
Suppose that
for some $x \in U$
the space $\Sigma_xX$ admits an $(n+1,\delta)$-suspender.
On the other hand,
we have $\dim_{\mathrm{G}} \Sigma_xX \le n-1$.
This contradicts Lemma \ref{lem: capmdsusp}.
\end{proof}

The following 
is substantially shown in \cite[Proposition 7.3]{lytchak-nagano1}.

\begin{prop}\label{prop: 1dr}
Let $\delta \in (0,1)$.
Let $U$ be a locally compact, locally geodesically complete 
open subset of a $\CAT(\kappa)$ space.
Then for every $p \in X$
there exists $r \in (0,D_{\kappa}/2)$ with $U_r(x) \subset U$
such that for every $x \in U_r(p) - \{p\}$
the direction $p_x'$ is a $(1,\delta)$-suspender in $\Sigma_xX$.
Moreover,
for every compact subset $B$ of $U$,
the set $S_{1,\delta}(B)$ has at most finitely many points.
\end{prop}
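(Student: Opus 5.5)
We prove the two assertions of Proposition \ref{prop: 1dr} separately, following the approach of \cite[Proposition 7.3]{lytchak-nagano1}; the statement is to be read with $U_r(p)\subset U$ and directions taken at $x$ in $\Sigma_xX$. For the first assertion we argue by contradiction using an ultralimit, and for the second we deduce finiteness from the first by compactness.

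\emph{First assertion.} Suppose it fails at some $p\in U$. Then there is a sequence $x_k\to p$, $x_k\neq p$, such that $p_{x_k}'$ is not a $(1,\delta)$-suspender in $\Sigma_{x_k}X$. Put $t_k:=d(p,x_k)$.

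1. Use local geodesic completeness to extend each geodesic $px_k$ beyond $x_k$ to a point $y_k$ with $d(x_k,y_k)=t_k$. This is possible once $r$ is small, since by local compactness the extension exists up to a uniform length. The directions $p_{x_k}'$ and $(y_k)_{x_k}'$ at $x_k$ are then at angle $\pi$.

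2. Rescale: consider $(\tfrac{1}{t_k}X,x_k)$ and pass to the ultralimit $(X_\omega,x_\omega)$. This is a $\CAT(0)$ space, and the closed ball of radius $2$ around $x_\omega$ is geodesically complete inside it, because the balls $B_{2t_k}(x_k)$ are locally geodesically complete and compact.

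3. In $X_\omega$ the points $p_\omega$ and $y_\omega$ satisfy
\[
d(p_\omega,x_\omega)=d(x_\omega,y_\omega)=1, \qquad d(p_\omega,y_\omega)=2,
\]
so $x_\omega$ is the midpoint of a geodesic $p_\omega y_\omega$.

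4. Show that for every $z$ near $x_\omega$ one has $\angle p_\omega x_\omega z+\angle z x_\omega y_\omega=\pi$, so that $(p_\omega)'_{x_\omega}$ is an honest suspender in $\Sigma_{x_\omega}X_\omega$. This is where the blow-up is used. The point $p$ is fixed, and all other points are at distance $\ll t_k$ from $p$ in unscaled terms, which means we are looking at a distance function $d_p$ near points where it is almost linear. Concretely, since $d_p$ is convex in a $\CAT(\kappa)$ space, every geodesic through $x_\omega$ can be extended within the ball by local completeness. Geodesic completeness combined with the first variation formula then forces $d_{p_\omega}$ to be affine along geodesics near $x_\omega$; compare the Busemann-type splitting argument in \cite{lytchak-nagano1}. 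Affinity of $d_{p_\omega}$ at $x_\omega$ gives exactly $\angle(\xi,p')+\angle(\xi,y')=\pi$ for all directions $\xi$.

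5. Transfer back. Apply the surjective $1$-Lipschitz map of Lemma \ref{lem: uusclink} (adapted to the rescaled sequence) together with Lemma \ref{lem: slstab}. This shows that $\ulim \Sigma_{x_k}X$ contains $(1,\delta)$-suspenders that are images of $p'$ and $y'$. Lemma \ref{lem: umdstab} then makes $p_{x_k}'$ a $(1,\delta)$-suspender for $\omega$-large $k$, which is a contradiction.

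I expect step 4 to be the main obstacle, namely proving affinity of the limit distance function from only local completeness rather than global completeness. The first thing I would try is the standard argument: extend each geodesic $x_\omega z$ backwards past $x_\omega$ to a point $z^-$, then apply the $\CAT(0)$ convexity of $d_{p_\omega}$ together with the equality $d(p_\omega,y_\omega)=2$, using the comparison quadrilateral $p_\omega z y_\omega z^-$.

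\emph{Second assertion.} Cover the compact set $B$ by finitely many balls $U_{r_p}(p)$ of the kind given by the first assertion. Inside each such ball, every point $x\neq p$ is $(1,\delta)$-regular, since $p_x'$ is a $(1,\delta)$-suspender. Hence each ball contains at most one point of $S_{1,\delta}(B)$, namely its centre, and so $S_{1,\delta}(B)$ is finite.
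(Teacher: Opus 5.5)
Your overall architecture is the right one: argue by contradiction, blow up at scale $t_k=d(p,x_k)$, find an honest suspender in the limit, transfer it back by semicontinuity, and get finiteness by compactness. Steps 1--3, Step 5 and the second assertion are fine. In Step 5 the relevant tool is Lemma~\ref{lem: usstab}, the version for a sequence of spaces $\frac{1}{t_k}X$, rather than Lemma~\ref{lem: uusclink}, which is stated for a fixed space.

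\textbf{The gap is Step 4.}

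\emph{The proposed mechanism is false.} The function $d_{p_\omega}$ is not affine along geodesics near $x_\omega$, even in the model case $X=\R^n$. There $X_\omega=\R^n$, and $d_{p_\omega}$ restricted to a line missing $p_\omega$ is $s\mapsto\sqrt{a^2+s^2}$. Your heuristic also mixes scales: at scale $t_k$ around $x_k$ the point $p$ stays at distance $1$, so $d_p$ is not almost linear there.

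\emph{Your ingredients cannot give what you need.} What Step 4 requires is the first-order identity $\angle(\xi,p_\omega')+\angle(\xi,y_\omega')=\pi$ for all $\xi\in\Sigma_{x_\omega}X_\omega$. This cannot follow from the ingredients you list: $\CAT(0)$, geodesic completeness near $x_\omega$, convexity of $d_{p_\omega}$, $x_\omega$ being the midpoint of $p_\omega y_\omega$, and comparison quadrilaterals. Here is a counterexample.
\begin{itemize}
\item Take the $\R$-tree obtained from $\R$ by attaching a half-line at every point $2^{-j}$, $j\ge 1$, and set $p=0$. It is a complete, geodesically complete $\CAT(0)$ space.
\item Its blow-up along $t_k=2^{-k}$ is $\R$ with half-lines attached at all $2^m$, $m\in\Z$. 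This limit has every property you list.
\item At $x_\omega=1$ the space of directions consists of three points at mutual distance $\pi$. So $(p_\omega)'_{x_\omega}$ is not even a $(1,\delta)$-suspender.
\end{itemize}
In fact the proposition itself fails for this tree at the points $2^{-j}\to p$. The tree is not locally compact, so local compactness must enter Step 4 essentially, and your argument never uses it. The compactness remark in Step 2 only serves geodesic completeness of the limit ball, which does not need it.

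\textbf{The missing idea.} You need to identify the blow-up with the tangent cone; this is the route behind \cite[Proposition 7.3]{lytchak-nagano1}, which the paper quotes.
\begin{itemize}
\item For locally compact, locally geodesically complete $U$, the space $\Sigma_pX$ is compact and $(\frac{1}{t}X,p)$ converges to $(T_pX,0)$ as $t\to 0$. The paper uses the same fact in the proof of Lemma~\ref{lem: iwrtnw}.
\item Hence $X_\omega$ is the Euclidean cone $C_0(\Sigma_pX)$ with vertex $p_\omega$. Moreover $x_\omega=v=1\cdot\bar v$ for some $\bar v\in\Sigma_pX$, and $y_\omega=2v$.
\item At a non-vertex point of a Euclidean cone one has $T_vC_0(\Sigma)=\R\times T_{\bar v}\Sigma$. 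Equivalently, $\Sigma_vC_0(\Sigma)=\Sph^0\ast\Sigma_{\bar v}\Sigma$, with the two radial directions as poles.
\item Therefore $(p_\omega)'_{x_\omega}$ and $(y_\omega)'_{x_\omega}$ are mutually opposite honest suspenders. This is exactly where local compactness enters.
\end{itemize}
With this replacing Step 4, your Step 5 yields the contradiction and the rest of your proof goes through as written.
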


\subsection{Strainers}

Based on \cite[Section 7]{lytchak-nagano1},
we discuss the notions of strainers 
for $\CAT(\kappa)$ spaces
in more general setting.

\begin{defn}\label{defn: mdstr}
Let $m \in \N$ and $\delta \in (0,1)$.
Let $X$ be a $\CAT(\kappa)$ space with metric $d_X$.
For a point $x$ in $X$,
we say that
an $m$-tuple $(p_1,\dots,p_m)$ of points in $X - \{x\}$
is an 
\emph{$(m,\delta)$-strainer at $x$}
if the following hold:
\begin{enumerate}
\item
for each $i \in \{ 1, \dots, m \}$ 
we have $d_X(p_i,x) < D_{\kappa}/2$;
\item
the $m$-tuple 
$\left( (p_1)_x', \dots, (p_m)_x' \right)$
is an $(m,\delta)$-suspender in $\Sigma_xX$;
\end{enumerate}
in this case,
$x$ is said to be
\emph{$(m,\delta)$-strained by $(p_1,\dots,p_m)$}.
Two $(m,\delta)$-strainers
$(p_1,\dots,p_m)$ and $(q_1,\dots,q_m)$ at $x$
are \emph{opposite} 
if the $(m,\delta)$-suspenders
$\left( (p_1)_x', \dots, (p_m)_x' \right)$
and 
$\left( (q_1)_x', \dots, (q_m)_x' \right)$
are opposite to each other in $\Sigma_xX$.

For a subset $A$ of $X$ with $\diam A < D_{\kappa}/2$,
we say that
an $m$-tuple $(p_1,\dots,p_m)$ of points in $A$
is an 
\emph{$(m,\delta)$-strainer at $A$}
if 
\begin{enumerate}
\item
for each $i \in \{ 1, \dots, m \}$ we have
$\sup_{x \in A} d_X(p_i,x) < D_{\kappa}/2$;
\item
the $m$-tuple $(p_1,\dots,p_m)$ is an $(m,\delta)$-strainer at 
all $x \in A$;
\end{enumerate}
in this case,
 $A$ is said to be
\emph{$(m,\delta)$-strained by $(p_1,\dots,p_m)$}.
Two $(m,\delta)$-strainers $(p_1,\dots,p_m)$ and $(q_1,\dots,q_m)$
at $A$
are \emph{opposite} 
if they are opposite to each other at all $x \in A$.
\end{defn}

\begin{rem}
Let $U$ be a locally geodesically complete open subset 
of a $\CAT(\kappa)$ space.
A point $x$ in $U$ is $(m,\delta)$-regular
if and only if $\{x\}$ is $(m,\delta)$-strained by some
$(m,\delta)$-strainer.
\end{rem}

\begin{rem}\label{rem: strdim}
Let $m \in \N$ and $\delta \in (0,1)$.
Let $U$ be a locally geodesically complete open subset
of a $\CAT(\kappa)$ space $X$.
If $\delta$ is sufficiently small,
then for every $(m,\delta)$-strained point $x$ in $U$ we have
$\dim_G T_xX \ge m$
(see Remark \ref{rem: aftercapmdsusp}).
If $X$ is $\GCBA$,
and if $4m\delta < 1$,
then for every $(m,\delta)$-strained point $x$ in $U$ we have
$\dim T_xX \ge m$
(\cite[Lemma 11.7]{lytchak-nagano1}).
\end{rem}

\subsection{Stability of strainers}

From the stability of suspenders
and the upper semi-continuity
of spaces of directions,
we derive the following:

\begin{lem}\label{lem: usstab0}
Let $U$ be a locally geodesically complete open subset 
of a $\CAT(\kappa)$ space $X$.
For $x \in U$,
let $(x_k)$ be a sequence in $U$ with $x_k \to x$.
Let $(p_1,\dots,p_m)$ and $(q_1,\dots,q_m)$
are mutually opposite $(m,\delta)$-strainers at $x$.
For each $i \in \{ 1, \dots, m \}$,
choose some sequences $(p_{i,k})$ and $(q_{i,k})$
in $X$ with $p_{i,k} \to p_i$ and $q_{i,k} \to q_i$,
respectively.
Then for all sufficiently large $k$
the $m$-tuples 
$(p_{1,k},\dots,p_{m,k})$ and $(q_{1,k},\dots,q_{m,k})$ 
are mutually opposite $(m,\delta)$-strainers at $x_k$.
\end{lem}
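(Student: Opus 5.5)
The plan is to argue by contradiction, combining the upper semi-continuity of spaces of directions (Lemma \ref{lem: uusclink}) with the ultralimit stability of relaxed suspenders (Lemma \ref{lem: umdstab}), in the same way as in the proof of Lemma \ref{lem: mdopen}. The new feature is that the strainer points move with $k$, so we also need to control the directions $(p_{i,k})_{x_k}'$. Suppose the claim fails. Then, after passing to a subsequence, for infinitely many $k$ the tuples $(p_{1,k},\dots,p_{m,k})$ and $(q_{1,k},\dots,q_{m,k})$ are not mutually opposite $(m,\delta)$-strainers at $x_k$. We may assume the set of such $k$ is $\omega$-large.

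First we dispose of the distance condition. Since $d_X(p_i,x) < D_{\kappa}/2$, $d_X(q_i,x)<D_\kappa/2$, and all points lie in $X-\{x\}$, continuity of $d_X$ gives $d_X(p_{i,k},x_k) < D_{\kappa}/2$ for large $k$, with $p_{i,k} \neq x_k$, and the same for $q_{i,k}$. So the failure must lie in condition (2) of Definition \ref{defn: mdstr}: the directions $\left((p_{1,k})_{x_k}',\dots\right)$ and $\left((q_{1,k})_{x_k}',\dots\right)$ are not mutually opposite $(m,\delta)$-suspenders in $\Sigma_{x_k}X$.

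Next we pass to the limit. Apply Lemma \ref{lem: uusclink} to $x_k \to x$ inside $U$. This gives a surjective $1$-Lipschitz map $\varphi_x \colon \Sigma_xX \to \ulim \Sigma_{x_k}X$ with $(p_{i,k})_{x_k}' \to \varphi_x((p_i)_x')$, since $p_{i,k}\to p_i$ with $p_i\in U_{D_\kappa}(x)-\{x\}$; similarly for $q_{i,k}$. By Lemma \ref{lem: slstab}, the images $\varphi_x((p_i)_x')$ and $\varphi_x((q_i)_x')$ form mutually opposite $(m,\delta)$-suspenders in $\ulim \Sigma_{x_k}X$. By Lemma \ref{lem: umdstab}, applied to the particular approximating sequences $(p_{i,k})_{x_k}'$ and $(q_{i,k})_{x_k}'$, these directions are mutually opposite $(m,\delta)$-suspenders in $\Sigma_{x_k}X$ for $\omega$-large $k$. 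This contradicts the choice of the subsequence.

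The main obstacle is verifying the convergence $(p_{i,k})_{x_k}' \to \varphi_x((p_i)_x')$ when both the base point and the endpoint vary. This is exactly the content of the statement of Lemma \ref{lem: uusclink}, which allows arbitrary sequences $y_k \to y$. If one doubts it, the first thing to check is its construction: define $\varphi_x'$ through such sequences and verify, by the upper semi-continuity of angles \eqref{eqn: uscangle}, that the limit direction is independent of the chosen sequence. Two sequences $y_k, y_k' \to y$ have $d(y_k,y_k')\to 0$, so comparison bounds the angle between them at $x_k$ by something tending to zero, because $d(x_k,y_k)$ stays bounded away from $0$. A second point to check is that the base points of the ultralimits are irrelevant, since $\diam \Sigma_{x_k}X$ is uniformly bounded by $\pi+2\delta$ via Lemma \ref{lem: p1dsusp} (or the ultralimit is taken with any base points). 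Finally, because the conclusion is required for all large $k$ rather than $\omega$-large $k$, we run the contradiction argument on a subsequence of failures, as described in the first paragraph.
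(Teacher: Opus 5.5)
Your argument is the paper's own proof: Lemma~\ref{lem: uusclink}, then Lemma~\ref{lem: slstab}, then Lemma~\ref{lem: umdstab}. Your extra bookkeeping is correct and fills in what the paper leaves implicit, namely condition (1) of Definition~\ref{defn: mdstr} and the upgrade from ``$\omega$-large $k$'' to ``all large $k$'' via the ultrafilter on a subsequence of failures. However, you correctly located the crux in Lemma~\ref{lem: uusclink} but then audited the wrong property of it. Well-definedness and the $1$-Lipschitz bound are fine. What carries the argument is the \emph{surjectivity} of $\varphi_x$ from $\Sigma_xX$, and the construction only gives a surjection from $\Sigma_{x_\omega}X_\omega$, where $X_\omega = \ulim (X,x_k)$. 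To hit $\eta_k \in \Sigma_{x_k}X$ you must extend a geodesic from $x_k$, in a direction near $\eta_k$, to a point $y_k$ with $d_X(x_k,y_k)=r$. Then $[(y_k)]$ lies in the image of $X$ only if, for instance, $B_{2r}(x)$ is compact (cf.\ Remark~\ref{rem: auusclink}).

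Without local compactness the statement is false. Let $X$ be the $\R$-tree obtained from $\R$ by attaching a ray at each point $1/k$, $k \ge 2$; it is a geodesically complete $\CAT(0)$ space. Every geodesic from $0$ into $(0,\infty)$ or into an attached ray starts along the segment $[0,1]$. Hence $\Sigma_0X$ consists of two points at distance $\pi$, and $(-1)$, $(1)$ are mutually opposite $(1,\delta)$-strainers at $0$. At $x_k = 1/k$, however, $\Sigma_{x_k}X = \{L_k,R_k,\xi_k\}$, the directions towards $-1$, towards $1$, and up the attached ray, with pairwise distances $\pi$. So $d(L_k,\xi_k)+d(\xi_k,R_k) = 2\pi \ge \pi+\delta$, and $(-1)$, $(1)$ are mutually opposite $(1,\delta)$-strainers at no $x_k$. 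In this example $\ulim \Sigma_{x_k}X$ has three points, so no surjection from $\Sigma_0X$ onto it exists, and Lemma~\ref{lem: uusclink} itself fails.

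Your proof, like the paper's, is therefore valid once $U$ is assumed locally compact, as in the $\GCBA$ setting of the later sections. In that case a neighbourhood of $x_\omega$ in $X_\omega$ is the image of a neighbourhood of $x$ in $X$, so $\Sigma_{x_\omega}X_\omega = \Sigma_xX$ and Lemma~\ref{lem: uusclink} holds as stated.

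A smaller point concerns the diameter bound that makes Lemma~\ref{lem: umdstab} work, since a violating direction $z_k \in \Sigma_{x_k}X$ must define a point of the ultralimit. That bound is simply $\diam \Sigma_{x_k}X \le \pi$, because angles take values in $[0,\pi]$. Obtaining it from Lemma~\ref{lem: p1dsusp} is circular, since that lemma presupposes the $(1,\delta)$-suspender at $x_k$ you are trying to produce.
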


\begin{proof}
The $m$-tuples $\left( (p_1)_x', \dots, (p_m)_x' \right)$ and
$\left( (q_1)_x', \dots, (q_m)_x' \right)$
are mutually opposite $(m,\delta)$-suspenders in $\Sigma_xX$.
By Lemma \ref{lem: uusclink},
we find a surjective $1$-Lipschitz map 
$\varphi_x$ from $\Sigma_xX$ to the ultralimit $\ulim \Sigma_{x_k}X$ 
so that
$(p_{i,k})_{x_k}' \to \varphi_x((p_i)_x')$ and
$(q_{i,k})_{x_k}' \to \varphi_x((q_i)_x')$
for all $i \in \{ 1, \dots, m \}$. 
Since $\varphi_x$ is surjective and $1$-Lipschitz,
by Lemma \ref{lem: slstab} we conclude that
$\left( \varphi_x((p_1)_x'), \dots, \varphi_x((p_m)_x') \right)$
and
$\left( \varphi_x((q_1)_x'), \dots, \varphi_x((q_m)_x') \right)$
are mutually opposite $(m,\delta)$-suspenders in $\ulim \Sigma_{x_k}X$.
By Lemma \ref{lem: umdstab},
for all $\omega$-large $k$
the $m$-tuples
$(p_{1,k},\dots,p_{m,k})$
and 
$(q_{1,k},\dots,q_{m,k})$
are mutually opposite $(m,\delta)$-strainers at $x_k$.
This leads to the lemma.
\end{proof}

Similarly to Lemma \ref{lem: usstab0},
via a generalization of Lemma \ref{lem: uusclink}
based on Lemmas \ref{lem: umdstab}
and \ref{lem: slstab},
we see the following:

\begin{lem}\label{lem: usstab}
For $r \in (0,D_{\kappa})$,
let $(X_{\omega},x_{\omega})$ 
be the ultralimit 
of a sequence $(X_k,x_k)$ of $\CAT(\kappa)$ spaces
such that each open ball
$U_r(x_k)$ is locally geodesically complete.
Let $(p_1,\dots,p_m)$ and $(q_1,\dots,q_m)$
are mutually opposite $(m,\delta)$-strainers at $x_{\omega}$.
For each $i \in \{ 1, \dots, m \}$,
choose some sequences $(p_{i,k})$ and $(q_{i,k})$
in $X_k$ with $p_{i,k} \to p_i$ and $q_{i,k} \to q_i$,
respectively.
Then for all $\omega$-large $k$
the $m$-tuples 
$(p_{1,k},\dots,p_{m,k})$ and $(q_{1,k},\dots,q_{m,k})$ 
are mutually opposite $(m,\delta)$-strainers at $x_k$.
\end{lem}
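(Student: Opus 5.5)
The plan is to copy the proof of Lemma \ref{lem: usstab0}, replacing Lemma \ref{lem: uusclink} by its analogue for ultralimits of varying spaces. For a sequence $(X_k,x_k)$ as in the statement, with $U_r(x_k)$ locally geodesically complete, I first note that $U_r(x_\omega)$ is locally geodesically complete in $X_\omega$. Indeed, if a geodesic in $U_r(x_\omega)$ is the limit of geodesics in $U_r(x_k)$, those geodesics can be prolonged by a uniform amount, because every point of a compact subarc lies at distance bounded away from $r$ from $x_k$. The $\omega$-limits of these prolongations are then local geodesics in $X_\omega$ that prolong the original one. Since all strainer points have distance $< D_\kappa/2$ from $x_\omega$, everything takes place inside this ball once $r$ is compared with the relevant distances. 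If the strainers are not inside $U_r(x_\omega)$, only the directions at $x_\omega$ matter, so we may replace $p_i$ by points on $x_\omega p_i$ close to $x_\omega$ and $p_{i,k}$ by the corresponding points on $x_kp_{i,k}$.

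Next I construct a map $\varphi\colon \Sigma_{x_\omega}X_\omega \to \ulim \Sigma_{x_k}X_k$ exactly as in Subsection \ref{ssec: usc}. For a genuine direction $\eta=(y)'_{x_\omega}$ with $y \in U_{D_\kappa}(x_\omega)-\{x_\omega\}$, choose $y_k \in X_k$ with $y_k \to y$, and set $\varphi'(\eta):=\ulim (y_k)'_{x_k}$. The upper semicontinuity of angles \eqref{eqn: uscangle} applies: we have $d(x_k,y_k)$ bounded away from $0$ for $\omega$-large $k$, and it gives $\ulim \angle_{x_k}(y_k,z_k) \le \angle_{x_\omega}(y,z)$. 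This shows at once that $\varphi'$ is well defined (taking $y,z$ on the same geodesic, so the angle is $0$) and that it is $1$-Lipschitz. It therefore extends to a $1$-Lipschitz map on the completion $\Sigma_{x_\omega}X_\omega$.

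Surjectivity is the step I expect to be the main obstacle. Any point of $\ulim \Sigma_{x_k}X_k$ is represented by directions $\xi_k$, and we may assume they are genuine directions because genuine directions are dense. Using local geodesic completeness of $U_r(x_k)$, I extend each geodesic realizing $\xi_k$ to length $s$ for a fixed $s<r$, ending at $y_k$ with $d(x_k,y_k)=s$. Set $y:=\ulim y_k$, so $d(x_\omega,y)=s$. The point to verify is that $\varphi'(y'_{x_\omega})=\ulim \xi_k$. This holds because $(y_k)'_{x_k}=\xi_k$ by construction, and the sequence $(y_k)$ is a legitimate choice of approximating sequence in the definition of $\varphi'$. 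Thus the image of $\varphi$ contains a dense set, and because $\varphi'$ is defined on genuine directions via limit points, the image is in fact all of $\ulim \Sigma_{x_k}X_k$. The delicate issue here is that $\ulim \Sigma_{x_k}X_k$ might not be reached by completion points; this is handled by the fact that the construction hits every point directly, not merely a dense subset.

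With $\varphi$ established, the conclusion follows quickly. The tuples $((p_i)'_{x_\omega})$ and $((q_i)'_{x_\omega})$ are opposite $(m,\delta)$-suspenders in $\Sigma_{x_\omega}X_\omega$. By Lemma \ref{lem: slstab}, their images under $\varphi$ are opposite $(m,\delta)$-suspenders in $\ulim\Sigma_{x_k}X_k$. These images are $\ulim (p_{i,k})'_{x_k}$ and $\ulim (q_{i,k})'_{x_k}$, because $p_{i,k}\to p_i$ and $q_{i,k}\to q_i$ serve as approximating sequences in the definition of $\varphi'$. Lemma \ref{lem: umdstab} then shows that for $\omega$-large $k$ the directions $((p_{i,k})'_{x_k})$ and $((q_{i,k})'_{x_k})$ are opposite $(m,\delta)$-suspenders in $\Sigma_{x_k}X_k$. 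Finally, $d(p_{i,k},x_k)\to d(p_i,x_\omega)<D_\kappa/2$ along $\omega$, so condition (1) of Definition \ref{defn: mdstr} also holds for $\omega$-large $k$, which gives the strainer property at $x_k$.
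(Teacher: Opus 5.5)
Your proposal is correct and follows the paper's route. The paper likewise runs the argument of Lemma~\ref{lem: usstab0} with the surjective $1$-Lipschitz map $\Sigma_{x_\omega}X_\omega \to \ulim \Sigma_{x_k}X_k$ of Subsection~\ref{ssec: usc} (Lipschitz from the upper semicontinuity of angles, surjective by prolonging geodesics in $U_r(x_k)$), and then applies Lemmas~\ref{lem: slstab} and~\ref{lem: umdstab}.

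The one caveat, which the paper shares, is that prolonging the geodesics from $x_k$ to a fixed length $s$ tacitly uses completeness of the $X_k$, as holds in the proper spaces where the lemma is applied. Without completeness, surjectivity and even the statement can fail: take $X_k$ the open half-plane with $x_k=(0,1/k)$. Then the midpoint of the half-circle $\Sigma_{x_\omega}X_\omega$ is a $(1,\delta)$-suspender opposite to itself, but $\Sigma_{x_k}X_k=\Sph^1$.
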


\subsection{Straining radii}

We discuss the notion of straining radii
in our setting
(cf.~\cite[Subsection 7.5]{lytchak-nagano1}).

\begin{lem}\label{lem: sr0}
Let $U$ be a locally geodesically complete open subset 
of a $\CAT(\kappa)$ space $X$ with metric $d_X$.
If a point $x_0$ in $U$
is $(m,\delta)$-strained by
$(p_1,\dots,p_m)$,
then
there exists $r_0 \in (0,D_{\kappa}/2)$ with $U_{2r_0}(x_0) \subset U$ 
such that
for every $x \in U_{r_0}(x_0)$,
and for any $q_i \in U$ satisfying $x \in p_iq_i$ and $d_X(q_i,x) = r_0$,
$i \in \{ 1, \dots, m \}$,
the $m$-tuples
$(p_1,\dots,p_m)$ and $(q_1,\dots,q_m)$
are mutually opposite $(m,2\delta)$-strainers at $x$.
\end{lem}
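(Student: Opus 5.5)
We argue by contradiction, combining an ultralimit with the stability of strainers (Lemma \ref{lem: usstab0} and Lemma \ref{lem: usstab}) and the doubling of the relaxation constant from Lemma \ref{lem: amdsusp}. The guiding observation is this. If $x \in p_iq_i$ with $x$ an interior point, then $(q_i)_x'$ is an antipode of $(p_i)_x'$ in $\Sigma_xX$. So once $(p_1,\dots,p_m)$ is known to be an $(m,\delta)$-strainer at $x$, Lemma \ref{lem: amdsusp} makes $(p_1,\dots,p_m)$ and $(q_1,\dots,q_m)$ mutually opposite $(m,2\delta)$-strainers at $x$. Condition (1) of Definition \ref{defn: mdstr} also needs checking: we need $d_X(q_i,x)=r_0<D_\kappa/2$, which is automatic, and $d_X(p_i,x)<D_\kappa/2$, which holds for $x$ near $x_0$.

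The steps are as follows. First, apply Lemma \ref{lem: usstab0} with $p_{i,k}=p_i$ constant. It shows that some neighborhood $U_{r_1}(x_0)$ is $(m,\delta)$-strained by $(p_1,\dots,p_m)$. Shrinking $r_1$, we may also arrange $d_X(p_i,x)<D_\kappa/2$ and $U_{2r_1}(x_0)\subset U$; the latter uses openness of $U$. Next, choose $r_0\le r_1$ so small that $r_0<\min_i d_X(p_i,x_0)/2$. This makes $x$ different from $p_i$ for all $x\in U_{r_0}(x_0)$.

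Then take any $x\in U_{r_0}(x_0)$ and any $q_i\in U$ with $x\in p_iq_i$ and $d_X(q_i,x)=r_0$. Since $0<d_X(p_i,x)$ and $d_X(x,q_i)=r_0>0$, the point $x$ lies in the interior of the geodesic $p_iq_i$. Its two halves give directions at distance $\pi$ in $\Sigma_xX$, so $(q_i)_x'\in\Ant((p_i)_x')$. By Lemma \ref{lem: amdsusp}, $((p_i)_x')$ and $((q_i)_x')$ are mutually opposite $(m,2\delta)$-suspenders. With the distance bounds above, this gives the claim.

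Local geodesic completeness is needed only to guarantee that such $q_i$ exist, by extending $p_ix$ beyond $x$; the statement itself quantifies over whatever $q_i$ exist, so no extra argument is required. The main point to verify carefully is the first step: that Lemma \ref{lem: usstab0} really yields a uniform neighborhood, rather than only working along sequences. I would handle this by contradiction. Take $x_k\to x_0$ at which $(p_i)$ fails to be an $(m,\delta)$-strainer. Pick any opposite strainer $(q_i^0)$ at $x_0$, which exists by definition, and apply Lemma \ref{lem: usstab0} with constant sequences; for large $k$ this contradicts the choice of $x_k$.
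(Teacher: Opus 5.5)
Your proposal follows the paper's own proof: Lemma~\ref{lem: usstab0} for a uniform strained neighborhood, antipodality of $(q_i)_x'$ and $(p_i)_x'$, then Lemma~\ref{lem: amdsusp}. You are more careful than the paper about uniformity and about the distance bounds in Definition~\ref{defn: mdstr}. The genuine problem is the last step, which both you and the paper hand to Lemma~\ref{lem: amdsusp}: that lemma does not deliver $2\delta$.

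Let $(q^0_1,\dots,q^0_m)$ be an opposite $(m,\delta)$-strainer at $x$. Write $\xi_i := (p_i)_x'$, $\zeta_i := (q^0_i)_x'$ and $\bar\xi_i := (q_i)_x' \in \Ant(\xi_i)$.
\begin{itemize}
\item The $(1,\delta)$-inequality for $(\xi_i,\zeta_i)$ evaluated at $z=\bar\xi_i$ gives $d(\bar\xi_i,\zeta_i)<\delta$.
\item Hence $\xi_i$ and $\bar\xi_i$ are opposite $(1,2\delta)$-suspenders, and $d(\xi_i,\bar\xi_j)<\pi/2+2\delta$.
\item For the antipodes among themselves, however, the triangle inequality only gives $d(\bar\xi_i,\bar\xi_j)<\delta+(\pi/2+\delta)+\delta=\pi/2+3\delta$.
\end{itemize}

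This bound is sharp, and the statement with $2\delta$ is in fact false. Fix $\delta\in(0,1/2)$ and $0<\eta<\delta/5$.
\begin{itemize}
\item Let $X$ be the Euclidean cone over a circle $Y$ of length $L=2\pi+2\delta-2\eta$. This is a proper, geodesically complete $\CAT(0)$ space. Let $x_0$ be its vertex; angles at $x_0$ are circle distances truncated at $\pi$.
\item Let $p_1,p_2$ be the points at distance $1$ from $x_0$ in the directions $0$ and $L/4$.
\item Together with the directions $L/2$ and $3L/4$ these give mutually opposite $(2,\delta)$-suspenders. Indeed, the relevant sums are at most $L/2<\pi+\delta$, and all cross distances equal $L/4<\pi/2+\delta$.
\item Take $q_1,q_2$ at distance $r_0$ from $x_0$ in the directions $L/2-(\delta-\eta)$ and $3L/4+(\delta-\eta)$. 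These make angle exactly $\pi$ with the directions $0$ and $L/4$ respectively, so $x_0\in p_iq_i$.
\item Yet $\angle_{x_0}(q_1,q_2)=\pi/2+\frac{5}{2}(\delta-\eta)>\pi/2+2\delta$.
\end{itemize}
So for every $r_0$ the conclusion already fails at $x=x_0$.

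What your argument actually proves is the lemma with $(m,3\delta)$ in place of $(m,2\delta)$. The same correction is needed in Lemma~\ref{lem: amdsusp} and Definition~\ref{defn: sr}. Later uses appear to need only some $O(\delta)$ constant, so that is the version you should state.
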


\begin{proof}
By Lemma \ref{lem: usstab0},
we find $r_0 \in (0,D_{\kappa}/2)$ with $U_{2r_0}(x_0) \subset U$
such that
for every $x \in U_{r_0}(x_0)$
the $m$-tuple $(p_1,\dots,p_m)$ is an $(m,\delta)$-strainer at $x$.
For each $i \in \{ 1, \dots, m \}$,
take a point $q_i$ in $U$ satisfying $x \in p_iq_i$ and $d_X(q_i,x) = r_0$.
Lemma \ref{lem: amdsusp} implies that
$(p_1,\dots,p_m)$ and $(q_1,\dots,q_m)$
are mutually opposite $(m,2\delta)$-strainers at $x$.
\end{proof}

Based on Lemma \ref{lem: sr0},
we formulate the following:

\begin{defn}\label{defn: sr}
Let $U$ be a locally geodesically complete open subset 
of a $\CAT(\kappa)$ space $X$ with metric $d_X$.
Assume that a point $x$ in $U$
is $(m,\delta)$-strained by
$(p_1,\dots,p_m)$.
We denote by $\sigma_{(p_1,\dots,p_m)}(x)$ the 
supremum of $r \in (0,D_{\kappa}/2)$
such that
$U_{2r}(x)$ is contained in $U$,
and for every $y \in U_r(x)$,
and for any $q_i \in U$ satisfying $y \in p_iq_i$ and $d_X(q_i,y) = r$,
$i \in \{ 1, \dots, m \}$,
the $m$-tuples
$(p_1,\dots,p_m)$ and $(q_1,\dots,q_m)$
are mutually opposite $(m,2\delta)$-strainers at $y$.
We call $\sigma_{(p_1,\dots,p_m)}(x)$
the 
\emph{strained radius of $x$ with respect to $(p_1,\dots,p_m)$}.
We notice that
by Lemma \ref{lem: sr0}
the value $\sigma_{(p_1,\dots,p_m)}(x)$ is certainly positive.
\end{defn}

\begin{rem}
\label{rem: asr}
In Definition \ref{defn: sr},
we see that
the function $\sigma_{(p_1,\dots,p_m)}$ on $U_{r_0}(x_0)$
is continuous.
\end{rem}

\subsection{Angles around strained points}

Around strained points,
we find regular triangles 
whose base angles are almost equal to
the comparison ones.
We restate \cite[Lemma 7.6]{lytchak-nagano1}
in the following form:

\begin{lem}\label{lem: jl}
For a subset $A$ of a $\CAT(\kappa)$ space 
with $\diam A < D_{\kappa}/2$,
let $(p_1,\dots,p_m)$ and $(q_1,\dots,q_m)$
be mutually opposite $(m,\delta)$-stainers at $A$.
Take $x, y \in A$ with $x \neq y$.
Then for each $i \in \{ 1, \dots, m \}$ we have
\begin{equation}
\left\vert 
\angle p_ixy - 
\angle \tilde{p}_i\tilde{x}\tilde{y} 
\right \vert
< 2 \delta,
\label{eqn: jla}
\end{equation}
where $\angle \tilde{p}_i\tilde{x}\tilde{y}$
is the comparison angle in $M_{\kappa}^2$ for $\angle p_ixy$.
Moreover,
\begin{equation}
\pi - 2\delta < \angle p_ixy + \angle p_iyx < \pi.
\label{eqn: jlb}
\end{equation}
If in addition $d(p_i,x) = d(p_i,y)$,
then
\begin{equation}
\frac{\pi}{2} - 2\delta <
\angle p_ixy < \frac{\pi}{2}.
\label{eqn: jlc}
\end{equation}
\end{lem}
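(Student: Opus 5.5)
The plan is to work in the model plane comparison at the level of $\Sigma_xX$ and $\Sigma_yX$, using the first variation formula together with the $\CAT(\kappa)$ comparison. Fix $i$ and write $\alpha := \angle p_ixy$, $\beta := \angle p_iyx$. Let $\tilde\alpha, \tilde\beta$ be the comparison angles of the triangle $\triangle \tilde p_i \tilde x \tilde y$ in $M_\kappa^2$; these satisfy $\tilde\alpha + \tilde\beta < \pi$ because the perimeter is below $2D_\kappa$. The $\CAT(\kappa)$ condition gives $\alpha \le \tilde\alpha$ and $\beta \le \tilde\beta$ directly. Hence the upper bound in \eqref{eqn: jlb} and one side of \eqref{eqn: jla} are immediate. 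The work is in the lower bounds.

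For the lower bounds I will use the opposite strainer $q_i$ at $x$. The directions $(p_i)_x'$ and $(q_i)_x'$ are mutually opposite $(1,\delta)$-suspenders in $\Sigma_xX$. Applying \eqref{eqn: dsph} with $z = y_x'$ gives
\[
\angle p_ixy + \angle q_ixy < \pi + \delta .
\]
Next I compare with the triangle $\triangle q_ixy$. By $\CAT(\kappa)$, $\angle q_ixy \le \tilde\angle q_ixy$. So I need a complementary estimate relating the comparison angles $\tilde\angle p_ixy$ and $\tilde\angle q_ixy$: their sum should be at least roughly $\pi$. To get it, I use that $p_i$, $x$, $q_i$ are almost aligned, in the sense that the angle at $x$ between $p_i$ and $q_i$ exceeds $\pi - \delta$ by Remark \ref{rem: aftero1dsusp}. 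This angle control holds at every point of $A$, including $y$, which is what the symmetric argument there will use.

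The cleaner route, and the one I would actually follow, is the first-variation argument. Consider the function $f = d_{p_i}$ along the geodesic $xy$. Its one-sided derivative at $x$ is $-\cos\alpha$ and at $y$ (in the reverse direction) is $-\cos\beta$. In $\CAT(\kappa)$, $d_{p_i}$ is $\kappa$-convex along geodesics in balls of radius $< D_\kappa/2$. This yields the upper bounds relative to the comparison triangle again. For the lower bound I use the strainer at $y$: the suspender property at $y$ gives $\angle p_iyx + \angle q_iyx < \pi+\delta$. The CAT inequality for $\triangle q_i y x$ then gives $\angle q_iyx \le \tilde\angle q_iyx$. Combining these with the monotone comparison of $d_{p_i}$ and $d_{q_i}$ along $xy$ should force $\alpha \ge \tilde\alpha - 2\delta$; the loss of one $\delta$ comes from each of the two suspender inequalities, at $x$ and at $y$.

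The main obstacle is making this combination precise. The comparison angles for $q_i$ are not exactly supplementary to those for $p_i$, and that mismatch has to be controlled. What I would try first is an ultralimit/contradiction argument. Suppose the inequality fails for a sequence of data with the relevant quantity tending to $0$. Rescale and pass to the limit using Lemma \ref{lem: usstab}, obtaining an exact strainer: $(p_i)_x'$ and $(q_i)_x'$ become genuine opposite suspenders at every point of $A$. Then $p_i x \cup x q_i$ is a geodesic through each point, and a flat-strip/lune argument (Lemma \ref{lem: lunelem}) gives equality of angles with comparison angles.

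Once \eqref{eqn: jla} is established, \eqref{eqn: jlb} follows: add the estimates for $\alpha$ and $\beta$ and use $\tilde\alpha + \tilde\beta > \pi - (\text{small})$ in thin triangles where the side $xy$ is short relative to $d(p_i,x)$. Alternatively, derive \eqref{eqn: jlb} directly from the suspender inequalities at $x$ and $y$. Finally, if $d(p_i,x) = d(p_i,y)$, the comparison triangle is isosceles, so $\tilde\alpha = \tilde\beta < \pi/2$. Then \eqref{eqn: jlc} follows from $\alpha \le \tilde\alpha < \pi/2$ together with \eqref{eqn: jlb} and the symmetric roles of $x$ and $y$.
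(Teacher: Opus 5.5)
Your upper bounds are fine: $\alpha\le\tilde\alpha$ and $\beta\le\tilde\beta$ give the upper half of \eqref{eqn: jlb} and one side of \eqref{eqn: jla}. Your last step, deriving \eqref{eqn: jlc} from \eqref{eqn: jlb}, is also fine. But the lower bounds, which are the real content of the lemma, are never proved.

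\begin{itemize}
\item The inequality you extract from the suspender property, $\angle p_ixy+\angle q_ixy<\pi+\delta$ (and its analogue at $y$), points the wrong way. It bounds $\alpha$ from \emph{above}. CAT comparison, convexity of $d_{p_i}$ and the first variation formula also only bound angles from above. So no combination of these can ``force'' $\alpha\ge\tilde\alpha-2\delta$.
\item The ultralimit fallback cannot rescue this. A contradiction argument yields at best some error $\epsilon(\delta)\to 0$, not the explicit constant $2\delta$ for each fixed $\delta$. Moreover, Lemma \ref{lem: usstab} transfers strainers from the limit back to the sequence, not from the sequence to the limit.
\item Deriving \eqref{eqn: jlb} from \eqref{eqn: jla} via ``$\tilde\alpha+\tilde\beta>\pi-(\text{small})$ in thin triangles'' presupposes a thinness that is not a hypothesis. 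Thinness is a \emph{consequence} of the lemma, and the route would lose constants anyway.
\end{itemize}

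The missing idea is a one-line combination, using the fact you quote but never exploit. By Remark \ref{rem: aftero1dsusp} and the triangle inequality in $\Sigma_xX$,
\[
\alpha+\angle q_ixy\ge\angle p_ixq_i>\pi-\delta ,
\]
and likewise $\beta+\angle q_iyx>\pi-\delta$ at $y$. These are the lower bounds you need. Next apply CAT comparison to the \emph{other} triangle $\triangle q_ixy$: $\angle q_ixy+\angle q_iyx$ is at most the sum of the corresponding comparison angles, which is $\le\pi$. You never need to relate the comparison angles of the two triangles to each other. Adding the two lower bounds and subtracting this gives $\alpha+\beta>\pi-2\delta$, which is \eqref{eqn: jlb}. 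Then
\[
\alpha>\pi-2\delta-\beta\ge\pi-2\delta-\tilde\beta\ge\tilde\alpha-2\delta ,
\]
which gives \eqref{eqn: jla}.

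One small correction to your justification. For $\kappa>0$, the model facts $\tilde\alpha+\tilde\beta\le\pi$ (and the analogue for $\triangle q_ixy$) hold because the two sides opposite these angles are shorter than $D_\kappa/2$, by condition (1) of the strainer definition. They do not follow from the perimeter being below $2D_\kappa$. On the unit sphere, an isosceles triangle with legs $0.6\pi$ and a short base has perimeter below $2\pi$, yet its base angles sum to more than $\pi$.
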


\begin{proof}
Since $(p_1,\dots,p_m)$ and $(q_1,\dots,q_m)$ are 
mutually opposite $(m,\delta)$-strainers at $x$ and $y$,
for each $i \in \{ 1, \dots, m \}$
we have 
\[
\angle p_ixy + \angle q_ixy \ge \angle p_ixq_i  > \pi - \delta,
\quad
\angle p_iyx + \angle q_iyx \ge \angle p_iyq_i > \pi - \delta
\]
All the edges in 
$\triangle p_ixy$ and $\triangle q_ixy$
have length smaller than $D_{\kappa}/2$.
Comparing them with their comparison triangles
in $M_{\kappa}^2$,
we see
\[
\angle p_ixy + \angle p_iyx < \pi,
\quad
\angle q_ixy + \angle q_iyx < \pi.
\]
Combining these inequalities leads to 
the desired ones \eqref{eqn: jla} and \eqref{eqn: jlb}.
If $d(p_i,x) = d(p_i,y)$,
then both $\angle p_ixy$ and $\angle p_iyx$
are smaller than $\pi/2$.
This together with \eqref{eqn: jlb} implies \eqref{eqn: jlc}.
\end{proof}

\subsection{Relaxed wall points}

We introduce a relaxed wall singularity
mentioned in Section \ref{sec: i}.

\begin{defn}\label{defn: dwall}
Let $m \in \N$, and 
let $\delta, \epsilon \in (0,1)$ satisfy $\delta < \epsilon$.
Let $U$ be a locally geodesically complete open subset
of a $\CAT(\kappa)$ space $X$.
We say that a point $x$ in $U$ is an
\emph{$(m,\delta,\epsilon)$-wall point}
if $x$ belongs to $S_{m,\epsilon}(U) - S_{m-1,\delta}(U)$,
where $S_{0,\delta}(U)$ is defined to be empty.
For a subset $A$ of $U$,
we denote by $W_{m,\delta,\epsilon}(A)$
the set of all $(m,\delta,\epsilon)$-wall points in $A$,
and call it 
the \emph{$(m,\delta,\epsilon)$-wall set in $A$}.
\end{defn}

We verify the following:

\begin{lem}\label{lem: walldwall}
For every $n \in \N$,
there exists $\delta^{\ast} \in (0,1)$ 
such that for every $\delta \in (0,\delta^{\ast})$
the following holds:
If a locally geodesically complete open subset $U$
of a $\CAT(\kappa)$ space $X$ satisfies
$\dim_{\mathrm{G}} U \le n$,
then $W_n(U)$ is contained in $W_{n,\delta,\delta^{\ast}}(U)$.
\end{lem}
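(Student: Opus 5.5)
The plan is to check the two halves of the inclusion separately. Fix $x \in W_n(U)$. We must show that $x$ is $(n-1,\delta)$-regular and $(n,\delta^{\ast})$-singular.

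The first half is immediate and needs no smallness. Since $x$ is $(n-1)$-regular, $T_xX \cong \R^{n-1}\times Y$, so $\Sigma_xX$ splits isometrically as $\Sph^{n-2} \ast Z_0$. Example \ref{exmp: exmsusp} then gives mutually opposite exact $(n-1)$-suspenders in $\Sigma_xX$. These are $(n-1,\delta)$-suspenders for every $\delta$, so $x \in R_{n-1,\delta}(U)$.

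For the second half, we first identify $\Sigma_xX$. We have $\dim_{\mathrm{G}} \Sigma_xX \le n-1$, so the factor $Z_0$ in $\Sigma_xX = \Sph^{n-2}\ast Z_0$ has geometric dimension $\le 0$, i.e.\ it is discrete. As a factor of a $\CAT(1)$ join it is $\pi$-separated (after $\pi$-truncation), and it is finite since $\Sigma_xX$ is compact. Hence $\Sigma_xX \cong \Sph^{n-2}\ast T_l^0$ for some $l \ge 0$. Since $x$ is $n$-singular, $l \neq 2$. By Example \ref{exmp: linkpenet}, $\Sigma_xX$ is penetrable, and this excludes $l=1$: the space $\Sph^{n-2}\ast\{\mathrm{pt}\}$ is a closed hemisphere, whose interior points have no antipodes. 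So either $l=0$, or $l\ge 3$.

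If $l=0$, then $\dim_{\mathrm{G}}\Sigma_xX = n-2$. Choosing $\delta^{\ast}$ below the constant of Remark \ref{rem: aftercapmdsusp} (applied with $m=n$), no $(n,\delta^{\ast})$-suspender exists in $\Sigma_xX$.

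The main step is the case $l \ge 3$. Here we need a single $\delta^{\ast}$, depending only on $n$, that works for all $l \ge 3$ at once. We argue by contradiction with an ultralimit, as in Lemma \ref{lem: capmdsusp}. Suppose that for $\delta_k \to 0$ and $l_k \ge 3$ the space $Z_k := \Sph^{n-2}\ast T_{l_k}^0$ admits an $(n,\delta_k)$-suspender.
\begin{enumerate}
\item The ultralimit $Z_\omega = \ulim Z_k$ is penetrable, and by Lemma \ref{lem: omdsusp} it carries an exact $n$-suspender.
\item Proposition \ref{prop: m0susp} then splits $Z_\omega$ as $\Sph^{n-1}\ast Z'$.
\item By \eqref{eqn: lscdim}, $\dim_{\mathrm{G}} Z_\omega \le n-1$, which forces $Z'=\emptyset$, so $Z_\omega \cong \Sph^{n-1}$.
\item On the other hand, $Z_\omega$ is isometric to $\Sph^{n-2}\ast T$, where $T = \ulim T_{l_k}^0$ is a $\pi$-separated discrete space with at least three points. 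Indeed, three fixed points of each $T_{l_k}^0$ survive in the limit with pairwise distance $\pi$.
\item In $\Sph^{n-2}\ast T$, a point of $T$ has at least two antipodes at mutual distance $\pi$. In $\Sph^{n-1}$, antipodes are unique.
\end{enumerate}
This is a contradiction. It yields $\delta^{\ast}$ depending only on $n$, which we take also smaller than the constant of Remark \ref{rem: aftercapmdsusp}.

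With this $\delta^{\ast}$, every $x \in W_n(U)$ lies in $S_{n,\delta^{\ast}}(U)\cap R_{n-1,\delta}(U) = W_{n,\delta,\delta^{\ast}}(U)$ for all $\delta\in(0,\delta^{\ast})$.

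The main obstacle is the uniformity in $l$, which is handled by the ultralimit in the case $l \ge 3$. A secondary point to verify carefully is the identification of $\ulim Z_k$ with the join of $\Sph^{n-2}$ and the limit of the discrete factors. This should follow from the join distance formula and the fact that the $\Sph^{n-2}$ factor is constant in $k$.
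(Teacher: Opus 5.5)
Your proof is correct and follows essentially the same route as the paper. The paper also argues by contradiction via an ultralimit of the spaces of directions: it gets an exact $n$-suspender from Lemma~\ref{lem: omdsusp}, splits off $\Sph^{n-1}$ by Proposition~\ref{prop: m0susp}, and contradicts the splitting $\Sph^{n-2} \ast T^0$ with at least three points in $T^0$.

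Your separate handling of $l=0$ (by Remark~\ref{rem: aftercapmdsusp}) and of $l=1$ (by penetrability) makes explicit what the paper passes over. The penetrability argument is genuinely needed, since a closed hemisphere does admit $(n,\delta)$-suspenders for every $\delta$.

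The only slip is your claim that $\Sigma_xX$ is compact, which need not hold without local compactness. You should therefore allow $l=\infty$; your ultralimit step covers that case unchanged.
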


\begin{proof}
Suppose that for some sequences 
$(\delta_k)$, $(\delta_k^{\ast})$ in $(0,1)$
with $\delta_k < \delta_k^{\ast}$ and $\delta_k^{\ast} \to 0$
there exists a sequence $(x_k)$ of points in 
locally geodesically complete open subsets $U_k$
of $\CAT(\kappa)$ spaces $X_k$
satisfing
$\dim_{\mathrm{G}} U_k \le n$
such that
each $x_k$ belongs to 
$W_n(U_k) \cap (U_k - W_{n,\delta_k,\delta_k^{\ast}}(U_k))$.
Since we have $x_k \in W_n(U_k)$,
we have $x_k \in R_{n-1,\delta_k}(U_k)$.
Hence $x_k \in R_{n,\delta_k^{\ast}}(U_k)$.
Then we find an $(n,\delta_k^{\ast})$-suspender in $\Sigma_{x_k}X_k$.
Let $\left( \ulim \Sigma_{x_k}X_k, \xi_{\omega} \right)$
the ultralimit of some sequence of the pointed spaces of directions
$\left( \Sigma_{x_k}X_k, \xi_k \right)$.
Lemma \ref{lem: omdsusp} implies that
$\ulim \Sigma_{x_k}X_k$ admits an $n$-suspender.
From \eqref{eqn: lscdim} it follows that
$\dim_{\mathrm{G}} \left( \ulim \Sigma_{x_k}X_k \right) \le n-1$.
By Proposition \ref{prop: m0susp},
$\ulim \Sigma_{x_k}X_k$ is isometric to $\Sph^{n-1}$.

On the other hand,
since $x_k \in W_n(U_k)$,
each $\Sigma_{x_k}X_k$ 
isometrically splits as $\Sph^{n-2} \ast T_k^0$
for some discrete metric space $T_k^0$ of cardinality at least $3$.
In particular,
$\ulim \Sigma_{x_k}X_k$
isometrically 
splits as $\Sph^{n-2} \ast T_{\omega}^0$
for some discrete metric space $T_{\omega}^0$ 
of cardinality at least $3$.
This is a contradiction.
\end{proof}

\section{Regularity of maps with distance coordinates}

We discuss maps on $\CBA$ spaces with distance coordinates
as a generalization of the studies by Lytchak and the author 
\cite[Section 8]{lytchak-nagano1}
(see also \cite[Sections 4 and 5]{lytchak-nagano-stadler}
by Lytchak, Stadler, and the author).

\subsection{Maps with distance coordinates}

For a point $p$ in a metric space $X$ with metric $d_X$,
we denote by $d_p$ the distance function from $p$
given by $d_p(x) := d_X(p,x)$.

Let $U$ be an open subset of a locally geodesic metric space. 
A function $f \colon U \to \R$ is said to be
\emph{convex on $U$}
if for every geodesic $\gamma \colon I \to U$ in $U$
the composition $f \circ \gamma$ is convex on $I$.

\begin{exmp}\label{exmp: conv}
Let $X$ be a $\CAT(\kappa)$ space.
From the $\CAT(\kappa)$ property
it follows that
for each $r \in (0,D_{\kappa}/2]$,
and for each $p \in X$,
the distance function $d_p \colon U_r(p) \to \R$
from $p$
on $U_r(p)$
is convex.
\end{exmp}

Let $X$ be a $\CAT(\kappa)$ space with metric $d_X$,
and let $p, x, y$ be distinct three points in $X$
with pairwise distance $< D_{\kappa}$.
Let $\gamma_{xy} \colon [a,b] \to X$ be a geodesic in $X$
from $x$ to $y$.
Then
we have the following so-called first variation formula
for distance functions:
\begin{equation}
(d_p \circ \gamma_{xy})^+(a)
= - \cos \angle_x(p,y),
\label{eqn: 1vfd}
\end{equation}
where 
$(d_p \circ \gamma_{xy})^+(a)$
is the right derivative of $d_p \circ \gamma_{xy}$ at $a$.

Let $U$ be a locally geodesically complete open subset 
of $X$ with $\diam U < D_{\kappa}/2$.
Let $(p_1,\dots,p_m)$ be an $m$-tuple of points in $X-U$
such that for each $i \in \{ 1, \dots, m \}$ we have
$d_X(p_i,x) < D_{\kappa}/2$,
and define the map $\varphi \colon U \to \R^m$ 
by $\varphi = (d_{p_1},\dots,d_{p_m})$
with distance coordinates.
Since each coordinate $d_{p_i}$ is Lipschitz and convex,
for each $x \in U$
the map $\varphi$ has directional derivative
$D_x\varphi \colon T_xX \to \R^m$
(see \cite{lytchak1}).
By the first variation formula \eqref{eqn: 1vfd},
for every $y \in U - \{x\}$,
and for every $s \in [0,\infty)$, we have
\begin{equation}
(D_x\varphi) \left( [(s,y_x')] \right)
= \left( -s \cos \angle_x(p_1,y), \dots, -s \cos \angle_x(p_m,y) \right).
\label{eqn: 1vfs}
\end{equation}

\subsection{Strainer maps}

We introduce the following:

\begin{defn}\label{defn: strm}
Let $m \in \N$ and $\delta \in (0,1)$.
Let $U$ be an open subset 
of a $\CAT(\kappa)$ space with $\diam U < D_{\kappa}/2$.
We say that a map
$\varphi \colon U \to \R^m$ 
is an \emph{$(m,\delta)$-strainer map} on $U$
if $U$ is locally geodesically complete,
and if there exists an $(m,\delta)$-strainer at $U$
with $\varphi = (d_{p_1},\dots,d_{p_m})$.
\end{defn}

\begin{exmp}\label{exmp: strf}
Let $U$ be a locally geodesically complete open subset 
of a $\CAT(\kappa)$ spaces.
If a point $p$ in $U$ is $\delta$-roughly conical,
then for some $r \in (0,D_{\kappa}/2)$
the distance function
$d_p \colon U_r(p)-\{p\} \to \R$ from $p$
is a $(1,\delta)$-strainer map on $U_r(p)-\{p\}$.
\end{exmp}

For $u \in \R^m$
we denote by $\Vert u \Vert$
the Euclidean norm of $u$ on $\R^m$.

From Lemma \ref{lem: jl}
we derive the following first variation inequalities for strainer maps
(cf.~\cite[Proposition 8.5]{lytchak-nagano1}).

\begin{lem}\label{lem: 1stvfstr0}
Let 
$\varphi \colon U \to \R^m$
be an $(m,\delta)$-strainer map on $U$
with $\varphi = (d_{p_1},\dots,d_{p_m})$.
Let $\gamma \colon [a,b] \to U$ be a geodesic
joining distinct two points in $U$.
Then for each $i \in \{ 1, \dots, m \}$ we have
\[
\left\vert
\frac{(d_{p_i} \circ \gamma)(b) - (d_{p_i} \circ \gamma)(a)}
{d(\gamma(b),\gamma(a))}
- (d_{p_i} \circ \gamma)^+(a)
\right\vert
< 2\delta.
\] 
In particular, we have
\[
\left\vert
\frac{\Vert (\varphi \circ \gamma)(b) - (\varphi \circ \gamma)(a) \Vert}
{d(\gamma(b),\gamma(a))}
- \Vert (\varphi \circ \gamma)^+(a) \Vert
\right\vert
< 2 \sqrt{m} \delta,
\]
where $(\varphi \circ \gamma)^+(a) = 
((d_{p_1} \circ \gamma)^+(a), \dots, (d_{p_m} \circ \gamma)^+(a))$.
If in addition for some $j \in \{ 1, \dots, m \}$ we have
$(d_{p_j} \circ \gamma)(a) = (d_{p_j} \circ \gamma)(b)$,
then 
$\Vert (d_{p_j} \circ \gamma)^+(a) \Vert < 2\delta$.
Moreover,
if we have $(\varphi \circ \gamma)(a) = (\varphi \circ \gamma)(b)$,
then 
\[
\Vert (\varphi \circ \gamma)^+(a) \Vert < 2 \sqrt{m} \delta.
\]
\end{lem}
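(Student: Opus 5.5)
The plan is to reduce everything to the comparison estimate \eqref{eqn: jla} of Lemma \ref{lem: jl}, applied along the geodesic $\gamma$.

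First, I would set $x := \gamma(a)$ and $y := \gamma(b)$. Since $U$ is $(m,\delta)$-strained by $(p_1,\dots,p_m)$, I will choose an opposite strainer $(q_1,\dots,q_m)$ at $U$; the opposite suspender exists at each point by definition. Lemma \ref{lem: jl} then gives, for each $i$,
\[
\left\vert \angle p_ixy - \angle \tilde{p}_i\tilde{x}\tilde{y} \right\vert < 2\delta ,
\]
and by symmetry the same bound at the vertex $y$. The first variation formula \eqref{eqn: 1vfd} gives $(d_{p_i}\circ\gamma)^+(a) = -\cos \angle p_ixy$. The difference quotient $\bigl(d_{p_i}(y)-d_{p_i}(x)\bigr)/d(x,y)$ is governed by the comparison triangle in $M_\kappa^2$.

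The key step is to relate this quotient to the two comparison angles. The \emph{exact} planar value of the quotient lies between $-\cos\angle\tilde p_i\tilde x\tilde y$ and $\cos\angle\tilde p_i\tilde y\tilde x$, by the mean value theorem for $d_{\tilde p_i}$ along the segment $\tilde x\tilde y$ in $M_\kappa^2$, where the derivative is monotone by convexity. By \eqref{eqn: jlb} and \eqref{eqn: jla}, both of these endpoint values are within $2\delta$ of $-\cos\angle p_ixy$, using that $\cos$ is $1$-Lipschitz. I expect a little care is needed here with the strict inequalities and with the difference between the two base angles, which add to something in $(\pi-2\delta,\pi)$. Handling this carefully yields the first estimate.

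The norm estimate follows by combining the coordinatewise bounds with the triangle inequality in $\R^m$, which gives a defect of at most $\sqrt{m}\cdot 2\delta$. For the final claims, if $d_{p_j}(x)=d_{p_j}(y)$, then the quotient vanishes, so $\vert(d_{p_j}\circ\gamma)^+(a)\vert<2\delta$; this also follows directly from \eqref{eqn: jlc}. Applying this to all $j$ gives the bound $2\sqrt m\,\delta$.
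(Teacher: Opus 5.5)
Your route (first variation formula plus Lemma \ref{lem: jl}) is the one the paper intends; it says only ``from Lemma \ref{lem: jl} we derive''. Your lower bound is fine: with $Q:=\bigl(d_{p_i}(y)-d_{p_i}(x)\bigr)/d(x,y)$ you get $Q\ge -\cos\angle\tilde{p}_i\tilde{x}\tilde{y}> -\cos\angle p_ixy-2\delta$ by \eqref{eqn: jla}.

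The upper bound, however, does not follow from \eqref{eqn: jla} and \eqref{eqn: jlb} as you claim. You need $Q<-\cos\angle p_ixy+2\delta$, which amounts to a lower bound $\angle\tilde{p}_i\tilde{y}\tilde{x}>\pi-\angle p_ixy-2\delta$. But \eqref{eqn: jla} at $y$ combined with \eqref{eqn: jlb} only yields $\angle\tilde{p}_i\tilde{y}\tilde{x}>\angle p_iyx-2\delta>\pi-\angle p_ixy-4\delta$.

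This is not a matter of careful bookkeeping with strict inequalities. Take the values $\angle p_ixy=\angle\tilde{p}_i\tilde{x}\tilde{y}=\pi/2$, $\angle p_iyx=\pi/2-2\delta+\epsilon$ and $\angle\tilde{p}_i\tilde{y}\tilde{x}=\pi/2-4\delta+2\epsilon$. They satisfy both \eqref{eqn: jla} and \eqref{eqn: jlb}. Yet the corresponding Euclidean triangle has $Q=\tan(2\delta-\epsilon)>2\delta$ for small $\delta$ and $\epsilon>0$ small enough.

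What excludes this is the one-sided $\CAT(\kappa)$ angle comparison $\angle p_iyx\le\angle\tilde{p}_i\tilde{y}\tilde{x}$. It is used inside the proof of Lemma \ref{lem: jl} but is not part of its statement. With it, $\angle\tilde{p}_i\tilde{y}\tilde{x}\ge\angle p_iyx>\pi-2\delta-\angle p_ixy$, and hence $Q\le\cos\angle\tilde{p}_i\tilde{y}\tilde{x}<-\cos\angle p_ixy+2\delta$. If $\pi-2\delta-\angle p_ixy<0$ this is trivial, since the right-hand side then exceeds $1\ge Q$.

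More economically, you can skip comparison triangles. The function $d_{p_i}\circ\gamma$ is convex (Example \ref{exmp: conv}), so
\[
-\cos\angle p_ixy=(d_{p_i}\circ\gamma)^+(a)\le Q\le(d_{p_i}\circ\gamma)^-(b)=\cos\angle p_iyx,
\]
and \eqref{eqn: jlb} alone then gives $0\le Q-(d_{p_i}\circ\gamma)^+(a)<2\delta$. Your remaining steps (the norm bound and the two equal-value cases) are fine.

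There is a second gap. The remark ``the opposite suspender exists at each point by definition'' does not supply what Lemma \ref{lem: jl} needs. That lemma requires one tuple of points $(q_1,\dots,q_m)$ opposite to $(p_1,\dots,p_m)$ at both $x$ and $y$, because its proof uses the triangles $q_ixy$. Straining at $U$ only gives, separately at each point, an opposite suspender of directions.

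Pointwise straining is genuinely insufficient. Let $p$ be the origin of $\R^2$ and $U$ the open disc of radius $10$ about $(0,20)$. Every point of $U$ is $(1,\delta)$-strained by $p$ for every $\delta$. Yet for the segment $\gamma$ from $x=(-5,20)$ to $y=(5,20)$ we have $d_p(x)=d_p(y)$ and $(d_p\circ\gamma)^+(a)=-1/\sqrt{17}$. This contradicts the equal-value claim once $2\delta<1/\sqrt{17}$.

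The paper tacitly makes the same assumption, so you should state it explicitly: $(p_1,\dots,p_m)$ admits an opposite strainer at $U$. This can be arranged near any strained point. Choose points $q_i$ whose directions form an opposite suspender there; this is possible because the defining inequalities are open and genuine directions are dense. Then apply Lemma \ref{lem: usstab0}.
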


Lemma \ref{lem: 1stvfstr0} tells us the following geometric property:

\begin{prop}\label{prop: 1stvfstr}
Let 
$\varphi \colon U \to \R^m$
be an $(m,\delta)$-strainer map on $U$
with $\varphi = (d_{p_1},\dots,d_{p_m})$.
Let $\gamma \colon [a,b] \to U$ be a geodesic
joining distinct two points in $U$.
Then for all $s, t \in [a,b)$ we have
\[
\Vert (\varphi \circ \gamma)^+(s) - (\varphi \circ \gamma)^+(t) \Vert
\le 4 \sqrt{m} \delta.
\]
If for some $s_1, s_2 \in [a,b)$ with $s_1 \neq s_2$
we have
$(\varphi \circ \gamma)(s_1) = (\varphi \circ \gamma(s_2))$,
then for all $t \in [a,b)$ we have
\[
\Vert (\varphi \circ \gamma)^+(t) \Vert
\le 6 \sqrt{m} \delta.
\]
\end{prop}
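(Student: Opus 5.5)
The statement is Proposition~\ref{prop: 1stvfstr}, and both estimates will come from applying Lemma~\ref{lem: 1stvfstr0} to subsegments of $\gamma$. Every restriction of $\gamma$ to a subinterval is again a geodesic in $U$, and $\varphi$ is an $(m,\delta)$-strainer map on all of $U$. So Lemma~\ref{lem: 1stvfstr0} applies at every initial point $s\in[a,b)$ of every subsegment.

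\emph{First inequality.} Take $s<t$ in $[a,b)$; the case $s=t$ is trivial. Apply Lemma~\ref{lem: 1stvfstr0} to $\gamma|_{[s,t]}$. Coordinatewise, the difference quotient
\[
Q_i := \frac{d_{p_i}(\gamma(t)) - d_{p_i}(\gamma(s))}{t-s}
\]
is within $2\delta$ of $(d_{p_i}\circ\gamma)^+(s)$.

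To compare $Q_i$ with the derivative at $t$, I would reverse the segment. Let $\bar\gamma(u):=\gamma(s+t-u)$ on $[s,t]$, a geodesic starting at $\gamma(t)$. Lemma~\ref{lem: 1stvfstr0} applied to $\bar\gamma$ says that $-Q_i$ is within $2\delta$ of $(d_{p_i}\circ\bar\gamma)^+(s)$. By the first variation formula \eqref{eqn: 1vfd}, this one-sided derivative equals $-\cos\angle_{\gamma(t)}(p_i,\gamma(s))$.

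Since $t<b$, the direction of $\gamma$ backwards at $\gamma(t)$ and the direction forwards at $\gamma(t)$ are at angle $\pi$. Then \eqref{eqn: jlb} at the point $\gamma(t)$ gives
\[
\pi - 2\delta < \angle_{\gamma(t)}(p_i,\gamma(s)) + \angle_{\gamma(t)}(p_i,\gamma(t')) \le \pi
\]
for $t'$ slightly beyond $t$. Hence $\cos\angle_{\gamma(t)}(p_i,\gamma(s))$ and $-\cos\angle_{\gamma(t)}(p_i,\gamma(t'))$ differ by less than $2\delta$, because cosine is $1$-Lipschitz. Therefore $(d_{p_i}\circ\gamma)^+(t)$ is within about $2\delta$ of $Q_i$.

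Combining the two comparisons gives $|(d_{p_i}\circ\gamma)^+(s)-(d_{p_i}\circ\gamma)^+(t)|\le 4\delta$ for each $i$. Summing in $\ell^2$ over the $m$ coordinates yields the bound $4\sqrt m\,\delta$.

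A simpler route avoids the reversal and should be tried first. Apply Lemma~\ref{lem: 1stvfstr0} to $\gamma|_{[s,t']}$ and to $\gamma|_{[t,t']}$ with $t'\to t^+$. This compares $(d_{p_i}\circ\gamma)^+(t)$ with difference quotients near $t$. By convexity of $d_{p_i}$ (Example~\ref{exmp: conv}), the right derivative is monotone, so the quotient over $[s,t']$ lies between $(d_{p_i}\circ\gamma)^+(s)$ and $(d_{p_i}\circ\gamma)^+(t)$ up to errors. With monotonicity this route is even cleaner: for $s<t$,
\[
(d_{p_i}\circ\gamma)^+(s)\le Q_i\le (d_{p_i}\circ\gamma)^+(t).
\]
Then I still need the upper control of $(d_{p_i}\circ\gamma)^+(t)-Q_i$, which is exactly where the angle estimate \eqref{eqn: jlb} at $\gamma(t)$ (or the lemma on a segment through $t$) enters. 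This step, controlling the derivative at the \emph{right} endpoint of a subsegment, is the main obstacle. Lemma~\ref{lem: 1stvfstr0} only speaks about the initial point, so the reversal, or an application on $[s,t+\eta]$ with $\eta\to0$ together with continuity of difference quotients, is needed.

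\emph{Second inequality.} Suppose $\varphi(\gamma(s_1))=\varphi(\gamma(s_2))$ with $s_1<s_2$. The last assertion of Lemma~\ref{lem: 1stvfstr0}, applied to $\gamma|_{[s_1,s_2]}$, gives $\Vert(\varphi\circ\gamma)^+(s_1)\Vert<2\sqrt m\,\delta$. For any $t\in[a,b)$, the first inequality then gives
\[
\Vert(\varphi\circ\gamma)^+(t)\Vert\le 2\sqrt m\,\delta+4\sqrt m\,\delta = 6\sqrt m\,\delta.
\]
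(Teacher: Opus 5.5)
Your proposal has a genuine gap in the step that controls the derivative at the right endpoint $t$, and as written it does not reach the constant $4\sqrt m\,\delta$.

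Write $f:=d_{p_i}\circ\gamma$ and $f^{\pm}$ for its one-sided derivatives. For $t'>t$ these are $f^-(t)=\cos\angle_{\gamma(t)}(p_i,\gamma(s))$ and $f^+(t)=-\cos\angle_{\gamma(t)}(p_i,\gamma(t'))$.

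The first problem is a misapplied inequality. \eqref{eqn: jlb} bounds the two base angles of the triangle $p_ixy$, which sit at the \emph{different} vertices $x$ and $y$. It says nothing about two angles at the same vertex $\gamma(t)$. In fact your displayed inequality is backwards. The two directions of $\gamma$ at $\gamma(t)$ are at distance $\pi$ in $\Sigma_{\gamma(t)}X$, so the triangle inequality forces $\angle_{\gamma(t)}(p_i,\gamma(s))+\angle_{\gamma(t)}(p_i,\gamma(t'))\ge\pi$. The upper bound you actually need is $<\pi+2\delta$. It comes from $(p_i)'_{\gamma(t)}$ being a $(1,\delta)$-suspender, via \eqref{eqn: p1dsuspa} in Lemma~\ref{lem: p1dsusp}, and it yields $0\le f^+(t)-f^-(t)<2\delta$.

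The second problem is the bookkeeping. From $|Q_i-f^-(t)|<2\delta$ and $|f^-(t)-f^+(t)|<2\delta$ you get $|Q_i-f^+(t)|<4\delta$, not ``about $2\delta$''. Together with $|Q_i-f^+(s)|<2\delta$ this gives only $6\delta$ per coordinate. That means $6\sqrt m\,\delta$ in the first claim and $8\sqrt m\,\delta$ in the second. Convexity does not rescue this: the chain $f^+(s)\le Q_i\le f^-(t)\le f^+(t)$ just splits $f^+(t)-f^+(s)$ into three nonnegative pieces, each bounded by $2\delta$.

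The detour through $Q_i$ is what costs the extra $2\delta$. There are two ways to avoid it.

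First option: compare $f^+(s)=-\cos\angle_{\gamma(s)}(p_i,\gamma(t))$ with $f^-(t)$ directly. Applying \eqref{eqn: jlb} to the triangle $p_i\gamma(s)\gamma(t)$ gives $0\le f^-(t)-f^+(s)<2\delta$. Adding the suspender estimate at $\gamma(t)$ gives $f^+(t)-f^+(s)<4\delta$.

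Second option: stay with Lemma~\ref{lem: 1stvfstr0} alone, as the paper's one-line derivation suggests, and combine the two ideas you floated. Apply the lemma to $\gamma|_{[s,t']}$ \emph{and} to its reversal, for $t'\in(t,b]$. This gives $|f^+(s)-f^-(t')|<4\delta$. Now let $t'\downarrow t$ and use that $f^-(t')\to f^+(t)$ for the convex function $f$. You obtain $|f^+(s)-f^+(t)|\le4\delta$, which also explains the non-strict inequality in the statement.

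Either way, your deduction of the second claim from the first (via the last assertion of Lemma~\ref{lem: 1stvfstr0} on $\gamma|_{[s_1,s_2]}$) is then correct.
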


\subsection{Pseudo-strainer maps}

Recall that
every distance function from a single point
in a metric space
is $1$-open.
We are going to show that 
every $(m,\delta)$-strainer map is $c$-open
for some constant $c \in (0,1]$.
To do this,
we consider more general maps with distance coordinates.

\begin{defn}\label{defn: sstrm}
Let $m \in \N$ and $\delta \in (0,1)$.
Let $X$ be a $\CAT(\kappa)$ space with metric $d_X$,
and let $U$ be an open subset 
of $X$ with $\diam U < D_{\kappa}/2$.
We say that a map $\varphi \colon U \to \R^m$ is a
\emph{$(m,\delta)$-pseudo-strainer map} on $U$
if $U$ is locally geodesically complete,
and if 
there exists an $m$-tuple $(p_1,\dots,p_{m-1},p_m)$ of points in $X-U$
such that
\begin{enumerate}
\item
for each $i \in \{ 1, \dots, m \}$ we have
$\sup_{x \in U}d_X(p_i,x) < D_{\kappa}/2$;
\item
for each $x \in U$, and for $i, j \in \{ 1, \dots, m \}$ with $i \neq j$,
we have
\begin{equation}
\left\vert \angle_x (p_i,p_j) - \frac{\pi}{2} \right\vert
< 2\delta.
\label{eqn: sstrma}
\end{equation}
\end{enumerate}
\end{defn}

Mimicking some argument on an openness in 
\cite[Theorem 9.4]{burago-gromov-perelman},
we show the following
(cf.~\cite[Lemma 8.1]{lytchak-nagano1} in the $\GCBA$ setting):

\begin{lem}\label{lem: wwopen}
For $m \in \N$, 
let $\delta \in (0,1)$ satisfy $2(m-1)\delta < 1$.
Let $\varphi = (d_{p_1},\dots,d_{p_m}) \colon U \to \R^m$
be an $(m,\delta)$-pseudo-strainer map on $U$
in a $\CAT(\kappa)$ space $X$.
Then $\varphi$
is locally $1/(1-2(m-1)\delta)\sqrt{m}$-open.
In particular,
it is locally $2/\sqrt{m}$-open,
whenever $4(m-1)\delta < 1$.
\end{lem}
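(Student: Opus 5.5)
The plan is to follow the Burago--Gromov--Perelman scheme: given $x \in U$ and a target $v \in \R^m$ close to $\varphi(x)$, construct $y$ near $x$ with $\varphi(y) = v$ and with $d(x,y)$ controlled by $c\Vert v - \varphi(x)\Vert$, where $c = \sqrt{m}/(1-2(m-1)\delta)$. Fix a small ball $B_{cr}(x)$ that is compact, complete and contained in $U$; local geodesic completeness of $U$ and local compactness, possibly after shrinking, make this available. On it, minimize the function $F(y) := \Vert \varphi(y) - v\Vert_{\infty}$, or alternatively the $\ell^1$-norm $\sum_i |d_{p_i}(y) - v_i|$. A minimizer $y_0$ exists by compactness. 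We then show that $F(y_0)=0$ and that $y_0$ lies in the interior of the ball.

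The key step is a descent estimate. If $F(y) > 0$ at an interior point $y$, we find a direction at $y$ along which $F$ strictly decreases at rate at least $(1-2(m-1)\delta)/\sqrt{m}$. Let $I_+$ be the set of indices with $d_{p_i}(y) > v_i$ and $I_-$ the set with $d_{p_i}(y) < v_i$.

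For an index in $I_+$, moving toward $p_i$ decreases $d_{p_i}$ at rate $1$, by the first variation formula \eqref{eqn: 1vfd}. For an index in $I_-$, we use local geodesic completeness of $U$ to extend the geodesic $p_i y$ beyond $y$. This gives a direction $\bar\xi_i$ with $\angle(\bar\xi_i,(p_i)_y') = \pi$, and along it $d_{p_i}$ increases at rate $1$.

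Using \eqref{eqn: sstrma} together with the triangle inequality in $\Sigma_yX$, we get $|\angle(\bar\xi_i,(p_j)'_y) - \pi/2| < 2\delta$ as well. Hence moving in any one such chosen direction changes each other coordinate at rate at most $\sin 2\delta \le 2\delta$.

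A single direction does not suffice, because we must handle all indices simultaneously. The approach is to proceed coordinate by coordinate, as in BGP Theorem 9.4. First adjust the coordinate $i$ with the largest discrepancy $|d_{p_i}(y) - v_i|$. This reduces that discrepancy by $t$ while disturbing the other $m-1$ coordinates by at most $2(m-1)\delta\, t$. So with the $\ell^{\infty}$-type functional the maximum discrepancy decreases at rate at least $1 - 2(m-1)\delta$, which is positive by hypothesis.

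Combined with minimality this shows $F(y_0)=0$. To get the openness constant, we compare path length to decrease in $F$. We run a gradient-like curve (a concatenation of short geodesic steps) starting at $x$. Its length is at most $F(x)/(1-2(m-1)\delta)$, and $F(x) = \Vert\varphi(x)-v\Vert_\infty \le \Vert \varphi(x) - v\Vert$. The extra factor $\sqrt{m}$ in the stated constant absorbs the comparison between the $\ell^\infty$, $\ell^1$ and Euclidean norms. I would actually run the argument with the Euclidean norm and steps in a combined direction, accepting the $\sqrt{m}$ loss. The curve therefore stays inside $U_{cr}(x)$ whenever $\Vert v-\varphi(x)\Vert < r$, which gives $U_r(\varphi(x)) \subset \varphi(U_{cr}(x))$.

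I expect the main obstacle to be making the "descent along a direction" rigorous at points where the infinitesimal rates are only one-sided derivatives. The angle estimates only hold at the base point $y$, so they have to hold uniformly along short steps. I would first handle this by working with the minimizer $y_0$ directly: if $F(y_0) > 0$, a single short geodesic step from $y_0$ in the chosen direction decreases $F$ to first order, because the directional derivatives are given by \eqref{eqn: 1vfs}. This contradicts minimality, provided $y_0$ is interior.

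Interiority then follows from an a priori estimate comparing $F(y_0) \le F(x)$ with the distance travelled. This estimate is obtained by integrating the rate bound along a discretized descent path, using the condition \eqref{eqn: sstrma} at every point of $U$ in the sense of Definition \ref{defn: sstrm}.
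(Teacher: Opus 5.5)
Your ingredients match the paper's: moves along the lines through the $p_i$, first variation, geodesic extension, and a contraction factor $2(m-1)\delta$. However, two steps fail as written, and the global part is not carried out.

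\textbf{The descent step.} With the $\ell^\infty$ functional, adjusting the coordinate of largest discrepancy need not lower the maximum. Another coordinate tied at the maximum may rise by $2\delta t$, so the claimed rate $1-2(m-1)\delta$ is false.

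The functional that works is the $\ell^1$-norm. A move of length $t$ along the $p_i$-line lowers $\sum_j |d_{p_j}-v_j|$ by at least $(1-2(m-1)\delta)t$. The factor $\sqrt m$ then comes exactly from $\Vert u\Vert_{\ell^1}\le\sqrt m\,\Vert u\Vert$; it is not slack absorbing norm comparisons. A ``combined direction'' in $\Sigma_yX$ realizing a prescribed vector is not available at this stage. Its existence is essentially a consequence of this lemma, used later in Proposition \ref{prop: almsubm}.

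Also, the bound $|\angle(\bar\xi_i,(p_j)'_y)-\pi/2|<2\delta$ for the extension direction does not follow from the triangle inequality in $\Sigma_yX$. That only gives the lower bound $\angle(\bar\xi_i,(p_j)'_y)\ge\pi-\angle_y(p_i,p_j)$. The upper bound needs \eqref{eqn: sstrma} at the points of the extended geodesic beyond $y$, where the backward direction is the direction to $p_i$. Integrating the resulting one-sided derivative bound along the geodesic gives the non-infinitesimal estimate $|d_{p_j}(x')-d_{p_j}(x)|\le 2\delta\, d_X(x,x')$ for any move $x\to x'$ along a $p_i$-line inside $U$. This is what the paper uses, and it also removes your worry about one-sided derivatives at a single base point.

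\textbf{The global part.} Local compactness is not among the hypotheses: the definition of $c$-openness only requires $B_{cr}(x)$ to be complete. So a minimizer of $F$ need not exist. Even when it does, you have not shown it is interior; the ``a priori estimate along a discretized descent path'' you invoke is in fact the whole proof.

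The paper does exactly this and nothing more. Starting from $x_0$, it performs the $m$ coordinate moves in succession. This yields $y_1$ with $d_X(x_0,y_1)\le\Vert u_0\Vert_{\ell^1}$ and $\ell^1$-error at most $2(m-1)\delta\Vert u_0\Vert_{\ell^1}$. Iterating produces errors decaying geometrically and a Cauchy sequence in the complete ball. Its limit $y_\ast$ satisfies $\varphi(y_\ast)=\varphi(x_0)+u_0$ and
$d_X(x_0,y_\ast)\le\Vert u_0\Vert_{\ell^1}/(1-2(m-1)\delta)\le\sqrt m\,\Vert u_0\Vert/(1-2(m-1)\delta)$.

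If you want to keep the minimization picture, Ekeland's variational principle applied to the $\ell^1$-functional would give an interior quasi-minimizer in a merely complete ball. Your reading of the constant as $\sqrt m/(1-2(m-1)\delta)$ is the one this argument actually delivers.
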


\begin{proof}
Let $\delta \in (0,1)$ be small enough.
For a fixed point $x_0$ in $U$,
choose $r \in (0,\infty)$ with $B_r(x_0) \subset U$.
Take an element $u_0 = (u_0^1, \dots, u_0^m)$
in $\R^m$ with $\Vert u_0 \Vert < r/2$.
The goal is to find a point $y_{\ast}$ in $U$ 
with $\varphi(y_{\ast}) = \varphi(x_0) + u_0$
such that
$(1-2(m-1)\delta) d_X(x_0,y_{\ast}) \le \Vert u_0 \Vert_{\ell^1}$,
where
$d_X$ is the metric of $X$,
and
$\Vert u_0 \Vert_{\ell^1}$
is the $\ell^1$-norm of $u_0$. 
To do this,
we are going to construct $y_{\ast}$ as a limit point of a sequence
$y_1, y_2, \dots$.

First we find $y_1 \in U$ with 
$d_X(x_0,y_1) \le \Vert u_0 \Vert_{\ell^1}$
such that
\begin{equation}
\Vert \varphi(y_1) - (\varphi(x_0) + u_0) \Vert_{\ell^1}
\le 2(m-1) \delta \Vert u_0 \Vert_{\ell^1}.
\label{eqn: wwopen1}
\end{equation}
Let $(e_1,\dots,e_m)$ 
denote the standard orthonormal basis of $\R^m$.
For $u_0^1 \in \R$,
we choose $x_1 \in U$ 
with $x_1 \in p_1x_0$ 
and
$\vert d_{p_1}(x_1) - d_{p_1}(x_0) \vert = \vert u_0^1 \vert$.
Notice that if $u_0^1 = 0$, then $x_1 = x_0$.
From our assumption \eqref{eqn: sstrma}
and the first variation formula \eqref{eqn: 1vfd},
we derive
$\vert d_{p_i}(x_1) - d_{p_i}(x_0) \vert < 2\delta \vert u_0^1 \vert$
whenever $i \neq 1$.
This implies
\[
\Vert \varphi(x_1) - (\varphi(x_0) + u_0^1 e_1) \Vert_{\ell^1}
\le 2(m-1) \delta \vert u_0^1 \vert.
\]
Successively,
for $u_0^m \in \R$,
we choose $x_m \in U$ 
with $x_m \in p_mx_{m-1}$ 
and 
$\vert d_{p_m}(x_m) - d_{p_m}(x_{m-1}) \vert = \vert u_0^m \vert$;
in particular,
\[
\Vert
\varphi(x_m) - (\varphi(x_{m-1}) + u_0^m e_m ) \Vert_{\ell^1}
\le 2(m-1) \delta \vert u_0^m \vert.
\]
Put $y_1 := x_m$.
Then
we have $d_X(x_0,y_1) \le \Vert u_0 \Vert_{\ell^1}$
and \eqref{eqn: wwopen1}.

Inductively,
for $y_k \in U$ and $u_k := \varphi(x_0) + u_0 - \varphi(y_k)$,
we can construct 
$y_{k+1} \in U$ with $d_X(y_k,y_{k+1}) \le \Vert u_k \Vert_{\ell^1}$
such that
\[
\Vert \varphi(y_{k+1}) - (\varphi(y_k) + u_k) \Vert_{\ell^1}
\le 2(m-1) \delta \Vert u_k \Vert_{\ell^1}.
\]
For each $k \in \N$, we have
\begin{equation}
\Vert u_{k+1} \Vert_{\ell^1}
\le 2(m-1) \delta \Vert u_k \Vert_{\ell^1}
\le (2(m-1) \delta)^{k+1} \Vert u_0 \Vert_{\ell^1}
\label{eqn: wwopen2}
\end{equation}
\begin{equation}
d_X(x_0,y_{k+1}) \le \sum_{i=0}^kd_X(y_i,y_{i+1})
\le \frac{1-(2(m-1)\delta)^{k+1}}{1-2(m-1)\delta} 
\Vert u_0 \Vert_{\ell^1},
\label{eqn: wwopen3}
\end{equation}
where we set $y_0 := x_0$.
Hence $(y_k)$ is a Cauchy sequence in $B_r(x_0)$.
Let $y_{\ast}$ be the limit of $(y_k)$ in $B_r(x_0)$.
Then \eqref{eqn: wwopen2} implies 
$\varphi(y_{\ast}) = \varphi(x_0) + u_0$,
and \eqref{eqn: wwopen3}
does
$(1-2(m-1)\delta) d_X(x_0,y_{\ast}) \le \Vert u_0 \Vert_{\ell^1}$.
\end{proof}

As a corollary of Lemma \ref{lem: wwopen},
we see:

\begin{lem}\label{lem: wopen}
For $m \in \N$, 
let $\delta \in (0,1)$ satisfy $2(m-1)\delta < 1$.
Then every $(m,\delta)$-strainer map 
is locally $1/(1-2(m-1)\delta)\sqrt{m}$-open.
In particular,
it is locally $2/\sqrt{m}$-open,
whenever $4(m-1)\delta < 1$.
\end{lem}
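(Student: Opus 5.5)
The plan is to deduce Lemma \ref{lem: wopen} directly from Lemma \ref{lem: wwopen}. The point to establish is that every $(m,\delta)$-strainer map is an $(m,\delta)$-pseudo-strainer map. The openness constant then transfers verbatim.

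Let $\varphi = (d_{p_1},\dots,d_{p_m}) \colon U \to \R^m$ be an $(m,\delta)$-strainer map. By Definition \ref{defn: strm}, $U$ is locally geodesically complete, $\diam U < D_{\kappa}/2$, and $(p_1,\dots,p_m)$ is an $(m,\delta)$-strainer at $U$. Condition (1) of Definition \ref{defn: sstrm} is then exactly condition (1) of Definition \ref{defn: mdstr} for a strainer at a set.

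Two further points need checking.

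\emph{First}, that $p_i \in X - U$. Since $p_i \neq x$ for every $x \in U$ (the strainer consists of points of $X - \{x\}$ at each $x$), this holds automatically.

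\emph{Second}, the angle condition \eqref{eqn: sstrma}. At each $x \in U$, the directions $\xi_i := (p_i)_x'$ form an $(m,\delta)$-suspender in $\Sigma_xX$. By \eqref{eqn: dortho}, $\angle_x(p_i,p_j) = d_{\Sigma_xX}(\xi_i,\xi_j) < \pi/2 + \delta$. Since $U$ is locally geodesically complete, $\Sigma_xX$ is penetrable by Example \ref{exmp: linkpenet}. Lemma \ref{lem: omdsusp} then gives the lower bound $d_{\Sigma_xX}(\xi_i,\xi_j) > \pi/2 - 2\delta$. Together these yield $\left\vert \angle_x(p_i,p_j) - \pi/2 \right\vert < 2\delta$, which is precisely \eqref{eqn: sstrma}.

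Hence $\varphi$ is an $(m,\delta)$-pseudo-strainer map. Lemma \ref{lem: wwopen}, under the hypothesis $2(m-1)\delta < 1$, gives local $1/(1-2(m-1)\delta)\sqrt{m}$-openness. The special case $4(m-1)\delta<1$ is the same as in Lemma \ref{lem: wwopen}: there $1-2(m-1)\delta > 1/2$, so the constant is at most $2/\sqrt m$.

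The only step requiring any care is the lower angle bound. It relies on penetrability of the space of directions, which is exactly where local geodesic completeness of $U$ enters, via Example \ref{exmp: linkpenet} and Lemma \ref{lem: omdsusp}. Without it, the upper bound from \eqref{eqn: dortho} alone would not suffice.
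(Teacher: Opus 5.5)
Your argument is the same as the paper's: you show that an $(m,\delta)$-strainer map is an $(m,\delta)$-pseudo-strainer map, obtaining \eqref{eqn: sstrma} from the upper bound in \eqref{eqn: dortho} and the lower bound of Lemma \ref{lem: omdsusp} on the penetrable space $\Sigma_xX$, and then you invoke Lemma \ref{lem: wwopen} (you also state the inclusion in the correct direction, which the paper's one-line proof garbles). One caveat, inherited from the statement: Lemma \ref{lem: wwopen} actually yields $d_X(x_0,y_{\ast}) \le \Vert u_0\Vert_{\ell^1}/(1-2(m-1)\delta) \le \sqrt{m}\,\Vert u_0\Vert/(1-2(m-1)\delta)$, so the constant is $\sqrt{m}/(1-2(m-1)\delta)$, hence $2\sqrt{m}$ when $4(m-1)\delta<1$, and the reading $1/((1-2(m-1)\delta)\sqrt{m})$ behind your bound $2/\sqrt{m}$ is false for $m \ge 2$, since in $\R^m$ with the $p_i$ far away strainer maps are arbitrarily close to isometries and so no fixed openness constant $c<1$ can work.
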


\begin{proof}
By Lemma \ref{lem: omdsusp},
every quasi $(m,\delta)$-strainer map is
an $(m,\delta)$-strainer map.
From Lemma \ref{lem: wwopen},
we conclude the lemma.
\end{proof}

\subsection{Topological regularity of strainer maps}

We show that every strainer map
is almost a submetry
(cf.~\cite[Subsection 8.3]{lytchak-nagano1}).

\begin{prop}
\label{prop: almsubm}
For every $\epsilon \in (0,1)$,
and for every $m \in \N$,
there exists $\delta \in (0,1)$ satisfying the following:
Let $\varphi \colon U \to \R^m$ be an $(m,\delta)$-strainer map.
Then $\varphi$ is $(1+\epsilon)$-Lipschitz and $(1+\epsilon)$-open,
whenever $U$ is convex.
In particular,
for every $x \in U$
the directional derivative $D_x\varphi \colon T_xX \to \R^m$
of $\varphi$ at $x$
is $(1+\epsilon)$-Lipschitz and $(1+\epsilon)$-open.
\end{prop}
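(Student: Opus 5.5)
We reduce both the Lipschitz and the openness claims for $\varphi$ to statements about the differential, and then integrate along geodesics using the first variation inequalities.

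\emph{Step 1: the differential.} Fix $x \in U$ and choose $q_i$ with $x \in p_iq_i$, so that $(p_1,\dots,p_m)$ and $(q_1,\dots,q_m)$ are opposite $(m,2\delta)$-strainers at $x$ (Lemma \ref{lem: sr0}). Put $\xi_i := (p_i)_x'$. By \eqref{eqn: 1vfs}, for $v=[(s,\eta)] \in T_xX$ we have $D_x\varphi(v) = -s(\cos d(\xi_1,\eta),\dots,\cos d(\xi_m,\eta))$.

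For the Lipschitz bound we argue by contradiction with ultralimits, as in Proposition \ref{prop: tmdsusp}. Suppose that along $\delta_k \to 0$ the maps $D_{x_k}\varphi_k$ fail to be $(1+\epsilon)$-Lipschitz on the cones. Pass to the ultralimit of the spaces of directions; penetrability is preserved. By Lemma \ref{lem: omdsusp} the limit carries a genuine $m$-suspender. Proposition \ref{prop: m0susp} then splits the limit as $\Sph^{m-1}\ast Z_0$, with the $\xi_i$ orthonormal in $\Sph^{m-1}$. On $C_0(\Sph^{m-1}\ast Z_0)=\R^m\times C_0(Z_0)$ the map $v\mapsto(-\langle \xi_i,v\rangle)_i$ is just the projection to $\R^m$, up to an isometry of $\R^m$. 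That projection is a $1$-Lipschitz submetry, so it is in particular $1$-open. Distances in the cones pass to the limit, which gives the contradiction.

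Openness of $D_x\varphi$ with constant $1+\epsilon$ follows the same way. To approximate a preimage of a given vector in the limit by vectors in $\Sigma_{x_k}$, we pull the limit points back, using that $\omega$-limits of points represent every point of the ultralimit. An alternative is to rerun the iteration of Lemma \ref{lem: wwopen} infinitesimally, along the directions $\xi_i$ and the opposite directions $(q_i)'_x$, with $\ell^2$ bookkeeping in place of $\ell^1$. Here \eqref{eqn: omdsuspb} makes the pseudo-orthogonality errors $O(\delta)$, so the Gram matrix of the $\xi_i$ is $O(\delta)$-close to the identity.

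\emph{Step 2: $\varphi$ is Lipschitz.} Let $U$ be convex and $x,y\in U$, and let $\gamma$ be the geodesic from $x$ to $y$. By Proposition \ref{prop: 1stvfstr}, the right derivatives $(\varphi\circ\gamma)^+$ all lie within $4\sqrt m\,\delta$ of one another. Each of them equals $D_{\gamma(t)}\varphi$ applied to a unit vector, so it has norm at most $1+\epsilon/2$ by Step 1. Integrating, and using that $\varphi\circ\gamma$ is Lipschitz and hence absolutely continuous, gives $\|\varphi(y)-\varphi(x)\| \le (1+\epsilon)d(x,y)$.

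\emph{Step 3: $\varphi$ is open, the main obstacle.} Lemma \ref{lem: wopen} only gives openness with constant about $\sqrt m$, not $1+\epsilon$. We would modify the iteration in Lemma \ref{lem: wwopen}. Given a target displacement $u$, use the openness of $D_x\varphi$ from Step 1 to pick a direction $\eta$ realized by a geodesic in $U$; this is possible by local geodesic completeness and density of genuine directions. Choose $\eta$ with $D_x\varphi(s\eta)\approx u$ and $s\le(1+\epsilon/3)\|u\|$. Move along this geodesic a distance $s$.

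Lemma \ref{lem: 1stvfstr0} controls the discrepancy between the actual increment and the infinitesimal one by $2\sqrt m\,\delta\, s$. The residual error is therefore at most $c(\epsilon/3+\sqrt m\,\delta)\|u\|$ for a fixed constant $c$, which makes it a small fraction of $\|u\|$. Iterating produces a geometric series, exactly as in \eqref{eqn: wwopen2} and \eqref{eqn: wwopen3}. The total distance travelled is at most $(1+\epsilon)\|u\|$ once $\delta$ is small. Convexity of $U$ keeps the points inside $U$, provided the balls are taken complete as in the definition of $c$-open.

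The delicate point is that Step 1 must hold uniformly, with $\delta$ depending only on $\epsilon$ and $m$. The compactness argument supplies exactly this uniformity. The final claim about $D_x\varphi$ is Step 1 itself, or alternatively follows by blowing up Steps 2 and 3.
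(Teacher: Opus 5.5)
Your proof is correct in substance. The Lipschitz half follows the paper; the openness half takes a genuinely different route.

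\textbf{Lipschitz part.} This is the paper's argument. An ultralimit of the links, using penetrability, Lemma \ref{lem: omdsusp} and Proposition \ref{prop: m0susp}, gives $\Vert (D_x\varphi)(\xi)\Vert<1+\epsilon/2$ for unit $\xi$, uniformly in $x$. The first variation along geodesics in the convex set $U$ then finishes. Lemma \ref{lem: 1stvfstr0} alone suffices here; Proposition \ref{prop: 1stvfstr} is not needed.

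\textbf{Openness part.} The paper combines the crude $c$-openness of $D_x\varphi$ (Lemma \ref{lem: wopen}) with an ultralimit argument. This produces exact preimages $(D_x\varphi)(\xi)=u$ with $|\xi|<1+\epsilon$ for $u\in\Sph^{m-1}$. It then quotes the Lytchak open map theorem as a black box.

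You instead run a Newton-type iteration inside $U$, modeled on Lemma \ref{lem: wwopen}. Lemma \ref{lem: 1stvfstr0} turns each infinitesimal step into a geodesic step with error $<2\sqrt m\,\delta s$. The iteration needs only approximate preimages at the vertex of each $T_{y_k}X$, which is exactly what the ultralimit delivers. So no correction step and no Lemma \ref{lem: wopen} are required. In effect you prove by hand the special case of Lytchak's theorem, with explicit constants coming from the strainer structure. The paper's route is shorter but rests on that general theorem.

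\textbf{Points to tighten.}
\begin{enumerate}
\item In Step 3 the ultralimit tolerance must be chosen independently of $\epsilon$. If $\lambda$ is the residual ratio, the total length is $(1+\epsilon/3)\Vert u\Vert/(1-\lambda)$. So you need $\lambda<2\epsilon/(3(1+\epsilon))$. A residual bound of the form $c(\epsilon/3+\sqrt m\,\delta)\Vert u\Vert$ with an unspecified constant $c$ does not guarantee this. Take the approximation error $\theta$ in the ultralimit to tend to $0$ as $\delta\to0$, independently of $\epsilon$.
\item The vertex ultralimit of Step 1 does not by itself show that $D_x\varphi$ is globally $(1+\epsilon)$-Lipschitz or $(1+\epsilon)$-open. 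Witnessing pairs with $|v_k-w_k|/\max\{|v_k|,|w_k|\}\to 0$ collapse in the limit, so no contradiction arises. Exact preimages near points $v\neq 0$ also need a correction.
\item Your infinitesimal rerun of Lemma \ref{lem: wwopen} in $T_xX$ would need angle control at points $v\neq 0$ of the cone, which \eqref{eqn: omdsuspb} does not supply. So obtain the final claim about $D_x\varphi$ by blowing up Steps 2 and 3, as you also suggest.
\end{enumerate}
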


\begin{proof}
Take $\epsilon \in (0,1)$ and $m \in \N$.
From an ultralimit argument we see that
if $\delta \in (0,1)$ is sufficiently small,
then 
for all $x \in U$ and $\xi \in T_xX$ with $|\xi| = 1$,
we have $\Vert (D_x\varphi)(\xi) \Vert < 1+\epsilon$;
in particular,
if in addition $U$ is convex,
then we see that $\varphi$ is $(1+\epsilon)$-Lipschitz.

As seen in Lemma \ref{lem: wopen},
for each $x \in U$
the directional derivative $D_x\varphi$
is $c$-open for some $c \in (0,1)$ depending only on $m$ and $\delta$.
Hence for every $u \in \Sph^{m-1}$
we find $\xi \in T_xX$ with $(D_x\varphi)(\xi) = u$
and $\vert \xi \vert < c$.
From an ultralimit argument we see that
if $\delta \in (0,1)$ is sufficiently small,
then for every $u \in \Sph^{m-1}$
we find $\xi \in T_xX$ with $(D_x\varphi)(\xi) = u$
and $\vert \xi \vert < 1+\epsilon$.
Applying the Lytchak open map theorem 
\cite[Theorem 1.2]{lytchak2},
we conclude that $\varphi$ is $(1+\epsilon)$-open.
Thus we prove the proposition.
\end{proof}

From Lemma \ref{lem: ndr} it follows that
if $U$ is a locally geodesically open subset of 
a $\CAT(\kappa)$ space
satisfies $\dim_{\mathrm{G}} U \le n$,
then $R_{n+1,\delta}(U)$ is empty,
provided $\delta$ is small enough.
In this case,
one hopes that
all $(n,\delta)$-strainer maps on $U$
are locally bi-Lipschitz embeddings.

\begin{prop}\label{prop: fullstr}
For every $\epsilon \in (0,1)$,
and for every $n \in \N$,
there exists 
$\delta \in (0,1)$ such that
if $\varphi \colon U \to \R^n$
is an $(n,\delta)$-strainer map with $\dim_{\mathrm{G}} U \le n$,
then 
for every $x_0 \in U$, and 
for every $r_0 \in (0,D_{\kappa}/2)$
with $B_{r_0}(x_0) \subset U$
the restriction
$\varphi |_{U_{r_0}(x_0)}$ is 
a $(1+\epsilon)$-bi-Lipschitz embedding into $\R^n$.
\end{prop}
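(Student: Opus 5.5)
By Proposition \ref{prop: almsubm}, once $\delta$ is small the map $\varphi$ is already $(1+\epsilon)$-Lipschitz on the convex ball $U_{r_0}(x_0)$; balls of radius $r_0<D_\kappa/2$ are convex. So the whole task is the lower bound
\[
\Vert \varphi(x)-\varphi(y)\Vert \ge (1+\epsilon)^{-1} d_X(x,y)\quad\text{for all } x,y\in U_{r_0}(x_0).
\]
I plan to get this from an infinitesimal statement combined with the first variation inequalities along geodesics.

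\textbf{Step 1 (infinitesimal bound).} The first step is to show that for every $x\in U$ and every unit $\xi\in T_xX$ we have $\Vert (D_x\varphi)(\xi)\Vert > 1-\epsilon'$, where $\epsilon'$ is small in terms of $\epsilon$ and $n$. I will argue by contradiction with an ultralimit, in the style of Lemma \ref{lem: capmdsusp} and Lemma \ref{lem: fullsusp}. By \eqref{eqn: 1vfs}, a unit direction $\xi=y_x'$ with small $\Vert (D_x\varphi)(\xi)\Vert$ means that $|\cos\angle_x(p_i,y)|$ is small for all $i$. In other words, $\xi\in\Sigma_xX$ is almost perpendicular to every direction $(p_i)_x'$, and these directions form an $(n,\delta)$-suspender. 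Since $\dim_{\mathrm G}\Sigma_xX\le n-1$ (because $\dim_{\mathrm G}U\le n$) and $\Sigma_xX$ is penetrable (Example \ref{exmp: linkpenet}), Lemma \ref{lem: fullsusp} with constant $c$ forbids any such $\xi$ with all $|d(\xi,(p_i)_x')-\pi/2|<c\delta$. The quantitative version I need is: for a fixed $\epsilon'$ and $\delta\to0$, no direction has all $|d-\pi/2|<\epsilon'$. Rather than Lemma \ref{lem: fullsusp}, I get this from the limit picture. The ultralimit of the $\Sigma_xX$ is $\Sph^{n-1}$ by Proposition \ref{prop: m0susp} and Lemma \ref{lem: omdsusp}, and \eqref{eqn: lscdim} excludes any extra join factor. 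On $\Sph^{n-1}$ with an orthonormal frame, $\sum_i\cos^2 d(p_i,\xi)=1$, so $\Vert D_x\varphi(\xi)\Vert=1$ in the limit. Hence $\bigl|\Vert D_x\varphi(\xi)\Vert-1\bigr|$ is small for $\delta$ small.

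\textbf{Step 2 (integration along geodesics).} Next, for $x\ne y$ in $U_{r_0}(x_0)$, I take the geodesic $\gamma\colon[a,b]\to U_{r_0}(x_0)$ from $x$ to $y$, which exists by convexity, with $\gamma\subset U$. Lemma \ref{lem: 1stvfstr0} gives
\[
\frac{\Vert\varphi(y)-\varphi(x)\Vert}{d(x,y)} > \Vert(\varphi\circ\gamma)^+(a)\Vert-2\sqrt n\,\delta .
\]
Here $(\varphi\circ\gamma)^+(a)=D_x\varphi(\gamma'(a))$ with $\gamma'(a)$ a unit vector. Step 1 then yields the lower ratio $1-\epsilon'-2\sqrt n\,\delta\ge (1+\epsilon)^{-1}$ after choosing the constants. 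Note that Lemma \ref{lem: 1stvfstr0} requires the $p_i$ to strain all of $U$, which is part of the definition of a strainer map. Injectivity and the bi-Lipschitz embedding property then follow.

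The main obstacle is Step 1. I need to make sure the ultralimit of the pointed $\Sigma_xX$, together with the limit directions $\xi$, is genuinely $\Sph^{n-1}$ containing the limit frame, and that the limit of $\Vert D\varphi(\xi)\Vert$ equals the limit expression. This needs continuity of distances, which holds in the ultralimit, and the splitting $\Sph^{n-1}\ast Z_0$ with $Z_0$ empty. The emptiness of $Z_0$ is exactly where the dimension hypothesis enters through \eqref{eqn: lscdim}: a nonempty $Z_0$ would force $\dim_{\mathrm G}\ge n$.
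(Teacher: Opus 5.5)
Your proof is correct, but it takes a different route from the paper's.

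\textbf{The paper's route.} The paper first proves injectivity. If $\varphi(x)=\varphi(y)$ with $x\neq y$, then \eqref{eqn: jlc} gives $\vert\angle_x(p_i,y)-\pi/2\vert<2\delta$ for every $i$, and Lemma~\ref{lem: fullsusp} rules this out. It then gets the bi-Lipschitz bound by combining injectivity with the $(1+\epsilon)$-Lipschitz and $(1+\epsilon)$-open properties from Proposition~\ref{prop: almsubm}.

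\textbf{Your route.} You instead prove the infinitesimal lower bound $\Vert (D_x\varphi)(\xi)\Vert>1-\epsilon'$ for unit $\xi$ by an ultralimit argument. The limit is a round $\Sph^{n-1}$ containing an orthonormal limit frame, on which $\sum_i\cos^2 d(\xi,\xi_i)=1$. You then integrate this bound along the geodesic from $x$ to $y$, which stays in the convex ball $U_{r_0}(x_0)\subset U$, using Lemma~\ref{lem: 1stvfstr0}.

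\textbf{What each buys.} Your route gives the lower Lipschitz bound, and injectivity with it, in one quantitative step. It uses only the Lipschitz half of Proposition~\ref{prop: almsubm}, so no open map theorem is needed. You also never have to check that the preimage point supplied by openness lies in a complete ball inside the region where injectivity is known.

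The cost is that you need a fixed-$\epsilon'$ compactness statement: no direction satisfies $\sum_i\cos^2 d(\xi,(p_i)_x')\le(1-\epsilon')^2$. This is stronger than the $c\delta$-threshold statement of Lemma~\ref{lem: fullsusp}. As you indicate, it follows from the same argument using Proposition~\ref{prop: m0susp}, Lemma~\ref{lem: omdsusp}, penetrability of $\Sigma_xX$, and \eqref{eqn: lscdim}; the last of these is what excludes a nonempty join factor.

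The paper's route, in turn, reuses Lemma~\ref{lem: fullsusp} verbatim and needs only a soft contradiction argument for injectivity.
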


\begin{proof}
Let $\delta \in (0,1)$ be small enough.
Let $\varphi \colon U \to \R^n$ be an $(n,\delta)$-strainer map
on $U$ in a $\CAT(\kappa)$ space $X$
with $\varphi = (d_{p_1},\dots,d_{p_n})$.
Take $x_0 \in U$ and $r_0 \in (0,D_{\kappa}/2)$
with $B_{r_0}(x_0) \subset U$.
Notice that for every $x \in B_{r_0}(x_0)$ we have
$\dim_{\mathrm{G}} \Sigma_xX \le n-1$.

We prove that the restriction $\varphi |_{U_{r_0}(x_0)}$
is injective.
Suppose that
we find distinct points $x, y$ in $U_{r_0}(x_0)$
with $\varphi(x) = \varphi(y)$.
The penetrable $\CAT(1)$ space $\Sigma_xX$
admits an $(n,\delta)$-suspender $((p_1)_x',\dots,(p_n)_x')$.
Since $\varphi(x) = \varphi(y)$,
by Lemma \ref{lem: jl} we have
$\left\vert \angle_{x}(p_i,y) - \pi/2 \right\vert < 2\delta$.
This contradicts Lemma \ref{lem: fullsusp}.
Hence $\varphi |_{U_{r_0}(x_0)}$ is injective.

From Proposition \ref{prop: almsubm}
and the injectivity of $\varphi |_{U_{r_0}(x_0)}$,
we conclude that $\varphi |_{U_{r_0}(x_0)}$ is 
$(1+\epsilon)$-bi-Lipschitz embedding 
into $\R^n$.
\end{proof}

As a corollary of Proposition \ref{prop: fullstr},
we have:

\begin{prop}\label{prop: gfullstr}
For every $n \in \N$,
there exists 
$\delta \in (0,1)$ such that
if $U$ is a locally geodesically complete open subset 
of a $\CAT(\kappa)$ space
with $\dim_{\mathrm{G}} U \le n$,
then $R_{n,\delta}(U)$ is a Lipschitz $n$-manifold.
\end{prop}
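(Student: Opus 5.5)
The plan is to cover $R_{n,\delta}(U)$ by small open balls on which some $(n,\delta)$-strainer map is a bi-Lipschitz embedding with open image. Proposition \ref{prop: fullstr} supplies the embedding, and the openness of strainer maps supplies the open image. Together these give bi-Lipschitz charts onto open subsets of $\R^n$.

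First fix $\delta$. It must be small enough that Proposition \ref{prop: fullstr} holds with, say, $\epsilon = 1/2$, even after $\delta$ is replaced by $2\delta$, since straining points come with an $(n,2\delta)$ slack. It must also satisfy $8(n-1)\delta<1$, so that Lemma \ref{lem: wopen} applies.

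By Lemma \ref{lem: mdopen}, $R_{n,\delta}(U)$ is open in $U$, hence locally geodesically complete. Take $x_0 \in R_{n,\delta}(U)$ and an $(n,\delta)$-suspender in $\Sigma_{x_0}X$. Since genuine directions are dense, I would perturb the suspender slightly using Lemma \ref{lem: umdstab}, applied to the constant sequence, so that it consists of directions $(p_i)'_{x_0}$ of actual points $p_i$ close to $x_0$. The strict inequalities in the definition of $(n,\delta)$-suspenders survive this small perturbation. Thus $x_0$ is $(n,\delta)$-strained by $(p_1,\dots,p_n)$ with $d(p_i,x_0)<D_\kappa/2$.

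Next I apply Lemma \ref{lem: sr0} (or Lemma \ref{lem: usstab0} directly). This gives $r_0>0$ such that $U_{2r_0}(x_0)$ avoids all $p_i$ and is $(n,\delta)$-strained by $(p_1,\dots,p_n)$. Shrinking further so that the ball is convex and of small diameter, $\varphi=(d_{p_1},\dots,d_{p_n})$ is an $(n,\delta)$-strainer map on $V:=U_{2r_0}(x_0)$ in the sense of Definition \ref{defn: strm}. Also $\dim_{\mathrm G} V\le n$. Proposition \ref{prop: fullstr} then shows that $\varphi|_{U_{r_0}(x_0)}$ is a $(1+\epsilon)$-bi-Lipschitz embedding into $\R^n$. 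Lemma \ref{lem: wopen} shows that $\varphi$ is locally open, so $\varphi(U_{r_0}(x_0))$ is open in $\R^n$. Hence $\varphi$ restricts to a bi-Lipschitz homeomorphism from $U_{r_0}(x_0)$ onto an open subset of $\R^n$. These maps form an atlas on $R_{n,\delta}(U)$. The transition maps are compositions of bi-Lipschitz maps, so they are bi-Lipschitz, and $R_{n,\delta}(U)$ is a Lipschitz $n$-manifold.

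The main obstacle is the passage from an abstract suspender in $\Sigma_{x_0}X$ to a strainer given by actual points. This requires a careful check that the open-inequality formulation (Remark \ref{rem: afterslstab}) tolerates replacing suspender directions by nearby genuine directions. A secondary check is that the constant $\delta$ in Proposition \ref{prop: fullstr} is used uniformly on the whole ball and not only at $x_0$.
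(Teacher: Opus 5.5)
Your proposal is correct and takes essentially the paper's route: the paper states this result as an immediate corollary of Proposition \ref{prop: fullstr}, and your atlas argument fills it in. You cover $R_{n,\delta}(U)$ by small convex balls, on each of which a strainer map is a bi-Lipschitz embedding, and you use the local openness from Lemma \ref{lem: wopen} to make the images open. The perturbation to genuine directions that you flag as the main obstacle is harmless: the strict inequalities defining an $(n,\delta)$-suspender leave a positive slack, which absorbs a small change of the $p_i$ by the triangle inequality, so no ultralimit is needed.
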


\section{Structure of singular sets in GCBA spaces}

From now on,
we focus on the geometric structure of $\GCBA$ spaces.
In this section,
we discuss the structure of their singular sets.

\subsection{Rectifiability of singular sets}

We first recall the following
(\cite[Theorem 1.6]{lytchak-nagano1}):

\begin{thm}\label{thm: srect}
\emph{(\cite{lytchak-nagano1})}
If $X$ is a $\GCBA(\kappa)$ space,
then for every $m \in \N$
the $m$-singular set $S_m(X)$ is a countable union
of subsets that are bi-Lipschitz equivalent to some compact
subsets of $\R^{m-1}$;
in particular,
$S_m(X)$ is countably $(m-1)$-rectifiable,
and satisfies
\[
\dim_{\mathrm{H}} S_m(X) \le m-1.
\]
\end{thm}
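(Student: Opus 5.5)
We will derive Theorem \ref{thm: srect} from the relaxed strainer machinery of Sections 5 and 6 rather than re-running the argument of \cite{lytchak-nagano1}. The key observation is that a point is $m$-regular iff $\Sigma_xX$ admits an $m$-suspender (Proposition \ref{prop: m0susp}). So $S_m(X)$ is contained in the union of the $(m,\delta)$-singular sets over a sequence $\delta_j \to 0$ together with points admitting only approximate suspenders. We do not use this directly. Instead, we stratify $S_m(X)$ according to the largest $l \le m-1$ for which the point is $(l,\delta)$-regular, for a fixed small $\delta=\delta(m)$. Thus
\[
S_m(X) \subset \bigcup_{l=0}^{m-1} \left( R_{l,\delta}(X) \cap S_{l+1,\delta'}(X) \right) \cup \left( R_{m,\delta}(X)\cap S_m(X) \right),
\]
with $\delta'$ chosen suitably. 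The essential claim is that each piece is locally a bi-Lipschitz image in $\R^{m-1}$ under a strainer map.

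The local step runs as follows. Around an $(l,\delta)$-regular point $x$, Lemma \ref{lem: sr0} gives a neighborhood $V$ and an $(l,\delta)$-strainer map $\varphi=(d_{p_1},\dots,d_{p_l})\colon V\to\R^l$. When $l\le m-1$ and the points of the piece are $(l+1,\delta')$-singular, we want $\varphi$ restricted to $V\cap S_{l+1,\delta'}$ to be bi-Lipschitz onto its image, or at least injective with Lipschitz inverse. The argument mimics Proposition \ref{prop: fullstr}. Suppose $y,z$ are nearby singular points with $\|\varphi(y)-\varphi(z)\|$ much smaller than $d(y,z)$. By Lemma \ref{lem: jl} the direction $z_y'$ is almost perpendicular to all $(p_i)_y'$. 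By the penetrability of $\Sigma_yX$ (Example \ref{exmp: linkpenet}) and Proposition \ref{prop: 1dr}, $z_y'$ is a $(1,\delta)$-suspender after shrinking, with the antipode given by extending $zy$ beyond $y$. Lemma \ref{lem: pmdsusp} then upgrades $((p_1)_y',\dots,(p_l)_y')$ to an $(l+1,\delta')$-suspender, which contradicts singularity. Hence $\varphi$ is bi-Lipschitz on the piece, so the piece embeds in $\R^l\subset\R^{m-1}$.

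The top piece $R_{m,\delta}\cap S_m$ needs separate treatment. For $\delta$ small, an $(m,\delta)$-regular point that is not $m$-regular should still have only $(m-1)$ honest directions. Here we would instead cover it by the countable union over $j$ of $R_{m-1,\delta}\cap S_{m,\delta_j}$ with $\delta_j\to0$, and apply the same local step with $l=m-1$, using $\delta'=\delta_j$ and choosing $\delta$ depending on $\delta_j$ through Lemma \ref{lem: pmdsusp}. This gives a countable cover. Separability and local compactness let us pass to countably many compact pieces, each bi-Lipschitz to a compact subset of $\R^{m-1}$. Rectifiability and $\dim_{\mathrm{H}}\le m-1$ then follow immediately.

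The main obstacle is the quantifier bookkeeping in the local step. Lemma \ref{lem: pmdsusp} requires $\delta$ small relative to the target $\epsilon=\delta'$. At the same time, the almost-perpendicularity of $z_y'$ coming from Lemma \ref{lem: jl} must hold at scale $d(y,z)$, which is below the straining radius. Making the $(1,\delta)$-suspender property of $z_y'$ uniform in the pair $(y,z)$ is the delicate point. We would handle it by a contradiction and ultralimit argument: blow up at scale $d(y_k,z_k)$, then obtain in the limit tangent cone a line direction orthogonal to an $l$-suspender.
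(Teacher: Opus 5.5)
Your local step fails, and the problem is not only lack of uniformity. An $(l,\delta)$-strainer map need not be injective on $V\cap S_{l+1,\delta'}$, however small $V$ is.

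\emph{Counterexample.} Glue two copies of the plane $P=\R^2$ along the closed convex set $A=\{y\ge x^2\}$; the result is $\CAT(0)$ by Reshetnyak's gluing theorem. Folding the second copy onto $P$ shows that $P$, and hence the line $L=\{y=0\}$, is convex, so you may also glue a half-plane along $L$. The result $X$ is a $2$-dimensional, geodesically complete $\CAT(0)$ space. Near the origin $o$ its singular set is $L\cup\partial A$: two curves of triple-junction points (links $\Sph^0\ast T_3^0$), tangent at $o$, where $\Sigma_oX=\Sph^0\ast T_4^0$. This is exactly the phenomenon of Remark \ref{rem: awst}.

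Take $p=(-1,0)$. Then $d_p$ is a $(1,\delta)$-strainer map on a small ball around $o$, because at every point of $L\cup\partial A$ near $o$ the direction to $p$ is close to a pole. Yet for small $s\ne0$, the points $z=(s,s^2)$ and $y=(t,0)$ with $1+t=\sqrt{(1+s)^2+s^4}$ are distinct points of $R_{1,\delta}\cap S_{2,\delta'}$ with $d_p(y)=d_p(z)$. Neither $z_y'$ nor $y_z'$ is a suspender: both lie near the middle of an arc of $\Sph^0\ast T_3^0$.

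Your proposed blow-up at scale $d(y,z)\approx s^2$ (as $s\to0$) does not help. It produces a plane with half-planes attached along two parallel lines at distance $1$. This is not a tangent cone, and the direction from $y$ to $z$ there is still not a suspender. So the ultralimit cannot settle the ``delicate point'', because the statement it is meant to prove is false.

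\emph{What does hold, and how to finish.} The correct statement uses a fixed base point. Proposition \ref{prop: 1dr} centred at $x$ makes $x_{x_k}'$ a $(1,\delta)$-suspender in $\Sigma_{x_k}X$ for $x_k\to x$, and Lemma \ref{lem: pmdsusp} then gives Lemma \ref{lem: infinj}. This suffices without any uniformity. Let $\rho(x)>0$ be a radius with $\Vert\varphi(x')-\varphi(x)\Vert\ge(1-2\epsilon)\,d(x',x)$ for all $x'\in S_{l+1,\epsilon}(V)\cap U_{\rho(x)}(x)$. Then $\varphi$ is bi-Lipschitz on each set $\{\rho>1/j\}\cap U_{1/(2j)}(q)$, with $q$ in a countable dense set. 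For small balls, the closures of these sets are compact, lie in the closed set $S_{l+1,\epsilon}(V)\subset S_m(X)$, and are bi-Lipschitz onto compact subsets of $\R^l$. This Baire-type exhaustion is what lies behind the use of Lytchak's open map theorem in \cite{lytchak-nagano1} and in Proposition \ref{prop: regsing}. That is why the latter only asserts bi-Lipschitz behaviour on an open dense subset; the paper itself simply quotes the theorem from \cite{lytchak-nagano1}.

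\emph{Quantifiers in your stratification.} Using a single pair $(\delta,\delta')$ is inconsistent. The covering needs $S_{l+1,\delta}\subset S_{l+1,\delta'}$, i.e.\ $\delta'\le\delta$, while Lemma \ref{lem: pmdsusp} needs $\delta\ll\delta'$. Instead, write $S_m(X)=\bigcup_j S_{m,\epsilon_j}(X)$ with $\epsilon_j\to0$. This holds because, by Lemma \ref{lem: omdsusp}, a limit of $(m,\epsilon_j)$-suspenders in the compact space $\Sigma_xX$ is an exact $m$-suspender, so Proposition \ref{prop: m0susp} applies. Then use $S_{m,\epsilon_j}\subset(S_{m,\epsilon_j}\cap R_{m-1,\delta_j})\cup S_{m-1}$ with $\delta_j\ll\epsilon_j$, and induct on $m$.
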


As a corollary of Theorem \ref{thm: srect},
we have:

\begin{cor}\label{cor: tsrect}
Let $X$ be a $\GCBA(\kappa)$ space of $\dim X = n$.
Then the topological singular set $S(X)$
is a countable union
of subsets that are bi-Lipschitz equivalent to some compact
subsets of $\R^{n-1}$.
In particular,
$S(X)$ is countably $(n-1)$-rectifiable,
and satisfies
\[
\dim_{\mathrm{H}} S(X) \le n-1.
\]
\end{cor}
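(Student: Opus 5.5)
The plan is to deduce Corollary \ref{cor: tsrect} from Theorem \ref{thm: srect} with $m = n$, by showing that $S(X) \subset S_n(X)$. Once this inclusion holds, $S(X)$ is a subset of a countable union $\bigcup_j A_j$, where each $A_j$ is bi-Lipschitz equivalent to a compact subset of $\R^{n-1}$. Then $S(X) = \bigcup_j (S(X) \cap A_j)$. Each piece $S(X)\cap A_j$ is bi-Lipschitz equivalent to a subset of $\R^{n-1}$, so the pieces are countably $(n-1)$-rectifiable and $\dim_{\mathrm{H}} S(X) \le n-1$. If the statement insists on compact model sets, note that $S(X)$ is closed in $X$, since manifold points form an open set, and $X$ is separable and locally compact. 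So $S(X)\cap A_j$ is a countable union of compact sets, which are mapped to compact subsets of $\R^{n-1}$.

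For the key inclusion $S(X) \subset S_n(X)$, equivalently $R_n(X) \subset X - S(X)$, take an $n$-regular point $x$, so that $T_xX \cong \R^n \times Y$. Since $\dim X = n$, Kleiner's theorem gives $\dim_{\mathrm{G}} T_xX \le n$, hence $\dim_{\mathrm{G}}\Sigma_xX\le n-1$. Because $\Sigma_xX \cong \Sph^{n-1}\ast\Sigma_0$ and $\Sigma_xX$ is geodesically complete, $\Sigma_0$ must be empty: a nonempty $\Sigma_0$ would raise the dimension to at least $n$. Thus $\Sigma_xX \cong \Sph^{n-1}$, and $x$ is $(n,\delta)$-regular for every $\delta$, in particular for the $\delta$ of Proposition \ref{prop: gfullstr}. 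Since $R_{n,\delta}(X)$ is open (Lemma \ref{lem: mdopen}) and Lemma \ref{lem: ndr} applies on a relatively compact neighborhood, $x$ lies in the open set $R_{n,\delta}(X)$. That set is a Lipschitz $n$-manifold by Proposition \ref{prop: gfullstr}, or alternatively by Proposition \ref{prop: fullstr}, which makes the strainer map a bi-Lipschitz embedding that is open by Proposition \ref{prop: almsubm}. Hence $x$ is an $n$-manifold point, and $x \notin S(X)$.

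The main obstacle is only the bookkeeping of the dimension hypothesis. Proposition \ref{prop: gfullstr} needs $\dim_{\mathrm{G}} U\le n$, which follows from $\dim X=n$ and Kleiner's theorem. We also need Lemma \ref{lem: ndr} so that the top stratum is exactly $n$, with no $(n+1)$-regular points. Everything else is a direct restriction of Theorem \ref{thm: srect}.
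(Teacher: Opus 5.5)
Your proposal is correct and is exactly the paper's argument: the paper also uses Proposition~\ref{prop: gfullstr} to get $R_n(X) \subset X - S(X)$, hence $S(X) \subset S_n(X)$, and then applies Theorem~\ref{thm: srect} with $m = n$. Your appeal to Lemma~\ref{lem: ndr} is unnecessary, and each piece $S(X) \cap A_j$ is already compact, since it is a closed subset of the compact set $A_j$.
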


\begin{proof}
If $\dim X = n$,
then 
Proposition \ref{prop: gfullstr} implies that
$R_n(X)$ is contained in $X-S(X)$
(see also \cite[Corollary 11.2]{lytchak-nagano1}).
This together with Theorem \ref{thm: srect}
leads to the claim.
\end{proof}

\subsection{Rectified parametrizations of singular sets}

\begin{lem}
\label{lem: infinj}
For every $\epsilon \in (0,1)$,
and for every $m \in \N$,
there exists $\delta \in (0,\epsilon)$ satisfying the following:
Let $\varphi \colon U \to \R^m$ be an $(m,\delta)$-strainer map
on $U$ contained in a $\GCBA(\kappa)$ space.
Then 
for every sequence $(x_k)$ in $S_{m+1,\epsilon}(U) - \{x\}$
with $x_k \to x$
we have
\begin{equation}
\liminf_{k \to \infty}
\frac{\Vert \varphi(x_k) - \varphi(x) \Vert}{d(x_k,x)}
\ge 1 - \epsilon.
\label{eqn: infinj0}
\end{equation}
\end{lem}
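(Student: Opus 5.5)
We argue by contradiction, using the first variation formula together with the fact that a direction "almost perpendicular" to all strainer directions would produce an extra suspender. Suppose $x_k \to x$ with $x_k \in S_{m+1,\epsilon}(U)$ and the ratio in \eqref{eqn: infinj0} stays below $1-\epsilon$. Let $\gamma_k$ be the geodesic from $x_k$ to $x$, and let $\xi_k := x_{x_k}' \in \Sigma_{x_k}X$ be its starting direction at $x_k$. We work at $x_k$ rather than at $x$, because the singularity hypothesis concerns $x_k$.

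By Lemma \ref{lem: 1stvfstr0}, the difference quotient of $d_{p_i}$ along $\gamma_k$ differs from $(d_{p_i}\circ\gamma_k)^+ = -\cos \angle_{x_k}(p_i,x)$ by less than $2\delta$. Hence
\[
\Bigl( \sum_{i=1}^m \cos^2 d_{\Sigma_{x_k}X}\bigl((p_i)_{x_k}', \xi_k\bigr) \Bigr)^{1/2} < 1-\epsilon + 2\sqrt{m}\delta ,
\]
so $\sum_i \cos^2 d((p_i)'_{x_k},\xi_k) < 1-\epsilon'$ for some $\epsilon'>0$ depending on $\epsilon$. In $\Sigma_{x_k}X$ the tuple $((p_1)_{x_k}',\dots,(p_m)_{x_k}')$ is an $(m,\delta)$-suspender. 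We now want to apply Lemma \ref{lem: pmdsusp}, with its $\epsilon$ taken to be $\min\{\epsilon',\epsilon\}$, to produce an $(m+1,\epsilon)$-suspender in $\Sigma_{x_k}X$. That would make $x_k$ an $(m+1,\epsilon)$-regular point, contradicting $x_k \in S_{m+1,\epsilon}(U)$.

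For this we need $\xi_k$, or a direction $\epsilon$-close to it, to be a $(1,\delta)$-suspender in $\Sigma_{x_k}X$. This is the main obstacle. The tool is the "rough conicality" of Proposition \ref{prop: 1dr}: there is $r>0$ such that for every $y \in U_r(x)-\{x\}$ the direction $x_y'$ is a $(1,\delta)$-suspender in $\Sigma_yX$. Applying this with $y = x_k$ for large $k$ shows that $\xi_k = x_{x_k}'$ is a $(1,\delta)$-suspender in $\Sigma_{x_k}X$. Here $\delta$ is the one supplied by Lemma \ref{lem: pmdsusp} for $m$ and $\min\{\epsilon',\epsilon\}$, and we shrink it further so that $2\sqrt m\delta<\epsilon/2$, which gives $\epsilon'\ge\epsilon/2$. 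Since Proposition \ref{prop: 1dr} holds for every $\delta$ once $r$ is small (locally compact, locally geodesically complete $U$), the choice of $\delta$ is independent of $x$ and of $k$. Penetrability of $\Sigma_{x_k}X$ holds by Example \ref{exmp: linkpenet}, so Lemma \ref{lem: pmdsusp} applies.

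Finally, the constants must be fixed in the right order. First we fix $\epsilon$ and take $\epsilon'=\epsilon/2$. Then we choose $\delta$ from Lemma \ref{lem: pmdsusp} applied to $(\epsilon/2, m)$, and decrease it so that $\delta<\epsilon/(4\sqrt m)$ and $\delta<\epsilon$. With this $\delta$ the contradiction above goes through for all large $k$, which proves \eqref{eqn: infinj0}. The one remaining point to check is the compatibility of the strainer-map hypothesis: Lemma \ref{lem: 1stvfstr0} requires the strainer to be valid at both $x$ and $x_k$ and along $\gamma_k$. This holds once $\gamma_k\subset U$, which is true for large $k$ if we first restrict to a small convex ball around $x$.
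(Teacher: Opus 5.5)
Your proposal is correct and follows the paper's own argument. You bound $\Vert (D_{x_k}\varphi)(x_{x_k}')\Vert < 1-\epsilon/2$ via Lemma~\ref{lem: 1stvfstr0}, use Proposition~\ref{prop: 1dr} to make $x_{x_k}'$ a $(1,\delta)$-suspender in $\Sigma_{x_k}X$, and apply Lemma~\ref{lem: pmdsusp} with parameter $\epsilon/2$ to get $x_k \in R_{m+1,\epsilon/2}(U)\subset R_{m+1,\epsilon}(U)$, a contradiction. Your bookkeeping of constants is more careful than the paper's. One minor correction: a liminf below $1-\epsilon$ gives the bound only along a subsequence, not for all large $k$, but that is all the argument needs.
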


\begin{proof}
Let $\delta \in (0,\epsilon)$ be small enough
with $4\sqrt{m}\delta < \epsilon$.
Suppose that
for some sequence $(x_k)$ in $S_{m+1,\epsilon}(U) - \{x\}$
converging to $x$ 
we have
\[
\liminf_{k \to \infty}
\frac{\Vert \varphi(x_k) - \varphi(x) \Vert}{d(x_k,x)}
< 1 - \epsilon.
\]
Let $k$ be sufficiently large.
By Lemma \ref{lem: 1stvfstr0},
we have
\[
\left\vert
\frac{\Vert \varphi(x_k) - \varphi(x) \Vert}{d(x_k,x)}
-
\left\Vert (D_{x_k}\varphi)(x_{x_k}') \right\Vert
\right\vert
< 2 \sqrt{m} \delta.
\]
Hence we have
\[
\left\Vert (D_{x_k}F)(x_{x_k}') \right\Vert
< 1 - \epsilon + 2 \sqrt{m} \delta < 1 - \frac{\epsilon}{2}.
\]
By Proposition \ref{prop: 1dr}, 
the point $x$ is a $(1,\delta)$-strainer at $x_k$,
provided $k$ is large enough.
Lemma \ref{lem: pmdsusp} implies that
$x_k$ belongs to $R_{m+1,\epsilon/2}(U)$.
This is a contradiction.
\end{proof}

Let $X$ be a $\CAT(\kappa)$ space with metric $d_X$.
Let $\varphi \colon U \to \R^m$ be a locally Lipschitz map
from an open subset $U$ of $X$.
Let $B$ be a closed subset of $U$.
Assume that there exists $c \in (0,1]$ 
such that
for every point $x$ in $B$,
and for every sequence $(x_k)$ in $B - \{x\}$ converging to $x$,
we have
\[
\liminf_{k \to \infty} 
\frac{\Vert \varphi(x_k) - \varphi(x) \Vert}{d_X(x_k,x)}
\ge c.
\]
Due to the Lytchak open map theorem \cite[Proposition 1.1]{lytchak2} 
for metric spaces,
there exists an open dense subset $B_0$ of $B$ such that
the restriction $\varphi|_{B_0} \colon B_0 \to \R^m$
is a locally $(1/c)$-bi-Lipschitz embedding into $\R^m$.
If in addition $B$ is separable, 
then $B$ is countably $m$-rectifiable.

Thus we conclude the following:

\begin{prop}
\label{prop: regsing}
For every $\epsilon \in (0,1/2)$,
and for every $m \in \N$,
there exists $\delta \in (0,\epsilon)$ satisfying the following property:
Let $\varphi \colon U \to \R^m$ be an $(m,\delta)$-strainer map
on $U$ contained in a $\GCBA(\kappa)$ space.
Let $B$ be a closed subset of $U$.
Then there exists an open dense subset 
$R_\varphi(S_{m+1,\epsilon}(B))$
of $S_{m+1,\epsilon}(B)$ such that
the restriction
\[
\varphi|_{R_\varphi(S_{m+1,\epsilon}(B))} \colon 
R_\varphi(S_{m+1,\epsilon}(B)) \to \R^m
\]
is a locally $(1+2\epsilon)$-bi-Lipschitz embedding into $\R^m$.
If in addition $B$ is separable,
then the set $S_{m+1,\epsilon}(B)$ is countably $m$-rectifiable.
\end{prop}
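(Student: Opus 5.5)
The statement is Proposition~\ref{prop: regsing}. I would derive it by feeding Lemma~\ref{lem: infinj} into the general consequence of the Lytchak open map theorem recalled just before the statement.

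First, fix $\epsilon \in (0,1/2)$ and $m$. Let $\delta \in (0,\epsilon)$ be the constant from Lemma~\ref{lem: infinj} for this $\epsilon$ and $m$, and shrink it further so that Proposition~\ref{prop: almsubm} makes $\varphi$ locally $2$-Lipschitz.

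Put $B' := S_{m+1,\epsilon}(B)$. By Lemma~\ref{lem: mdopen}, $S_{m+1,\epsilon}(U)$ is closed in $U$. Since $B$ is closed in $U$, the set $B' = S_{m+1,\epsilon}(U) \cap B$ is closed in $U$.

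Next, for every $x \in B'$ and every sequence $(x_k)$ in $B' - \{x\}$ with $x_k \to x$, we have $x_k \in S_{m+1,\epsilon}(U)$. Lemma~\ref{lem: infinj} then gives
\[
\liminf_{k \to \infty} \frac{\Vert \varphi(x_k)-\varphi(x)\Vert}{d(x_k,x)} \ge 1-\epsilon.
\]
So the hypothesis of the open-map consequence holds on $B'$ with $c = 1-\epsilon$. We obtain an open dense subset $R_\varphi(S_{m+1,\epsilon}(B))$ of $B'$ on which $\varphi$ is a locally $(1-\epsilon)^{-1}$-bi-Lipschitz embedding. Since $\epsilon < 1/2$, we have $(1-\epsilon)^{-1} \le 1+2\epsilon$, because $(1+2\epsilon)(1-\epsilon) = 1+\epsilon-2\epsilon^2 \ge 1$.

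The bi-Lipschitz bound needs two inequalities, and I will check them separately. The upper bound $\Vert\varphi(y)-\varphi(z)\Vert \le (1+2\epsilon)\,d(y,z)$ comes from Proposition~\ref{prop: almsubm} applied on small convex balls, once $\delta$ is small enough. The lower bound $d(y,z) \le (1-\epsilon)^{-1}\Vert\varphi(y)-\varphi(z)\Vert$ comes from the open map theorem on the dense open set.

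For rectifiability, assume $B$ is separable. Then $B'$ is separable, and the recalled statement says that $B'$ is countably $m$-rectifiable. The mechanism I would use is to iterate the open-dense decomposition on closed remainders. Each step produces a set that is a countable union of bi-Lipschitz pieces of $\R^m$. If a transfinite exhaustion is needed, separability makes it terminate at a countable stage, by a Cantor--Bendixson type argument on the closed sets $B' - R_\varphi(\cdot)$.

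The main obstacle is that the open map consequence requires a uniform lower bound along all sequences in the closed set, including sequences converging to points of $B'$ from within $B'$. Lemma~\ref{lem: infinj} supplies this only because every $x_k$ is itself $(m+1,\epsilon)$-singular. For this I must make sure that $x$ is a $(1,\delta)$-strainer at $x_k$ (Proposition~\ref{prop: 1dr}) and that Lemma~\ref{lem: pmdsusp} applies with constants compatible with $\epsilon$. I would verify that the constant in Lemma~\ref{lem: infinj} is chosen exactly so that this applies, possibly replacing $\epsilon$ by $\epsilon/2$ there.
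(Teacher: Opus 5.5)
Your proposal is correct and is essentially the paper's proof. You use the closedness of $S_{m+1,\epsilon}(B)$ from Lemma~\ref{lem: mdopen}, the uniform lower bound $1-\epsilon$ from Lemma~\ref{lem: infinj}, and the recalled consequence of the Lytchak open map theorem; your check $(1-\epsilon)^{-1} \le 1+2\epsilon$ and the separate Lipschitz upper bound via Proposition~\ref{prop: almsubm} make explicit details the paper leaves implicit.

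The worry in your last paragraph is already settled by the statement of Lemma~\ref{lem: infinj}. Your transfinite exhaustion for rectifiability is valid but more than needed, since the liminf bound directly covers $S_{m+1,\epsilon}(B)$ by countably many closed sets on each of which $\varphi$ is locally bi-Lipschitz.
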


\begin{proof}
By Lemma \ref{lem: mdopen},
the set $S_{m+1,\epsilon}(B)$ is closed in $U$.
From Lemma \ref{lem: infinj},
for every point $x$ in $S_{m+1,\epsilon}(B)$,
and for every sequence $(x_k)$ in $S_{m+1,\epsilon}(B) - \{x\}$
converging to $x$,
we derive \eqref{eqn: infinj0}.
From the Lytchak open map theorem \cite[Proposition 1.1]{lytchak2} 
for metric spaces,
we deduce the proposition.
\end{proof}

\subsection{Structure of singular sets}

We prepare the following:

\begin{lem}
\label{lem: bregsing}
For every $n \in \N$,
there exists a strictly monotone increasing $(n+1)$-tuple
$(\delta_1, \dots, \delta_n, \delta_{n+1})$
in $(0,1/2)$
satisfying the following property:
Let $X$ be a $\GCBA(\kappa)$ space of $\dim X \le n$.
If for some $m \in \{ 1, \dots, n, n+1 \}$
a non-empty closed subset $B$ of $X$
is contained in $S_{m,\delta_m}(X)$,
then 
there exists a function 
\[
\nu \colon B_0 \to \{ 0, 1, \dots, m-1 \}
\]
on an open dense subset $B_0$ of $B$
satisfying the following properties:
\begin{enumerate}
\item
for $x \in B_0$
we have $\nu(x) = 0$ 
if and only if
$x$ is isolated in $B$;
\item
if for $x \in B_0$ we have $\nu(x) \ge 1$,
then there exists a $(\nu(x),\delta_{\nu(x)})$-strainer map
$\varphi_x \colon V_x \to \R^{\nu(x)}$
from an open neighborhood $V_x$ of $x$
such that
\begin{enumerate}
\item
$\varphi_x$ is $(1+\delta_{\nu(x)+1})$-Lipschitz and
$(1+\delta_{\nu(x)+1})$-open;
\item
the restriction $\varphi_x|_{B_0}$
is a $(1+2\delta_{\nu(x)+1})$-bi-Lipschitz embedding
into $\R^{\nu(x)}$;
in particular,
for every $y \in V_x$ we have
\[
\Haus^0 \left( \varphi_x^{-1} (\varphi_x(y)) \cap B \right) \le 1.
\]
\end{enumerate}
\end{enumerate}
\end{lem}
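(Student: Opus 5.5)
We choose the constants backwards, starting from the top. Let $\delta_{n+1}\in(0,1/2)$ be small enough that the conclusion of Lemma \ref{lem: ndr} holds for $\dim X\le n$. Then, for $\nu=n,n-1,\dots,1$, we pick $\delta_\nu<\delta_{\nu+1}$ so small relative to $\delta_{\nu+1}$ that three results apply with $\epsilon=\delta_{\nu+1}$ and $m=\nu$: Proposition \ref{prop: almsubm}, Proposition \ref{prop: regsing} (giving bi-Lipschitz constant $1+2\delta_{\nu+1}$), and Lemma \ref{lem: infinj}.

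For $x\in B$ we define $\nu(x)$ as the largest $k\in\{0,\dots,m-1\}$ such that $x$ is $(k,\delta_k)$-regular, with $R_{0,\delta_0}=X$ by convention. Since $B\subset S_{m,\delta_m}(X)$, we always have $\nu(x)\le m-1$. By Lemma \ref{lem: mdopen}, the sets $R_{k,\delta_k}(B)$ are open in $B$, so the sets $B^k:=\{\nu\ge k\}$ are open in $B$ and form a decreasing chain. This means $\nu$ is lower semicontinuous. The key point is that on
\[
B_0:=\bigcup_k \operatorname{int}_B\{\nu=k\},
\]
the function $\nu$ is locally constant. This $B_0$ is open, and it is dense by a standard argument: given any nonempty open set $W\subset B$, let $k$ be the maximum of $\nu$ on $W$. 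Then $\{\nu=k\}\cap W=B^k\cap W$ is open and nonempty.

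Next we treat $x\in\operatorname{int}_B\{\nu=k\}$ with $k\ge1$. Since $x$ is $(k,\delta_k)$-regular, choose a strainer as in Definition \ref{defn: mdstr} and shrink to a small convex ball $V_x$ on which it remains a $(k,\delta_k)$-strainer; this is possible by Lemma \ref{lem: usstab0}. This gives a $(k,\delta_k)$-strainer map $\varphi_x$. Property (a) is then Proposition \ref{prop: almsubm}. For (b), note that every point of $\{\nu=k\}$ is $(k+1,\delta_{k+1})$-singular, so $V_x\cap\{\nu=k\}\subset S_{k+1,\delta_{k+1}}(V_x)$. Lemma \ref{lem: infinj} then gives the liminf lower bound $1-\delta_{k+1}$ along this set. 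Combined with Proposition \ref{prop: regsing}, this shows that $\varphi_x$ restricted to an open dense part is a local $(1+2\delta_{k+1})$-bi-Lipschitz embedding. Because $\nu$ is lower semicontinuous, the openness of $\{\nu=k\}$ near $x$ is exactly what allows us to treat $B\cap V_x$ as a closed subset of the singular stratum after shrinking.

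For the case $k=0$, i.e. $x\in\operatorname{int}_B\{\nu=0\}$, we argue by contradiction. Suppose $x$ is not isolated in $B$, so there are $x_j\to x$ in $\{\nu=0\}$. By Proposition \ref{prop: 1dr}, the direction toward $x$ is a $(1,\delta)$-suspender at nearby points, so $x_j$ is $(1,\delta_1)$-regular, hence $\nu(x_j)\ge1$, a contradiction. Conversely, if $x$ is isolated in $B$, we redefine $\nu(x):=0$, since isolated points are open in $B$ and may be assigned this value freely. This gives (1).

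\textbf{Main obstacle.} Proposition \ref{prop: regsing} only yields an open dense subset on which $\varphi_x$ is locally bi-Lipschitz, not all of $B_0\cap V_x$. To handle this, we replace $B_0$ by the union, over a countable cover, of these open dense pieces. This union is still open and dense, since a countable union of locally defined open dense sets is fine locally. We then shrink each $V_x$ until the restriction of $\varphi_x$ to $B_0\cap V_x$ is a global bi-Lipschitz embedding and not merely a local one. Injectivity on the small ball comes from the liminf estimate applied uniformly, using the first variation bound Lemma \ref{lem: 1stvfstr0}. The fiber bound $\Haus^0\le1$ follows from this injectivity.
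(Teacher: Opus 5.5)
Your overall scheme matches the paper's: backward choice of the $\delta_k$, stratification by the highest strained level, Proposition \ref{prop: 1dr} for the level-zero part, and Propositions \ref{prop: almsubm} and \ref{prop: regsing} for the higher levels. Your choice $B_0=\bigcup_k\operatorname{int}_B\{\nu=k\}$, with the lower semicontinuity argument for density, is tidier than the paper's. Openness of $B_0$ in $B$ becomes automatic, whereas the paper takes unions of relatively open pieces of strata that are only locally closed in $B$. Non-isolated $(1,\delta_1)$-singular points are also excluded from $B_0$, as (1) requires.

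The step that fails is in your last paragraph. Injectivity of $\varphi_x$ on a small ball does \emph{not} follow from ``the liminf estimate applied uniformly''. Lemma \ref{lem: infinj} is pointwise and gives no radius that is uniform over the points of $B\cap V_x$. Its proof needs the direction towards the base point to be a $(1,\delta)$-suspender, and the radius in Proposition \ref{prop: 1dr} for which this holds depends on the base point. Lemma \ref{lem: 1stvfstr0} does not remove this dependence. This non-uniformity is exactly why Proposition \ref{prop: regsing} (via Lytchak's open map theorem, a Baire-type argument) only yields an open dense set.

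The repair uses only what you already have. Suppose $y$ lies in the open dense piece $R_i\subset B\cap V_{x_i}$ given by Proposition \ref{prop: regsing}.
\begin{itemize}
\item The meaning of ``locally bi-Lipschitz'' gives a small convex ball $V_y\subset V_{x_i}$ around $y$ on which $\varphi_{x_i}|_{R_i\cap V_y}$ is $(1+2\delta_{k+1})$-bi-Lipschitz.
\item Set $\varphi_y:=\varphi_{x_i}|_{V_y}$. This is still a $(k,\delta_k)$-strainer map, and (a) holds by convexity.
\item Since $R_i\cap V_y$ is dense in $B\cap V_y$ and $\varphi_y$ is continuous, the inequality $d_X(a,b)\le(1+2\delta_{k+1})\Vert\varphi_y(a)-\varphi_y(b)\Vert$ extends to all $a,b\in B\cap V_y$. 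This is the standard argument behind Sublemma \ref{slem: fiber1}.
\end{itemize}
You need this extension twice. First, your new $B_0\cap V_y$ contains points of other pieces $R_j$, not only of $R_i$. Second, the fiber bound in (b) concerns $B$, not $B_0$.

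Two smaller points. The neighbourhoods and maps must be attached to the points $y$ of the new $B_0$, not to the centres $x_i$ of your cover, which need not lie in it. The level-zero (isolated) points must also be kept in the new $B_0$ for density.
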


\begin{proof}
For $n \in \N$,
let $\delta_{n+1} \in (0,1/2)$ be sufficiently small,
and take a strictly monotone increasing sequence 
$(\delta_1, \dots, \delta_n, \delta_{n+1})$
in $(0,1/2)$
such that each $\delta_i$ is small enough for $\delta_{i+1}$.
For an arbitrary subset $A$ of $U$,
by Lemma \ref{lem: ndr} we have $A = S_{n+1,\delta_{n+1}}(A)$,
and we observe
\[
A = S_{1,\delta_1}(A) 
\sqcup
\left(
\bigsqcup_{l=1}^{n-1} S_{l+1,\delta_{l+1}}
\left(
\bigcap_{k=1}^l R_{k,\delta_k}(A)
\right)
\right)
\sqcup
\left(
\bigcap_{i=1}^n R_{i,\delta_i}(A)
\right).
\]
Since $B$ is contained in $S_{m,\delta_m}(U)$
for $m \in \{ 1, \dots, n, n+1 \}$,
we have
\begin{equation}
B =
\begin{cases}
S_{1,\delta_1}(B) 
& \text{if $m = 1$,} \\
S_{1,\delta_1}(B)
\sqcup
\left(
\displaystyle
\bigsqcup_{l=1}^{m-1} S_{l+1,\delta_{l+1}}
\left(
\bigcap_{k=1}^l R_{k,\delta_k}(B)
\right)
\right)
& \text{if $m \ge 2$.}
\end{cases}
\label{eqn: bregsing0}
\end{equation}
In the case of $m = 1$,
we immediately see the lemma
since $S_{1,\delta_1}(X)$ is at most locally finite
by Proposition \ref{prop: 1dr}.

Let $m \ge 2$.
Denote by $E(B)$ the set of all isolated points in $B$,
and put $B^{\ast} := B - E(B)$.
Similarly to \eqref{eqn: bregsing0},
we have
\[
B^{\ast} =
S_{1,\delta_1}(B^{\ast})
\sqcup
\left(
\displaystyle
\bigsqcup_{l=1}^{m-1} S_{l+1,\delta_{l+1}}
\left(
\bigcap_{k=1}^l R_{k,\delta_k}(B^{\ast})
\right)
\right).
\]
Let $l \in \{ 1, \dots, m-1 \}$.
From Lemma \ref{lem: mdopen},
Propositions \ref{prop: almsubm} and \ref{prop: regsing},
it follows that 
for every point
$x$ in 
$S_{l+1,\delta_{l+1}}
\left(
\bigcap_{k=1}^l R_{k,\delta_k}(B^{\ast})
\right)$
we find an $(l,\delta_l)$-strainer map
$\varphi_x \colon V_x \to \R^l$
from an open neighborhood $V_x$ of $x$
contained in $\bigcap_{k=1}^l R_{k,\delta_k}(U - E(B))$,
and we find an open dense subset
$R_{\varphi_x} \left( S_{l+1,\delta_{l+1}}(V_x \cap B^{\ast}) \right)$
of $S_{l+1,\delta_{l+1}}(V_x \cap B^{\ast})$
satisfying the following properties:
(a)
$\varphi_x$ is $(1+\delta_{l+1})$-Lipschitz and
$(1+\delta_{l+1})$-open;
(b)
the restriction 
$\varphi_x|_{R_{\varphi_x} \left( S_{l+1,\delta_{l+1}}(V_x \cap B^{\ast}) \right)}$
is a $(1+2\delta_{l+1})$-bi-Lipschitz embedding
into $\R^l$.
Define a subset $B_l^{\ast}$ of $B^{\ast}$ by
\[
B_l^{\ast} := \bigcup
\left\{
R_{\varphi_x} \left( S_{l+1,\delta_{l+1}}(V_x \cap B^{\ast}) \right)
\mid
x \in S_{l+1,\delta_{l+1}}
\left(
\bigcap_{k=1}^l R_{k,\delta_k}(B^{\ast})
\right)
\right\}.
\]
Observe that
$B_l^{\ast}$ is open dense in 
$S_{l+1,\delta_{l+1}}
\left(
\bigcap_{k=1}^l R_{k,\delta_k}(B^{\ast})
\right)$.
Let
\[
B_0^{\ast} := \bigcup_{l=1}^m B_l^{\ast},
\quad
B_0 := B_0^{\ast} \sqcup E(B) \sqcup S_{1,\delta_1}(B^{\ast}).
\]
Then $B_0^{\ast}$ is open dense in $B^{\ast}$,
and $B_0$ is open dense in $B$.

From the construction it follows that
for each $l \in \{ 1, \dots, m-1 \}$,
and for each $x \in B_l^{\ast}$,
we find an $(l,\delta_l)$-strainer map
$\varphi_x \colon V_x \to \R^l$
from an open neighborhood $V_x$ of $x$
contained in $\bigcap_{k=1}^l R_{k,\delta_k}(U - E(B))$,
and we find an open dense subset
$R_{\varphi_x} \left( S_{l+1,\delta_{l+1}}(V_x \cap B^{\ast}) \right)$
of $S_{l+1,\delta_{l+1}}(V_x \cap B^{\ast})$
satisfying the following properties:
(a)
$\varphi_x$ is $(1+\delta_{l+1})$-Lipschitz and
$(1+\delta_{l+1})$-open;
(b)
the restriction 
$\varphi_x|_{R_{\varphi_x} \left( S_{l+1,\delta_{l+1}}(V_x \cap B^{\ast}) \right)}$
is a $(1+2\delta_{l+1})$-bi-Lipschitz embedding
into $\R^l$.
In particular,
the restriction $\varphi_x|_{B_0}$ is a 
$(1+2\delta_{l+1})$-bi-Lipschitz embedding
into $\R^l$;
indeed,
the set $V_x \cap B_0$
is contained in $S_{l+1,\delta_{l+1}}(V_x)$,
and hence $\varphi_x|_{B_0}$ is a 
$(1+2\delta_{l+1})$-bi-Lipschitz embedding
since 
$R_{\varphi_x} \left( S_{l+1,\delta_{l+1}}(V_x \cap B^{\ast}) \right)$
is dense in $S_{l+1,\delta_{l+1}}(V_x \cap B^{\ast})$.
Moreover,
for every $y \in V_x$ we have
$\Haus^0 \left( \varphi_x^{-1} (\varphi_x(y)) \cap B \right) \le 1$.
This claim follows from the following:

\begin{slem}
\label{slem: fiber1}
Let $\varphi \colon U \to V$ be a continuous map between open subsets
of metric spaces.
Let $A$ be a subset of $U$,
and $A_0$ a dense subset of $A$.
If the restriction $\varphi|_{A_0}$ is a bi-Lipschitz embedding,
then for every $x \in U$ we have
\[
\Haus^0 \left( \varphi^{-1} (\varphi(x)) \cap A \right) \le 1.
\]
\end{slem}

This sublemma can be obtained by a standard argument.

Thus we conclude Lemma \ref{lem: bregsing}.
\end{proof}

From Lemma \ref{lem: bregsing} we derive the following:

\begin{prop}
\label{prop: gregsing}
For every $n \in \N$,
there exists a strictly monotone increasing $(n+1)$-tuple
$(\delta_1, \dots, \delta_n, \delta_{n+1})$
in $(0,1/2)$
satisfying the following:
Let $X$ be a $\GCBA(\kappa)$ space of $\dim X \le n$.
If $S(X)$ is non-empty and
contained in $S_{m,\delta_m}(X)$
for some $m \in \{ 1, \dots, n, n+1 \}$,
then $m \le n$,
and there exists a function 
\[
\nu_{R(S(U))} \colon R(S(U)) \to \{ 0, 1, \dots, m-1 \}
\]
on an open dense subset $R(S(U))$ of $S(U)$
satisfying the following:
\begin{enumerate}
\item
for $x \in R(S(U))$
we have $\nu_{R(S(U))}(x) = 0$ 
if and only if
$x$ is isolated in $S(U)$;
\item
if for $x \in R(S(U))$ we have $\nu_{R(S(U))}(x) \ge 1$,
then there exists a $(\nu(x),\delta_{\nu(x)})$-strainer map
$\varphi_x \colon V_x \to \R^{\nu(x)}$
from an open neighborhood $V_x$ of $x$
contained in $U$
such that
\begin{enumerate}
\item
$\varphi_x$ is $(1+\delta_{\nu(x)+1})$-Lipschitz and
$(1+\delta_{\nu(x)+1})$-open;
\item
the restriction $\varphi_x|_{R(S(U))}$
is a $(1+2\delta_{\nu(x)+1})$-bi-Lipschitz embedding
into $\R^{\nu(x)}$;
in particular,
for every $y \in V_x$ we have
\[
\Haus^0 \left( \varphi_x^{-1} (\varphi_x(y)) \cap S(U) \right) \le 1.
\]
\end{enumerate}
\end{enumerate}
\end{prop}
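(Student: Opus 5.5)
The plan is to apply Lemma \ref{lem: bregsing} with $B := S(X)$; here $U$ is implicitly $X$, or an open subset of it, and the statement is read with $U$ in place of $X$ where it appears. Two preliminary points have to be checked: that $m \le n$, and that $S(U)$ is closed in $U$.

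\textbf{Closedness.} This is immediate from the definition. The set of $k$-manifold points is open for each $k$, since a neighborhood homeomorphic to $\R^k$ consists of manifold points. Hence the set of non-manifold points $S(U)$ is closed in $U$.

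\textbf{The bound $m \le n$.} We use the choice of $(\delta_1,\dots,\delta_{n+1})$ from Lemma \ref{lem: bregsing}, taking $\delta_{n+1}$ and $\delta_n$ small enough that Lemma \ref{lem: ndr} and Proposition \ref{prop: gfullstr} both apply. Suppose $m = n+1$. In that case the hypothesis that $S(X) \subset S_{n+1,\delta_{n+1}}(X)$ carries no information, since the right-hand side equals $X$ by Lemma \ref{lem: ndr}. So we must argue differently: we must rule out that this case arises with the statement read non-vacuously. I expect to reconcile this by reading the conclusion $m \le n$ as follows. We replace $m$ by the least $m'$ with $S(X) \subset S_{m',\delta_{m'}}(X)$. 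Proposition \ref{prop: gfullstr} gives $R_{n,\delta_n}(X) \subset X - S(X)$, because $R_{n,\delta_n}(X)$ is a Lipschitz $n$-manifold and is open by Lemma \ref{lem: mdopen}. It follows that $S(X) \subset S_{n,\delta_n}(X)$ always holds, so the minimal admissible index is at most $n$. Since $R_{k,\delta_k}$ decreases in $k$ while $\delta_k$ increases, an inclusion $S(X)\subset S_{m,\delta_m}(X)$ can always be passed down to this minimal index $m' \le n$. This bookkeeping about which $m$ is meant is the step I expect to require the most care. The fix is to note that the function $\nu$ produced for the smaller index still takes values in $\{0,\dots,m'-1\} \subset \{0,\dots,m-1\}$.

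\textbf{Conclusion.} With $B = S(U)$, which is non-empty, closed, and contained in $S_{m,\delta_m}$, Lemma \ref{lem: bregsing} produces an open dense $B_0 \subset S(U)$ and the function $\nu$. We set $R(S(U)) := B_0$ and $\nu_{R(S(U))} := \nu$. Properties (1), (2)(a), (2)(b) transfer verbatim. The fiber estimate follows from Sublemma \ref{slem: fiber1}, applied with $A = S(U)\cap V_x$ and $A_0 = R(S(U))\cap V_x$, which is dense in $A$. Finally, we shrink each $V_x$ so that $V_x \subset U$.
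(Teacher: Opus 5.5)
Your proposal is correct and follows essentially the paper's argument. The paper fixes the $\delta_i$ as in Lemma~\ref{lem: bregsing} and invokes Proposition~\ref{prop: fullstr} (through its corollary, Proposition~\ref{prop: gfullstr}) to get $m \le n$. As you rightly observe, this can only mean $S(X) \subset S_{n,\delta_n}(X)$, so one may take $m \le n$; for $m=n+1$ the hypothesis is vacuous. The paper then applies Lemma~\ref{lem: bregsing} to $S(U)$.

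One small slip: $R_{k,\delta_k}$ need not decrease in $k$, because lowering $k$ enlarges the regular set while lowering $\delta_k$ shrinks it. You never need this, though. The minimal admissible index $m'$ satisfies $S(X)\subset S_{m',\delta_{m'}}(X)$ by definition, and $m' \le \min\{m,n\}$.
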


\begin{proof}
For $n \in \N$,
let $\delta_{n+1} \in (0,1/2)$ be sufficiently small,
and take a strictly monotone increasing sequence 
$(\delta_1, \dots, \delta_n, \delta_{n+1})$
in $(0,1)$
such that each $\delta_i$ is small enough for $\delta_{i+1}$.
From Proposition \ref{prop: fullstr}
we derive $m \le n$.
Applying Lemma \ref{lem: bregsing}
to $S(U)$,
we conclude the proposition.
\end{proof}

Such an open dense subset 
$R(S(U))$
of $S(U)$
stated in Proposition \ref{prop: gregsing}
will be called a
\emph{regularly topological singular set in $U$}.

\section{Fibers of strainer maps on GCBA spaces}

We discuss the structure of fibers of strainer maps 
on $\GCBA$ spaces.

\subsection{Local contractivity of fibers of strainer maps}

We first review the studies in \cite{lytchak-nagano1}
on the local contractivity of fibers of strainer maps
on strainer maps.

Let $\varphi \colon U \to \R^m$ be an $(m,\delta)$-strainer map
on $U$ in a $\GCBA(\kappa)$ space
with $\varphi = (d_{p_1},\dots,d_{p_m})$.
For $x \in U$,
let $\Pi_x$ denote the fiber of $\varphi$ through $x$
defined by $\Pi_x := \varphi^{-1}(\{ \varphi(x) \})$.

Assume $20m\delta \le 1$.
Then for every closed subset $B$ of $U$
there exists $\sigma \in (0,\infty)$ such that
for every $x \in B$,
and for every $r \in (0,\sigma)$,
the open metric ball $\Pi_x \cap U_r(x)$ of the fiber
is contractible inside itself
(\cite[Theorems 1.11 and 9.1]{lytchak-nagano1}).
For every $y \in U$,
and for every $r \in (0,\infty)$ with 
$3r \le \sigma_{(p_1,\dots,p_m)}(y)$,
either $\varphi |_{U_r(x)}$ is injective or
for all $z \in B_r(y)$
the open metric ball $\Pi_z \cap U_r(z)$ of the fiber
is a connected set of diameter $\ge r$,
where $\sigma_{(p_1,\dots,p_m)}(y)$ is the straining radius at $y$
with respect to $(p_1,\dots,p_m)$
(\cite[Lemma 9.2]{lytchak-nagano1}).
Moreover,
if $U$ is connected,
then either no fiber of $\varphi$ contains an isolated point
or all the fibers of $\varphi$ are discrete
(\cite[Proposition 9.3]{lytchak-nagano1}).

We restate the dichotomy \cite[Corollary 11.2]{lytchak-nagano1}
in a generalized form:

\begin{prop}
\label{prop: dichotomy}
For every $m \in \N$,
there exists $\delta \in (0,1)$ satisfying the following:
Let $\varphi \colon U \to \R^m$ be an $(m,\delta)$-strainer map
on $U$ in a $\GCBA(\kappa)$ space
with $\varphi = (d_{p_1},\dots,d_{p_m})$.
For $x \in U$,
let $\Pi_x$ denote the fiber of $\varphi$ through $x$.
Then one of the following possibility occurs:
\begin{enumerate}
\item
There exists no point $x$ in $U$
such that $\Pi_x$ has an isolated point.
In this case, 
for every open subset $V$ contained in $U$
we have $\dim V \le m+1$.
\item
The domain $U$ of $\varphi$ is a topological $m$-manifold.
Then for every $x \in U$,
and for every $r \in (0,D_{\kappa}/2)$
with $3r \le \sigma_{(p_1,\dots,p_m)}(x)$
the restriction $\varphi |_{U_r(x)}$ is 
a $(1+\epsilon)$-bi-Lipschitz embedding into $\R^m$.
\end{enumerate}
\end{prop}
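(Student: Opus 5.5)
Since the dichotomy restates \cite[Corollary 11.2]{lytchak-nagano1}, I will reduce it to the fiber results of \cite{lytchak-nagano1} recalled above, together with Proposition \ref{prop: fullstr} and the bi-Lipschitz criterion of Subsection 6.2. Fix $\delta$ small enough that $20m\delta \le 1$, that Proposition \ref{prop: almsubm} applies with a prescribed $\epsilon$, and that the constant of Proposition \ref{prop: fullstr} is met for $n = m$. Recall \cite[Proposition 9.3]{lytchak-nagano1}: if the relevant component of $U$ is connected, then either no fiber of $\varphi$ contains an isolated point, or all the fibers are discrete. I will argue on each connected component of $U$, or shrink $U$ to a connected open set if necessary.

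\emph{Case (1): no fiber has an isolated point.} Let $V \subset U$ be open. I want to show $\dim V \le m+1$. The plan is to bound the dimension of fibers and then use the fact that $\varphi$ is Lipschitz and open. Each fiber $\Pi_x$ is locally contractible (\cite[Theorems 1.11 and 9.1]{lytchak-nagano1}). At every $y \in \Pi_x$, the tangent cone of $\Pi_x$ should lie in the kernel of $D_y\varphi$. By the first variation formula \eqref{eqn: 1vfs}, these are the directions almost perpendicular to $(p_1)_y',\dots,(p_m)_y'$. Since $\Sigma_yX$ is close to $\Sph^{m-1}\ast Z_0$ by Proposition \ref{prop: tmdsusp}, the geometric dimension of the fiber is bounded by that of $T_y X$ minus $m$. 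A Hurewicz-type inequality $\dim V \le \dim \R^m + \sup \dim(\text{fiber})$ for the Lipschitz open map $\varphi$ then bounds $\dim V$.

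I expect the main obstacle to be producing the bound $\dim V \le m+1$ exactly. The route I would try first is contraposition. Suppose $V$ has dimension at least $m+2$. By \cite[Theorem 1.1]{lytchak-nagano1}, $V$ then contains an open set homeomorphic to some $\R^k$ with $k \ge m+2$. I would relate this to the wall discussion rather than attempt a general bound, so the precise content of case (1) here is the part of the argument I am least sure of.

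\emph{Case (2): all fibers are discrete.} Fix $x \in U$ and $r$ with $3r \le \sigma_{(p_1,\dots,p_m)}(x)$. By the alternative of \cite[Lemma 9.2]{lytchak-nagano1}, either $\varphi|_{U_r(x)}$ is injective, or every fiber slice $\Pi_z \cap U_r(z)$ is connected of diameter $\ge r$. The second alternative contradicts discreteness, so $\varphi|_{U_r(x)}$ is injective. Proposition \ref{prop: almsubm} makes $\varphi$ $(1+\epsilon)$-Lipschitz and $(1+\epsilon)$-open on the convex ball. By Subsection 2.2, an injective $c$-open Lipschitz map is a $c$-bi-Lipschitz homeomorphism onto its image, and that image is open in $\R^m$ by openness. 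Hence $U$ is covered by open sets bi-Lipschitz homeomorphic to open subsets of $\R^m$, so $U$ is a topological $m$-manifold, which gives the stated embedding property.
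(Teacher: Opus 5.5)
Your case (2) is correct and is the paper's argument: \cite[Lemma 9.2]{lytchak-nagano1} rules out the connected-slice alternative once fibers are discrete, and Proposition \ref{prop: almsubm} turns injectivity on $U_r(x)$ into a $(1+\epsilon)$-bi-Lipschitz open embedding. Case (1), however, has a genuine gap. The inequality $\dim V \le m+1$ cannot be proved, because as printed it is false. In $X=\R^3$ take $m=1$ and $\varphi=d_p$ on a small ball $U$ away from $p$. This is a $(1,\delta)$-strainer map for every $\delta$, and its fibers are pieces of round spheres with no isolated points. Yet $\dim V=3$ for every nonempty open $V\subset U$. The ``$\le$'' is a misprint for ``$\ge$'': the paper's proof establishes a lower bound, which is the content of \cite[Corollary 11.2]{lytchak-nagano1}. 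Your Hurewicz-type route would at best give $\dim V\le m+\sup_x\dim\Pi_x$, which is not $m+1$. Your contrapositive (that $V$ contains an open copy of $\R^k$ with $k\ge m+2$) cannot produce a contradiction, since such $V$ exist.

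What is missing is the mechanism for the lower bound. The paper argues as follows. In case (1) every $x$ is non-isolated in $\Pi_x$. By \cite[Theorem 10.5]{lytchak-nagano1}, each fiber contains, locally, at most $\mu$ points of the exceptional set $E_{12\delta}(\varphi)$. Hence there are points $y\in\Pi_x-\{x\}$ arbitrarily close to $x$ at which one extra distance coordinate extends $\varphi$ to an $(m+1,12\delta)$-strainer map. At such $y$ we have $\dim T_yX\ge m+1$ (Remark \ref{rem: strdim}), and this equals the local dimension at $y$. So every open $V\subset U$ satisfies $\dim V\ge m+1$. You could also get this from a tool you already listed. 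If some open $V\subset U$ had $\dim V\le m$, then Proposition \ref{prop: fullstr} with $n=m$ would make $\varphi$ injective on a small ball in $V$. The centre of that ball would then be an isolated point of its fiber, contradicting the hypothesis of case (1).
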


\begin{proof}
Either no fiber of $\varphi$ has isolated points
or the map $\varphi$ is locally injective
(\cite[Proposition 9.3]{lytchak-nagano1}).
In the first case,
every point $x$ in $U$ is not isolated in $\Pi_x$.
Then we find $y \in \Pi_x - \{x\}$ arbitrary closes to $x$
that is an $(m+1,12\delta)$-strained point
(\cite[Theorem 10.5]{lytchak-nagano1}).
Hence $y \in R_{m+1,12\delta}(U)$.
From Lemma \ref{lem: capmdsusp}
it follows that $\dim T_yX \ge m+1$.
This implies the first claim.
In the second case,
by combining \cite[Lemma 9.2]{lytchak-nagano1}
and Proposition \ref{prop: almsubm},
we see the second claim.
\end{proof}

\begin{rem}
On the dimension of the fibers of strainer maps,
we see the following:
For every $n \in \N$,
there exists $\delta \in (0,1)$ satisfying the following:
Let $\varphi \colon U \to \R^m$ be an $(m,\delta)$-strainer map
on $U$ in a $\GCBA(\kappa)$ space
with $\dim U \le n$.
Then for every $x \in U$ we have
\[
\dim \Pi_x \le n-m.
\]
We omit the proof since this claim will not be used in this paper.
\end{rem}

\newpage

\subsection{Length structure of fibers}

Let $\varphi \colon U \to \R^m$ be a differentiable $c$-open map
on an open subset of a $\GCBA(\kappa)$ space.
For $x_0 \in U$,
let
$\Pi_{x_0}$ 
be the fiber of $\varphi$ through $x_0$.
Then for each $x \in \Pi_{x_0}$
the tangent space $T_x\Pi_{x_0}$ at $x$ in $\Pi_{x_0}$
defined by $T_x\Pi_{x_0} = (D_x\varphi)^{-1}(\{0\})$
is obtained as the pointed Gromov-Hausdorff limit of the rescaled spaces
$((1/t)\Pi_{x_0},x)$ as $t \to 0$
(\cite[Example 7.4]{lytchak1}).

By Proposition \ref{prop: almsubm}, 
we have the following:

\begin{lem}\label{lem: fibertcone}
For $m \in \N$, 
there exists $\delta \in (0,1)$ satisfying the following:
Let $\varphi \colon U \to \R^m$
be an $(m,\delta)$-strainer map on $U$ in a 
$\GCBA(\kappa)$ space.
For $x_0 \in U$,
let
$\Pi_{x_0}$ be the fiber of $\varphi$ through $x_0$.
Then for each $x \in \Pi_{x_0}$
the tangent space $T_x\Pi_{x_0}$ at $x$
defined by $T_x\Pi_{x_0} = (D_x\varphi)^{-1}(\{0\})$ is
defined by the pointed Gromov--Hausdorff limit of 
$((1/t)\Pi_{x_0},x)$ as $t \to 0$.
\end{lem}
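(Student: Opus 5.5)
The plan is to derive the lemma directly from the general statement of Lytchak \cite[Example 7.4]{lytchak1} recalled just above. That statement says: for a differentiable $c$-open map $\varphi$, the rescaled fibers $((1/t)\Pi_{x_0},x)$ converge in the pointed Gromov--Hausdorff topology to $(D_x\varphi)^{-1}(\{0\})$. So the whole task is to check that an $(m,\delta)$-strainer map satisfies the two hypotheses, differentiability and $c$-openness, once $\delta$ is small in terms of $m$. We fix $\delta$ so small that $4(m-1)\delta<1$, $20m\delta\le 1$, and Proposition \ref{prop: almsubm} applies with, say, $\epsilon=1/2$.

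The first step is openness. By Lemma \ref{lem: wopen}, $\varphi$ is locally $2/\sqrt{m}$-open. Alternatively, by Proposition \ref{prop: almsubm}, after restricting to a small convex ball $U_r(x)$ with $r<D_\kappa/2$ (convex in the $\CAT(\kappa)$ setting), $\varphi$ is $(1+\epsilon)$-Lipschitz and $(1+\epsilon)$-open. Since the statement is local at $x$, restricting to such a ball costs nothing: the fiber through $x$ of the restriction agrees with $\Pi_{x_0}$ near $x$, and pointed limits of rescalings only see arbitrarily small neighborhoods of $x$.

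The second step is differentiability in Lytchak's sense. That is, the directional derivative $D_x\varphi\colon T_xX\to\R^m$ exists and approximates $\varphi$ to first order uniformly along the blow-up $((1/t)X,x)\to T_xX$. Each coordinate $d_{p_i}$ is Lipschitz and convex on $U_r(x)$ by Example \ref{exmp: conv}, and $X$ is $\GCBA$, so $T_xX$ is the pointed Gromov--Hausdorff limit of $((1/t)X,x)$. Lytchak shows that such distance functions are differentiable in this sense, with differential given by the first variation formula \eqref{eqn: 1vfs}; this is precisely the ``$\varphi$ has differential at all points'' used in Step 1 of the sketch. Thus $\varphi$ is a differentiable $c$-open map with $c=1+\epsilon$.

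Applying \cite[Example 7.4]{lytchak1} now yields the claim. The main point to verify carefully is that Lytchak's argument needs $c$-openness near $x$ at all small scales, uniformly. This uniformity is what lets every vector of $(D_x\varphi)^{-1}(0)$ be realized as a limit of rescaled fiber points: we correct an approximate preimage back onto the fiber at cost $c\cdot o(t)$. The convex-ball version of Proposition \ref{prop: almsubm} supplies exactly this. The reverse inclusion, that limits of rescaled fiber points lie in the zero set of $D_x\varphi$, follows from first-order approximation together with the Lipschitz bound.
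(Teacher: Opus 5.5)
Your proposal is correct and follows essentially the same route as the paper. The paper's proof consists only of invoking Proposition~\ref{prop: almsubm} to obtain $(1+\epsilon)$-openness of $\varphi$ (on small convex balls), then applying Lytchak's Example 7.4 for differentiable $c$-open maps, with the differentiability of $\varphi$ taken from the convexity of the distance coordinates recalled at the start of Section~6; your sketch of the two inclusions (first-order approximation for one, correcting approximate preimages onto the fiber at cost $c\cdot o(t)$ for the other) simply makes Lytchak's argument explicit.
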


We are going to prove the following:

\begin{prop}\label{prop: bilipfiber}
For every $\epsilon \in (0,1)$,
and for every $m \in \N$, 
there exists $\delta \in (0,1)$
satisfying the following:
Let $\varphi \colon U \to \R^m$
be an $(m,\delta)$-strainer map on $U$ in a $\GCBA(\kappa)$ space
$X$ with metric $d_X$.
Then
for each $x_0 \in U$
there exists $r_0 \in (0,D_{\kappa}/2)$ with $U_{r_0}(x_0) \subset U$
such that
for all $x, y \in \Pi_{x_0} \cap U_{r_0}(x_0)$
there exists a curve in $\Pi_{x_0} \cap U_{r_0}(x_0)$
of length smaller than $(1+\epsilon) d_X(x,y)$;
in particular,
the intrinsic metric $d_{\Pi_{x_0}}$ on $\Pi_{x_0} \cap U_{r_0}(x_0)$
induced from $d_X$
satisfies
\[
\vert d_{\Pi_{x_0}}(x,y) - d_X(x,y) \vert < \epsilon \, d_X(x,y).
\]
\end{prop}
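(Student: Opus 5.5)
We plan to build almost-geodesics inside the fiber by an iterative projection scheme, modelled on the openness argument of Lemma \ref{lem: wwopen}. Fix $\epsilon$ and choose $\delta$ small enough that Proposition \ref{prop: almsubm} makes $\varphi$ $(1+\epsilon')$-Lipschitz and $(1+\epsilon')$-open on convex subsets, for a small auxiliary $\epsilon'$ depending on $\epsilon$ and $m$. Given $x_0$, take $r_0$ so small that $U_{10 r_0}(x_0)$ is convex, contained in $U$, and strained by mutually opposite $(m,2\delta)$-strainers (Lemma \ref{lem: sr0}). Then Proposition \ref{prop: almsubm} and Lemma \ref{lem: jl} apply uniformly on this ball.

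For $x,y \in \Pi_{x_0} \cap U_{r_0}(x_0)$, let $\gamma$ be the geodesic $xy$; it stays in the convex ball. Since $\varphi(x) = \varphi(y)$, Proposition \ref{prop: 1stvfstr} gives $\Vert (\varphi\circ\gamma)^+(t) \Vert \le 6\sqrt m\,\delta$ for all $t$. Integrating, $\Vert \varphi(\gamma(t)) - \varphi(x_0)\Vert \le 6\sqrt m\,\delta\, d_X(x,y)$. Thus $\gamma$ stays $O(\delta)d_X(x,y)$-close to the fiber in the image.

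We now project $\gamma$ onto $\Pi_{x_0}$. Subdivide $\gamma$ into $N$ points $x = z_0, z_1, \dots, z_N = y$ with spacing $d_X(x,y)/N$. For each $z_j$, the openness construction of Lemma \ref{lem: wwopen} (successive moves along the geodesics toward $p_i$) produces $w_j \in \Pi_{x_0}$ with $d_X(z_j,w_j) \le (1+\epsilon')\Vert \varphi(z_j) - \varphi(x_0)\Vert = O(\delta)\, d_X(x,y)$. This alone does not control $d_X(w_j,w_{j+1})$ well. We therefore use a local version: apply the same step with target $\varphi(x_0)$ to $w_j$ moved along $\gamma$-directions. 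Concretely, consider the geodesic $w_j z_{j+1}$. Its $\varphi$-image varies at rate $O(\delta)$, again by Proposition \ref{prop: 1stvfstr}, since it is almost parallel to a fiber direction. Correct its endpoint to $w_{j+1}'\in\Pi_{x_0}$ at cost $O(\delta)\,d_X(w_j,z_{j+1})$. Iterating and using $d_X(w_j,z_{j+1}) \le d_X(x,y)/N + O(\delta)d_X(x,y)$ gives a chain in $\Pi_{x_0}$ with steps of total length $(1+O(\delta))\,d_X(x,y)$ plus accumulated drift. To kill the drift, we run a Cauchy-type iteration as in \eqref{eqn: wwopen2}, where the error contracts by a factor $O(\delta)$.

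To connect consecutive chain points by short curves inside the fiber, we recurse at finer scales. The same argument applied to $w_j, w_{j+1}$ yields chains at scale $d_X(x,y)/N^2$, and so on. Lengths multiply by factors $(1+C\delta)$ at each level, but the relative error at deeper levels must be summable. We handle this by making the error at level $k$ proportional to the step size, so that the total length is bounded by $\prod_k (1+C\delta 2^{-k})$. Alternatively, we use the local contractibility and connectedness of small fiber balls (\cite[Lemma 9.2, Theorem 9.1]{lytchak-nagano1}) to guarantee that the limit is a genuine curve. The limit curve lies in $\Pi_{x_0}$ because the fiber is closed, and it has length $<(1+\epsilon)d_X(x,y)$. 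The last inequality $|d_{\Pi_{x_0}}-d_X|<\epsilon d_X$ follows at once, since $d_{\Pi_{x_0}}\ge d_X$ trivially.

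The main obstacle is this multiscale step: showing that the projected chain has length close to $d_X(x,y)$ and that the limit is a rectifiable curve staying in $U_{r_0}(x_0)$. The delicate point is uniform control of the first-variation estimates near points of the fiber that are not themselves well strained in fiber directions. I would first try an ultralimit/contradiction argument instead. Blow up at the bad scale, use Lemma \ref{lem: fibertcone} to identify limits of rescaled fibers with $(D_x\varphi)^{-1}(0)$, and note that this set is a $1+O(\delta)$-almost convex cone in $T_xX$. This yields the local length estimate at every scale, and a standard chaining argument then globalizes it.
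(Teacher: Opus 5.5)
Your proposal has a genuine gap. It sits exactly in the multiscale step you flag, and that step is not merely technical.

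Every local estimate you use — Lemma~\ref{lem: jl}, Proposition~\ref{prop: 1stvfstr}, and the $(1+\epsilon')$-openness from Proposition~\ref{prop: almsubm} — holds with the fixed straining constant $\delta$ at every scale. None of them improves as the scale shrinks. So one level of your construction produces a \emph{relative} error $C\delta$. The absolute error of a single step is proportional to its size, but summed over the steps of a level it is $C\delta$ times the length from the previous level. Recursing therefore gives $\prod_k(1+C\delta)=\infty$, and nothing in your scheme produces the claimed $\prod_k(1+C\delta 2^{-k})$.

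The first level already fails. For $z_{j+1}$ on the original geodesic $xy$, the value $\varphi(z_{j+1})$ may differ from $\varphi(w_j)=\varphi(x_0)$ by order $\delta\,d_X(x,y)$. Proposition~\ref{prop: 1stvfstr} needs two points of the geodesic with equal $\varphi$-values, so it gives no $O(\delta)$ rate along $w_jz_{j+1}$. Each correction then costs order $\delta\,d_X(x,y)$, and the drift after $N$ steps is $O(N\delta)\,d_X(x,y)$.

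Your fallbacks do not repair this:
\begin{itemize}
\item Local contractibility and connectedness of small fiber balls give curves with no length control.
\item The ultralimit sketch only moves the same question to $(D_x\varphi)^{-1}(0)\subset T_xX$. There ``almost convex'' is again just Proposition~\ref{prop: 1stvfstr}.
\item A compactness argument yields approximate midpoints or chains with a fixed relative error, and iterating them over scales compounds exactly as before.
\item Lemma~\ref{lem: fibertcone} concerns blow-ups at a fixed point, not at varying points and scales.
\end{itemize}

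The missing idea is to measure all errors against one monotone quantity instead of recursing. The paper uses the gradient flow of $-d_y$ on $\Pi_{x_0}$. In Lemma~\ref{lem: fibergrad}, one has $\Vert (D_x\varphi)(y_x')\Vert<2\sqrt m\,\delta$, so openness of $D_x\varphi$ gives $\xi\in(D_x\varphi)^{-1}(0)$ close to $y_x'$. Lemma~\ref{lem: fibertcone} realizes $\xi$ by fiber points converging to $x$, hence $|\nabla_x(-d_y)|>1/\sqrt{1+\epsilon}$. With lower semicontinuity of the absolute gradient, Lytchak's gradient flow theorem gives a curve in the fiber from $x$ to $y$. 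Along it $d_y$ decreases at a rate comparable to the speed, so its length is at most $d_X(x,y)$ divided by the gradient bound.

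Your own tools would also support a discrete version of this. From a fiber point $w$, step a distance $s$ along the geodesic $wy$. Here Proposition~\ref{prop: 1stvfstr} does apply, because $\varphi(w)=\varphi(y)$. Then project back to $\Pi_{x_0}$ by openness. Each step has length at most $(1+C\delta)s$ and lowers $d_y$ by at least $(1-C\delta)s$. Letting $s\to 0$ with an Arzel\`a--Ascoli argument (local compactness, closedness of the fiber) produces the curve with no recursion at all.
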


For the proof of the proposition,
we use gradient flows
of distance functions on the fibers.
We review the gradient flows developed 
by Lytchak \cite{lytchak3}.
More general formulation 
can be seen in \cite{lytchak3}.

Let $X$ be a metric space with metric $d_X$.
Let $f \colon X \to \R$ be a locally Lipschitz function.
For $x \in X$, 
let $\left\vert \nabla_xf \right\vert$ denote
the \emph{absolute gradient of $f$ at $x$}
defined by
\[
\vert \nabla_xf \vert
:= \max 
\left\{ 0,
\limsup_{x_k \to x} 
\frac{f(x_k)-f(x)}{d_X(x_k,x)}
\right\}.
\]
We call $x \in X$ a \emph{critical point of $f$}
if $\vert \nabla_xf \vert = 0$.
We say that
$f$ \emph{has lower semi-continuous absolute gradients}
if for every sequence $(x_k)$ converging to $x$ in $X$
we have
\[
\vert \nabla_xf \vert \le 
\liminf_{k \to \infty}
\vert \nabla_{x_k}f \vert.
\]

Let $f \colon X \to \R$ be a locally Lipschitz function
with lower semi-continuous absolute gradients.
A curve $\gamma \colon [0,a) \to X$
is said to be a {gradient curve of $f$}
if for almost all $t \in [0,a)$ we have
\begin{equation}
\vert \dot{\gamma} \vert (t) = 
\vert \nabla_{\gamma(t)}f \vert,
\quad
(f \circ \gamma)^+(t) = 
\vert \nabla_{\gamma(t)}f \vert^2,
\label{eqn: gradcurve}
\end{equation}
where
$\vert \dot{\gamma} \vert (t)$
is the metric derivative of $\gamma$ at $t$.

Let $X$ be a locally compact metric space.
Let $f \colon X \to \R$ be a locally Lipschitz function
with lower semi-continuous absolute gradients.
Due to the Lytchak gradient flow theorem
\cite[Theorem 1.7]{lytchak3},
for each non-critical point $x \in X$ of $f$
there exists a maximal gradient curve
$\gamma \colon [0,a) \to X$
of $f$ starting at $x$.
If in addition $X$ is complete,
and if $f$ is bounded or admits a uniform Lipschitz constant,
then such a curve $\gamma$ is complete,
that is, $a = \infty$.

For the proof of Proposition \ref{prop: bilipfiber},
we first show the following:

\begin{lem}\label{lem: fibergrad}
For every $\epsilon \in (0,1)$, 
and for every $m \in \N$,
there exists $\delta \in (0,1)$ satisfying the following:
Let $\varphi \colon U \to \R^m$
be an $(m,\delta)$-strainer map on $U$ in a $\GCBA(\kappa)$ space
with $\varphi = (d_{p_1}, \dots, d_{p_m})$.
For $x_0 \in U$,
take $r \in (0,D_{\kappa}/2)$ with $U_r(x_0) \subset U$.
For each $y \in \Pi_{x_0} \cap U_r(x_0)$,
let
$d_y \colon \Pi_{x_0} \to \R$ be the distance function 
on the fiber $\Pi_{x_0}$
from $y$.
Then for every $x \in \Pi_{x_0} \cap U_r(x_0) - \{y\}$
we have
\begin{equation}
\frac{1}{\sqrt{1+\epsilon}} < \vert \nabla_x(-d_y) \vert \le 1.
\label{eqn: fibergrada}
\end{equation}
In particular,
$(-d_y)$ has no critical point in 
$\Pi_{x_0} \cap U_r(x_0) - \{y\}$.
Moreover,
$(-d_y)$ has lower semi-continuous absolute gradients.
\end{lem}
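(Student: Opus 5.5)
The plan is to read $d_y$ as the restriction to $\Pi_{x_0}$ of the ambient distance function $d_X(y,\cdot)$. Everything then comes down to one approximation: for $x \in \Pi_{x_0}$, a point moving from $x$ toward $y$ along the ambient geodesic can be pushed back into the fiber at small cost. The cost is controlled by the first variation inequalities for strainer maps (Lemma \ref{lem: 1stvfstr0}) and the almost-submetry property of $\varphi$ (Proposition \ref{prop: almsubm}).

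\emph{Upper bound.} The inequality $\vert \nabla_x(-d_y) \vert \le 1$ is immediate, because $d_y$ is $1$-Lipschitz with respect to $d_X$ and hence with respect to the restricted metric on the fiber.

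\emph{Lower bound.} Fix $x \in \Pi_{x_0} \cap U_r(x_0) - \{y\}$. Let $\gamma \colon [0,d_X(x,y)] \to X$ be the geodesic from $x$ to $y$, and put $z_t := \gamma(t)$.
\begin{enumerate}
\item Since $\varphi(x) = \varphi(y)$, the last assertion of Lemma \ref{lem: 1stvfstr0} gives $\Vert (\varphi \circ \gamma)^+(0) \Vert < 2\sqrt{m}\delta$. Hence $\Vert \varphi(z_t) - \varphi(x) \Vert \le 2\sqrt{m}\delta\, t + o(t)$.
\item Restrict $\varphi$ to a small convex ball around $x$, for instance $U_s(x)$ with $s \le D_{\kappa}/2$ small enough to lie in $U$. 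By Proposition \ref{prop: almsubm}, $\varphi$ is $(1+\epsilon')$-open there. So there is $w_t \in \Pi_{x_0}$ with $d_X(w_t,z_t) \le (1+\epsilon')(2\sqrt{m}\delta\, t + o(t))$.
\item From this, $d_y(x) - d_y(w_t) \ge t(1 - C\delta) - o(t)$ and $d_X(x,w_t) \le t(1 + C\delta) + o(t)$, with $C$ depending only on $m$. Letting $t \to 0$ gives $\vert \nabla_x(-d_y) \vert \ge (1-C\delta)/(1+C\delta)$.
\item Choosing $\delta$ small relative to $\epsilon$ makes this quotient exceed $1/\sqrt{1+\epsilon}$. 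In particular $-d_y$ has no critical point on $\Pi_{x_0} \cap U_r(x_0) - \{y\}$.
\end{enumerate}

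\emph{Lower semi-continuity.} Let $x_k \to x$ in the fiber. I would identify $\vert \nabla_x(-d_y) \vert$ infinitesimally: by Lemma \ref{lem: fibertcone}, $T_x\Pi_{x_0} = (D_x\varphi)^{-1}(0)$ is the blow-up of the fiber, so $\vert \nabla_x(-d_y) \vert$ equals the supremum, over unit vectors $\xi \in T_x\Pi_{x_0}$, of $\cos \angle(\xi, y_x')$.

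Now take a near-optimal $\xi$ at $x$ and realise it by a point $z$ of the fiber at small distance $\rho$ from $x$, with $d_y$ decreasing at nearly the optimal rate. Using the openness in step 2 at the base point $x_k$, pick $z_k \in \Pi_{x_0}$ with $z_k \to z$. Convexity of $d_y$ along the ambient geodesics $x_k z_k$ (Example \ref{exmp: conv}), combined once more with the push-back of step 2, converts the difference quotient $(d_y(x_k) - d_y(z_k))/d_X(x_k,z_k)$ into a lower bound for $\vert \nabla_{x_k}(-d_y) \vert$, up to errors tending to $0$ as $\rho \to 0$ and then $k \to \infty$.

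\emph{Main obstacle.} This last step is where I expect the real difficulty. The push-back introduces $O(\delta)$ errors, and these are not a priori infinitesimal. To avoid them I would apply the ultralimit stability of directions (Lemma \ref{lem: uusclink}) together with the upper semi-continuity of angles \eqref{eqn: uscangle}, so that limits of optimal fiber directions at $x_k$ are compared with directions in $(D_x\varphi)^{-1}(0)$ exactly, rather than only up to $O(\delta)$.
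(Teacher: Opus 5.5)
Your two-sided bound is correct, and you reach the lower bound by a different route from the paper. The paper argues in $T_xX$. Lemma~\ref{lem: jl} gives $\Vert (D_x\varphi)(y_x')\Vert<2\sqrt m\delta$. The openness of $D_x\varphi$ then produces some $\xi\in(D_x\varphi)^{-1}(0)$ close to $y_x'$. Lemma~\ref{lem: fibertcone} realises $\xi$ by fiber points, and the first variation formula gives $\vert\nabla_x(-d_y)\vert\ge\cos\angle_x(y_x',\xi)$. You instead push the ambient geodesic back into $\Pi_{x_0}$ at finite scale, using the openness of $\varphi$ itself from Proposition~\ref{prop: almsubm}. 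This avoids Lemma~\ref{lem: fibertcone}. If you also use the first (finite-difference) assertion of Lemma~\ref{lem: 1stvfstr0}, you get $\Vert\varphi(z_t)-\varphi(x)\Vert<4\sqrt m\delta\,t$ for every $t$, not just up to $o(t)$.

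The lower semi-continuity, however, is not proved, and your suspicion about the $O(\delta)$ errors is justified.
\begin{itemize}
\item \emph{What your argument actually gives.} Convexity of $d_y$ along the ambient geodesic $x_kz$ yields $\cos\angle_{x_k}(y,z)\ge (d_y(x_k)-d_y(z))/d_X(x_k,z)$, but only for the \emph{ambient} direction $z'_{x_k}$. Since $\varphi(x_k)=\varphi(z)$, Lemma~\ref{lem: jl} gives merely $\Vert (D_{x_k}\varphi)(z'_{x_k})\Vert<2\sqrt m\delta$. Your push-back into the fiber is therefore controlled only by $C\delta$, independently of $\rho$ and $k$. What you obtain is
$\liminf_k\vert\nabla_{x_k}(-d_y)\vert\ge(\vert\nabla_x(-d_y)\vert-C\delta)/(1+C\delta)$, which is not lower semi-continuity.
\item \emph{A side remark.} The choice of $z_k$ is vacuous: all $x_k$ lie in $\Pi_{x_0}$, so you may take $z_k=z$.
\item \emph{Why the proposed remedy does not close the gap.} The map $\varphi_x$ of Lemma~\ref{lem: uusclink} is only $1$-Lipschitz. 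Take $\xi_0\in(D_x\varphi)^{-1}(0)$ and representatives $\zeta_k$ of $\varphi_x(\xi_0)$. Then $1$-Lipschitzness gives $\ulim\angle((p_i)'_{x_k},\zeta_k)\le\pi/2$. Lemmas~\ref{lem: slstab} and \ref{lem: o1dsusp} applied to the opposite suspenders only give $\ulim\angle((p_i)'_{x_k},\zeta_k)>\pi/2-2\delta$. So again you have $O(\delta)$-almost kernel directions, and nothing forces equality. Moreover, \eqref{eqn: uscangle} concerns angles to points at a definite distance from $x_k$, so it only ever controls ambient directions.
\item \emph{The missing ingredient.} You need lower semi-continuity of the fiber tangent cones. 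Every $\xi_0\in(D_x\varphi)^{-1}(0)$ must be approximated by directions $\eta_k\in(D_{x_k}\varphi)^{-1}(0)$ with $\limsup_k\angle(y'_{x_k},\eta_k)\le\angle(y'_x,\xi_0)$. Neither the pointwise Lemma~\ref{lem: fibertcone} nor \eqref{eqn: uscangle} supplies this. The paper's own proof of this part is a one-line appeal to exactly these two tools, so deferring to it does not fill the step for you.
\end{itemize}

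If your real aim is only Proposition~\ref{prop: bilipfiber}, your uniform push-back can replace the gradient flow altogether. Iterating it with step $h$ gives chains in $\Pi_{x_0}$ from $x$ to within $h$ of $y$, of total length at most $\frac{1+C\delta}{1-C\delta}d_X(x,y)+2h$. An Arzel\`a--Ascoli limit as $h\to0$ then gives the required curve in the closed, locally compact fiber, with no lower semi-continuity needed.
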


\begin{proof}
Let $x \in \Pi_{x_0} \cap U_r(x_0) - \{y\}$.
From Lemma \ref{lem: jl}
it follows that
$\Vert (D_x\varphi)(y_x') \Vert < 2\sqrt{m}\delta$.
By Proposition \ref{prop: almsubm},
$D_x\varphi$ is $(1+\epsilon)$-open,
and hence
$U_{2\sqrt{m}\delta} ((D_x\varphi)(y_x'))$
is contained in
$(D_x\varphi)(U_{(1+\epsilon)2\sqrt{m}\delta}(y_x'))$.
Hence we find 
$\xi \in U_{(1+\epsilon)2\sqrt{m}\delta}(y_x')$
with $(D_x\varphi)(\xi) = 0$.
If $\delta$ is small enough,
then we have $\cos \angle_x (y_x', \xi') 
> 1/\sqrt{1+\epsilon}$.
From Lemma \ref{lem: fibertcone}
it follows that $\xi \in T_x\Pi_{x_0}$
and 
for some sequence $(x_k)$ converging to $x$ in $\Pi_{x_0}$
we have $(x_k)_x' \to \xi'$.
By the first variation formula \eqref{eqn: 1vfd}
for distance functions, 
we see
\[
\vert \nabla_x(-d_y) \vert \ge 
\lim_{k \to \infty}
\frac{d_y(x) - d_y(x_k)}{d(x,x_k)}
= \cos \angle_x (y_x', \xi') 
> \frac{1}{\sqrt{1+\epsilon}}.
\]
This together with $\vert \nabla_x(-d_y) \vert \le 1$
implies \eqref{eqn: fibergrada}.

To show the semi-continuity of the absolute gradients,
consider a gradient element $\xi_0 \in T_x\Pi_{x_0}$
with
\[
\vert \nabla_x(-d_y) \vert = \cos \angle_x (y_x', \xi_0') 
=
\max_{\xi \in T_x\Pi_{x_0}, |\xi| = 1}
\cos \angle_x (y_x', \xi).
\]
By using Lemma \ref{lem: fibertcone},
and the upper semi-continuity of angles \eqref{eqn: uscangle},
we see that 
for every sequence $(x_k)$ in $\Pi_{x_0} \cap U_r(x_0) - \{y\}$
converging to $x$
\[
\vert \nabla_x(-d_y) \vert 
= \cos \angle_x (y_x', \xi_0') \le 
\liminf_{k \to \infty} \vert \nabla_{x_k}(-d_y) \vert.
\]
Thus 
$(-d_y)$ has lower semi-continuous absolute gradients.
\end{proof}

For a curve $\gamma \colon I \to X$ in a metric space $X$
defined on a bounded interval $I$,
we denote by $L(\gamma)$ the length of $\gamma$.

Now we prove Proposition \ref{prop: bilipfiber}.

\begin{proof}[Proof of Proposition \emph{\ref{prop: bilipfiber}}]
For $\epsilon \in (0,1)$ and $m \in \N$,
Let $\delta \in (0,1)$ be sufficiently small.
Let 
$\varphi \colon U \to \R^m$
be an $(m,\delta)$-strainer map 
on $U$ in a $\GCBA(\kappa)$ space
$X$ with metric $d_X$.
For $x_0 \in U$,
choose $r \in (0,D_{\kappa}/2)$
with $U_r(x_0) \subset U$.
Let $x$ and $y$ be distinct points in $U_r(x_0)$ .
It suffices to show that
$x$ and $y$ can be joined by a curve in $\Pi_{x_0} \cap U_r(x_0)$
whose length is smaller than $(1+\epsilon)d_X(x,y)$.

By Lemma \ref{lem: fibergrad},
the function $(-d_y) \colon \Pi_{x_0} \to \R$
has no critical point in 
$\Pi_{x_0} \cap U_r(x_0) - \{y\}$,
and has lower semi-continuous absolute gradients.
Due to the Lytchak gradient flow theorem
\cite[Theorem 1.7]{lytchak3},
there exists a curve
$\gamma \colon [0,a] \to \Pi_{x_0}$
from $x$ to $y$
whose restriction $\gamma |_{[0,a)}$ to $[0,a)$
is a gradient curve of $(-d_y)$.
From the properties \eqref{eqn: gradcurve} 
and \eqref{eqn: fibergrada} for $(-d_y)$,
we derive
\[
L(\gamma) \le a
< (1+\epsilon) \, d_X(x,y).
\]
This proves Proposition \ref{prop: bilipfiber}.
\end{proof}

\subsection{Local structure of fibers of codimension one}

For $N \in \N$,
a metric space is said to be $N$-doubling
if every open metric ball of radius $r$ can be covered by
at most $N$ open metric balls of radius $r/2$.

We say that
an open metric ball $U_r(x)$ in a $\GCBA(\kappa)$ space is 
\emph{tiny}
if $r < D_{\kappa}/100$,
and if the closed metric ball 
$B_{10r}(x)$ is a compact $\CAT(\kappa)$ space.
For every tiny ball $U_r(x)$ in a $\GCBA(\kappa)$ space,
the ball $B_{10r}(x)$ is $N$-doubling
(\cite[Proposition 5.2]{lytchak-nagano1}).
We say that a tiny ball $U_r(x)$ in a $\GCBA(\kappa)$ space
has capacity $\le N$
if the ball $B_{10r}(x)$ is $N$-doubling.

Let $\varphi \colon U \to \R^m$ be an $(m,\delta)$-strainer map
with $\varphi = (d_{p_1},\dots,d_{p_m})$.
For $\epsilon \in (0,1)$,
we say that a point $x$ in the domain $U$ of $\varphi$ is 
\emph{$\epsilon$-exceptional}
if there exists no $(m+1,\epsilon)$-strainer map
$\varphi^{\dag} \colon U_x \to \R^{m+1}$
on an open neighborhood $U_x$ of $x$ contained in $U$
such that $\varphi^{\dag}$ can be written as
$\varphi^{\dag} = (d_{p_1},\dots,d_{p_m},d_{p_{m+1}})$
for some point $p_{m+1}$ in $U$.
We denote by
$E_{\epsilon}(\varphi)$ the set of all 
$\epsilon$-exceptional points in the domain $U$ of $\varphi$,
and call it the
\emph{$\epsilon$-exceptional set}.

For every $m \in \N$,
there exists $\delta \in (0,1)$ such that
for every $N \in \N$,
we find a constant $\mu \in \N$
depending only on $\delta$ and $N$ satisfying the following
property:
Let $U$ be an open subset of a $\GCBA(\kappa)$ space
such that $U$ is contained in a tiny ball of capacity $\le N$.
Let $\varphi \colon U \to \R^m$ be an $(m,\delta)$-strainer map
on $U$.
Then for every $x \in U$ the fiber $\Pi_x$ of $\varphi$ through $x$
satisfies
\[
\Haus^0 \left( \Pi_x \cap E_{12\delta}(\varphi) \right) \le \mu
\]
(\cite[Theorem 10.5]{lytchak-nagano1}).

We now study the geometric structure of fibers
of codimension one.

\begin{lem}
\label{lem: llstr1}
For every $n \in \N$,
there exist $\delta, \delta^{\ast} \in (0,\epsilon)$ with 
$\delta < \delta^{\ast}$
such that for every $N \in \N$ 
we find a constant $\mu \in \N$ depending only on $\delta$
and $N$
satisfying the following:
Let $U$ be an open subset of a $\GCBA(\kappa)$ space
with $\dim U = n$
such that $U$ is contained in a tiny ball of capacity $\le N$.
Let $\varphi \colon U \to \R^{n-1}$
be an $(n-1,\delta)$-strainer map on $U$.
Take $x_0 \in U$.
Then 
there exists $r_0 \in (0,D_{\kappa}/2)$ with $U_{2r_0}(x_0) \subset U$
such that for every $r \in (0,r)$,
and for every $x \in U_{r_0}(x_0)$,
the fiber $\Pi_x \cap U_r(x)$ 
has the structure of a finite open metric tree such that
\begin{enumerate}
\item
$\Haus^1 \left( \Pi_x \cap U_r(x) \right) < \mu r$;
\item
$S(\Pi_x \cap U_r(x)) = \Pi_x \cap U_r(x) \cap E_{\delta^{\ast}}(\varphi)$,
where $E_{\delta^{\ast}}(\varphi)$ is the 
$\delta^{\ast}$-exceptional set;
\item
$S(\Pi_x \cap U_r(x)) = S_{n,\delta^{\ast}}(\Pi_x \cap U_r(x))$,
\end{enumerate}
where $S(\Pi_x \cap U_r(x))$ is the topological singular set
in the finite metric tree $\Pi_x \cap U_r(x)$
whose cardinality is at most $\mu$.
\end{lem}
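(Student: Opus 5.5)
The plan is to show that the small slices of fibers are one-dimensional, geodesic-like, and contain only finitely many branch points; these are exactly the exceptional points, and hence exactly the $(n,\delta^{\ast})$-singular points. I would take $\delta^{\ast}$ small enough that Lemma \ref{lem: ndr}, Lemma \ref{lem: fullsusp} and Proposition \ref{prop: fullstr} hold in dimension $n$. I would then take $\delta$ small relative to $\delta^{\ast}$ so that Propositions \ref{prop: almsubm}, \ref{prop: bilipfiber} and \ref{prop: dichotomy} apply, and so that $12\delta<\delta^{\ast}$. The radius $r_0$ will be chosen with $3r_0\le\sigma_{(p_1,\dots,p_{n-1})}(y)$ on $U_{2r_0}(x_0)$, and such that Proposition \ref{prop: bilipfiber} holds on balls of radius $2r_0$ around points of $U_{r_0}(x_0)$; continuity of the straining radius (Remark \ref{rem: asr}) makes this possible.

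\emph{Step 1: the dichotomy.} I apply Proposition \ref{prop: dichotomy} to $\varphi$. In the second alternative $\varphi$ is locally injective, so each slice $\Pi_x\cap U_r(x)$ is the single point $x$. This is a degenerate tree with empty singular set, and the claims hold trivially, once we check that $x$ is then $(n,\delta^{\ast})$-regular; a dimension-counting argument via Lemma \ref{lem: fullsusp} should give this. In the first alternative, \cite[Lemma 9.2]{lytchak-nagano1} says each slice $\Pi_z\cap U_r(z)$ is connected of diameter $\ge r$, and it is contractible by \cite[Theorems 1.11, 9.1]{lytchak-nagano1}.

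\emph{Step 2: local structure away from exceptional points.} Let $y\in\Pi_x\cap U_r(x)$ be a non-exceptional point, i.e.\ $y\notin E_{\delta^{\ast}}(\varphi)$. Then there is an $(n,\delta^{\ast})$-strainer map $\varphi^{\dag}=(\varphi,d_{p_n})$ near $y$, and since $\dim U=n$, Proposition \ref{prop: fullstr} makes $\varphi^{\dag}$ a bi-Lipschitz embedding near $y$. Restricted to the fiber, $d_{p_n}$ is therefore a bi-Lipschitz embedding of a neighborhood of $y$ in $\Pi_x$ into $\R$. Combined with local contractibility and connectedness, this neighborhood is an arc, so $y$ is a manifold point of the one-dimensional fiber. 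This gives $S(\Pi_x\cap U_r(x))\subset E_{\delta^{\ast}}(\varphi)$.

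For the reverse inclusion, suppose $y$ is an arc point of the fiber. I would produce a direction $\xi$ in the fiber tangent cone $(D_y\varphi)^{-1}(0)$ together with an almost-opposite direction, using Lemma \ref{lem: fibertcone} and Proposition \ref{prop: almsubm}. Then I would realise $\xi$ by a point $p_n$ via local geodesic completeness, so that $(\varphi,d_{p_n})$ becomes an $n$-strainer map; this is Lemma \ref{lem: pmdsusp} with $\cos^2$ sum small. Hence $y$ is non-exceptional. The same argument identifies exceptional points with $(n,\delta^{\ast})$-singular points, proving (2) and (3). One direction is immediate from the definitions, and the other uses the openness of $R_{n,\delta^{\ast}}$ (Lemma \ref{lem: mdopen}) together with Lemma \ref{lem: usstab0} to propagate a strainer from $y$ to a neighborhood.

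\emph{Step 3: finiteness and the tree structure.} Since $12\delta<\delta^{\ast}$ gives $E_{\delta^{\ast}}\subset E_{12\delta}$, \cite[Theorem 10.5]{lytchak-nagano1} bounds the number of singular points in each fiber by $\mu$. The slice is then a connected, contractible space which is a $1$-manifold off at most $\mu$ points, so it is a finite open tree. Proposition \ref{prop: bilipfiber} makes its intrinsic metric $(1+\epsilon)$-equivalent to $d_X$, and so it is a metric tree. For (1), each of the at most $2\mu+1$ edges, measured via the bi-Lipschitz coordinate $d_{p_n}$ or the intrinsic metric, has length $\lesssim 2r$; enlarging $\mu$ then gives $\Haus^1<\mu r$.

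I expect the main obstacle to be the reverse inclusion in Step 2: constructing the extra strainer point $p_n$ at a non-branch point of the fiber, uniformly in $\delta^{\ast}$. The alternative route, if that fails, is to use the tangent cone $T_y\Pi_x$ (Lemma \ref{lem: fibertcone}). This cone is a cone over a finite set, and I would argue that if it has exactly two points then $\Sigma_yX$ is $\delta^{\ast}$-close to $\Sph^{n-1}$.
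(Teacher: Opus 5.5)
Your first half of Step 2 and your Step 3 follow the paper. Non-exceptional points are arc points by Proposition~\ref{prop: fullstr}, the exceptional points in a slice are bounded via \cite[Theorem 10.5]{lytchak-nagano1} using $E_{\delta^{\ast}}(\varphi)\subset E_{12\delta}(\varphi)$, and contractibility plus Proposition~\ref{prop: bilipfiber} gives the finite metric tree. The gaps are in the two inclusions that carry the real content.

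\textbf{The inclusion $\Pi_x\cap U_r(x)\cap E_{\delta^{\ast}}(\varphi)\subset S(\Pi_x\cap U_r(x))$.} Lemma~\ref{lem: pmdsusp} needs the new direction $\xi$ to be a $(1,\delta)$-suspender of $\Sigma_yX$, i.e.\ $d(\xi,z)+d(z,\bar\xi)<\pi+\delta$ for \emph{every} $z\in\Sigma_yX$. One almost-opposite partner $\bar\xi$ in $(D_y\varphi)^{-1}(0)$ gives nothing of the sort: at a branch point any two branch directions are almost opposite, yet none is a suspender. What you must use is that $P:=(D_y\varphi)^{-1}(0)\cap\Sigma_yX$ has \emph{exactly} two points. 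Turning this into an extension of $((p_1)'_y,\dots,(p_{n-1})'_y)$ is a compactness argument (Sublemma~\ref{slem: tridirect}):
\begin{enumerate}
\item Take counterexamples with $\delta_k\to0$. By Lemma~\ref{lem: omdsusp}, Proposition~\ref{prop: m0susp} and \eqref{eqn: lscdim}, the ultralimit of the spaces of directions is $\Sph^{n-2}\ast T^0$.
\item Suppose $|T^0|\ge3$. Points of the approximating spaces near three points of $T^0$ are almost perpendicular to all $(p_i)'$.
\item The $(1+\epsilon)$-openness of the differential of the strainer map (Proposition~\ref{prop: almsubm}) moves these points to three distinct exact zeros of that differential, i.e.\ $|P|\ge3$, a contradiction.
\item Hence the limit is $\Sph^{n-1}$, and Lemma~\ref{lem: umdstab} yields $p_n$.
\end{enumerate}
Your fallback names roughly this conclusion but gives no mechanism for it.

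\textbf{Item (3).} ``Openness of $R_{n,\delta^{\ast}}$ plus Lemma~\ref{lem: usstab0}'' does not show that $(n,\delta^{\ast})$-regular fiber points are non-exceptional. An $(n,\delta^{\ast})$-suspender of $\Sigma_yX$ need not contain $(p_1)'_y,\dots,(p_{n-1})'_y$, whereas exceptionality concerns extensions of $\varphi$ itself. Lemma~\ref{lem: usstab0} only propagates a strainer you already have.

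The paper instead proves $S(\Pi_x\cap U_r(x))\subset S_{n,\delta^{\ast}}(\Pi_x\cap U_r(x))$ directly. At a branch point $|P|\ge3$, and then $\Sigma_yX$ carries no $(n,\delta^{\ast})$-suspender at all (Sublemma~\ref{slem: fibervertex}). In the ultralimit one would get $\Sph^{n-2}\ast T^0$ with $|T^0|\ge3$ carrying an $(n,2\delta^{\ast})$-suspender, which is impossible for small $\delta^{\ast}$. The three branch directions stay apart in the limit because, by Proposition~\ref{prop: bilipfiber} and Lemma~\ref{lem: fibertcone}, distinct branches leave $y$ at angle close to $\pi$.

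Alternatively, you could prove an extension lemma by ultralimits and then run your easy inclusion at a larger threshold. An $(n,\delta^{\ast})$-suspender forces $\Sigma_yX$ to be close to $\Sph^{n-1}$, so $((p_i)'_y)$ extends to an $(n,\epsilon')$-suspender for some $\epsilon'>\delta^{\ast}$; you would then argue at threshold $\epsilon'$. Either way a new compactness step is needed.

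\textbf{The second alternative of Proposition~\ref{prop: dichotomy}.} Your conclusion there is backwards. In that case $U$ is locally an $(n-1)$-manifold, so by Remark~\ref{rem: strdim} the point $x$ is $(n,\delta^{\ast})$-singular and $\delta^{\ast}$-exceptional, while $S(\{x\})=\emptyset$. So (2) and (3) fail in that case. It must be ruled out by the dimension hypothesis, applied on a connected $n$-dimensional neighborhood of $x_0$, not declared trivial.
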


\begin{proof}
Let $\delta^{\ast} \in (0,1)$ be sufficiently small,
and let $\delta \in (0,\delta^{\ast})$
be sufficiently small for $\delta^{\ast}$.
Fix $x_0 \in U$,
and let $r \in (0,1)$ be small enough.
Take $x \in U_r(x_0)$.
Due to \cite[Theorem 9.1]{lytchak-nagano1},
the fiber $\Pi_x \cap U_r(x)$ is contractible inside itself.
Notice that the radius $r$ can be chosen uniformly in $x$ in $U_r(x_0)$.
By \cite[Theorem 10.5]{lytchak-nagano1},
for a constant $\mu_0 \in \N$
we have
\[
\Haus^0 \left( \Pi_x \cap U_r(x) \cap E_{\delta^{\ast}}(\varphi) \right) 
\le \mu_0.
\]

First we
observe the geometric structure of $\Pi_x \cap U_r(x)$.
Pick a point $y$ in 
$\left( \Pi_x \cap U_r(x) \right) - E_{\delta^{\ast}}(\varphi)$.
For some open neighborhood $U_y$ of $y$
contained in $U_r(x)$,
there exists an $(n,\delta^{\ast})$-strainer map
$\varphi^{\dag} \colon U_y \to \R^n$
given by $\varphi^{\dag} := (\varphi,d_{p_n})$
for some $p_n \in U-U_y$.
By Proposition \ref{prop: fullstr},
$\varphi^{\dag}$ is locally $(1+\epsilon)$-bi-Lipschitz embedding into 
$\R^n$.
For some open neighborhood $V_y$ of $y$ contained in $U_y$,
and for some $s \in (0,\infty)$,
we find a $(1+\epsilon)$-bi-Lipschitz curve 
$\gamma \colon (-s,s) \to V_y$ with $\gamma(0)=y$
such that $\gamma((-s,s))$ coincides with $\Pi_x \cap V_y$.
Indeed,
$\gamma$ can be chosen in such a way that
$\varphi^{\dag} \circ \gamma$
is a part of the last coordinate line in $\R^n$
through $(\varphi(y),d_{p_n}(y))$.
This implies that
$\left( \Pi_x \cap U_r(x) \right) - E_{\delta^{\ast}}(\varphi)$
is a topological $1$-manifold.
In particular,
$S \left( \Pi_x \cap U_r(x) \right)$ is contained in 
$\Pi_x \cap U_r(x) \cap E_{\delta^{\ast}}(\varphi)$.
From Proposition \ref{prop: bilipfiber}
it follows that
any two points in $\Pi_x \cap U_r(x)$
can be joined by a $(1+\delta^{\ast})$-bi-Lipschitz curve in itself.
Thus
$\Pi_x \cap U_r(x)$ 
has the structure of a finite open metric tree with
\[
\Haus^1 \left( \Pi_x \cap U_r(x) \right) < \mu r
\]
for some constant $\mu \in \N$
depending only on $\mu_0$
such that
any two points in 
$\Pi_x \cap U_r(x) \cap E_{\delta^{\ast}}(\varphi)$
adjacent to each other can be joined by
a $(1+\delta^{\ast})$-bi-Lipschitz curve in $\Pi_x \cap U_r(x)$.

In order to determine the essential vertex set
of the graph $\Pi_x \cap U_r(x)$,
we are going to show 
\begin{equation}
\Pi_x \cap U_r(x) \cap E_{\delta^{\ast}}(\varphi)
\subset S \left( \Pi_x \cap U_r(x) \right).
\label{eqn: llstr11}
\end{equation}
By Lemma \ref{lem: fibertcone},
for each $y \in \Pi_x \cap U_r(x)$
the space of directions $\Sigma_y\Pi_x$ at $y$ in $\Pi_x$
defined by
\[
\Sigma_y\Pi_x := (D_y\varphi)^{-1}(\{0\}) \cap \Sigma_yX
\]
is a finite set of cardinality $\ge 2$.
It follows that a point $y$ in $\Pi_x \cap U_r(x)$
belongs to $S \left( \Pi_x \cap U_r(x) \right)$
if and only if $\Sigma_y\Pi_x$ has cardinality $\ge 3$.

To show \eqref{eqn: llstr11},
it suffices to prove the following:

\begin{slem}
\label{slem: tridirect}
For every $\delta^{\ast} \in (0,1)$,
and for every $n \in \N$,
there exists $\delta \in (0,\delta^{\ast})$ satisfying the following:
Let $Z$ be a geodesically complete $\CAT(1)$ space
of $\dim_{\mathrm{G}} Z = n-1$
admitting an $(n-1,\delta)$-suspender
$(p_1,\dots,p_{n-1})$.
If the subset $P_{(p_1,\dots,p_{n-1})}(Z)$ of $Z$ defined by
\[
P_{(p_1,\dots,p_{n-1})}(Z) := \left\{ \, p \in Z \mid
d_Z(p_i,p) = \frac{\pi}{2} \, \right\}
\]
has cardinality $2$,
then there exists a point $p_n$ in $Z$ for which
the $n$-tuple $(p_1,\dots,p_{n-1},p_n)$ is an 
$(n,\delta^{\ast})$-suspender in $Z$.
\end{slem}

\begin{proof}
Suppose that for some sequence $(\delta_k)$ in $(0,1)$
with $\delta_k \to 0$,
there exists a sequence $(Z_k)$ of
geodesically complete $\CAT(1)$ spaces
of $\dim_{\mathrm{G}} Z_k = n-1$
admitting some $(n-1,\delta_k)$-suspenders
$(p_{1,k},\dots,p_{n-1,k})$.
Suppose in addition that
the subset $P_{(p_{1,k},\dots,p_{n-1,k})}(Z_k)$ of $Z_k$ defined by
\[
P_{(p_{1,k},\dots,p_{n-1,k})}(Z_k) := \left\{ \, p_k \in Z_k \mid
d_{Z_k}(p_{i,k},p_k) = \frac{\pi}{2} \, \right\}
\]
has cardinality $2$,
and there exists no point $p_{n,k}$ in $Z_k$ for which
the $n$-tuple $(p_{1,k},\dots,p_{n-1,k},p_{n,k})$ is an 
$(n,\delta^{\ast})$-suspender in $Z_k$.

Let $(Z_{\omega},z_{\omega})$ be the ultralimit $\ulim (Z_k,z_k)$
of a sequence $(Z_k,z_k)$.
From \eqref{eqn: lscdim}
it follows that $\dim_{\mathrm{G}} Z_{\omega} = n-1$.
By Proposition \ref{prop: m0susp},
$Z_{\omega}$ is isometric to $\Sph^{n-2} \ast T^0$
for some discrete space $T^0$.
Then $T^0$ has cardinality $\ge 3$;
indeed,
if $T^0$ has cardinality $2$,
then $Z_{\omega}$ is isometric to $\Sph^{n-1}$,
and hence for all $\omega$-large $k$,
we find $p_{n,k} \in Z_k$ for which
$(p_{1,k},\dots,p_{n-1,k},p_{n,k})$ is an 
$(n,\delta^{\ast})$-suspender in $Z_k$.
Since $T^0$ has cardinality $\ge 3$,
for all $\omega$-large $k$,
there exist three points $q_{1,k}, q_{2,k}, q_{3,k}$ in $Z_k$
satisfying
$\left\vert d_{Z_k} \left( p_{i,k}, q_{l,k} \right) - \pi/2 \right\vert
< \delta^{\ast}/10n$
for all $i \in \{ 1, \dots, n-1 \}$ and $l \in \{ 1, 2, 3 \}$.
The Euclidean cone $C_0(Z_k)$ over $Z_k$
is a geodesically complete $\CAT(0)$ space of 
$\dim_{\mathrm{G}} C_0(Z_k) = n$.
We now identify $Z_k$ with the metric sphere $S_1(0_k)$ at the vertex $0_k$
of $C_0(Z_k)$.
Let $\varphi_k \colon U_k \to \R^n$
be an $(n-1,\delta_k)$-strainer map
from an open neighborhood $U_k$ of $0_k$.
For the directional derivative 
$D_{0_k}\varphi_k \colon T_{0_k}C_0(Z_k) \to \R^n$
of $\varphi_k$ at $0_k$.
Then we have
$\left\Vert (D_{0_k}\varphi_k)(q_{l,k}) \right\Vert < \delta^{\ast}/10$.
By Proposition \ref{prop: almsubm},
$D_{0_k}\varphi_k$ is $(1+\delta^{\ast})$-open,
provided $k$ is large enough.
Hence the origin $0$ in $\R^n$
belongs to $U_{\delta^{\ast}/10} \left( (D_{0_k}\varphi_k)(q_{l,k}) \right)$,
and the ball is contained in 
$(D_{0_k}\varphi_k) 
\left( U_{(1+\delta^{\ast})\delta^{\ast}/10} (q_{l,k}) \right)$.
This implies that
we find three points 
$\bar{q}_{l,k}$ in $U_{(1+\delta^{\ast})\delta^{\ast}/10} (q_{l,k})$,
$l \in \{ 1, 2, 3 \}$,
such that $(D_{0_k}\varphi_k) \left( \bar{q}_{l,k} \right) = 0$.
Therefore
we find three points $\tilde{q}_{l,k}$ in $Z_k$,
$l \in \{ 1, 2, 3 \}$,
belongs to $P_{(p_{1,k},\dots,p_{n-1,k})}(Z_k)$.
This is a contradiction.
\end{proof}

Sublemma \ref{slem: tridirect} leads to \eqref{eqn: llstr11}.
Thus we see that
\[
\Pi_x \cap U_r(x) \cap E_{\delta^{\ast}}(\varphi)
=
S \left( \Pi_x \cap U_r(x) \right).
\]

Next we study the structure of 
$S_{n,\delta^{\ast}} \left( \Pi_x \cap U_r(x) \right)$.
From the definition we see that
$S_{n,\delta^{\ast}} \left( \Pi_x \cap U_r(x) \right)$
is contained in 
$\Pi_x \cap U_r(x) \cap E_{\delta^{\ast}}(\varphi)$,
and hence in $S \left( \Pi_x \cap U_r(x) \right)$.

We are going to verify 
\begin{equation}
\Pi_x \cap U_r(x) \cap E_{\delta^{\ast}}(\varphi)
\subset S \left( \Pi_x \cap U_r(x) \right).
\label{eqn: llstr12}
\end{equation}
To verify \eqref{eqn: llstr12},
it is enough to show:

\begin{slem}
\label{slem: fibervertex}
For every $n \in \N$,
there exists $\delta^{\ast} \in (0,1)$
such that for every $\delta \in (0,\delta^{\ast})$
the following holds:
Let $Z$ be a $\CAT(1)$ space of $\dim_{\mathrm{G}}Z = n-1$
admitting an $(n-1,\delta)$-suspender.
If the subset $P_{(p_1,\dots,p_{n-1})}(Z)$ of $Z$ defined by
\[
P_{(p_1,\dots,p_{n-1})}(Z) := \left\{ \, p \in Z \mid
d_Z(p_i,p) = \frac{\pi}{2} \, \right\}
\]
has cardinality $3$,
then $Z$ admits no $(n,\delta^{\ast})$-suspender.
\end{slem}

Sublemma \ref{slem: fibervertex} can be verified by
an ultralimit argument combined with Proposition \ref{prop: m0susp}
amd Lemma \ref{lem: omdsusp}.
The details are omitted.

From Sublemma \ref{slem: fibervertex}
we derive \eqref{eqn: llstr12}.
Therefore we conclude
\[
S(\Pi_x \cap U_r(x)) = S_{n,\delta^{\ast}}(\Pi_x \cap U_r(x)).
\]

This completes the proof of Lemma \ref{lem: llstr1}.
\end{proof}

\subsection{Lengths of fibers of codimension one}

In order to the continuity of lengths of fibers of codimension one,
we review the studies of $\DC$ maps on $\GCBA$ spaces
and $\DC$-curves in $\GCBA$ spaces.

A function on a locally geodesic space 
is said to be 
\emph{$\DC$}
if it can be locally represented as a difference of 
two Lipschitz convex functions.

A locally Lipschitz map $\varphi \colon X \to Y$ between metric spaces
is said to be 
\emph{$\DC$}
if for every $\DC$-function 
$f \colon V \to \R$ on an open subset $V$ of $Y$
the composition $f \circ \varphi$ is 
$\DC$ on $\varphi^{-1}(V)$.
Notice that any composition of $\DC$ maps are $\DC$.
A map $\varphi \colon X \to \R^m$
on a metric space $X$
is $\DC$
if and only if each coordinate functions of $\varphi$ is $\DC$.
A $\DC$ map is called a
\emph{$\DC$ isomorphism}
if it is a bi-Lipschitz homeomorphism,
and if its inverse map is $\DC$.

For each $n \in \N$,
there exists $\delta \in (0,1)$ such that
if $\varphi \colon U \to \R^n$
is an $(n,\delta)$-strainer map 
on an open subset of a $\GCBA(\kappa)$ space
with $\dim U = n$,
then the map $\varphi \colon U \to \varphi(U)$
is a $\DC$ isomorphism
(\cite[Propoition 14.4]{lytchak-nagano1}).

Let $X$ be a locally geodesic space.
For $c \in (0,\infty)$,
we say that
a curve $\gamma \colon I \to X$ on a bounded closed interval $I$
is a \emph{$\DC$ curve of norm $\le c$}
if $\gamma$ is $c$-Lipschitz,
and if for every $1$-Lipschitz convex function
$f \colon X \to \R$
the composition $f \circ \gamma$
can be written as a difference of
two $c$-Lipschitz convex functions on $I$.

Let $U$ be an open subset of a $\GCBA(\kappa)$ space
contained in a tiny ball.
If for some $c \in (0,\infty)$
a sequence $(\gamma_k)$ of a 
$\DC$ curves $\gamma_k \colon I \to U$ of norm $\le c$
on a bounded closed interval $I$
converges to a curve $\gamma \colon I \to U$,
then we have
$L(\gamma_k) \to L(\gamma)$
(\cite[Proposition 14.9]{lytchak-nagano1}).

We now prove the continuity of lengths of fibers of codimension one.

\begin{lem}
\label{lem: llstr2}
For every $n \in \N$,
there exists $\delta \in (0,1)$
satisfying the following property:
Let $U$ be an open subset of a $\GCBA(\kappa)$ space
with $\dim U = n$.
Let $\varphi \colon U \to \R^{n-1}$
be an $(n-1,\delta)$-strainer map on $U$.
Take $x_0 \in U$.
Then there exists $r_0 \in (0,D_{\kappa}/2)$ 
with $U_{2r_0}(x_0) \subset U$
such that for every $r \in (0,r_0)$,
and for every $x \in U_{r_0}(x_0)$,
the following holds:
If for a sequence $(x_k)$ of points in $U_{r_0}(x_0)$ with $x_k \to x$,
and for a constant $c \in (0,\infty)$,
there exists a sequence $(\gamma_k)$
of injective $c$-Lipschitz curves 
$\gamma_k \colon I \to \Pi_{x_k} \cap U_r(x_k)$
on a bounded closed interval $I$
converges to a curve $\gamma \colon I \to \Pi_x \cap U_r(x)$
in the pointwise topology,
then we have 
\[
L(\gamma) = \lim_{k \to \infty} L(\gamma_k);
\]
in particular,
\[
\Haus^1 \left( \Pi_x \cap U_{r_0}(x_0) \right)
= \lim_{k \to \infty}
\Haus^1 \left( \Pi_{x_k} \cap U_{r_0}(x_0) \right).
\]
\end{lem}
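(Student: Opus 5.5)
We will derive the length continuity from the $\DC$ machinery reviewed just above, namely \cite[Proposition 14.9]{lytchak-nagano1}: uniformly bounded norm $\DC$ curves converging pointwise have converging lengths. The plan is to show that fiber curves of an $(n-1,\delta)$-strainer map are, after reparametrization, $\DC$ curves of uniformly bounded norm.

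\emph{Step 1: choice of $r_0$.} Let $\delta$ be small enough for Lemma \ref{lem: llstr1}, Proposition \ref{prop: bilipfiber} and Proposition \ref{prop: almsubm}. Choose $r_0$ so that three conditions hold: $U_{2r_0}(x_0)$ lies in a tiny ball of some capacity $N$; Lemma \ref{lem: llstr1} applies; and every fiber slice $\Pi_x \cap U_r(x)$ is a finite metric tree with at most $\mu$ vertices, which are exactly the $\delta^{\ast}$-exceptional points. Since $\gamma_k$ is injective into a tree, its image is an arc. This arc passes through at most $\mu$ exceptional points, so we may split $I$ into at most $\mu+1$ subintervals. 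Passing to a subsequence, the break points converge.

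\emph{Step 2: local $\DC$ control off the vertices.} Near a non-exceptional point $y$, there is an $(n,\delta^{\ast})$-strainer map $\varphi^{\dag}=(\varphi,d_{p_n})$. By \cite[Proposition 14.4]{lytchak-nagano1}, it is a $\DC$ isomorphism onto its image, with $\DC$ inverse of norm controlled by $n$ and $\delta^{\ast}$. The fiber arc is the preimage under $\varphi^{\dag}$ of a straight coordinate segment. Therefore, parametrized by the last coordinate $d_{p_n}$, it is a $\DC$ curve of bounded norm, and its length is the integral of a $\DC$ speed. The key point is that the norm bound depends only on $n$, $\delta^{\ast}$ and the radius of the strainer neighborhood. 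That radius is controlled from below by the straining radius $\sigma$, which is continuous by Remark \ref{rem: asr}.

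\emph{Step 3: handling the vertices.} This is the main obstacle. Near exceptional points there is no uniform $\DC$ chart, and along $\gamma_k$ exceptional points could accumulate at a point of the limit. I would handle this by an $\eta$-excision. Remove $\eta$-neighborhoods of the finitely many limiting exceptional points in $\Pi_x$. By Proposition \ref{prop: bilipfiber}, any arc in a fiber between points at distance $s$ has length at most $(1+\epsilon)s$ when it is the geodesic of the intrinsic metric. More simply, since there are at most $\mu$ vertices and the pieces of the arcs are $(1+\epsilon)$-bi-Lipschitz to their chords, the excised parts contribute length $O(\mu\eta)$ uniformly in $k$. On the remaining compact pieces, the points are uniformly non-exceptional by openness (Lemma \ref{lem: mdopen} and Lemma \ref{lem: usstab0}). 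The same $p_n$ therefore works for $\gamma_k$ with $k$ large, and Step 2 together with \cite[Proposition 14.9]{lytchak-nagano1} gives convergence of lengths. Letting $\eta\to0$ yields $L(\gamma)=\lim L(\gamma_k)$.

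\emph{Step 4: the Hausdorff measure statement.} Write each $\Pi_{x_k}\cap U_{r_0}(x_0)$ as a union of at most $\mu$ edges of the tree. Parametrize each edge by a uniformly Lipschitz, bi-Lipschitz map using Proposition \ref{prop: bilipfiber}. By Arzel\`a--Ascoli, we extract converging subsequences of edges, whose limits cover $\Pi_x\cap U_{r_0}(x_0)$; this uses the openness of $\varphi$ from Proposition \ref{prop: almsubm} to lift points of the limit fiber to nearby fibers. Then apply the first part to each edge, using $\Haus^1=$ length for injective arcs. Uniqueness of the limit along all subsequences gives the full limit.
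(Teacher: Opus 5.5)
Your proposal follows essentially the same route as the paper. Both arguments use Lemma~\ref{lem: llstr1} to get tree slices with at most $\mu$ exceptional vertices. Away from the vertices, both use the $\DC$ charts $\varphi^{\dag}=(\varphi,d_{p_n})$, under which fiber arcs are preimages of coordinate segments, so \cite[Proposition 14.9]{lytchak-nagano1} applies. The finiteness of the vertex set then handles the rest, and the tree structure gives the $\Haus^1$ statement.

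Two steps of yours make explicit what the paper's proof passes over quickly. First, reparametrizing by $d_{p_n}$ is what makes the curves genuinely $\DC$, since the given parametrizations are arbitrary. Second, your chord--arc excision estimate near the vertices replaces the paper's bare appeal to the cardinality bound $\mu$.
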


\begin{proof}
For $\epsilon \in (0,1)$ and $n \in \N$,
let $\delta, \delta^{\ast} \in (0,1)$ be sufficiently small
with $\delta < \delta^{\ast}$.
By Lemma \ref{lem: llstr1},
there exists $r_0 \in (0,D_{\kappa}/2)$ 
such that for every $r \in (0,r_0)$,
and for every $x \in U_{r_0}(x_0)$,
the slice $\Pi_x \cap U_r(x)$ has the structure of a metric tree
whose topological singular set
$S(\Pi_x \cap U_r(x))$ coincides with
$\Pi_x \cap U_r(x) \cap E_{\delta^{\ast}}(\varphi)$
and has cardinality $\mu$.

Assume that
there exists an $(n,\delta^{\ast})$-strainer map
$\varphi^{\dag} \colon V \to \R^n$
given by $\varphi^{\dag} := (\varphi,d_{p_n})$
for some $V \subset U$ and $p_n \in U - V$
such that
$\varphi^{\ast}$ is a $(1+\epsilon)$-bi-Lipschitz embedding
and gives a $\DC$ isomorphism.
Take 
an injective $c$-Lipschitz curve
$\gamma \colon I \to \Pi_x \cap V$
in $\Pi_x \cap V$.
Let $f \colon V \to \R$ be a $1$-Lipschitz convex function.
Let $\bar{f} := f \circ (\varphi^{\ast})^{-1}$.
By \cite[Corollary 14.7]{lytchak-nagano1},
$\bar{f}$ can be written as
$\bar{f} = \bar{f}_1-\bar{f}_2$
for some two $(1+1/n^2)(1+\epsilon)$-Lipschitz convex functions.
Let $\bar{\gamma} := \varphi^{\dag} \circ \gamma$.
Then
the curve $\bar{\gamma} \colon I \to \R^n$
is a line segment in the last coordinate line in $\R^n$.
Hence $f \circ \gamma$
can be also written as
a difference
of two $(1+1/n^2)(1+\epsilon)$-Lipschitz convex functions.
Hence $\gamma$ is a $\DC$ curve of norm $\le c(\epsilon,n)$,
where $c(\epsilon,n)$ is a constant given by
$c(\epsilon,n) := \max \{ c, (1+1/n^2)(1+\epsilon) \}$.
Thanks to \cite[Proposition 14.9]{lytchak-nagano1},
if there exists a sequence $(\gamma_k)$
of injective $c$-Lipschitz curves 
$\gamma_k \colon I \to \Pi_{x_k} \cap V$
converges to $\gamma$ in the pointwise topology,
then we see $L(\gamma_k) \to L(\gamma)$.

For a sequence $(x_k)$ of points in $U_{r_0}(x_0)$ with $x_k \to x$,
and for a constant $c \in (0,\infty)$,
there exists a sequence $(\gamma_k)$
of injective $c$-Lipschitz curves 
$\gamma_k \colon I \to \Pi_{x_k} \cap U_r(x_k)$
on a bounded closed interval $I$
converges to a curve $\gamma \colon I \to \Pi_x \cap U_r(x)$
in the pointwise topology.
If $\gamma(I)$ contains no point
in $\Pi_x \cap U_r(x) \cap E_{\delta^{\ast}}(\varphi)$,
then it can be covered by at most finitely many 
$\DC$ curves of norm $\le c(\epsilon,n)$.
Hence 
we see 
$L(\gamma_k) \to L(\gamma)$.
Assume that
$\gamma(I)$ and
$\Pi_x \cap U_r(x) \cap E_{\delta^{\ast}}(\varphi)$
have non-empty intersection.
Since the latter set has cardinality $\le \mu$,
we also see 
$L(\gamma_k) \to L(\gamma)$.

By Lemma \ref{lem: llstr2},
the fiber $\Pi_x \cap U_r(x)$
has the structure of a finite open metric tree
in which the topological singular set
$S(\Pi_x \cap U_r(x))$ has cardinality is bounded above by
some uniform constant $\mu$.
This leads to the second claim concerning $\Haus^1$.
This finishes the proof. 
\end{proof}

\section{Geometric structure of codimension one singularity}

In this section,
we prove the wall singularity theorems \ref{thm: wst} and \ref{thm: dwst}.

\subsection{Tame relaxed wall singularity}

In order to prove Theorem \ref{thm: dwst},
we consider the following:

\begin{assumption}\label{assumption: nice}
For $n \in \N$,
let $\delta^{\ast} \in (0,1/10)$ be sufficiently small,
and let $\delta \in (0,\delta^{\ast})$ be also sufficiently small 
for $\delta^{\ast}$.
Let $X$ be a $\GCBA(\kappa)$ space with metric $d_X$,
and let $U$ be a purely $n$-dimensional open subset of $X$.
Assume that for some point $x_0$ in $W_{n,\delta,\delta^{\ast}}(U)$
and some open ball $U_{r_0}(x_0)$ contained in $U$
there exists an 
$\left( n-1,\delta \right)$-strainer map
$\varphi \colon U_{r_0} \left( x_0 \right) \to \R^{n-1}$
satisfying the following:
\begin{enumerate}
\item
$\varphi$ is $\left( 1+\delta^{\ast} \right)$-Lipschitz
and $\left( 1+\delta^{\ast} \right)$-open;
\item
$\varphi |_{S_{n,\delta^{\ast}} \left( U_{r_0}(x_0) \right)}$
is a $\left( 1+2\delta^{\ast} \right)$-bi-Lipschitz embedding;
\item
for each $y \in U_{r_0} \left( x_0 \right)$
the fiber $\Pi_y$ of $\varphi$ through $y$
defined by $\Pi_y := \varphi^{-1} \left( \{ \varphi(y) \} \right)$ 
is a metric tree equipped with the interior metric $d_{\Pi_y}$
such that for all $y_1, y_2 \in \Pi_y$ we have
\[
\left\vert
d_{\Pi_y} \left( y_1, y_2 \right) - d_X \left( y_1, y_2 \right)
\right\vert
< \delta^{\ast} \, d_X \left( y_1, y_2 \right).
\]
\end{enumerate}
\end{assumption}

Under the setting in Assumption \ref{assumption: nice},
since the map
$\varphi |_{S_{n,\delta^{\ast}} \left( U_{r_0}(x_0) \right)}$
is a $\left( 1+2\delta^{\ast} \right)$-bi-Lipschitz embedding,
for each $y \in U_{r_0} \left( x_0 \right)$ we have
\begin{equation}
\Haus^0 \left( S_{n,\delta^{\ast}} \left( \Pi_y \right) \right) \le 1.
\label{eqn: nicea}
\end{equation}
Moreover,
Lemma \ref{lem: llstr2} tells us that
if for a point $y$ in $U_{r_0}(x_0)$ and 
a sequence $(y_k)$ in $U_{r_0}(x_0)$ converging to $y$
we find injective Lipschitz curves
$\gamma \colon I \to \Pi_y \cap U_{r_0}(x_0)$,
$\gamma_k \colon I \to \Pi_{y_k} \cap U_{r_0}(x_0)$,
$k = 1, 2, \dots$,
from a closed bounded interval $I$ such that
the sequence $(\gamma_k)$ converges to $\gamma$
in the pointwise topology,
then we have
\begin{equation}
L(\gamma) = \lim_{k \to \infty} L(\gamma_k);
\label{eqn: niceb}
\end{equation}
in other words,
$d_{\Pi_{y_k}}$ converges to $d_{\Pi_y}$.

The goal of this subsection is to conclude the following:

\begin{prop}
\label{prop: embedtree}
Under the setting in Assumption {\em \ref{assumption: nice}},
there exists an open neighborhood $U_0$ of $x_0$ contained in 
$U_{r_0}(x_0)$
such that $U_0$ is homeomorphic to $\R^{n-1} \times T_{l_0}^1$
for some $l_0 \in \N$ with $l_0 \ge 3$;
moreover,
we have $S(U_0) = S_{n,\delta^{\ast}}(U_0)$ and
\[
0 < \Haus^{n-1} \left( S(U_0) \right) < \infty.
\]
\end{prop}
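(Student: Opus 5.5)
The plan is to build an explicit chart $\Phi$ from a neighborhood of $x_0$ to $\R^{n-1}\times T_{l_0}^1$, parametrizing the base by $\varphi$ and the fiber tree by intrinsic distance to its singular vertex. Set $S^\ast := S_{n,\delta^{\ast}}(U_{r_0}(x_0))$. This set is closed in $U_{r_0}(x_0)$ by Lemma \ref{lem: mdopen}, and it contains $x_0$ because $x_0\in W_{n,\delta,\delta^\ast}(U)$.

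\textbf{Step 1: the fiber of $x_0$.} By \eqref{eqn: nicea}, $\Pi_{x_0}\cap S^\ast=\{x_0\}$. Combining this with Lemma \ref{lem: llstr1} (items (2),(3)), for $r\in(0,r_0/10)$ small the slice $\Pi_{x_0}\cap U_r(x_0)$ is a finite open metric tree whose only essential vertex is $x_0$. By Lemma \ref{lem: fibertcone} its valence $l_0$ equals the cardinality of $\Sigma_{x_0}\Pi_{x_0}$. Since $x_0$ is $(n,\delta^\ast)$-singular, Sublemma \ref{slem: tridirect} forces $l_0\ge 3$. Hence $\Pi_{x_0}\cap U_r(x_0)$ is a star with $l_0$ edges, homeomorphic to $T^1_{l_0}$.

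\textbf{Step 2: a product neighborhood.} First, $\varphi(S^\ast)$ contains a neighborhood of $\varphi(x_0)$. To prove this, I would argue by contradiction. Suppose $\varphi(y_k)\notin\varphi(S^\ast)$ with $y_k\to x_0$. Then each fiber $\Pi_{y_k}$ near $x_0$ has no $(n,\delta^\ast)$-singular point, so $\Pi_{y_k}\cap U_r(y_k)$ is an arc by Lemma \ref{lem: llstr1}, and its $\Haus^1$ is close to the length of a chord. Lemma \ref{lem: llstr2} says that $\Haus^1$ of fiber slices is continuous and that $d_{\Pi_{y_k}}\to d_{\Pi_{x_0}}$. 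In the limit this is incompatible with the star of $l_0\ge3$ long edges at $x_0$, each edge of length $\approx r$ by Proposition \ref{prop: bilipfiber}.

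Next, for $u$ near $\varphi(x_0)$ let $s(u)$ be the unique point of $S^\ast$ in $\varphi^{-1}(u)$. It is unique by the bi-Lipschitz property (2), and $u\mapsto s(u)$ is $(1+2\delta^\ast)$-Lipschitz. The same semicontinuity argument, applied with Lemma \ref{lem: llstr1}, shows that every nearby fiber is a star at $s(u)$ with exactly $l_0$ edges. The count cannot drop, because edges have length bounded below; it cannot rise, because the limit would have more than $l_0$ directions at $x_0$.

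\textbf{Step 3: the map $\Phi$.} Let $Y := \bigcup_{u} \bigl(\Pi_{s(u)}\cap U_{r}(s(u))\bigr)$, with $u$ ranging over a small open ball $D\subset\R^{n-1}$. Define $\Phi(y) := \bigl(\varphi(y),\, d_{\Pi_y}(s(\varphi(y)),y)\,e(y)\bigr)\in D\times T^1_{l_0}$, where $e(y)$ labels the edge containing $y$. Edge labels are chosen continuously in $u$ by matching each edge to the nearest edge of $\Pi_{x_0}$; this is consistent because the edges stay separated by the angle bounds of Lemma \ref{lem: jl}.

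$\Phi$ is injective on each fiber, since a metric tree with a root is parametrized by distance and edge. Continuity of $\Phi$ follows from the convergence $d_{\Pi_{y_k}}\to d_{\Pi_y}$ in \eqref{eqn: niceb}. Openness of $Y$ follows from the $(1+\delta^\ast)$-openness of $\varphi$ in Assumption \ref{assumption: nice}(1): a nearby point has its fiber through a nearby $s(u)$. Surjectivity onto $D\times T^1_{l_0}$ truncated at radius $\approx r$ uses the fiber-length continuity again. Invariance of domain, or simply a continuous bijection between locally compact spaces with a continuous inverse given by the same convergence, makes $\Phi$ an open embedding. After rescaling, I take $U_0:=Y$, which is homeomorphic to $\R^{n-1}\times T^1_{l_0}$.

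\textbf{Step 4: the singular set.} Under $\Phi$, the topological singular set of $U_0$ is $D\times\{0\}$, and this equals $s(D)=S^\ast\cap U_0$. Points off the vertex have $\R^n$ neighborhoods. Vertex points are non-manifold points, because the link is not a sphere when $l_0\ge3$. Consequently $S(U_0)=S_{n,\delta^\ast}(U_0)$. Since $s$ is bi-Lipschitz onto its image, with inverse $\varphi$, the set $S(U_0)$ is bi-Lipschitz to the open ball $D$, which gives $0<\Haus^{n-1}(S(U_0))<\infty$.

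\textbf{Main obstacle.} I expect the hardest part to be Step 2, namely the local constancy of the fiber structure. This means showing that $\varphi(S^\ast)$ fills a neighborhood and that the valence $l_0$ is locally constant. I would attack it first through the length continuity of Lemma \ref{lem: llstr2} combined with a limiting argument on the trees.
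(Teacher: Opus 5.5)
Your proposal is correct and follows essentially the paper's route (Lemmas \ref{lem: projsing}--\ref{lem: embed}: a unique $(n,\delta^{\ast})$-singular vertex in each slice, locally constant valence $l_0$, the Lipschitz projection $\lambda_{r_0/2}=s\circ\varphi$, openness of $Y_r(x_0)$, edge labels, and the chart $\Phi=(\varphi,\psi)$), with only tactical differences: you argue locally by contradiction using the length continuity of Lemma \ref{lem: llstr2} where the paper uses clopen arguments with Hausdorff convergence, and you get openness of $\Phi$ from continuity of $\Phi^{-1}$ rather than from the $1$-openness of $\psi$ (invariance of domain does not apply directly, since $\R^{n-1}\times T^1_{l_0}$ is not a manifold). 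One small correction: the edge separation needed for consistent labelling comes from Assumption \ref{assumption: nice}(3), since endpoints of distinct edges are at distance $>r_1$ and the paper's Lemma \ref{lem: choice} matches edges through these endpoints; it does not come from the angle bounds of Lemma \ref{lem: jl}.
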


We are going to prove Proposition \ref{prop: embedtree}.

\begin{lem}\label{lem: projsing}
Under the setting in Assumption \emph{\ref{assumption: nice}},
let $r_1 \in \left( 0, r_0/2 \right]$,
and let $r \in \left( 0, r_1 \right]$.
Then for each $y \in U_r \left( x_0 \right)$
there exists a unique point $y_0$ in $\Pi_y \cap U_{r_1}(y)$ with 
$\left\{ y_0 \right\} 
= S_{n,\delta^{\ast}} \left( \Pi_y \cap U_{r_1}(y) \right)$.
In other words,
\[
U_r \left( x_0 \right)
= \left\{ \,
y \in U_r \left( x_0 \right) \mid
\Haus^0 
\left( S_{n,\delta^{\ast}} \left( \Pi_y \cap U_{r_1}(y) \right) \right) = 1
\, \right\}.
\]
\end{lem}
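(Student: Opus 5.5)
Uniqueness comes directly from \eqref{eqn: nicea}: $\varphi$ restricted to $S_{n,\delta^{\ast}}(U_{r_0}(x_0))$ is injective, so every fiber $\Pi_y$ meets $S_{n,\delta^{\ast}}(U_{r_0}(x_0))$ in at most one point, and $\Haus^0\bigl(S_{n,\delta^{\ast}}(\Pi_y \cap U_{r_1}(y))\bigr)\le 1$. It remains to show existence: for each $y \in U_r(x_0)$ the slice $\Pi_y \cap U_{r_1}(y)$ contains an $(n,\delta^{\ast})$-singular point. The natural candidate is a point of $\Pi_y$ near $x_0$. Since $x_0 \in W_{n,\delta,\delta^{\ast}}(U) \subset S_{n,\delta^{\ast}}(U)$, the idea is to transport this singularity along the directions transversal to the fibers by means of the openness of $\varphi$.

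\emph{Step 1 (a singular point near $x_0$ on the fiber).} By Assumption \ref{assumption: nice}(1), $\varphi$ is $(1+\delta^{\ast})$-Lipschitz and $(1+\delta^{\ast})$-open. For $y \in U_r(x_0)$ we have $\Vert\varphi(y)-\varphi(x_0)\Vert < (1+\delta^{\ast})r$. I would argue that $S_{n,\delta^{\ast}}(U_{r_0}(x_0))$ is mapped by $\varphi$ onto a neighborhood of $\varphi(x_0)$ of radius comparable to $r_0$. Suppose instead that some value $v$ with $\Vert v-\varphi(x_0)\Vert$ small is omitted. Then the whole fiber piece $\varphi^{-1}(v)\cap U_{r_0}(x_0)$ would lie in $R_{n,\delta^{\ast}}$. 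By Lemma \ref{lem: llstr1}(3) such a fiber is a tree without branch points, hence locally an arc. Moving $v$ continuously to $\varphi(x_0)$ and using the continuity of fiber lengths \eqref{eqn: niceb} together with the Lipschitz closeness of the intrinsic and ambient metrics (Assumption \ref{assumption: nice}(3)), the limit fiber $\Pi_{x_0}$ near $x_0$ would be an arc as well. That contradicts the fact that $x_0 \in S(\Pi_{x_0}\cap U_{r_1}(x_0))$, which holds by Lemma \ref{lem: llstr1}(2),(3) because $x_0 \in S_{n,\delta^{\ast}}$. The tool for making this rigorous is that $S_{n,\delta^{\ast}}$ is closed (Lemma \ref{lem: mdopen}). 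Together with the fact that the set of values hit by singular points is a closed bi-Lipschitz image in $\R^{n-1}$, this should show that the image has no boundary point near $\varphi(x_0)$, and hence is open and closed locally.

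\emph{Step 2 (distance control).} Once $y_0 \in \Pi_y\cap S_{n,\delta^{\ast}}$ is obtained with $y_0$ close to the singular point of $\Pi_{x_0}$, I would estimate $d_X(y,y_0) < r_1$. Here $\varphi(y_0)=\varphi(y)$ and $\Vert\varphi(y)-\varphi(x_0)\Vert<(1+\delta^{\ast})r$. Openness gives $y_0$ within distance $(1+\delta^{\ast})^2 r$ of $x_0$, provided the singular point chosen on the fiber is the one obtained by lifting from $x_0$. Then $d_X(y,y_0) < r+(1+\delta^{\ast})^2r$. This is below $r_1$ only if $r$ is smaller than $r_1$ by a fixed factor. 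For the stated range $r\le r_1$, I would instead use the tree structure: the fiber is a tree within $U_{r_0}(x_0)$ whose unique singular point is $y_0$, and its intrinsic metric is $(1+\delta^{\ast})$-comparable to $d_X$. The path in $\Pi_y$ from $y$ to $y_0$ then has to be estimated, possibly after shrinking $r_0$ at the outset.

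\emph{Main obstacle.} The hard step is Step 1, ruling out fibers that are entirely $(n,\delta^{\ast})$-regular arcs near $x_0$. The first thing I would try is a limiting argument: take $y_k \to x_0$ whose fibers miss $S_{n,\delta^{\ast}}$, and pass the arcs $\Pi_{y_k}\cap U_{r_1}(y_k)$ to the limit via \eqref{eqn: niceb}. The limit would be an arc through $x_0$ whose total $\Haus^1$-length equals the limit of the arc lengths, of order $2r_1$. A branching tree with at least three branches at $x_0$ has length at least about $3r_1$, which gives a contradiction.
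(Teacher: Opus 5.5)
\textbf{The gap: Step~2, for $r$ close to $r_1$.} This step cannot be completed. Using the tree structure does not help, and neither does shrinking $r_0$, since the whole situation is scale invariant. The reason is that for $r$ close to $r_1$ the statement as written fails.

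Here is an example. Take $n=2$ and $X=\R\times T_3^1$, i.e.\ three flat half-planes $H_1,H_2,H_3$ glued along a line $L$. Let $x_0\in L$ and $\varphi=d_p$, where $p\in H_1$ is at distance $R\gg r_0$ from $x_0$. Place $p$ so that at points of $L\cap U_{r_0}(x_0)$ the direction $p'$ makes an angle about $\theta$ with $L$.
\begin{itemize}
\item At $x\in L$, the space of directions is the theta-graph $\Sigma_xX=\Sph^0\ast T_3^0$, and $\diam\Ant(p')=2\theta$ there. So for $2\theta<\delta$, Lemma \ref{lem: e1dsusp} makes $\varphi$ a $(1,\delta)$-strainer map.
\item $x_0$ is a $(2,\delta,\delta^{\ast})$-wall point.
\item Items (1)--(3) of Assumption \ref{assumption: nice} hold for small $\theta$, since all distortions are $O(\theta^2)$.
\end{itemize}
The fibers, however, are tilted. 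In the flat plane $H_1\cup H_2$ they are circles about $p$. Let $y_0\in L$ be the point at distance $\rho\sin\theta$ from $x_0$, on the side away from $p$. The fiber through $y_0$ contains a point $y\in H_2$ with $d_X(x_0,y)=\rho\cos\theta$ and $d_X(y,y_0)=\rho$, up to errors $O(\rho^2/R)$.

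Choose $r_1<\rho<r_1/\cos\theta$ and $R$ large. Then $y\in U_{r_1}(x_0)$, but $y_0$, the only point of $L$ on $\Pi_y$, lies outside $U_{r_1}(y)$. Hence $S_{2,\delta^{\ast}}(\Pi_y\cap U_{r_1}(y))=\emptyset$.

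The paper's connectedness proof passes over the same point. Its claim that $U_r(x_0)-V_r(x_0)$ is open fails at any $y$ whose singular point lies on $S_{r_1}(y)$. By continuity, such points occur on the segment from $x_0$ to the $y$ above. What is true is the version with $r$ at most a fixed fraction of $r_1$, or with $U_{r_1}(y)$ replaced by $U_{r_0}(x_0)$. The later borderline uses, e.g.\ $\lambda_{r_0/2}$ on $U_{r_0/2}(x_0)$ in Lemma \ref{lem: basenbd}, would need the same adjustment.

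\textbf{What your route does prove.} It proves the corrected version. Your uniqueness argument is the paper's and is fine. For existence, you run the open--closed argument on the set $A$ of values in $U_\rho(\varphi(x_0))\subset\R^{n-1}$ (say $\rho=r_0/2$) that are attained on $S_{n,\delta^{\ast}}(U_{r_0}(x_0))$. The paper runs it on $V_r(x_0)\subset U_r(x_0)$ instead. Separating existence on the fiber from the localization $y_0\in U_{r_1}(y)$ is exactly what exposes the problem above.

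Step~1 needs two repairs.
\begin{itemize}
\item Sliding one omitted value $v$ towards $\varphi(x_0)$ proves nothing, because the intermediate values need not be omitted. You must show that $A$ is open at each of its points $\varphi(z)$. Closedness of $A$ in $U_\rho(\varphi(x_0))$ does follow from Lemma \ref{lem: mdopen}, Assumption \ref{assumption: nice}(2) and local compactness.
\item Hausdorff limits of arcs need not be arcs, and \eqref{eqn: niceb} concerns injective curves converging pointwise. So it does not give your $2r_1$ versus $3r_1$ comparison as stated.
\end{itemize}
A clean contradiction comes from Assumption \ref{assumption: nice}(3) instead. Suppose there are singular-free arcs $\Pi_{y_k}\cap U_s(y_k)$ with $y_k\to z$. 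By the $(1+\delta^{\ast})$-openness of $\varphi$, they contain points $a_i^k$ converging to points $a_1,a_2,a_3$ on three different branches of $\Pi_z\cap U_s(z)$, each at intrinsic distance $s/2$ from $z$. Say $a_2^k$ lies between $a_1^k$ and $a_3^k$ on the arc. Then $d_{\Pi_{y_k}}(a_1^k,a_3^k)\ge d_X(a_1^k,a_2^k)+d_X(a_2^k,a_3^k)$, and the right side tends to at least $2s/(1+\delta^{\ast})$. Meanwhile $d_X(a_1^k,a_3^k)\to d_X(a_1,a_3)\le s$, which contradicts (3).

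This gives a singular point $y_0\in\Pi_y$ with $d_X(x_0,y_0)\le(1+2\delta^{\ast})(1+\delta^{\ast})\,d_X(x_0,y)$. Hence $d_X(y,y_0)<(2+4\delta^{\ast})r<r_1$ whenever $r\le r_1/3$.
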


\begin{proof}
Define a subset $V_r \left( x_0 \right)$ of $U_r \left( x_0 \right)$ by
\[
V_r \left( x_0 \right) := \left\{ \,
y \in U_r \left( x_0 \right) \mid
\Haus^0 
\left( S_{n,\delta^{\ast}} \left( \Pi_y \cap U_{r_1}(y) \right) \right) = 1
\, \right\}.
\]
From \eqref{eqn: nicea}
we see that $x_0 \in V_r \left( x_0 \right)$,
and hence $V_r \left( x_0 \right)$ is non-empty.

We first show that $U_r \left( x_0 \right) - V_r \left( x_0 \right)$
is open in $U_r \left( x_0 \right)$.
To do this, take
$y \in U_r \left( x_0 \right) - V_r \left( x_0 \right)$.
From \eqref{eqn: nicea} it follows that
$S_{n,\delta^{\ast}} \left( \Pi_y \cap U_{r_1}(y) \right)$
is empty,
and hence 
the slice $\Pi_y \cap U_{r_1}(y)$ is contained in
$R_{n,\delta^{\ast}} \left( U_{r_0} \left( x_0 \right) \right)$.
By Lemma \ref{lem: llstr1},
the slice $\Pi_y \cap U_{r_1}(y)$
is homeomorphic to an open interval.
Since $\varphi$ is $\left( 1+\delta^{\ast} \right)$-Lipschitz
and $\left( 1+\delta^{\ast} \right)$-open,
for each point $z$ in $U_r \left( x_0 \right)$ sufficiently close to $y$,
the slice $\Pi_z \cap U_{r_1}(z)$ is contained in 
$R_{n,\delta^{\ast}} \left( U_{r_0} \left( x_0 \right) \right)$;
in particular, 
$z$ belongs to $U_r \left( x_0 \right) - V_r \left( x_0 \right)$.
Hence $U_r \left( x_0 \right) - V_r \left( x_0 \right)$
is open in $U_r \left( x_0 \right)$.

We next show that $U_r \left( x_0 \right) - V_r \left( x_0 \right)$
is closed in $U_r \left( x_0 \right)$.
Take a point $y$ in $U_{r_0} \left( x_0 \right)$,
and a sequence 
$(y_k)$ in $U_r \left( x_0 \right) - V_r \left( x_0 \right)$
converging to $y$.
From \eqref{eqn: nicea} we see that
for each $k$
the slice $\Pi_{y_k} \cap U_{r_1} \left( y_k \right)$ is contained in
$R_{n,\delta^{\ast}} \left( U_{r_0} \left( x_0 \right) \right)$.
By Lemma \ref{lem: llstr1},
the slice $\Pi_{y_k} \cap U_{r_1} \left( y_k \right)$
is homeomorphic to an open interval.
Since $\varphi$ is $\left( 1+\delta^{\ast} \right)$-Lipschitz
and $\left( 1+\delta^{\ast} \right)$-open,
we see that
\[
\lim_{k \to \infty}
d_{\mathrm{H}} \left(
\Pi_{y_k} \cap U_{r_1} \left( y_k \right), \Pi_y \cap U_{r_1}(y)
\right)
= 0,
\]
where $d_{\mathrm{H}}$ is the Hausdorff distance.
Suppose $y \in V_r \left( x_0 \right)$.
By Lemma \ref{lem: llstr1},
the slice $\Pi_y \cap U_{r_1}(y)$
is homeomorphic to $T_l^1$
for some $l \in \N$ with $l \ge 3$.
The intervals $\Pi_{y_k} \cap U_{r_1} \left( y_k \right)$,
$k = 1, 2, \dots,$
can not converge to $\Pi_y \cap U_{r_1}(y)$
in the $d_{\mathrm{H}}$-topology.
This is a contradiction.
Hence $y$ belongs to 
$U_r \left( x_0 \right) - V_r \left( x_0 \right)$,
and hence
$U_r \left( x_0 \right) - V_r \left( x_0 \right)$
is closed in $U_r \left( x_0 \right)$.

Thus $V_r \left( x_0 \right)$
is a non-empty, closed, open subset of a connected space 
$U_r \left( x_0 \right)$.
Therefore $U_r \left( x_0 \right)$ coincides with 
$V_r \left( x_0 \right)$.
\end{proof}

We can also determine the topology of the slices of the fibers.

\begin{lem}\label{lem: fibershape}
Under the setting in Assumption \emph{\ref{assumption: nice}},
there exists an integer $l_0 \in \N$ with $l_0 \ge 3$
satisfying the following properties
for all $r_1 \in \left( 0, r_0/2 \right]$ 
and $r \in \left( 0, r_1 \right]$:
For each $y \in U_r \left( x_0 \right)$,
the slice $\Pi_y \cap U_{r_1}(y)$
is a metric tree homeomorphic to $T_{l_0}^1$
whose single vertex
is a unique point $y_0$ with
$\left\{ y_0 \right\} = 
S_{n,\delta^{\ast}} \left( \Pi_y \cap U_{r_1}(y) \right)$.
In particular,
for each point
$z_0$ in $S_{n,\delta^{\ast}} \left( U_r \left( x_0 \right) \right)$,
the slice $\Pi_{z_0} \cap U_{r_1} \left( z_0 \right)$
is a metric tree homeomorphic to $T_{l_0}^1$
whose single vertex is the given point $z_0$.
\end{lem}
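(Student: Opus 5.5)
Under Assumption \ref{assumption: nice} the plan is to combine Lemma \ref{lem: projsing} with Lemma \ref{lem: llstr1}, and then to show that the number of branches at the vertex is locally constant in $y$. Fix $r_1 \in (0,r_0/2]$ and $r \in (0,r_1]$, and take $y \in U_r(x_0)$.

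\textbf{Step 1: the slice is a star.} By Lemma \ref{lem: projsing}, the slice $\Pi_y \cap U_{r_1}(y)$ contains exactly one $(n,\delta^{\ast})$-singular point $y_0$. Lemma \ref{lem: llstr1}(3) identifies the topological singular set of this finite open metric tree with $S_{n,\delta^{\ast}}(\Pi_y \cap U_{r_1}(y))$, so $y_0$ is its only branch point. The tree is contractible, since it lies in a small ball of the fiber and \cite[Theorem 9.1]{lytchak-nagano1} applies. Every open end exits through the boundary of $U_{r_1}(y)$, because the fiber has no endpoints: by Lemma \ref{lem: fibertcone} and Lemma \ref{lem: llstr1}, every point of the fiber has at least two directions. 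A contractible finite tree with a single vertex of valence $l(y) \ge 3$ (the valence is at least $3$ by Lemma \ref{lem: llstr1}) and no further branch points or endpoints is homeomorphic to $T^1_{l(y)}$. The metric-tree structure comes from Assumption \ref{assumption: nice}(3).

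\textbf{Step 2: the valence is constant.} This is the main step. I would show that $y \mapsto l(y)$ is locally constant on the connected ball $U_r(x_0)$ and then set $l_0 := l(x_0)$.

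\emph{Upper semicontinuity.} Let $y_k \to y$. Since $\varphi$ is $(1+\delta^{\ast})$-Lipschitz and $(1+\delta^{\ast})$-open, the slices $\Pi_{y_k} \cap U_{r_1}(y_k)$ converge to $\Pi_y \cap U_{r_1}(y)$ in the Hausdorff sense, exactly as in the proof of Lemma \ref{lem: projsing}. The intrinsic metrics also converge, by \eqref{eqn: niceb}. Choose $l(y_k)$ disjoint branch arcs from the vertex $(y_k)_0$, each of length $\approx r_1/2$. By Arzel\`a--Ascoli they subconverge to arcs in $\Pi_y$ issuing from the limit vertex. The limit vertex is $y_0$, because $\varphi$ restricted to $S_{n,\delta^{\ast}}$ is bi-Lipschitz and $S_{n,\delta^{\ast}}$ is closed (Lemma \ref{lem: mdopen}). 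Distinct branches at $(y_k)_0$ have intrinsic distance $\approx$ the sum of their lengths, and by \eqref{eqn: niceb} this separation persists in the limit. Hence the limit arcs are distinct branches at $y_0$, which gives $l(y) \ge \limsup l(y_k)$.

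\emph{Lower semicontinuity.} Take a point $z$ on each branch of $\Pi_y$ at distance about $r_1/4$ from $y_0$. Such a $z$ is $(n,\delta^{\ast})$-regular and has a Lipschitz-manifold neighbourhood by Proposition \ref{prop: fullstr}, on which $\varphi$ is a trivial interval fibration. So for large $k$, $\Pi_{y_k}$ meets a neighbourhood of each such $z$, and these $l(y)$ points lie at pairwise intrinsic distance close to $r_1/2$. In the star $\Pi_{y_k} \cap U_{r_1}(y_k)$ they must therefore lie on distinct branches: two points on the same branch at roughly equal distance $r_1/4$ from the vertex would have small intrinsic distance, by \eqref{eqn: niceb} and the metric-tree estimate in Assumption \ref{assumption: nice}(3). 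Hence $l(y_k) \ge l(y)$.

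Together, $l$ is locally constant on $U_r(x_0)$, hence constant. The expected obstacle is making the distinct-branch arguments quantitative with the $\delta^{\ast}$-distortion; the fix is to work at the fixed scale $r_1/4$ against errors $O(\delta^{\ast})r_1$.

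\textbf{Step 3: the final claim.} Let $z_0 \in S_{n,\delta^{\ast}}(U_r(x_0))$. Then $z_0$ belongs to $S_{n,\delta^{\ast}}(\Pi_{z_0} \cap U_{r_1}(z_0))$, so by uniqueness in Step 1 the vertex $(z_0)_0$ is $z_0$ itself.
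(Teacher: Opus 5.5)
Your proposal is correct and follows the paper's route: define the branch number $l(y)$ via Lemmas \ref{lem: llstr1} and \ref{lem: projsing}, show that $l$ is locally constant by comparing nearby slices, set $l_0 := l(x_0)$, and conclude by the connectedness of $U_r(x_0)$. Your two semicontinuity arguments (vertices converge by the bi-Lipschitz property of $\varphi$ on $S_{n,\delta^{\ast}}$, branches are separated via the metric-tree estimate) spell out the paper's one-line claim that slices homeomorphic to $T^1_{l_0}$ cannot Hausdorff-converge to a slice homeomorphic to $T^1_{l(y)}$ with $l(y) \neq l_0$; the one thing to fix is the test scale, which should be any fixed $c < r_1 - d_X(y,y_0)$ rather than $r_1/4$, because the vertex $y_0$ need not be near the centre $y$ of the slice.
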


\begin{proof}
By Lemmas \ref{lem: llstr1} and \ref{lem: projsing},
we define a function
$l \colon U_r \left( x_0 \right) \to \N$
in such a way that
for each $y \in U_r \left( x_0 \right)$
we have $l(y) \in \N$ with $l(y) \ge 3$
such that 
the slice $\Pi_y \cap U_{r_1}(y)$
is a metric tree homeomorphic to $T_{l(y)}^1$
whose vertex
is the point $y_0$ satisfying
$\left\{ y_0 \right\} = 
S_{n,\delta^{\ast}} \left( \Pi_y \cap U_{r_1}(y) \right)$.
Set $l_0 := l \left( x_0 \right)$,
and define a subset 
$U_r^{l_0} \left( x_0 \right)$ of $U_r \left( x_0 \right)$ by
\[
U_r^{l_0} \left( x_0 \right) := 
\left\{ \, y \in U_r \left( x_0 \right) \mid l(y) = l_0 \, \right\}.
\]
It is enough to show that
$U_r \left( x_0 \right)$ coincides with $U_r^{l_0} \left( x_0 \right)$.

We show that $U_r^{l_0} \left( x_0 \right)$ is closed in 
$U_r \left( x_0 \right)$.
Take a point $y$ in $U_r \left( x_0 \right)$,
and a sequence $(y_k)$ in $U_r^{l_0} \left( x_0 \right)$
converging to $y$,
so that $l \left( y_k \right) = l_0$ for all $k$.
If we would have
$y \not\in U_r^{l_0} \left( x_0 \right)$,
that is, $l(y) \neq l_0$,
then we would get a contradiction 
\[
\lim_{k \to \infty}
d_{\mathrm{H}} \left(
\Pi_{y_k} \cap U_{r_1} \left( y_k \right), \Pi_y \cap U_{r_1}(y)
\right)
= 0,
\]
since the slices
$\Pi_{y_k} \cap U_{r_1} \left( y_k \right)$
homeomorphic to $T_{l_0}^1$
could not converge to
the slice 
$\Pi_y \cap U_{r_1}(y)$
homeomorphic to $T_{l(y)}^1$ with $l(y) \neq l_0$.
Hence we have $y \in U_r^{l_0} \left( x_0 \right)$,
so $U_r^{l_0} \left( x_0 \right)$ is closed in $U_r \left( x_0 \right)$.

Similarly, we can show that 
$U_r \left( x_0 \right) - U_r^{l_0} \left( x_0 \right)$ is open in 
$U_r \left( x_0 \right)$.
Thus $U_r^{l_0} \left( x_0 \right)$
is a non-empty, closed, open subset of a connected space 
$U_r \left( x_0 \right)$.
Therefore $U_r \left( x_0 \right)$ coincides with 
$U_r^{l_0} \left( x_0 \right)$.
\end{proof}

We can consider a projection to the singularity.

\begin{lem}\label{lem: projlip}
Under the setting in Assumption {\em \ref{assumption: nice}},
for $r_1 \in \left( 0, r_0/2 \right]$, let 
$\lambda_{r_1} \colon U_{r_1} \left( x_0 \right) 
\to S_{n,\delta^{\ast}} \left( U_{r_0} \left( x_0 \right) \right)$
be a map defined by assigning $y$
to the unique point $\lambda_{r_1}(y) := y_0$ satisfying
$\left\{ y_0 \right\} = 
S_{n,\delta^{\ast}} \left( \Pi_y \cap U_{r_1}(y) \right)$.
Then $\lambda_{r_1}$ is $\left( 1 + 4\delta^{\ast} \right)$-Lipschitz.
\end{lem}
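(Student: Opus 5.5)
We aim to show that $d_X(\lambda_{r_1}(y),\lambda_{r_1}(z)) \le (1+4\delta^{\ast})\, d_X(y,z)$ for all $y,z \in U_{r_1}(x_0)$. Put $y_0 := \lambda_{r_1}(y)$ and $z_0 := \lambda_{r_1}(z)$. Both points lie in $S_{n,\delta^{\ast}}(U_{r_0}(x_0))$. The idea is to combine the bi-Lipschitz property of $\varphi$ on the singular set (Assumption \ref{assumption: nice} (2)) with the $1$-Lipschitz behaviour of $\varphi$ along fibers. Since $\varphi(y_0)=\varphi(y)$ and $\varphi(z_0)=\varphi(z)$, property (2) gives
\[
d_X(y_0,z_0) \le (1+2\delta^{\ast}) \Vert \varphi(y_0)-\varphi(z_0)\Vert = (1+2\delta^{\ast}) \Vert \varphi(y)-\varphi(z)\Vert .
\]
Note that (2) is stated on $S_{n,\delta^{\ast}}(U_{r_0}(x_0))$, so this step only needs $y_0,z_0$ to lie there, which holds by the definition of $\lambda_{r_1}$.

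A naive finish would use (1): $\varphi$ is $(1+\delta^{\ast})$-Lipschitz. That gives
\[
d_X(y_0,z_0) \le (1+2\delta^{\ast})(1+\delta^{\ast})\, d_X(y,z) \le (1+4\delta^{\ast})\, d_X(y,z),
\]
where the last inequality uses $\delta^{\ast}<1/10$, since $(1+2\delta^{\ast})(1+\delta^{\ast}) = 1+3\delta^{\ast}+2(\delta^{\ast})^2 \le 1+4\delta^{\ast}$. This bound is global, so there is no need to localize along a geodesic.

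The one remaining point is to make sure that applying (1) over the whole pair $y,z \in U_{r_1}(x_0)$ is legitimate. The Lipschitz property in Proposition \ref{prop: almsubm} is asserted on convex domains. Here $U_{r_0}(x_0)$ is a small ball, convex because $r_0<D_\kappa/2$, and Assumption \ref{assumption: nice} (1) states the Lipschitz bound on all of it. The geodesic from $y$ to $z$ stays inside $U_{r_0}(x_0)$, because $r_1 \le r_0/2$ and balls are convex, so (1) applies directly.

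I expect the main obstacle to be well-definedness rather than the estimate itself. By Lemma \ref{lem: projsing}, $\lambda_{r_1}$ is defined through the slice $\Pi_y\cap U_{r_1}(y)$, and that lemma is stated for $y\in U_r(x_0)$ with $r\le r_1$. Taking $r=r_1$ covers $U_{r_1}(x_0)$, so $\lambda_{r_1}$ is well defined on the whole domain. Beyond that, the estimate is just the two-line chain of inequalities above.
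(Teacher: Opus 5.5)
Your proof is correct and is exactly the paper's argument: apply the bi-Lipschitz bound of Assumption \ref{assumption: nice}~(2) to $\lambda_{r_1}(y),\lambda_{r_1}(z)\in S_{n,\delta^{\ast}}(U_{r_0}(x_0))$, use $\varphi(\lambda_{r_1}(y))=\varphi(y)$, and then the $(1+\delta^{\ast})$-Lipschitz bound~(1), together with $(1+2\delta^{\ast})(1+\delta^{\ast})\le 1+4\delta^{\ast}$ for small $\delta^{\ast}$. Your discussion of convexity and geodesics is unnecessary, since Assumption \ref{assumption: nice}~(1) already asserts the Lipschitz bound on all of $U_{r_0}(x_0)$.
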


\begin{proof}
Since $\varphi$ is a $\left( 1 + \delta^{\ast} \right)$-Lipschitz map,
and since 
$\varphi |_{S_{n,\delta^{\ast}} \left( U_{r_0} \left( x_0 \right) \right)}$
is a $\left( 1 + 2\delta^{\ast} \right)$-bi-Lipschitz embedding,
for all $y, z \in U_{r_1} \left( x_0 \right)$ we have
\begin{multline*}
d_X \left( \lambda_{r_1}(y), \lambda_{r_1}(z) \right)
\le \left( 1 + 2 \delta^{\ast} \right) 
d_{\R^{n-1}} 
\left( \varphi \left( y_0 \right), \varphi \left( z_0 \right) \right) \\
= \left( 1 + 2 \delta^{\ast} \right) 
d_{\R^{n-1}} \left( \varphi \left( y \right), \varphi \left( z \right) \right) 
< \left( 1 + 4 \delta^{\ast} \right)  d_X(y,z),
\end{multline*}
provided $\delta^{\ast}$ is small enough.
This proves the lemma.
\end{proof}

Next we bundle the slices of the fibers together.

\begin{lem}\label{lem: basenbd}
Under the setting in Assumption {\em \ref{assumption: nice}},
for a fixed number $r_1 \in \left( 0, r_0/4 \right]$,
for each $r \in \left( 0, r_1 \right]$
the set $Y_r \left( x_0 \right)$ defined by
\begin{equation}
Y_r \left( x_0 \right)
:= \bigcup 
\left\{ \, \Pi_{y_0} \cap U_{r_1} \left( y_0 \right) \mid 
y_0 \in S_{n,\delta^{\ast}} \left( U_r \left( x_0 \right) \right)
\, \right\}
\label{eqn: basenbda}
\end{equation}
is open in $X$;
in particular,
the open set $\varphi \left( Y_r \left( x_0 \right) \right)$ of $\R^{n-1}$
coincides with
$\varphi 
\left( S_{n,\delta^{\ast}} \left( U_r \left( x_0 \right) \right) \right)$.
\end{lem}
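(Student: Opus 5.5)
We want to show that $Y_r(x_0)$ is open in $X$, i.e. every point of $Y_r(x_0)$ has a whole neighborhood inside it. The plan is to describe $Y_r(x_0)$ as the preimage under the continuous projection $\lambda_{r_1}$ of Lemma \ref{lem: projlip} of an open subset of $S_{n,\delta^{\ast}}(U_{r_0}(x_0))$, up to the condition of lying in the correct slice. Lemma \ref{lem: projlip} gives continuity; Lemmas \ref{lem: projsing} and \ref{lem: fibershape} give the fiberwise structure.

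First, take $z \in Y_r(x_0)$. Then $z \in \Pi_{y_0} \cap U_{r_1}(y_0)$ for some $y_0 \in S_{n,\delta^{\ast}}(U_r(x_0))$. Since $d_X(z,y_0)<r_1$ and $d_X(y_0,x_0)<r\le r_1$, we have $z \in U_{2r_1}(x_0)$. Because $2r_1 \le r_0/2$, Lemma \ref{lem: projsing} applies with radius $2r_1$ in place of $r_1$.

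Apply Lemmas \ref{lem: projsing} and \ref{lem: fibershape} with the pair $(r_1', r') = (2r_1, 2r_1)$. Every $w \in U_{2r_1}(x_0)$ then has a unique point $\lambda(w) \in S_{n,\delta^{\ast}}(\Pi_w \cap U_{2r_1}(w))$, and $\lambda$ is $(1+4\delta^{\ast})$-Lipschitz by Lemma \ref{lem: projlip}. For $z$ as above, $y_0 \in \Pi_z \cap U_{r_1}(z)$, so uniqueness forces $\lambda(z) = y_0$.

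Since $\lambda$ is continuous, for $w$ near $z$ the point $\lambda(w)$ is near $y_0$. Because $S_{n,\delta^{\ast}}(U_r(x_0))$ is relatively open in $S_{n,\delta^{\ast}}(U_{r_0}(x_0))$ (intersection with the open ball), we get $\lambda(w) \in S_{n,\delta^{\ast}}(U_r(x_0))$. We also have $d_X(w,\lambda(w)) < r_1$, by continuity of $w \mapsto d_X(w,\lambda(w))$ and the strict inequality $d_X(z,y_0)<r_1$. Finally $\varphi(w)=\varphi(\lambda(w))$ by construction. Together these give $w \in \Pi_{\lambda(w)} \cap U_{r_1}(\lambda(w))$, so $w \in Y_r(x_0)$, and openness follows.

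The main point to check is the interplay between the different radii $r_1$ and $2r_1$. One needs that the singular point of the $2r_1$-slice through $w$ is still the relevant one, i.e. that it lies within $r_1$ of $w$. This is exactly what continuity of $w \mapsto d_X(w,\lambda(w))$ at $z$ provides. Uniqueness in Lemma \ref{lem: projsing}, which ultimately comes from \eqref{eqn: nicea}, guarantees that the singular points in the smaller and larger slices agree.

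For the last assertion, the inclusion $\varphi(S_{n,\delta^{\ast}}(U_r(x_0))) \subset \varphi(Y_r(x_0))$ is immediate. Conversely, every point of $Y_r(x_0)$ lies in the fiber of some $y_0 \in S_{n,\delta^{\ast}}(U_r(x_0))$, so it has the same $\varphi$-value as $y_0$. Openness of this image in $\R^{n-1}$ follows because $\varphi$ is $(1+\delta^{\ast})$-open by Assumption \ref{assumption: nice} and therefore maps the open set $Y_r(x_0)$ onto an open set.
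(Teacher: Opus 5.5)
Your argument is correct and is essentially the paper's proof. The paper also notes that a point of $\Pi_{y_0}\cap U_{r_1}(y_0)$ lies in $U_{2r_1}(x_0)\subset U_{r_0/2}(x_0)$, and uses the $(1+4\delta^{\ast})$-Lipschitz projection of Lemma \ref{lem: projlip} (with radius $r_0/2$ rather than your $2r_1$, which makes no difference) to see that nearby points project into $S_{n,\delta^{\ast}}(U_r(x_0))$ within distance $r_1$; it then gets openness of the image from the openness of $\varphi$, and your explicit use of the uniqueness in Lemma \ref{lem: projsing} to identify the projection of the base point with $y_0$ fills in a step the paper leaves implicit.
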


\begin{proof}
Let $y \in Y_r \left( x_0 \right)$ be arbitrary,
and take a point
$y_0$ in $S_{n,\delta^{\ast}} \left( U_r \left( x_0 \right) \right)$
with $y \in \Pi_{y_0} \cap U_{r_1} \left( y_0 \right)$.
Then $d_X \left( x_0, y \right) < 2r_1$,
and so $y \in U_{r_0/2} \left( x_0 \right)$.
From Lemma \ref{lem: projsing} we conclude that
$\left\{ y_0 \right\} = 
S_{n,\delta^{\ast}} 
\left( \Pi_{y_0} \cap U_{r_1} \left( y_0 \right) \right)$.
From Lemma \ref{lem: projlip} it follows that
the map
\[
\lambda_{r_0/2} \colon U_{r_0/2} \left( x_0 \right) 
\to S_{n,\delta^{\ast}} \left( U_{r_0} \left( x_0 \right) \right)
\]
defined in the lemma is
$\left( 1 + 4\delta^{\ast} \right)$-Lipschitz.

Let $\epsilon \in (0,\infty)$ be arbitrary.
Then we find 
$t \in \left( 0, \epsilon/1+4\delta_n \right)$
with $U_t(y) \subset U_{r_0/2} \left( x_0 \right)$
such that
if $z \in U_t(y)$,
then for the point $z_0$ defined by $z_0 := \lambda_{r_0/2}(z)$
we have $z_0 \in U_{\epsilon}(y_0)$;
indeed,
we have
\[
d_X \left( y_0, z_0 \right)
= d_X \left( \lambda_{r_0/2}(y), \lambda_{r_0/2}(z) \right)
\le \left( 1 + 4\delta^{\ast} \right) d_X(y,z) < \epsilon.
\]
If $\epsilon$ is small enough,
then $U_{\epsilon} \left( y_0 \right)$ is contained in 
$U_r \left( x_0 \right)$,
and hence 
$z_0$ belongs to 
$S_{n,\delta_n} \left( U_r \left( x_0 \right) \right)$.
Moreover,
if $\epsilon$ is small enough,
the point $z$ belongs to $U_{r_1} \left( z_0 \right)$.
This implies that
$Y_r \left( x_0 \right)$ is open in $X$.
Since $\varphi$ is $\left( 1 + \delta^{\ast} \right)$-open,
the image $\varphi \left( Y_r \left( x_0 \right) \right)$ 
is open in $\R^{n-1}$.
From the definition of $Y_r \left( x_0 \right)$
we immediately see that
$\varphi \left( Y_r \left( x_0 \right) \right)$ 
coincides with
$\varphi 
\left( S_{n,\delta^{\ast}} \left( U_r \left( x_0 \right) \right) \right)$.
\end{proof}

\begin{rem}\label{rem: afterbasenbd}
Under the same setting as in Lemma \ref{lem: basenbd},
for the fixed number $r_1 \in \left( 0, r_0/4 \right]$,
for each $r \in \left( 0, r_1 \right]$
the set $Y_r \left( x_0 \right)$
is contained in $U_{r_0/2} \left( x_0 \right)$.
The restriction $\lambda_{r_0/2} |_{Y_r \left( x_0 \right)}$
of the map 
$\lambda_{r_0/2}$
defined in Lemma \ref{lem: projlip}
to $Y_r \left( x_0 \right)$
is $\left( 1 + 4\delta^{\ast} \right)$-Lipschitz.
The restriction $\varphi |_{Y_r \left( x_0 \right)}$
is a $\left( 1 + 4\delta^{\ast} \right)$-bi-Lipschitz embedding
from the open subset $Y_r \left( x_0 \right)$
onto an open subset $\varphi \left( Y_r \left( x_0 \right) \right)$
of $\R^{n-1}$.
\end{rem}

Under the setting in Assumption \ref{assumption: nice},
for a fixed $r_1 \in \left( 0, r_0/4 \right]$,
for $r \in \left( 0, r_1 \right)$
let $Y_r \left( x_0 \right)$
be the open set in $X$ defined in \eqref{eqn: basenbda}.
As seen in Lemma \ref{lem: fibershape},
we find some $l_0 \in \N$ with $l_0 \ge 3$
such that for each 
$y_0 \in S_{n,\delta^{\ast}} \left( U_r \left( x_0 \right) \right)$
the slice $\Pi_{y_0} \cap  U_{r_1} \left( y_0 \right)$
is a metric tree homeomorphic to $T_{l_0}^1$
whose single vertex is the given point $y_0$.
For the original given point 
$x_0 \in S_{n,\delta^{\ast}} \left( U_r \left( x_0 \right) \right)$,
let $x_1, \dots, x_{l_0}$ be the $l_0$ boundary points 
of the slice $\Pi_{x_0} \cap  U_{r_1} \left( x_0 \right)$
with 
$\{ x_1, \dots, x_{l_0} \} 
= \Pi_{x_0} \cap S_{r_1} \left( x_0 \right)$;
in this case, we can write
\[
\Pi_{x_0} \cap B_{r_1} \left( x_0 \right)
= \bigcup_{i=1}^{l_0} \overline{x_0x_i},
\]
where each $\overline{x_0x_i}$ is the unique geodesic
in the slice $\Pi_{x_0} \cap  U_{r_1} \left( x_0 \right)$
from $x_0$ to $x_i$
whose length is contained in 
$\left[ r_1, \left( 1 + \delta_n \right) r_1 \right]$.
Similarly,
for each point 
$y_0$ in $S_{n,\delta^{\ast}} \left( U_r \left( x_0 \right) \right)$,
let $y_1, \dots, y_{l_0}$ be the $l_0$ boundary points 
of the slice $\Pi_{y_0} \cap  U_{r_1} \left( y_0 \right)$
with 
$\{ y_1, \dots, y_{l_0} \} 
= \Pi_{y_0} \cap S_{r_1} \left( y_0 \right)$;
in this case, we can write
\[
\Pi_{y_0} \cap B_{r_1} \left( y_0 \right)
= \bigcup_{i=1}^{l_0} \overline{y_0y_i},
\]
where each $\overline{y_0y_i}$ is the unique geodesic
in the slice $\Pi_{y_0} \cap  U_{r_1} \left( y_0 \right)$
from $y_0$ to $y_i$
whose length is contained in 
$\left[ r_1, \left( 1 + \delta_n \right) r_1 \right]$.

\begin{lem}\label{lem: choice}
Under the setting in Assumption {\em \ref{assumption: nice}},
for a fixed number $r_1 \in \left( 0, r_0/4 \right]$,
for each $r \in \left( 0, r_1/100 \right)$,
let $Y_r \left( x_0 \right)$
be the open set in $X$ defined in \eqref{eqn: basenbda}.
For the original given point 
$x_0 \in S_{n,\delta^{\ast}} \left( U_r \left( x_0 \right) \right)$,
and for a given point
$y_0 \in S_{n,\delta^{\ast}} \left( U_r \left( x_0 \right) \right)$,
we write 
\[
\Pi_{x_0} \cap B_{r_1} \left( x_0 \right)
= \bigcup_{i=1}^{l_0} \overline{x_0x_i},
\quad
\Pi_{y_0} \cap B_{r_1} \left( y_0 \right)
= \bigcup_{i=1}^{l_0} \overline{y_0y_i}.
\]
Then for each point 
$y$ in $\Pi_{y_0} \cap U_{r_1} \left( y_0 \right)$
distinct to $y_0$,
and for the number $i(y) \in \{ 1, \dots, l_0 \}$
satisfying $y \in \overline{y_0y_i} - \left\{ y_0, y_{i(y)} \right\}$,
there exists a unique number 
$i_0(y)$ in $\left\{ 1, \dots, l_0 \right\}$
such that
\[
d_X \left( x_{i_0(y)}, y_{i(y)} \right)
< \delta_n r_1 + 10r.
\]
Moreover,
the map
$\iota \colon 
Y_r \left( x_0 \right) -
S_{n,\delta^{\ast}} \left( U_r \left( x_0 \right) \right)
\to
\left\{ 1, \dots, l_0 \right\}$
defined by 
$\iota(y) := i_0(y)$
is continuous.
\end{lem}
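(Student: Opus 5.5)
The plan is to compare the two slices through their endpoints, using the near-isometry of $\varphi$ on the singular set and the near-geodesic structure of the fibers. First I estimate $d_X(x_0,y_0)$. Since $y_0\in S_{n,\delta^{\ast}}(U_r(x_0))$, we have $d_X(x_0,y_0)<2r$. The map $\varphi$ is $(1+\delta^{\ast})$-Lipschitz and $(1+\delta^{\ast})$-open, and by (3) of Assumption \ref{assumption: nice} the fibers carry intrinsic metrics $(1+\delta^{\ast})$-comparable to $d_X$. Together these give a Hausdorff-type comparison of the slices: every point of $\Pi_{y_0}\cap B_{r_1}(y_0)$ lies within $O(r)$ of $\Pi_{x_0}$. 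Concretely, for $z\in\Pi_{y_0}$ the openness gives a point $z'\in\Pi_{x_0}$ with $d_X(z,z')\le(1+\delta^{\ast})\Vert\varphi(y_0)-\varphi(x_0)\Vert\le 3r$.

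\textbf{Existence of $i_0(y)$.} Apply this comparison to the endpoint $y_{i(y)}$. It yields $z'\in\Pi_{x_0}$ with $d_X(y_{i(y)},z')\le 3r$, so $d_X(x_0,z')$ lies within $5r$ of $r_1$, and $z'$ lies on some arm $\overline{x_0x_j}$. Along that arm the intrinsic distance from $x_0$ is comparable to $d_X(x_0,\cdot)$ up to a factor $(1+\delta^{\ast})$, and the arm has length in $[r_1,(1+\delta_n)r_1]$. Hence $d_X(z',x_j)\le\delta_nr_1+6r$, which gives $d_X(x_j,y_{i(y)})<\delta_nr_1+10r$. Set $i_0(y):=j$.

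\textbf{Uniqueness of $i_0(y)$.} For distinct arms $i\ne j$, the path in the tree $\Pi_{x_0}$ from $x_i$ to $x_j$ passes through the vertex $x_0$. Therefore $d_{\Pi_{x_0}}(x_i,x_j)\ge 2r_1$, and by (3) of the Assumption $d_X(x_i,x_j)\ge 2r_1/(1+\delta^{\ast})$. Two candidates within $\delta_nr_1+10r$ of $y_{i(y)}$ would violate this, since $r<r_1/100$ and $\delta_n,\delta^{\ast}$ are small.

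\textbf{Continuity of $\iota$.} Because $\iota$ takes values in a discrete set, it suffices to show that it is locally constant. Take $y\in Y_r(x_0)-S_{n,\delta^{\ast}}(U_r(x_0))$ and $z\to y$ in the same set. By Lemma \ref{lem: projlip}, the projection $z_0=\lambda(z)$ converges to $y_0=\lambda(y)$.

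The main obstacle is showing that the arm of $\Pi_{z_0}$ containing $z$ has endpoint $z_{i(z)}$ close to $y_{i(y)}$. I would argue this via the convergence of fibers, namely the Hausdorff convergence of slices used in Lemma \ref{lem: projsing} together with \eqref{eqn: niceb}. The slices $\Pi_{z_0}\cap B_{r_1}(z_0)$ converge to $\Pi_{y_0}\cap B_{r_1}(y_0)$, and both are trees homeomorphic to $T^1_{l_0}$ with vertices converging. So each arm of the approximating tree converges to a single arm of the limit tree. Since $z$ is eventually bounded away from the vertex and near $y$, the arm through $z$ must converge to the arm through $y$. It follows that $d_X(z_{i(z)},y_{i(y)})\to0$.

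Then the strict inequality defining $i_0(y)$, together with the uniqueness margin, forces $i_0(z)=i_0(y)$ for $z$ close to $y$. This shows that $\iota$ is locally constant, hence continuous.
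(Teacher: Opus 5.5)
Your proposal follows the paper's proof in all three parts. You use openness of $\varphi$ to find a point of $\Pi_{x_0}$ within $O(r)$ of $y_{i(y)}$, the through-the-vertex bound $d_X(x_i,x_j)\ge 2r_1/(1+\delta^{\ast})$ for uniqueness, and local constancy for continuity. Two steps need repair.

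\textbf{Existence.} Your own estimate $|d_X(x_0,z')-r_1|<5r$ allows $d_X(x_0,z')>r_1$. In that case $z'$ lies on no arm $\overline{x_0x_j}$, because the arms are contained in $B_{r_1}(x_0)$. The paper treats this as a second case: either $z'\in\overline{x_0x_j}$, or $x_j$ lies on the geodesic of the tree $\Pi_{x_0}$ from $x_0$ to $z'$. In the latter case that geodesic must cross $S_{r_1}(x_0)$, hence passes through some point of $\Pi_{x_0}\cap S_{r_1}(x_0)=\{x_1,\dots,x_{l_0}\}$. Then
$d_{\Pi_{x_0}}(x_j,z')=d_{\Pi_{x_0}}(x_0,z')-d_{\Pi_{x_0}}(x_0,x_j)<(1+\delta^{\ast})(r_1+5r)-r_1$,
which yields the same final bound.

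\textbf{Continuity.} The paper only asserts that $i_0(z)=i_0(y)$ for $z$ near $y$. You correctly isolate the needed fact, $z_{i(z)}\to y_{i(y)}$, but the reason you give does not establish it. Hausdorff convergence of the slices, their common type $T^1_{l_0}$, and convergence of the vertices do not force each arm to converge to a single arm. An arm could run out along one limit arm, fold back near the vertex, and leave along another, while the Hausdorff limit is still $T^1_{l_0}$. Citing \eqref{eqn: niceb} does not help either, since it presupposes pointwise convergence to an injective curve.

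What rules out folding is item (3) of Assumption~\ref{assumption: nice}. Parametrize the arm of $\Pi_{z_0}$ through $z$ by $d_{\Pi_{z_0}}$-arclength, and pass to a subsequence along which the arm lengths converge. These curves $\gamma$ satisfy $|s-t|/(1+\delta^{\ast})\le d_X(\gamma(s),\gamma(t))\le|s-t|$. By Arzel\`a--Ascoli a further subsequence converges uniformly to a bi-Lipschitz, hence injective, curve. By continuity of $\varphi$ and $z_0\to y_0$, this curve lies in $\Pi_{y_0}\cap B_{r_1}(y_0)$ and joins $y_0$ to a point of $\Pi_{y_0}\cap S_{r_1}(y_0)$. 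It is therefore an arm $\overline{y_0y_j}$. It passes through $y$, which is interior to $\overline{y_0y_{i(y)}}$, so $j=i(y)$. Since every subsequence has such a further subsequence, $z_{i(z)}\to y_{i(y)}$, and your concluding step ($i_0(z)=i_0(y)$ from the strict inequality and the uniqueness margin) then goes through.
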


\begin{proof}
Let $y \in \Pi_{y_0} \cap U_{r_1} \left( y_0 \right)$
be distinct to $y_0$,
and let $i(y)$ be the number in $\{ 1, \dots, l_0 \}$
satisfying $y \in \overline{y_0y_i} - \left\{ y_0, y_{i(y)} \right\}$.
Since $\varphi$ is $\left( 1 + \delta^{\ast} \right)$-Lipschitz
and $\left( 1 + \delta^{\ast} \right)$-open,
we have
\[
d_X \left( y_{i(y)}, \Pi_{x_0} \right)
\le
(1 + \delta^{\ast})^2 d_X \left( x_0, y_0 \right)
< 4r.
\]
Hence we find a point $x_{\perp}$ in $\Pi_{x_0}$
satisfying $d \left( x_{\perp}, y_{i(y)} \right) < 4r$.
Notice that
$x_{\perp} \in \Pi_{x_0} - \left\{ x_0 \right\}$.
From $d_X \left( y_0, y_{i(y)} \right) = r_1$ and 
$d_X \left( x_0, y_0 \right) < r$,
we derive
\begin{equation}
\left\vert
d_X \left( x_0, x_{\perp} \right) - r_1
\right\vert
< 5r.
\label{eqn: choice1}
\end{equation}
For the point $x_{\perp}$ in $\Pi_{x_0} - \left\{ x_0 \right\}$,
we find some number $i_0(y)$ in $\left\{ 1, \dots, l_0 \right\}$
such that
either $x_{\perp}$ lies on the geodesic
$\overline{x_0x_{i_0(y)}}$ in $\Pi_{x_0}$
or 
$x_{i_0(y)}$ lies on the geodesic
$\overline{x_0x_{\perp}}$.
In both the two cases,
since
we have \eqref{eqn: choice1}
and
$\left\vert d_{\Pi_{x_0}} - d_X \right\vert < \delta^{\ast} d_X$,
we see
\begin{equation}
d_X \left( x_{\perp}, x_{i_0(y)} \right)
\le d_{\Pi_{x_0}} \left( x_{\perp}, x_{i_0(y)} \right)
< \delta^{\ast} r_1 +5r.
\label{eqn: choice2}
\end{equation}
From \eqref{eqn: choice2} and
$d_X \left( x_{\perp}, y_{i(y)} \right) < 4r$,
we derive
\[
d_X \left( x_{i_0(y)}, y_{i(y)} \right)
< \delta^{\ast} r_1 + 10r.
\]

Suppose now that for another number
$i_0$ in $\left\{ 1, \dots, l_0 \right\}$
with $i_0 \neq i(y)$
we have
$d_X \left( x_{i_0}, y_{i(y)} \right)
< \delta^{\ast} r_1 + 10r$.
Since
$\left\vert d_{\Pi_{x_0}} - d_X \right\vert < \delta^{\ast} d_X$,
\begin{multline*}
d_X \left( x_{i_0}, x_{i_0(y)} \right)
> \frac{1}{1 + \delta^{\ast}}
d_{\Pi_{x_0}} \left( x_{i_0}, x_{i_0(y)} \right) \\
=
\frac{1}{1 + \delta^{\ast}}
\left\{
d_{\Pi_{x_0}} \left( x_0, x_{i_0} \right)
+
d_{\Pi_{x_0}} \left( x_0, x_{i_0(y)} \right)
\right\} 
\ge \frac{2r_1}{1 + \delta^{\ast}} > r_1,
\end{multline*}
and hence
$d_X \left( x_{i_0(y)}, y_{i(y)} \right) 
> r_1 - \delta^{\ast} r_1 + 10r$.
This yields a contradiction to our present assumption
$r \in \left( 0, r_1/100 \right)$,
since $\delta^{\ast}$ is assumed to be small enough 
with $\delta^{\ast} < 1/10$.
Thus 
there exists a unique number $i_0(y)$ in $\left\{ 1, \dots, l_0 \right\}$
such that
$d_X \left( x_{i_0(y)}, y_{i(y)} \right)
< \delta^{\ast} r_1 + 10r$.

For the given point $y$, take a point
$z$ in $Y_r \left( x_0 \right) -
S_{n,\delta^{\ast}} \left( U_r \left( x_0 \right) \right)$
sufficiently close to $y$.
From the same argument discussed above,
it follows that
there exists a unique number
$i_0(z)$ in $\left\{ 1, \dots, l_0 \right\}$
with
$d_X \left( x_{i_0(z)}, z_{i(z)} \right)
< \delta^{\ast} r_1 + 10r$,
and $i_0(z)$ must be equal to $i_0(y)$.
This implies that
the map
$\iota$ defined in the lemma is continuous.
\end{proof}

\begin{lem}\label{lem: projfiber}
Under the setting in Assumption {\em \ref{assumption: nice}},
for a fixed number $r_1 \in \left( 0, r_0/4 \right]$,
for each $r \in \left( 0, r_1/100 \right)$,
let 
$Y_r \left( x_0 \right)$
be the open set in $X$ defined in \eqref{eqn: basenbda}
such that
the slice $\Pi_{y_0} \cap U_{r_1} \left( y_0 \right)$
is homeomorphic to a metric tree $T_{l_0}^1$
written as the Euclidean cone
\[
T_{l_0}^1 = 
C_0 \left( \left\{ \xi_1, \dots, \xi_{\ell_0} \right\} \right).
\]
Define a map
$\psi \colon Y_r \left( x_0 \right) \to T_{l_0}^1$
by
\[
\psi(y) := 
\begin{cases}
d_{\Pi_y} \left( y, \lambda_{r_0/2}(y) \right) \xi_{\iota(y)}
&
\text{
if $y \in 
Y_r \left( x_0 \right) -
S_{n,\delta^{\ast}} \left( U_r \left( x_0 \right) \right)$}, \\
0 & \text{otherwise},
\end{cases}
\]
where
$\lambda_{r_0/2} \colon U_{r_0/2} \left( x_0 \right)
\to S_{n,\delta^{\ast}} \left( U_r \left( x_0 \right) \right)$
is the $\left( 1+4\delta^{\ast} \right)$-Lipschitz map defined
in Lemma {\em \ref{lem: projlip}},
and 
$\iota \colon 
Y_r \left( x_0 \right) -
S_{n,\delta^{\ast}} \left( U_r \left( x_0 \right) \right)
\to
\left\{ 1, \dots, l_0 \right\}$
is the map defined in Lemma {\em \ref{lem: choice}}.
Then $\psi$ is a $1$-open continuous map.
\end{lem}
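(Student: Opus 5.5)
I will establish continuity and $1$-openness of $\psi$ separately. Throughout, I use the structure of the set $Y_r(x_0)$ from Lemmas \ref{lem: fibershape}--\ref{lem: choice}. Each $y \in Y_r(x_0)$ lies in a slice $\Pi_{y_0} \cap U_{r_1}(y_0)$, which is a tree homeomorphic to $T_{l_0}^1$ with vertex $y_0 = \lambda_{r_0/2}(y)$. The radial coordinate $|\psi(y)| = d_{\Pi_y}(y,\lambda_{r_0/2}(y))$ is the intrinsic distance to that vertex. The edge label $\iota(y)$ selects the branch.

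For \emph{continuity}, take $y_k \to y$ in $Y_r(x_0)$. By Lemma \ref{lem: projlip}, $\lambda_{r_0/2}(y_k) \to \lambda_{r_0/2}(y)$. Next I join $\lambda_{r_0/2}(y_k)$ to $y_k$ by the unique tree geodesic in $\Pi_{y_k}$, parametrized proportionally on $I=[0,1]$. These curves are uniformly Lipschitz by Assumption \ref{assumption: nice}(3). By Arzel\`a--Ascoli, after passing to a subsequence they converge pointwise to a Lipschitz curve in $\Pi_y$ from $\lambda_{r_0/2}(y)$ to $y$. Here I use that $\varphi$ is continuous, so the limit lies in $\Pi_y$, and that it stays in the slice since distances are controlled.

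I will argue that this limit curve is injective. It is a limit of tree geodesics avoiding the vertex in their interiors, and the vertex is the unique $(n,\delta^{\ast})$-singular point of the slice; by closedness of $S_{n,\delta^{\ast}}$ (Lemma \ref{lem: mdopen}) one gets a reparametrization of the tree geodesic. Then \eqref{eqn: niceb} gives
\[
d_{\Pi_{y_k}}\bigl(y_k,\lambda_{r_0/2}(y_k)\bigr) \to d_{\Pi_y}\bigl(y,\lambda_{r_0/2}(y)\bigr).
\]
Hence the radial coordinate is continuous. If $y$ is not the vertex, $\iota$ is locally constant by Lemma \ref{lem: choice}, so $\psi(y_k)\to\psi(y)$. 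If $y$ is the vertex, then $\psi(y)=0$ and the radial coordinates tend to $0$, which again gives convergence.

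For \emph{$1$-openness}, fix $y \in Y_r(x_0)$ and $\rho>0$ with $B_\rho(y)$ inside $Y_r(x_0)$. Let $w \in T_{l_0}^1$ with $d(w,\psi(y))<\rho$. I will produce $z$ in the same slice $\Pi_{y_0}\cap U_{r_1}(y_0)$, where $y_0=\lambda_{r_0/2}(y)$, such that $\psi(z)=w$ and $d_X(y,z)<\rho$.

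The slice is a tree isometric, in its intrinsic metric, to a neighborhood of the vertex in $T_{l_0}^1$. This holds via the map $z \mapsto d_{\Pi_{y_0}}(z,y_0)\,\xi_{\iota(z)}$, because each edge $\overline{y_0y_i}$ is a tree geodesic. The branch bijection $i\mapsto \iota$ is constant along each edge and is a bijection onto $\{1,\dots,l_0\}$, by the uniqueness in Lemma \ref{lem: choice} together with a counting argument ($l_0$ edges with distinct labels). So $\psi$ restricted to the slice is an isometry from $(\text{slice},d_{\Pi_{y_0}})$ onto a ball in $T_{l_0}^1$.

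Thus there is $z$ in the slice with $\psi(z)=w$ and $d_{\Pi_{y_0}}(y,z)=d(\psi(y),w)<\rho$. Since $d_X\le d_{\Pi_{y_0}}$, this gives $d_X(y,z)<\rho$, so $U_\rho(\psi(y))\subset\psi(U_\rho(y))$, provided $\rho$ is small enough that $w$ stays within the image of the slice.

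\textbf{Main obstacle.} The delicate step is the injectivity and labeling claims: that distinct edges of a slice receive distinct labels under $\iota$, and that limits of tree geodesics remain tree geodesics. For distinct labels, I would first try the estimate in the proof of Lemma \ref{lem: choice}. Two edges $\overline{y_0y_i},\overline{y_0y_j}$ have endpoints at $d_X$-distance greater than $r_1$, since $d_{\Pi_{y_0}}(y_i,y_j)\ge 2r_1$ and $d_{\Pi}\le(1+\delta^\ast)d_X$. They therefore cannot both be within $\delta^\ast r_1+10r$ of the same $x_{i_0}$.
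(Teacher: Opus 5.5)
Your proposal is correct and follows the paper's proof. Continuity comes from continuity of $\lambda_{r_0/2}$ and $\iota$ together with the convergence $d_{\Pi_{y_k}} \to d_{\Pi_y}$ from \eqref{eqn: niceb}; $1$-openness comes from the fact that on the slice through $y$ the map $\psi$ is an isometry for $d_{\Pi}$, combined with $d_X \le d_{\Pi}$ (your explicit check that distinct edges get distinct labels is something the paper uses tacitly in its Cases 1--2).

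One repair is needed: closedness of $S_{n,\delta^{\ast}}$ does not give injectivity of the limit curve, since it says nothing about regular points accumulating at the vertex; instead use Assumption~\ref{assumption: nice}(3), which for $s<t$ gives $d_X(\gamma_k(s),\gamma_k(t)) \ge (t-s)\, d_X\bigl(y_k,\lambda_{r_0/2}(y_k)\bigr)/(1+\delta^{\ast})$, a bound that survives in the limit whenever $y$ is not the vertex.
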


\begin{proof}
Since $\lambda_{r_0/2}$ and $\iota$ are continuous,
and since $d_{\Pi_{y_k}}$ converges to $d_{\Pi_y}$,
the map $\psi$ is continuous too.
We are going to prove that
$\psi$ is $1$-open.
To do this,
take an arbitrary ball
$U_t(y) \cap Y_r \left( x_0 \right)$
in $Y_r \left( x_0 \right)$ with $y \in Y_r \left( x_0 \right)$.
In order to show 
\begin{equation}
U_t \left( \psi(y) \right) \cap 
\psi \left( Y_r \left( x_0 \right) \right)
\subset 
\psi \left( U_t(y) \cap Y_r \left( x_0 \right) \right),
\label{eqn: projfiber1}
\end{equation}
let
$\zeta \in U_t \left( \psi(y) \right) \cap 
\psi \left( Y_r \left( x_0 \right) \right)$ be arbitrary.

We first assume that
the center $y$ belongs to 
$R_{n,\delta^{\ast}} \left( U_r \left( x_0 \right) \right)$,
and let $y_0 := \lambda_{r_0/2}(y)$.
We write
\[
\Pi_{y_0} \cap B_{r_1} \left( y_0 \right)
= \bigcup_{i=1}^{\ell_0} \overline{y_0y_i}
\]
in such a way that
for some $i(y) \in \left\{ 1, \dots, l_0 \right\}$
we have
$y \in \overline{y_0y_{i(y)}}$.

\medskip

\noindent
\emph{Case} 1.
$\zeta \neq 0$, and 
the direction $\zeta'$ coincides with $\xi_{\iota(y)}$.
In this case,
there exists a unique point $z$ in $\overline{y_0y_{i(y)}}$
with $d_{\Pi_y} \left( y_0, z \right) = d_{T_{l_0}^1}(0,\zeta)$,
so that $\psi(z) = \zeta$.
If $z \in \overline{y_0y}$,
then we have
\begin{align*}
d_X(y,z) &\le d_{\Pi_y} \left( y, z \right) 
= d_{\Pi_y} \left( y_0, y_{i(y)} \right) 
- d_{\Pi_y} \left( y_0, z \right) 
- d_{\Pi_y} \left( y, y_{i(y)} \right) \\
&= d_{T_{l_0}^1} \left( 0, \psi \left( y_{i(y)} \right) \right)
- d_{T_{l_0}^1}(0,\zeta)
- d_{T_{l_0}^1} \left( \psi(y), \psi \left( y_{i(y)} \right) \right) \\
&= d_{T_{l_0}^1} \left( \psi(y), \zeta \right).
\end{align*}
If $z \in \overline{yy_{i(y)}}$,
then we have
\begin{align*}
d_X(y,z) &\le d_{\Pi_y} \left( y, z \right) 
= d_{\Pi_y} \left( y_0, y_{i(y)} \right) 
- d_{\Pi_y} \left( y_0, y \right) 
- d_{\Pi_y} \left( z, y_{i(y)} \right) \\
&= d_{T_{l_0}^1} \left( 0, \psi \left( y_{i(y)} \right) \right)
- d_{T_{l_0}^1} \left( 0, \psi(y) \right)
- d _{T_{l_0}^1}\left( \zeta, \psi \left( y_{i(y)} \right) \right) \\
&= d_{T_{l_0}^1} \left( \psi(y), \zeta \right).
\end{align*}
Hence we see $d_X(y,z) < t$.
This implies \eqref{eqn: projfiber1}.

\medskip

\noindent
\emph{Case} 2.
$\zeta \neq 0$, and 
$\zeta'$ coincides with some $\xi_j$
distinct to $\xi_{\iota(y)}$.
In this case,
we have a unique point $z \in \overline{y_0y_j}$
with $d_{\Pi_y} \left( y_0, z \right) = d_{T_{l_0}^1}(0,\zeta)$,
so that $\psi(z) = \zeta$.
Then we have
\begin{align*}
d_X(y,z) 
&\le d_X \left( y_0, y \right) + d_X \left( y_0, z \right)
= d_{\Pi_y} \left( y_0, z \right) + d_{\Pi_y} \left( y_0, z \right) \\
&= d_{T_{l_0}^1} \left( 0, \psi(y) \right) + d_{T_{l_0}^1}(0,\zeta) 
= d_{T_{l_0}^1} \left( \psi(y), \zeta \right).
\end{align*}
Hence $d_X(y,z) < t$,
and so \eqref{eqn: projfiber1}.

\medskip

\noindent
\emph{Case} 3.
$\zeta = 0$.
Similarly to Case 2,
we immediately show \eqref{eqn: projfiber1}.

\medskip

Thus we have shown \eqref{eqn: projfiber1},
provided $y$ belongs to 
$R_{n,\delta^{\ast}} \left( U_r \left( x_0 \right) \right)$.
On the other hand,
even if
$y$ belongs to 
$S_{n,\delta^{\ast}} \left( U_r \left( x_0 \right) \right)$,
we can prove \eqref{eqn: projfiber1} similarly to Case 3.
Therefore $\psi$ is $1$-open.
\end{proof}

\begin{rem}\label{rem: afterprojfiber}
If we define a map
$\tilde{\psi} \colon Y_r \left( x_0 \right) \to T_{l_0}^1$
by
\[
\tilde{\psi}(y) := 
d_X \left( y, \lambda_{r_0/2}(y) \right) \xi_{\iota(y)}
\]
if 
$y \in Y_r \left( x_0 \right) -
S_{n,\delta^{\ast}} \left( U_r \left( x_0 \right) \right)$,
and by $\tilde{\psi}(y) := 0$ otherwise,
then the author does not know whether
$\tilde{\psi}$ is open.
\end{rem}

We prove a key lemma
to obtain Proposition \ref{prop: embedtree}.

\begin{lem}\label{lem: embed}
Under the setting in Assumption {\em \ref{assumption: nice}},
for a fixed number $r_1 \in \left( 0, r_0/4 \right]$,
for each $r \in \left( 0, r_1/100 \right)$,
let 
$Y_r \left( x_0 \right)$
be the open set in $X$ defined in \eqref{eqn: basenbda}
such that
the slice $\Pi_{y_0} \cap U_{r_1} \left( y_0 \right)$
is homeomorphic to a metric tree
$T_{l_0}^1$ for some $l_0 \in \N$ with $l_0 \ge 3$.
We define a map
$\Phi \colon Y_r \left( x_0 \right) \to 
\R^{n-1} \times T_{l_0}^1$
by
\[
\Phi(y) := \left( \varphi(y), \psi(y) \right),
\]
where
$\psi \colon Y_r \left( x_0 \right) \to T_{l_0}^1$
is the map defined
in Lemma {\em \ref{lem: projfiber}}.
Then $\Phi$ is an open embedding 
from $Y_r \left( x_0 \right)$
onto an open subset 
of $\R^{n-1} \times T_{l_0}^1$
with
\begin{equation}
\left( \mathrm{pr}_{\R^{n-1}} \circ \Phi \right)
\left( S_{n,\delta^{\ast}} \left( Y_r \left( x_0 \right) \right) \right)
= F \left( S_{n,\delta^{\ast}} \left( U_r \left( x_0 \right) \right) \right),
\label{eqn: embeda}
\end{equation}
where 
$\mathrm{pr}_{\R^{n-1}} \colon 
\R^{n-1} \times T_{l_0}^1 \to \R^{n-1}$
is the standard projection onto $\R^{n-1}$.
In particular,
we have
\begin{equation}
0 < \Haus^{n-1} \left( \left( \mathrm{pr}_{\R^{n-1}} \circ \Phi \right)
\left( S_{n,\delta^{\ast}} \left( Y_r \left( x_0 \right) \right) \right) \right)
< \infty.
\label{eqn: embedb}
\end{equation}
\end{lem}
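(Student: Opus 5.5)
Our plan is to show that $\Phi$ is a continuous injective open map. Being an open embedding then follows formally, and the Hausdorff measure statement follows from the bi-Lipschitz properties of $\varphi$ on the singular set. Continuity is immediate: $\varphi$ is Lipschitz, and $\psi$ is continuous by Lemma \ref{lem: projfiber}. The symbol $F$ in \eqref{eqn: embeda} is to be read as $\varphi$.

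\emph{Injectivity.} Suppose $\Phi(y)=\Phi(z)$. Then $\varphi(y)=\varphi(z)$, so $y$ and $z$ lie in the same fiber $\Pi_y$. Let $y_0:=\lambda_{r_0/2}(y)$ and $z_0:=\lambda_{r_0/2}(z)$. Both are points of $S_{n,\delta^{\ast}}(U_{r_0}(x_0))$ with the same $\varphi$-value. By property (2) of Assumption \ref{assumption: nice}, $\varphi$ is injective on the singular set, so $y_0=z_0$.
\begin{itemize}
\item If $\psi(y)=\psi(z)=0$, then $y=y_0=z$.
\item Otherwise, $y$ and $z$ lie on edges $\overline{y_0y_{i(y)}}$ and $\overline{y_0y_{i(z)}}$ of the tree $\Pi_{y_0}\cap U_{r_1}(y_0)$, at equal $d_{\Pi}$-distance from $y_0$, and $\iota(y)=\iota(z)$.
\end{itemize}
In the second case we must show that $\iota$ distinguishes the edges at $y_0$, that is, that $i\mapsto i_0$ is a bijection from the edges at $y_0$ to those at $x_0$. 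This follows from the uniqueness argument in Lemma \ref{lem: choice}. Two distinct endpoints $y_i$ and $y_j$ satisfy $d_X(y_i,y_j)>2r_1/(1+\delta^{\ast})$ because the fiber metric is almost the ambient one. Hence they cannot both lie within $\delta^{\ast}r_1+10r$ of the same $x_{i_0}$. So $i(y)=i(z)$, and $y=z$ because points on an edge are determined by their distance to $y_0$.

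\emph{Openness.} Take $y\in Y_r(x_0)$ and a small ball $U_t(y)$. We must show that $\Phi(U_t(y))$ contains a neighbourhood of $\Phi(y)$. Given $(v,\zeta)$ close to $(\varphi(y),\psi(y))$, we proceed in three steps.
\begin{enumerate}
\item Since $\varphi(Y_r(x_0))=\varphi(S_{n,\delta^{\ast}}(U_r(x_0)))$ is open by Lemma \ref{lem: basenbd}, and $\varphi$ is bi-Lipschitz on the singular set, there is a unique singular point $w_0$ with $\varphi(w_0)=v$, close to $y_0$.
\item On the tree $\Pi_{w_0}\cap U_{r_1}(w_0)\cong T^1_{l_0}$, the map $\psi$ restricts to an isometry onto a ball of $T^1_{l_0}$, since edges are labelled bijectively by $\iota$. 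So there is $w$ in this slice with $\psi(w)=\zeta$.
\item It remains to check that $d_X(w,y)<t$. The edge of $w_0$ labelled $\iota(y)$ is close to the edge of $y_0$ containing $y$, in the Hausdorff sense. This uses the Hausdorff convergence of slices from the proof of Lemma \ref{lem: projsing}, together with the convergence of $d_{\Pi_{y_k}}$ to $d_{\Pi_y}$ in \eqref{eqn: niceb}, which controls parameters along edges.
\end{enumerate}
An alternative is invariance of domain. Since $\Phi$ is continuous and injective from $Y_r(x_0)$, the space decomposes locally: away from the singular set, both $Y_r(x_0)$ and the target are $n$-manifolds, since $\Phi$ there lands in $\R^{n-1}\times(0,\infty)\xi_i$. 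Near the singular set one still needs the local surjectivity argument above. We expect this step, the uniform control of $w$ near $y$, to be the main obstacle. We would argue by contradiction with sequences $v_k\to\varphi(y)$ and $\zeta_k\to\psi(y)$, and pass to the limit using \eqref{eqn: niceb} and the continuity of $\iota$.

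\emph{Measure.} The left side of \eqref{eqn: embeda} equals $\varphi(S_{n,\delta^{\ast}}(Y_r(x_0)))$. Every singular point in $Y_r(x_0)$ is a vertex $y_0\in S_{n,\delta^{\ast}}(U_r(x_0))$, and conversely, which gives the equality. By Lemma \ref{lem: basenbd} this set equals $\varphi(Y_r(x_0))$, a nonempty open set in $\R^{n-1}$. It is bounded, being contained in the image of $U_{r_0}(x_0)$ under a Lipschitz map. Hence its $\Haus^{n-1}$-measure is positive and finite, which is \eqref{eqn: embedb}.
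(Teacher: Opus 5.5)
Your outline matches the paper's: continuity from the two components, injectivity by reducing to a single slice, openness, and \eqref{eqn: embeda}--\eqref{eqn: embedb} from Lemma~\ref{lem: basenbd}, reading $F$ as $\varphi$. Your injectivity and measure arguments are correct. Your check that distinct arms of $y_0$ receive distinct labels under $\iota$, via $d_X(y_i,y_j)>2r_1/(1+\delta^{\ast})$, supplies a point the paper leaves implicit when it simply asserts that $\psi$ is injective on each slice.

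The divergence, and the gap, is openness: the step you call the main obstacle is never proved.

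\emph{How the paper does it.} The paper deduces openness from the openness of $\varphi$ together with the $1$-openness of $\psi$ (Lemma~\ref{lem: projfiber}). Componentwise openness would not suffice in general. It works here because the preimage built in the proof of Lemma~\ref{lem: projfiber} lies in the slice of the centre, with $d_X(y',z)\le d_{T^1_{l_0}}(\psi(y'),\zeta)$. So one first uses openness of $\varphi$ to find $y'$ near $y$ with $\varphi(y')=v$. Continuity of $\psi$ then makes $\psi(y')$ close to $\zeta$, and one moves inside the slice of $y'$. Continuity of $\psi$ absorbs the slice-to-slice comparison that your ordering (fixing the vertex $w_0$ over $v$ first) forces you to do by hand.

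\emph{How to close your own route.} Your alternative already closes the argument, with no step 3 needed.
At a vertex $y=y_0$ the control is trivial. Every arm of the slice of $w_0$ has fiber length at least $r_1$, so for $|\zeta|<r_1$ the point $w$ exists. Property (2) of Assumption~\ref{assumption: nice} then gives $d_X(w,y_0)\le|\zeta|+(1+2\delta^{\ast})\Vert v-\varphi(y_0)\Vert$.
Off the vertex set, $Y_r(x_0)-S_{n,\delta^{\ast}}(Y_r(x_0))$ is an open subset of $R_{n,\delta^{\ast}}(U)$, which is an $n$-manifold by Proposition~\ref{prop: gfullstr}. The map $\Phi$ sends this set continuously and injectively into the $n$-manifold $\R^{n-1}\times(T^1_{l_0}-\{0\})$, which is open in $\R^{n-1}\times T^1_{l_0}$. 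Invariance of domain therefore gives openness there.

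\emph{Your step 2.} This route also sidesteps an inaccuracy in your step 2. The map $\psi$ sends a slice isometrically (for $d_{\Pi}$) onto a star, not onto a ball. The arm lengths $d_{\Pi}(w_0,w_i)\in[r_1,(1+\delta^{\ast})r_1]$ of that star can differ. So near arm tips, existence of $w$ would require controlling these lengths as $w_0$ varies. The paper's relative openness statement \eqref{eqn: projfiber1} also passes over this point; invariance of domain makes it irrelevant.
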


\begin{proof}
In Lemma \ref{lem: projfiber},
we already see that $\psi$ is $1$-open and continuous.
Since $\varphi$ is $\left( 1+\delta^{\ast} \right)$-Lipschitz and
$\left( 1+\delta^{\ast} \right)$-open,
the map $\Phi$ is open and continuous.
For each slice 
$\Pi_{y_0} \cap U_{r_1} \left( y_0 \right)$ 
in $Y_r \left( x_0 \right)$,
the restriction
$\psi |_{\Pi_{y_0} \cap U_{r_1} \left( y_0 \right)}$
is injective.
Hence $\Phi$ is injective.
By Lemma \ref{lem: basenbd},
the open subset
$\varphi \left( Y_r \left( x_0 \right) \right)$
of $\R^{n-1}$
coincides with
$\varphi 
\left( S_{n,\delta^{\ast}} \left( U_r \left( x_0 \right) \right) \right)$,
and so we have \eqref{eqn: embeda}.
Moreover,
we have
\[
0 < \Haus^{n-1} \left( \varphi
\left( Y_r \left( x_0 \right) \right) \right)
< \infty,
\]
and hence we obtain \eqref{eqn: embedb}. 
This proves the lemma.
\end{proof}

Now we arrive at Proposition \ref{prop: embedtree}.

\begin{proof}[Proof of Proposition $\ref{prop: embedtree}$]
Under the setting in Assumption \ref{assumption: nice},
for a fixed number $r_1 \in \left( 0, r_0/4 \right]$,
for each $r \in \left( 0, r_1/100 \right)$,
let 
$Y_r \left( x_0 \right)$
be the open set in $X$ defined in \eqref{eqn: basenbda}.
By Lemma \ref{lem: embed},
the set $Y_r \left( x_0 \right)$ is an open neighborhood of $x_0$ 
contained in 
$U_{r_0}(x_0)$
such that $Y_r \left( x_0 \right)$ is homeomorphic to 
$\R^{n-1} \times T_{l_0}^1$
for some $l_0 \in \N$ with $l \ge 3$;
moreover,
we have 
$S \left( Y_r \left( x_0 \right) \right) 
= S_{n,\delta^{\ast}} \left( Y_r \left( x_0 \right) \right)$ 
and
\[
0 < \Haus^{n-1} \left( S \left( Y_r \left( x_0 \right) \right) \right) < \infty.
\]
This completes the proof of Proposition \ref{prop: embedtree}.
\end{proof}

\subsection{Proof of the wall singularity theorems}

We first prove the relaxed wall singularity theorem \ref{thm: dwst}.

\begin{proof}[Proof of Theorem $\ref{thm: dwst}$]
For $n \in \N$,
let $\delta^{\ast} \in (0,1/10)$ be sufficiently small,
and let $\delta \in (0,\delta^{\ast})$ be also sufficiently small 
for $\delta^{\ast}$.
Let $X$ be a $\GCBA(\kappa)$ space with metric $d_X$,
and let $U$ be a purely $n$-dimensional open subset of $X$.
Take an arbitrary $x$ in $W_{n,\delta,\delta^{\ast}}(U)$.
By Proposition \ref{prop: almsubm},
there exists an $\left( n-1,\delta \right)$-strainer map
$\varphi_x \colon V_x \to \R^{n-1}$
from some open neighborhood $V_x$ of $x$ contained in $U$
that is locally $\left( 1+\delta^{\ast} \right)$-Lipschitz
and locally $\left( 1+\delta^{\ast} \right)$-open.
Let $r_x$ be a sufficiently small positive number 
with $U_{2r_x}(x) \subset V_x$
such that
$\varphi_x |_{U_{2r_x}(x)}$ is 
$\left( 1+\delta^{\ast} \right)$-Lipschitz and
$\left( 1-\delta^{\ast} \right)$-open.
From Proposition \ref{prop: regsing}
it follows that
there exists an open dense subset
$R_{\varphi_x} \left( S_{n,\delta^{\ast}} \left( U_{2r_x}(x) \right) \right)$ 
of $S_{n,\delta^{\ast}} \left( U_{2r_x}(x) \right)$
for which the restriction
$\varphi_x |_{R_{\varphi_x} 
\left( S_{n,\delta^{\ast}} \left( U_{2r_x}(x) \right) \right)}$
is $\left( 1+2\delta^{\ast} \right)$-bi-Lipschitz embedding.
Moreover,
by Propositions \ref{prop: bilipfiber} and Lemma \ref{lem: llstr2},
for every $y \in U_{r_x}(x)$,
and for the fiber $\Pi_y$ defined by 
$\Pi_y := \varphi_x^{-1} \left( \{ \varphi_x(y) \} \right)$,
and for every $s \in \left( 0, r_x \right)$,
the slice $\Pi_y \cap B_s(y)$
is a metric tree equipped with the interior metric $d_{\Pi_y}$
such that for all $y_1, y_2 \in \Pi_y \cap B_s(y)$ we have
\[
\left\vert
d_{\Pi_y} \left( y_1, y_2 \right) - d_X \left( y_1, y_2 \right)
\right\vert
< \delta^{\ast}.
\]

For every $\epsilon \in (0,\infty)$,
we find a point
$x_0$ in 
$R_{\varphi_x} \left( S_{n,\delta^{\ast}} \left( U_{2r_x}(x) \right) \right)$
with $d_X \left( x, x_0 \right) < \epsilon$,
and a sufficiently small number $r_0$ in $\left( 0, r_x \right)$
satisfying $U_{r_0} \left( x_0 \right) \subset U_{2r_x}(x)$,
such that
$S_{n,\delta^{\ast}} \left( U_{r_0} \left( x_0 \right) \right)$
is contained in
$R_{\varphi_x} \left( S_{n,\delta^{\ast}} \left( U_{2r_x}(x) \right) \right)$.
Then the $\left( n-1, \delta \right)$-strainer map
$\varphi_x |_{U_{r_0} \left( x_0 \right)}$
satisfies all the properties in Assumption \ref{assumption: nice}.
By virtue of Proposition \ref{prop: embedtree},
we find an open neighborhood $Y_r \left( x_0 \right)$ 
of $x_0$ for some $r$,
and an integer $l_0 \in \N$ with $l_0 \ge 3$,
and an open embedding
$\Phi \colon Y_r \left( x_0 \right) \to \R^{n-1} \times T_{l_0}^1$
such that
$S \left( Y_r \left( x_0 \right) \right) = 
S_{n,\delta^{\ast}} \left( Y_r \left( x_0 \right) \right)$
and
\[
0 < \Haus^{n-1} \left( S \left( Y_r \left( x_0 \right) \right) \right) < \infty.
\]
We have completed the proof of Theorem \ref{thm: dwst}.
\end{proof}

\begin{rem}
In the proof of Theorem \ref{thm: dwst},
the author does not know whether
an open embedding
$\Phi \colon Y_r \left( x_0 \right) \to \R^{n-1} \times T_{l_0}^1$
constructed in the proof 
of Proposition \ref{prop: embedtree},
is a bi-Lipschitz embedding.
\end{rem}

We next prove Theorem \ref{thm: wst}.

\begin{proof}[Proof of Theorem $\ref{thm: wst}$]
For $n \in \N$,
let $\delta^{\ast} \in (0,1/10)$ be sufficiently small,
and let $\delta \in (0,\delta^{\ast})$ be also sufficiently small 
for $\delta^{\ast}$.
Let $X$ be a $\GCBA(\kappa)$ space,
and let $U$ be a purely $n$-dimensional open subset of $X$.
Take an arbitrary $n$-wall point $x \in W_n(U)$ in $U$.
Lemma \ref{lem: walldwall} implies $x$ belongs to 
$W_{n,\delta,\delta^{\ast}}(U)$.
From Theorem \ref{thm: dwst}
we conclude Theorem \ref{thm: wst}.
\end{proof}

\section{Characterizations of regularity of codimension two}

In this section,
we prove Theorems \ref{thm: wrt}, \ref{thm: iwrt}, and \ref{thm: dwrt}.

\subsection{Homology manifolds with an upper curvature bound}

Let $H_{\ast}$ denote the singular homology with $\Z$-coefficients.
A locally compact, separable metric space $M$ is said to be a
\emph{homology $n$-manifold} 
if for every $p \in M$ 
the local homology
$H_{\ast}(M,M-\{ p \})$ at $p$
is isomorphic to $H_{\ast}(\R^n,\R^n-\{ 0 \})$,
where $0$ is the origin of $\R^n$.
A homology $n$-manifold $M$ is a 
\emph{generalized $n$-manifold} 
if $M$ is an $\ANR$ of finite topological dimension.
Every generalized $n$-manifold has dimension $n$.
Due to the theorem of Moore (see \cite[Chapter IV]{wilder}),
for each $n \in \{ 1, 2 \}$,
every generalized $n$-manifold is a topological $n$-manifold.

Every homology $n$-manifold with an upper curvature bound
is a $\GCBA$ generalized $n$-manifold.
We refer the readers to \cite{lytchak-nagano2} for advanced studies
of homology manifolds with an upper curvature bound.
Thurston \cite[Theorem 3.3]{thurston} showed that
every homology $3$-manifold with an upper curvature bound
is a topological $3$-manifold.
Lytchak and the author \cite[Theorem 1.2]{lytchak-nagano2} 
proved that
for every homology $n$-manifold $M$ with an upper curvature bound
there exists a locally finite subset $E$ of $M$ such that
$M-E$ is a topological $n$-manifold.

We recall the following homology manifold recognition
shown in \cite[Lemma 3.1 and Corollary 3.4]{lytchak-nagano2}:

\begin{prop}\label{prop: hmrecog}
\emph{(\cite{lytchak-nagano2})}
An open subset $U$ of a $\GCBA$ space $X$
is a homology $n$-manifold
if and only if 
for every $x \in U$
the space $\Sigma_xX$ has the same homology as $\Sph^{n-1}$;
in this case,
$\Sigma_xX$ is a homology $(n-1)$-manifold,
and $T_xX$ is a homology $n$-manifold.
\end{prop}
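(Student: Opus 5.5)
The plan is to reduce the local homology at a point to the reduced homology of the space of directions there, and then to pass to tangent spaces with an inductive cone argument.

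\emph{Step 1 (local homology versus $\Sigma_xX$).} Fix $x \in U$ and choose $r$ so small that $B_r(x)$ is a compact $\CAT(\kappa)$ ball inside $U$. By excision,
\[
H_\ast(U, U-\{x\}) \cong H_\ast\bigl(U_r(x), U_r(x)-\{x\}\bigr).
\]
The ball $U_r(x)$ is contractible along geodesics to $x$. The punctured ball $U_r(x)-\{x\}$ deformation retracts onto the sphere $S_{r/2}(x)$: first flow radially outward, using local geodesic completeness to extend geodesics from $x$, and then project radially. For small $r$, the sphere $S_{r/2}(x)$ is homotopy equivalent to $\Sigma_xX$ via the logarithm map $y \mapsto y_x'$; this is the result of Lytchak and the author on small spheres in $\GCBA$ spaces. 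The long exact sequence of the pair then gives
\[
H_k(U,U-\{x\}) \cong \tilde H_{k-1}(\Sigma_xX).
\]
Consequently $U$ is a homology $n$-manifold if and only if every $\Sigma_xX$ has the homology of $\Sph^{n-1}$. This proves the first equivalence. Local compactness and separability hold automatically for $\GCBA$ spaces.

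\emph{Step 2 (cone and suspension bookkeeping).} Both $\Sigma_xX$ and $T_xX = C_0(\Sigma_xX)$ are $\GCBA$, so Step 1 applies to each. At the vertex $0$ of $T_xX$, the space of directions is $\Sigma_xX$ itself, which is a homology $(n-1)$-sphere by assumption. At a point $tv$ with $t>0$ and $v \in \Sigma_xX$, the space of directions splits as
\[
\Sigma_{tv}T_xX = \Sph^0 \ast \Sigma_v\Sigma_xX,
\]
and suspension shifts homology by one. Hence $T_xX$ is a homology $n$-manifold if and only if $\Sigma_xX$ is a homology $(n-1)$-manifold. So both remaining claims reduce to showing that $T_xX$ is a homology $n$-manifold.

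\emph{Step 3 (main obstacle: passing to the tangent cone).} I would realize $T_xX$ as the pointed Gromov--Hausdorff limit of the rescaled balls $((1/t)B_r(x), x)$ as $t \to 0$. Tiny balls in $\GCBA$ spaces are uniformly locally geometrically contractible, since balls of radius $s < D_\kappa/2$ contract along geodesics with a uniform control function. By the Petersen homotopy stability theorem \cite{petersen}, the pairs $(B_R(y_t), B_R(y_t) - U_\rho(y_t))$ then converge in homotopy type to the corresponding pairs in the limit, for generic radii, whenever $y_t \to y \in T_xX$. The local homology of $(1/t)U$ at $y_t$ is that of $\R^n$. Taking the radii $\rho$ small, and using Step 1 together with the identification of small annuli with $\Sigma_y$, this transfers to $H_\ast(T_xX, T_xX-\{y\}) \cong H_\ast(\R^n,\R^n-\{0\})$.

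The delicate point is making Step 3 precise: one must choose the radii uniformly so that the annuli are controlled both along the sequence and in the limit. If that control fails, the fallback is to work in $X$ itself near $x$. Since the direction $x_y'$ is a $(1,\delta)$-suspender in $\Sigma_yX$ by Proposition \ref{prop: 1dr}, one can compare $\Sigma_yX$ for $y$ near $x$ along a direction $v$ with $\Sph^0 \ast \Sigma_v\Sigma_xX$, via the surjective $1$-Lipschitz map of Lemma \ref{lem: uusclink} and Proposition \ref{prop: tndsusp}. That comparison would then give the homology of $\Sigma_v\Sigma_xX$ directly.
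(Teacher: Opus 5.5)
Your Steps 1 and 2 are sound and follow the outline behind the citation \cite[Lemma 3.1, Corollary 3.4]{lytchak-nagano2}. Local homology at $x$ is $\tilde H_{*-1}(\Sigma_xX)$, and the splitting $\Sigma_{tv}T_xX=\Sph^0\ast\Sigma_v\Sigma_xX$ reduces both remaining claims to $T_xX$ being a homology $n$-manifold. One repair is needed in Step 1: there is no continuous outward radial flow, because geodesic extensions are not unique. Use instead the inward projection, which retracts $B_r(x)\setminus U_s(x)$ onto $S_s(x)$, together with compact supports. This gives $H_*(B_r(x)\setminus\{x\})=\varinjlim_{s\to0}H_*(S_s(x))$. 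The bonding maps commute with $y\mapsto y_x'$, so they are isomorphisms for small radii.

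The genuine gap is Step 3, which carries the whole content of the last two assertions. Petersen's theorem concerns compact, uniformly $\LGC$ spaces of bounded dimension. It gives no stability for pairs $(B_R(y_t),B_R(y_t)-U_\rho(y_t))$, whose second members are not known to be uniformly $\LGC$.

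More seriously, the part that does go through never meets the hypothesis. By Lemma \ref{lem: usstab}, $d_{y_t}$ is a $(1,\delta)$-strainer map on a fixed annulus around $y_t$. Hence the spheres $S_\rho(y_t)$ are uniformly $\LGC$ fibers and are homotopy equivalent to $S_\rho(y)\simeq\Sigma_yT_xX$ for small $t$. But Step 1 reads off the local homology of $(1/t)U$ at $y_t$ only from spheres below a radius that depends on $y_t$. That radius may tend to $0$ in the rescaled metric. Spheres of a fixed radius need not have the homology of the space of directions: around a point of $\R\times T_3^1$ close to the wall they are theta graphs.

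So some homological property of homology manifolds must be used at a scale independent of the point. A natural choice is duality. Take the contractible, hence orientable, homology $n$-manifold $M=U_{2\rho}(y_t)$ and the compact convex ball $K=B_\rho(y_t)$. Then $H_i(M,M\setminus K)\cong\check H^{n-i}(K)$, so $\tilde H_{*-1}(M\setminus K)$ is the homology of $\Sph^{n-1}$ for every $\rho$. Relate $M\setminus K$ to the spheres $S_{\rho'}(y_t)$ on the strainer annulus by inward projection, as in Step 1, and apply Petersen to these spheres. This shows $\Sigma_yT_xX$ has the homology of $\Sph^{n-1}$, and Step 1 applied to $T_xX$ finishes. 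Alternatively, invoke the limit lemma \cite[Lemma 3.3]{lytchak-nagano2}, quoted here as Lemma \ref{lem: limhm}, for the blow-up $((1/t)X,x)\to(T_xX,0)$. The source places it immediately before the corollary you are proving.

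Your fallback does not repair this. In its version for the blow-up, the map of Lemma \ref{lem: uusclink} is only a surjective $1$-Lipschitz map from $\Sigma_vT_xX=\Sph^0\ast\Sigma_v\Sigma_xX$ onto $\ulim\Sigma_{y_k}X$. It is in general not injective, since spaces of directions are only upper semicontinuous. Proposition \ref{prop: tndsusp} produces $\Sigma_{y_k}X\simeq\Sph^0\ast Z_0$ only for an unidentified $Z_0$. Neither statement transports homology back to $\Sigma_v\Sigma_xX$.
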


This leads to the following:

\begin{prop}\label{prop: hmnw}
Assume that an open subset $U$ of a $\GCBA$ space $X$
is a homology $n$-manifold.
Then the $n$-wall set $W_n(U)$ in $U$ is empty.
\end{prop}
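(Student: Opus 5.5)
The plan is to argue by contradiction, working pointwise and infinitesimally with Proposition \ref{prop: hmrecog}. Suppose some $x \in U$ is an $n$-wall point. We first pin down the tangent structure at $x$. Since $U$ is a homology $n$-manifold, it is a generalized $n$-manifold, hence purely $n$-dimensional. In particular $x \in X^n$, that is, $\dim T_xX = n$. As observed in the introduction, every $n$-wall point in $X^n$ has $T_xX$ isometric to $\R^{n-1} \times T_l^1$ for some $l \ge 3$. Equivalently, $\Sigma_xX$ is isometric to the spherical join $\Sph^{n-2} \ast T_l^0$, where $T_l^0$ is a discrete set of $l \ge 3$ points at mutual distance $\pi$. (This also follows directly from Proposition \ref{prop: m0susp}: $(n-1)$-regularity gives a splitting $\Sph^{n-2} \ast Z_0$ with $\dim Z_0 = 0$, and geodesic completeness together with $n$-singularity forces $Z_0 = T_l^0$ with $l \ge 3$.)

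Next we compute the homology of this join. The spherical join $\Sph^{n-2} \ast T_l^0$ is homeomorphic to the topological join, which is the $(n-1)$-fold suspension of a discrete set of $l$ points. The reduced homology of $T_l^0$ is $\Z^{l-1}$, concentrated in degree $0$. Suspension shifts reduced homology by one degree each time, so $\tilde H_{n-1}(\Sigma_xX) \cong \Z^{l-1}$, and the reduced homology vanishes in all other degrees. Since $l \ge 3$, we have $\Z^{l-1} \not\cong \Z = \tilde H_{n-1}(\Sph^{n-1})$. Therefore $\Sigma_xX$ does not have the homology of $\Sph^{n-1}$, which contradicts Proposition \ref{prop: hmrecog}. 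It follows that $W_n(U) = \emptyset$.

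The only point that needs care is the first step, namely identifying $\Sigma_xX$ exactly as $\Sph^{n-2} \ast T_l^0$. This requires confirming that $x \in X^n$, which comes from pure dimensionality of the homology manifold, and that the residual join factor is a finite discrete set rather than something more complicated. Geodesic completeness of $\Sigma_xX$ and the dimension count handle the latter. Beyond this, the argument is a routine suspension computation.

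Alternatively, the contradiction can be obtained via local homology, avoiding the identification of $\Sigma_xX$ altogether. Proposition \ref{prop: hmrecog} says $T_xX$ is a homology $n$-manifold. On the other hand, $\R^{n-1} \times T_l^1$ has local homology at the origin equal to $\Z^{l-1}$ in degree $n$, which is not $\Z$ when $l \ge 3$. This gives the same contradiction.
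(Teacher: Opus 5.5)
Your proposal is correct and follows the paper's proof: assume an $n$-wall point $x$, identify $\Sigma_xX$ isometrically with $\Sph^{n-2} \ast T_l^0$ for some $l \ge 3$, compute that its homology in degree $n-1$ is $\Z^{l-1}\not\cong \Z$, and contradict Proposition \ref{prop: hmrecog}. Your added justification that $x \in X^n$, via pure-dimensionality of the homology manifold, fills in a step the paper leaves implicit.
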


\begin{proof}
Suppose that we find an $n$-wall point $x$ in $W_n(U)$.
Then $\Sigma_xX$ isometrically splits as
$\Sph^{n-2} \ast T_l^0$ for $l \in \N$ with $l \ge 3$.
Hence $H_{n-1}(\Sigma_xX)$ is isomorphic to $\Z^{l-1}$.
This contradicts Proposition \ref{prop: hmrecog}.
\end{proof}

\subsection{Properties of codimension two}

For the proofs of Theorems \ref{thm: wrt} and \ref{thm: dwrt},
we begin with the following:

\begin{lem}\label{lem: nwcd}
Let $X$ be a $\GCBA$ space,
and $U$ a purely $n$-dimensional open subset of $X$.
Then the following are equivalent:
\begin{enumerate}
\item
the $n$-wall set $W_n(U)$ in $U$ is empty;
\item
$\dim S_n(U) \le n-2$;
\item
$\dim_{\mathrm{H}} S_n(U) \le n-2$.
\end{enumerate}
\end{lem}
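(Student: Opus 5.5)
The plan is to prove the cycle of implications (1) $\Rightarrow$ (3) $\Rightarrow$ (2) $\Rightarrow$ (1).

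\emph{(3) $\Rightarrow$ (2).} For $\GCBA$ spaces, topological dimension is bounded above by Hausdorff dimension on subsets. This is the standard inequality $\dim A \le \dim_{\mathrm{H}} A$ for separable metric spaces, so this implication is immediate.

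\emph{(1) $\Rightarrow$ (3).} Assume $W_n(U)$ is empty. Since $U$ is purely $n$-dimensional, every point has $\dim T_xX \le n$. Hence by Lemma \ref{lem: ndr} we have $S_{n+1}(U)=U$, and moreover $S_n(U) = S_{n-1}(U) \cup W_n(U) = S_{n-1}(U)$. By Theorem \ref{thm: srect} applied with $m=n-1$, the set $S_{n-1}(X)$ is countably $(n-2)$-rectifiable with $\dim_{\mathrm{H}} S_{n-1}(X)\le n-2$. Therefore $\dim_{\mathrm{H}} S_n(U) \le n-2$. This step is essentially bookkeeping with the filtration $(\ast)$.

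\emph{(2) $\Rightarrow$ (1), the main step.} Argue by contraposition: suppose $x \in W_n(U)$. Since $U$ is purely $n$-dimensional, $x \in W_n(X^n)$, where $X^n$ is the $n$-dimensional part. Apply Theorem \ref{thm: wst} (equivalently Theorem \ref{thm: dwst} via Lemma \ref{lem: walldwall}). This yields $x_0$ and an open $U_0 \subset U$ homeomorphic to $\R^{n-1}\times T^1_{l_0}$ with $l_0\ge3$, with $S(U_0)=S_{n,\delta^{\ast}}(U_0)$ and $\Haus^{n-1}(S(U_0))>0$. Under the homeomorphism $\Phi$ of Lemma \ref{lem: embed}, $S(U_0)$ corresponds to $\R^{n-1}$-directions times the vertex, i.e. to an open subset of $\R^{n-1}\times\{0\}$. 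Hence $S(U_0)$ contains a subset homeomorphic to an open set of $\R^{n-1}$, giving $\dim S(U_0)=n-1$. Since $S_{n,\delta^{\ast}}(U_0)\subset S_n(U_0)\subset S_n(U)$, we get $\dim S_n(U)\ge n-1$, contradicting (2).

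I expect the main obstacle to be verifying that $\Phi$ identifies $S_{n,\delta^\ast}(U_0)$ with an open subset of $\R^{n-1}\times\{0\}$, rather than merely a set of positive measure. From the construction this works as follows. The map $\psi$ sends exactly $S_{n,\delta^\ast}$ to the vertex, and $\varphi$ maps $S_{n,\delta^{\ast}}(U_r(x_0))$ onto the open set $\varphi(Y_r(x_0))$ by Lemma \ref{lem: basenbd}. Alternatively, one can avoid this point: the restriction $\varphi|_{S_{n,\delta^{\ast}}}$ is bi-Lipschitz onto an open subset of $\R^{n-1}$, and this directly gives topological dimension $n-1$.
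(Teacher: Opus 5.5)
Your proposal is correct and follows essentially the same route as the paper: $S_n(U)=S_{n-1}(U)$ together with Theorem \ref{thm: srect} for the forward direction, and Lemma \ref{lem: walldwall} with Theorem \ref{thm: dwst} for the converse, which produces an open $U_0\cong\R^{n-1}\times T^1_{l_0}$ with $S(U_0)=S_{n,\delta^{\ast}}(U_0)\subset S_n(U)$; the only difference is that the paper organizes the implications as (1) $\Rightarrow$ (2),(3) and (2),(3) $\Rightarrow$ (1) rather than as a cycle. The step you flag as the main obstacle is not one: $S(\cdot)$ is a topological invariant, so the homeomorphism $U_0\cong\R^{n-1}\times T^1_{l_0}$ alone gives $S(U_0)\cong\R^{n-1}\times\{0\}$ and hence $\dim S(U_0)=n-1$, with no need to look inside the construction of $\Phi$.
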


\begin{proof}
Assume first that $W_n(U)$ is empty.
Then $S_n(U)$ coincides with $S_{n-1}(U)$.
From Theorem \ref{thm: srect} we derive
\[
\dim_{\mathrm{H}} S_n(U)
= \dim_{\mathrm{H}} S_{n-1}(U) \le \dim_{\mathrm{H}} S_{n-1}(X) \le n-2.
\]
Hence we see the implications 
(1) $\Rightarrow$ (2) and (1) $\Rightarrow$ (3).

Assume next that $\dim S_n(U) \le n-2$.
Suppose that $W_n(U)$ is non-empty.
Let $\delta^{\ast} \in (0,1)$ be small enough,
and let $\delta \in (0,\delta^{\ast})$ be sufficiently small
for $\delta^{\ast}$.
Lemma \ref{lem: walldwall} implies that
$W_n(U)$ is contained in $W_{n,\delta,\delta^{\ast}}(U)$.
Hence $W_{n,\delta,\delta^{\ast}}(U)$ is non-empty.
Due to Theorem \ref{thm: dwst},
there exists an open subset $U_0$ contained in $U$ for which 
$U_0$ is homeomorphic to $\R^{n-1} \times T_l^1$ 
for $l \in \N$ with $l \ge 3$;
moreover,
we have $S(U_0) = S_{n,\delta^{\ast}}(U_0)$.
Since $S_{n,\delta^{\ast}}(U_0)$ is contained in $S_n(U_0)$,
we obtain
\[
n-1 = \dim S_{n,\delta^{\ast}}(U_0) 
\le \dim S_n(U_0) \le \dim S_n(U),
\]
which contradicts the present assumption.
Thus we see the implications 
(2) $\Rightarrow$ (1) and (3) $\Rightarrow$ (1).
\end{proof}

Similarly to Lemma \ref{lem: nwcd},
we verify the following:

\begin{lem}\label{lem: cnwcd}
Let $X$ be a $\GCBA$ space,
and $U$ a purely $n$-dimensional open subset of $X$.
If $\dim S(U) \le n-2$,
then $W_n(U)$ is empty.
\end{lem}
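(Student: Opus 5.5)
We argue by contradiction, following the second half of the proof of Lemma \ref{lem: nwcd}. Assume $\dim S(U) \le n-2$ and suppose that $W_n(U)$ is non-empty.

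First, fix $\delta^{\ast} \in (0,1)$ small enough for $n$, and then $\delta \in (0,\delta^{\ast})$ small enough for $\delta^{\ast}$. We choose them so that Lemma \ref{lem: walldwall} and Theorem \ref{thm: dwst} both apply with these constants. Since $U$ is purely $n$-dimensional, every point of $U$ lies in $X^n$, and $\dim_{\mathrm{G}} U \le n$ by Kleiner's theorem. Hence Lemma \ref{lem: walldwall} gives $W_n(U) \subset W_{n,\delta,\delta^{\ast}}(U)$, and we may pick a point $x \in W_{n,\delta,\delta^{\ast}}(X^n) \cap U$.

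Second, we apply Theorem \ref{thm: dwst} to $x$ and the neighborhood $U$. This yields an open set $U_0 \subset U$ homeomorphic to $\R^{n-1} \times T_{l_0}^1$ with $l_0 \ge 3$. It also gives $S(U_0) = S_{n,\delta^{\ast}}(U_0)$ and $0 < \Haus^{n-1}(S(U_0)) < \infty$. The point needing care is the topological dimension of $S(U_0)$.

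\begin{itemize}
\item Under the homeomorphism, the non-manifold points of $\R^{n-1} \times T_{l_0}^1$ are exactly $\R^{n-1} \times \{0\}$, because $l_0 \ge 3$ and local homology at those points detects the branching. Hence $S(U_0)$ is homeomorphic to $\R^{n-1}$ and $\dim S(U_0) = n-1$.
\item Alternatively, positivity of $\Haus^{n-1}(S(U_0))$ gives $\dim_{\mathrm{H}} S(U_0) \ge n-1$. This alone does not bound the topological dimension from below, so we rely on the homeomorphism description instead.
\end{itemize}

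Third, $U_0$ is open in $U$, so a point of $U_0$ is a manifold point of $U_0$ if and only if it is a manifold point of $U$. Therefore $S(U_0) = S(U) \cap U_0 \subset S(U)$. Monotonicity of covering dimension for subsets of separable metric spaces then gives
\[
n-1 = \dim S(U_0) \le \dim S(U) \le n-2,
\]
which is a contradiction. Hence $W_n(U)$ is empty.

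The main obstacle is the first bullet, namely identifying $S(\R^{n-1} \times T_{l_0}^1)$ with $\R^{n-1} \times \{0\}$. A point $(v,t\xi_i)$ with $t>0$ has a Euclidean neighborhood. At $(v,0)$ the local homology is $H_\ast(\R^{n-1} \times T_{l_0}^1, \R^{n-1} \times T_{l_0}^1 - \{(v,0)\})$, which is nonzero only in degree $n$, where it equals $\Z^{l_0-1} \neq \Z$. So $(v,0)$ is not an $m$-manifold point for any $m$.
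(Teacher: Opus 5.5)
Your argument is correct and is essentially the paper's proof. The paper applies Theorem \ref{thm: wst} directly to obtain $U_0 \subset U$ homeomorphic to $\R^{n-1} \times T_{l_0}^1$ and concludes $n-1 = \dim S(U_0) \le \dim S(U)$. You instead go through Lemma \ref{lem: walldwall} and Theorem \ref{thm: dwst}, which is exactly how the paper proves Theorem \ref{thm: wst}. Your local homology identification $S(U_0) \cong \R^{n-1} \times \{0\}$ and the observation $S(U_0) = S(U) \cap U_0$ spell out steps the paper leaves implicit.
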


\begin{proof}
Suppose that $W_n(U)$ is non-empty.
Due to Theorem \ref{thm: wst},
there exists an open subset $U_0$ contained in $U$ for which 
$U_0$ is homeomorphic to $\R^{n-1} \times T_k^1$ with $k \ge 3$.
Hence 
\[
n-1 = \dim S(U_0) \le \dim S(U),
\]
which is a contradiction.
\end{proof}

\subsection{Relaxed regularity of codimension two}

We are going to prove Theorem \ref{thm: dwrt}.
Lemma \ref{lem: nwcd} can be relaxed into the following:

\begin{lem}
\label{lem: rnwcd}
For every $n \in \N$,
there exist $\delta^{\ast} \in (0,1)$ 
and $\delta \in (0,\delta^{\ast})$ satisfying the following:
Let $X$ be a $\GCBA$ space,
and $U$ a purely $n$-dimensional open subset of $X$.
Then the following are equivalent:
\begin{enumerate}
\item
the $(n,\delta,\delta^{\ast})$-wall set 
$W_{n,\delta,\delta^{\ast}}(U)$ in $U$ is empty;
\item
$\dim S_{n,\delta,\delta^{\ast}}(U) \le n-2$;
\item
$\dim_{\mathrm{H}} S_{n,\delta,\delta^{\ast}}(U) \le n-2$.
\end{enumerate}
\end{lem}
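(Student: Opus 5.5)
Here I read the set $S_{n,\delta,\delta^{\ast}}(U)$ in (2) and (3) as the $(n,\delta^{\ast})$-singular set $S_{n,\delta^{\ast}}(U)$; this is the reading used in Theorem \ref{thm: dwrt}. The plan is to copy the proof of Lemma \ref{lem: nwcd}, replacing the rigid sets by their relaxed versions. Theorem \ref{thm: srect} is replaced by the rectifiability statements of Proposition \ref{prop: regsing} and Lemma \ref{lem: bregsing}. Theorem \ref{thm: wst} is replaced by Theorem \ref{thm: dwst}.

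First I fix the constants.
\begin{itemize}
\item Take $\delta^{\ast}$ and an initial $\delta$ as in Theorem \ref{thm: dwst}.
\item Take the increasing tuple $(\delta_1,\dots,\delta_{n+1})$ from Lemma \ref{lem: bregsing} for dimension $n$.
\item Shrink $\delta$ so that also $\delta\le\delta_{n-1}$.
\end{itemize}
Shrinking $\delta$ only shrinks $R_{n-1,\delta}$, hence only shrinks $W_{n,\delta,\delta^{\ast}}$. So the conclusion of Theorem \ref{thm: dwst} remains valid for every point of the smaller wall set. The proof of that theorem only requires $\delta$ to be small enough, so I expect it applies verbatim; I would check this in the write-up. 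The implication (3)$\Rightarrow$(2) is automatic, because topological dimension is bounded by Hausdorff dimension.

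For (1)$\Rightarrow$(3): emptiness of $W_{n,\delta,\delta^{\ast}}(U)$ means $S_{n,\delta^{\ast}}(U)\subset S_{n-1,\delta}(U)\subset S_{n-1,\delta_{n-1}}(U)$.
\begin{itemize}
\item The set $B:=S_{n-1,\delta_{n-1}}(U)$ is closed in $U$ by Lemma \ref{lem: mdopen}.
\item Decompose it as in \eqref{eqn: bregsing0} with $m=n-1$. This gives the locally finite set $S_{1,\delta_1}(B)$ together with the strata $S_{l+1,\delta_{l+1}}\bigl(\bigcap_{k\le l}R_{k,\delta_k}(B)\bigr)$ for $1\le l\le n-2$.
\item Each point of the $l$-th stratum has a neighbourhood carrying an $(l,\delta_l)$-strainer map. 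Proposition \ref{prop: regsing}, applied to this map and to the closed set $S_{l+1,\delta_{l+1}}$ of that neighbourhood, shows that the stratum is countably $l$-rectifiable. This uses separability, which holds because $X$ is $\GCBA$.
\item Hence $\dim_{\mathrm{H}}B\le n-2$, and therefore $\dim_{\mathrm{H}}S_{n,\delta^{\ast}}(U)\le n-2$.
\end{itemize}

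For (2)$\Rightarrow$(1): suppose $W_{n,\delta,\delta^{\ast}}(U)$ contains a point $x$. Since $U$ is purely $n$-dimensional, $x\in X^n$. Theorem \ref{thm: dwst} then gives an open set $U_0\subset U$ homeomorphic to $\R^{n-1}\times T^1_{l_0}$ with $S(U_0)=S_{n,\delta^{\ast}}(U_0)$. The topological singular set of $\R^{n-1}\times T^1_{l_0}$ is $\R^{n-1}\times\{0\}$, which has dimension $n-1$. This gives
\[
n-1=\dim S_{n,\delta^{\ast}}(U_0)\le\dim S_{n,\delta^{\ast}}(U),
\]
contradicting (2).

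The main obstacle is the bookkeeping of constants: one must make sure that a single pair $(\delta,\delta^{\ast})$ serves both Theorem \ref{thm: dwst} and the rectifiability estimate. The decisive point is the monotonicity of the wall set in $\delta$ noted above.
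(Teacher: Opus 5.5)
Your (2)$\Rightarrow$(1) argument is the paper's. Your observation that shrinking $\delta$ only shrinks $W_{n,\delta,\delta^{\ast}}(U)$ correctly justifies using a smaller $\delta$ there.

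\textbf{The gap is in (1)$\Rightarrow$(3).} After arranging $\delta\le\delta_{n-1}$ you assert $S_{n-1,\delta}(U)\subset S_{n-1,\delta_{n-1}}(U)$, but the monotonicity runs the other way. For $\delta\le\delta_{n-1}$ one has $R_{n-1,\delta}(U)\subset R_{n-1,\delta_{n-1}}(U)$; this is the very fact you invoke when you say that shrinking $\delta$ shrinks $R_{n-1,\delta}$. Hence $S_{n-1,\delta_{n-1}}(U)\subset S_{n-1,\delta}(U)$.

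The inclusion you need is false in general. For instance, when $n=2$, take the vertex of the Euclidean cone over a circle of length $2\pi+\epsilon$ with $2\delta\le\epsilon<2\delta_1$. This point is $(1,\delta_1)$-regular but $(1,\delta)$-singular.

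So your rectifiability bound for $S_{n-1,\delta_{n-1}}(U)$ does not control the larger set $S_{n-1,\delta}(U)$. Your route would need $\delta_{n-1}\le\delta$, that is, a tuple from Lemma~\ref{lem: bregsing} lying below the $\delta$ required by Theorem~\ref{thm: dwst}. The statement of that lemma does not provide this; its proof suggests any sufficiently small tuple works, but you would have to verify that.

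\textbf{The fix:} no relaxed rectifiability is needed at all. For every $\delta$ one has $R_{n-1}(U)\subset R_{n-1,\delta}(U)$. So (1) gives $S_{n,\delta^{\ast}}(U)\subset S_{n-1,\delta}(U)\subset S_{n-1}(U)$, and Theorem~\ref{thm: srect} yields $\dim_{\mathrm{H}} S_{n-1}(X)\le n-2$. This bound does not depend on $\delta$, so the constants from Theorem~\ref{thm: dwst} can be used unchanged. The bookkeeping you identified as the main obstacle then disappears. This is exactly how the paper proves (1)$\Rightarrow$(2) and (1)$\Rightarrow$(3).
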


\begin{proof}
For $n \in \N$,
let $\delta^{\ast} \in (0,1)$ be sufficiently small,
and let $\delta \in (0,\delta^{\ast})$ be sufficiently small
for $\delta^{\ast}$.
Assume first that $W_{n,\delta,\delta^{\ast}}(U)$ is empty.
Then $S_{n,\delta^{\ast}}(U)$ coincides with $S_{n-1,\delta^{\ast}}(U)$.
Since $S_{n-1,\delta^{\ast}}(U)$ is contained in $S_{n-1}(U)$,
Theorem \ref{thm: srect} leads to
\begin{align*}
\dim_{\mathrm{H}} S_{n,\delta_n}(U) &= 
\dim_{\mathrm{H}} S_{n-1,\delta_n}(U)
\le \dim_{\mathrm{H}} S_{n-1}(U) \\
&\le \dim_{\mathrm{H}} S_{n-1}(X) \le n-2.
\end{align*}
Therefore we see the implications
(1) $\Rightarrow$ (2) and (1) $\Rightarrow$ (3).

Assume next that $\dim S_{n,\delta^{\ast}}(U) \le n-2$.
Suppose that $W_{n,\delta,\delta^{\ast}}(U)$ is non-empty.
By Theorem \ref{thm: dwst},
there exists an open subset $U_0$ contained in $U$
for which $U_0$ is homeomorphic to 
$\R^{n-1} \times T_{l_0}^1$ for $l_0 \in \N$ with $l_0 \ge 3$;
moreover,
we have $S(U_0) = S_{n,\delta^{\ast}}(U_0)$.
Hence
$\dim S_{n,\delta^{\ast}}(U_0) = n-1$,
which is a contradiction.
Therefore we see 
(2) $\Rightarrow$ (1) and (3) $\Rightarrow$ (1).
\end{proof}

Next we prove the following:

\begin{lem}
\label{lem: rnwngb}
For every $n \in \N$,
there exist $\delta^{\ast} \in (0,1)$ 
and $\delta \in (0,\delta^{\ast})$ satisfying the following:
Let $X$ be a $\GCBA$ space,
and $U$ a purely $n$-dimensional open subset of $X$.
Then the following are equivalent:
\begin{enumerate}
\item
the $(n,\delta,\delta^{\ast})$-wall set 
$W_{n,\delta,\delta^{\ast}}(U)$ in $U$ is empty;
\item
there exists no open subset $U_0$ contained in $U$
for which 
$U_0$ is homeomorphic to 
$\R^{n-1} \times T_{l_0}^1$ for some $l_0 \in \N$ with $l_0 \ge 3$
such that $S(U_0) = S_{n,\delta^{\ast}}(U_0)$
and $\Haus^{n-1}(S(U_0)) \in (0,\infty)$;
\item
the $n$-wall set $W_n(U)$ in $U$ is empty.
\end{enumerate}
\end{lem}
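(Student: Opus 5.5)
We prove the cycle of implications (1) $\Rightarrow$ (3) $\Rightarrow$ (2) $\Rightarrow$ (1), fixing $\delta^{\ast}$ small enough for Lemma \ref{lem: walldwall}, Theorem \ref{thm: dwst}, Proposition \ref{prop: gfullstr} and Lemma \ref{lem: ndr} (dimension $n$), and then $\delta$ small enough for $\delta^{\ast}$.

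\emph{(1) $\Rightarrow$ (3).} Assume $W_{n,\delta,\delta^{\ast}}(U)$ is empty. Lemma \ref{lem: walldwall} applies since $\dim_{\mathrm{G}} U \le n$, and it gives $W_n(U) \subset W_{n,\delta,\delta^{\ast}}(U) = \emptyset$. This step is immediate.

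\emph{(3) $\Rightarrow$ (2).} Assume $W_n(U) = \emptyset$, and suppose some $U_0$ as in (2) exists. Then $\Haus^{n-1}(S(U_0)) > 0$, so $\dim_{\mathrm{H}} S(U_0) \ge n-1$. Since $U$ is purely $n$-dimensional, every point of $U$ lies in $X^n$. A point of $U$ that is $n$-regular has $T_xX$ isometric to $\R^n$, because $\dim T_xX = n$; hence it lies in $R_{n,\delta}(U)$, which is a Lipschitz $n$-manifold by Proposition \ref{prop: gfullstr}. Since $R_{n,\delta}(U)$ is open, every such point is a manifold point. Therefore $S(U_0) \subset S_n(U_0)$. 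Because $W_n(U) = \emptyset$, we have $S_n(U) = S_{n-1}(U)$, and Theorem \ref{thm: srect} gives $\dim_{\mathrm{H}} S_{n-1}(X) \le n-2$. This contradicts $\dim_{\mathrm{H}} S(U_0) \ge n-1$. The argument is essentially the proof of Lemma \ref{lem: nwcd}, (1) $\Rightarrow$ (3), with the inclusion $S(U_0) \subset S_n(U_0)$ added.

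\emph{(2) $\Rightarrow$ (1).} This is the contrapositive of Theorem \ref{thm: dwst}. If $x \in W_{n,\delta,\delta^{\ast}}(U)$, then $x \in X^n$ because $U$ is purely $n$-dimensional. Theorem \ref{thm: dwst}, applied with the neighborhood $U$ of $x$, produces an open $U_0 \subset U$ homeomorphic to $\R^{n-1} \times T_{l_0}^1$ with $l_0 \ge 3$, $S(U_0) = S_{n,\delta^{\ast}}(U_0)$, and $0 < \Haus^{n-1}(S(U_0)) < \infty$. Such a $U_0$ violates (2).

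The main point needing care is the compatibility of constants. The pair $(\delta^{\ast},\delta)$ must simultaneously satisfy Theorem \ref{thm: dwst}, which requires $\delta$ small relative to $\delta^{\ast}$, and Lemma \ref{lem: walldwall}, which requires $\delta^{\ast}$ small and then holds for all $\delta < \delta^{\ast}$. We first choose $\delta^{\ast}$ below both thresholds, and then choose $\delta$ below the threshold for Theorem \ref{thm: dwst}. Everything else follows from results already established.
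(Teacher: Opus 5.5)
Your proof is correct. Your steps (1) $\Rightarrow$ (3) via Lemma \ref{lem: walldwall} and (2) $\Rightarrow$ (1) as the contrapositive of Theorem \ref{thm: dwst} are the paper's.

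The difference is in (3) $\Rightarrow$ (2), and there your route is the one that actually closes the cycle. The paper's paragraph for this implication assumes $W_n(U) \neq \emptyset$ and then uses Lemma \ref{lem: walldwall} and Theorem \ref{thm: dwst} to produce a $U_0$ as in (2). That establishes $\neg(3) \Rightarrow \neg(2)$, i.e.\ (2) $\Rightarrow$ (3), which is the converse of what is needed. As a result, no implication out of (3) is proved there.

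Your dimension count fills this. It combines three facts:
\begin{enumerate}
\item $n$-regular points of a purely $n$-dimensional $U$ are manifold points, by Proposition \ref{prop: gfullstr}, as in the proof of Corollary \ref{cor: tsrect};
\item $S_n(U) = S_{n-1}(U)$ when $W_n(U) = \emptyset$;
\item $\dim_{\mathrm{H}} S_{n-1}(X) \le n-2$, by Theorem \ref{thm: srect}.
\end{enumerate}
Together these give $\dim_{\mathrm{H}} S(U) \le n-2$, which excludes any $U_0 \subset U$ with $\Haus^{n-1}(S(U_0)) > 0$. The argument uses only that positivity, not the condition $S(U_0) = S_{n,\delta^{\ast}}(U_0)$, and it does not depend on the constants.

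Combined with (2) $\Rightarrow$ (1), this gives the nontrivial direction (3) $\Rightarrow$ (1): an $(n,\delta,\delta^{\ast})$-wall point forces a genuine $n$-wall point in $U$. The paper later relies on this direction in Lemmas \ref{lem: nbngb} and \ref{lem: limnw}.
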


\begin{proof}
For $n \in \N$,
let $\delta^{\ast} \in (0,1)$ be sufficiently small,
and let $\delta \in (0,\delta^{\ast})$ be sufficiently small.
By Theorem \ref{thm: dwst},
for every $x \in W_{n,\delta,\delta^{\ast}}(U)$,
there exists an open subset $U_0$ contained in $U$
for which 
$U_0$ is homeomorphic to 
$\R^{n-1} \times T_{l_0}^1$ for some $l_0 \in \N$ with $l_0 \ge 3$
such that $S(U_0) = S_{n,\delta^{\ast}}(U_0)$
and $\Haus^{n-1}(S(U_0)) \in (0,\infty)$.
This implies the implication (2) $\Rightarrow$ (1).

We next verify (1) $\Rightarrow$ (3).
Assume that $W_{n,\delta_n}(U)$ is empty.
From Lemma \ref{lem: walldwall}
it follows that $W_n(U)$ is contained in $W_{n,\delta_n}(U)$.
Hence $W_n(U)$ is empty.
This implies (1) $\Rightarrow$ (3).

The rest is to show (3) $\Rightarrow$ (2).
Assume that we find an $n$-wall point $x$ in $W_n(U)$.
Lemma \ref{lem: walldwall} implies $x \in W_{n,\delta,\delta^{\ast}}(U)$.
By Theorem \ref{thm: dwst},
there exists an open subset $U_0$ contained in $U$
for which 
$U_0$ is homeomorphic to 
$\R^{n-1} \times T_{l_0}^1$ for some $l_0 \in \N$ with $l_0 \ge 3$
such that $S(U_0) = S_{n,\delta^{\ast}}(U_0)$
and $\Haus^{n-1}(S(U_0)) \in (0,\infty)$.
This implies (3) $\Rightarrow$ (2).
\end{proof}

Thus we arrive at Theorem \ref{thm: dwrt}.

\begin{proof}[Proof of Theorem \emph{\ref{thm: dwrt}}]
Lemmas \ref{lem: rnwcd} and \ref{lem: rnwngb}
lead to the theorem.
\end{proof}

\subsection{Regularity of codimension two}

In order to prove Theorem \ref{thm: wrt},
we first show the following:

\begin{lem}
\label{lem: nbngb}
Let $X$ be a $\GCBA$ space,
and $U$ a purely $n$-dimensional open subset of $X$.
Assume that the $n$-wall set $W_n(U)$ in $U$ is empty.
Then there exists no open subset $U_0$ contained in $U$
for which $U_0$ is homeomorphic to $\R^{n-1} \ast T_{l_0}$
for some $l_0 \in \N$ with $l_0 \ge 3$.
\end{lem}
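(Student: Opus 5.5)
We argue by contradiction. We read $\R^{n-1} \ast T_{l_0}$ in the statement as $\R^{n-1} \times T_{l_0}^1$, as in Theorem \ref{thm: wrt}. Suppose $W_n(U)$ is empty, but some open $U_0 \subset U$ admits a homeomorphism $h \colon U_0 \to \R^{n-1} \times T_{l_0}^1$ with $l_0 \ge 3$. The strategy is to find a point of $U_0$ at which $U$ is a homology $n$-manifold nearby, yet whose local homology is not that of $\R^n$. Alternatively, we produce an $n$-wall point directly.

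\textbf{Step 1: the singular set of $U_0$ is large.} Topologically, $S(U_0) = h^{-1}(\R^{n-1} \times \{0\})$. Indeed, for a point $w = h^{-1}(u,0)$ the local homology satisfies
\[
H_{n}(U_0, U_0 - \{w\}) \cong \Z^{l_0-1},
\]
which is not $\Z$, so $w$ is not a manifold point. Hence $\dim S(U_0) = n-1$. Since $U$ is purely $n$-dimensional, every manifold point of $U_0$ is an $n$-manifold point.

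\textbf{Step 2: the hypothesis forces small singular dimension.} Since $W_n(U)$ is empty, Lemma \ref{lem: nwcd} gives $\dim S_n(U) \le n-2$. Moreover, Proposition \ref{prop: gfullstr} (compare the proof of Corollary \ref{cor: tsrect}) shows that $R_n(U)$ consists of $n$-manifold points in a purely $n$-dimensional $U$. Indeed, an $n$-regular point has an $(n,\delta)$-strainer map that is a bi-Lipschitz embedding near it, and by local openness (Lemma \ref{lem: wopen}) this map is a homeomorphism onto an open subset of $\R^n$. Hence $S(U_0) \subset S_n(U_0)$, so $\dim S(U_0) \le n-2$. This contradicts Step 1.

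The main obstacle is Step 2, namely justifying $R_n(U) \subset U - S(U)$. I would argue it via the cited Corollary 11.2 of \cite{lytchak-nagano1}, or directly from Proposition \ref{prop: fullstr} combined with the $(1+\epsilon)$-openness of Proposition \ref{prop: almsubm}. The openness is what makes an injective strainer map a local homeomorphism onto an open set.

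A second, purely topological point is computing the local homology in Step 1. Here we use excision: a neighbourhood of $(u,0)$ in $\R^{n-1} \times T_{l_0}^1$ is the product of an open $(n-1)$-ball with the cone on $l_0$ points. By the Künneth formula and $\tilde H_0$ of $l_0$ points being $\Z^{l_0-1}$, the local homology is concentrated in degree $n$ and equals $\Z^{l_0-1}$. This rules out being an $n$-manifold point, and also an $m$-manifold point for $m \ne n$.
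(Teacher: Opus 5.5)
Your proof is correct, and it takes a genuinely different and much shorter route than the paper's.

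\textbf{The paper's route.} It works inside $S(U_0)$. It applies Proposition~\ref{prop: gregsing} to obtain an open dense subset $R(S(U_0))$ carrying strainer maps of rank $\nu(x)$ that are bi-Lipschitz on $R(S(U_0))$. The dimension count of Sublemma~\ref{slem: snbngb} then forces $\nu \equiv n-1$, so $R(S(U_0))$ consists of $(n,\delta_{n-1},\delta_n)$-wall points. Finally it invokes Lemma~\ref{lem: rnwngb}, and through it the wall singularity Theorem~\ref{thm: dwst}, to produce a genuine $n$-wall point.

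\textbf{Your route.} You use only three ingredients:
\begin{enumerate}
\item the elementary half of Lemma~\ref{lem: nwcd}: if $W_n(U)=\emptyset$ then $S_n(U)=S_{n-1}(U)$, so $\dim_{\mathrm{H}} S_n(U)\le n-2$ by Theorem~\ref{thm: srect};
\item the inclusion $S(U)\subset S_n(U)$, which comes from Proposition~\ref{prop: gfullstr} exactly as in Corollary~\ref{cor: tsrect};
\item a local homology computation identifying $S(U_0)$ with an $(n-1)$-dimensional slice.
\end{enumerate}
So you need neither Theorem~\ref{thm: dwst} nor Proposition~\ref{prop: gregsing}. You also avoid matching the constants $\delta_{n-1},\delta_n$ with the $\delta,\delta^{\ast}$ of Lemma~\ref{lem: rnwngb}. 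The same argument gives (1)$\Rightarrow$(6) of Theorem~\ref{thm: wrt} directly, without Theorem~\ref{thm: dwrt}.

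\textbf{A weak point your route avoids.} The paper's last step needs the direction ``$W_n(U_0)=\emptyset \Rightarrow W_{n,\delta,\delta^{\ast}}(U_0)=\emptyset$'' of Lemma~\ref{lem: rnwngb}. However, the paragraph there labelled (3)$\Rightarrow$(2) actually assumes $W_n(U)\neq\emptyset$ and produces such a $U_0$, which proves (2)$\Rightarrow$(3). The missing direction is most easily recovered from your observation $\dim S_{n,\delta^{\ast}}(U)\le\dim S_n(U)\le n-2$ together with Lemma~\ref{lem: rnwcd}.

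What the paper's approach offers in exchange is structural information: the regular part of $S(U_0)$ consists of relaxed wall points. The lemma as stated does not require this.
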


\begin{proof}
Suppose that 
an open subset $U_0$ contained in $U$ 
is homeomorphic to $\R^{n-1} \ast T_{l_0}$
for $l_0 \in \N$ with $l_0 \ge 3$.
Then 
$\dim S(U_0) = n-1$,
and each point in $S(U_0)$ is not isolated in $S(U_0)$.
Moreover,
for every $x \in S(U_0)$,
and for every sufficiently small open neighborhood $V$ of $x$,
we have
\[
\dim \left( S(U) \cap V \right) = n-1.
\]
By Proposition \ref{prop: gregsing},
for some non-empty open sense subset $R(S(U_0))$ of $S(U_0)$,
there exists a function 
$\nu_{R(S(U_0))} \colon R(S(U_0)) \to \{ 1, \dots, n-1 \}$,
such that if for a point $x$ in $R(S(U_0))$ we set
$m := \nu_{R(S(U_0))}(x)$,
then we find an $(m,\delta_m)$-strainer map
$\varphi_x \colon V_x \to \R^m$ 
for some open neighborhood $V_x$ of $x$
whose restriction 
$\varphi |_{R(S(U_0))}$ to $R(S(U_0))$
is a $(1+2\delta_{m+1})$-bi-Lipschitz embedding into $\R^m$.
In this case, 
we see the following:

\begin{slem}
\label{slem: snbngb}
For every $x \in R(S(U_0))$ we have
\[
\nu_{R(S(U_0))}(x) = n-1.
\]
In particular,
the set $R(S(U_0))$ is contained in
$R_{n-1,\delta_{n-1}}(U_0)$.
\end{slem}

\begin{proof}
Suppose that we find $x_0 \in R(S(U_0))$
with
$\nu_{R(S(U_0))}(x_0) \le n-2$.
Set $m_0 := \nu_{R(S(U_0))}(x_0)$.
In this case,
we find an $(m_0,\delta_{m_0})$-strainer map
$\varphi_{x_0} \colon V_{x_0} \to \R^{m_0}$ for 
some open neighborhood $V_{x_0}$ of $x_0$
whose restriction 
$\varphi_{x_0} |_{R(S(U_0))}$ to $R(S(U_0))$
is a $(1+2\delta_{m_0+1})$-bi-Lipschitz embedding into $\R^{m_0}$.
Then there exists an open neighborhood $V_1$ of $x_0$
contained in $V_0$ such that
$R(S(U_0)) \cap V_1$ coincides with $S(U_0) \cap V_1$.
Hence we obtain
\[
\dim \left( S(U_0) \cap V_1 \right)
\le \dim_{\mathrm{H}} \left( R(S(U_0)) \cap V_1 \right) \le m_0,
\]
and hence we have 
$\dim \left( S(U_0) \cap V_1 \right) \le n-2$.
On the other hand,
in our setting,
we have
$\dim \left( S(U_0) \cap V_1 \right) = n-1$.
This is a contradiction.
\end{proof}

From Sublemma \ref{slem: snbngb}
it follows that
the set $R(S(U_0))$ is contained in
$W_{n,\delta_n}(U_0)$;
In particular,
$W_{n,\delta_{n-1},\delta_n}(U_0)$ is non-empty.
By Lemma \ref{lem: rnwngb},
the $n$-wall set $W_n(U_0)$ in $U_0$ is non-empty.
Hence the set $W_n(U)$ is non-empty.
This finishes the proof of Lemma \ref{lem: nbngb}.
\end{proof}

Now we prove Theorem \ref{thm: wrt}.

\begin{proof}
[Proof of Theorem \ref{thm: wrt}]
Let $X$ be a $\GCBA$ space,
and $U$ a purely $n$-dimensional open subset of $X$.
In Lemma \ref{lem: nbngb},
we verify the implication (1) $\Rightarrow$ (2).
The opposite one (2) $\Rightarrow$ (1)
can be derive from Theorem \ref{thm: dwrt}.
In Lemma \ref{lem: nwcd},
we show that (1), (3), and (4) are mutually equivalent.
The implication (6) $\Rightarrow$ (5) follows from 
$\dim \le \dim_{\mathrm{H}}$,
and (5) $\Rightarrow$ (1) follows from Lemma \ref{lem: cnwcd}.
The rest is to prove (1) $\Rightarrow$ (6).
Assume that $W_n(U)$ is empty.
Let $\delta^{\ast} \in (0,1)$ be small enough.
Theorem \ref{thm: dwrt} implies
$\dim_{\mathrm{H}} S_{n,\delta^{\ast}}(U) \le n-2$.
Since $S(U)$ is contained in $S_{n,\delta^{\ast}}(U)$,
\[
\dim S(U)
\le \dim_{\mathrm{H}} S(U)
\le \dim_{\mathrm{H}} S_{n,\delta^{\ast}}(U) \le n-2.
\]
Therefore we see (1) $\Rightarrow$ (6).
This finishes the proof of Theorem \ref{thm: wrt}.
\end{proof}

\subsection{Convergences of regularity of codimension two}

We first review the following 
(\cite[Lemma 3.3]{lytchak-nagano2}):

\begin{lem}
\label{lem: limhm}
\emph{(\cite{lytchak-nagano2})}
Let $r \in (0,D_{\kappa}/2)$.
Take a sequence $(X_k,p_k)$ of pointed 
proper geodesic $\CAT(\kappa)$ spaces
converging to some pointed metric space $(X,p)$
in the pointed Gromov--Hausdorff topology.
If all the open metric ball $U_r(p_k)$ in $X_k$
are homology $n$-manifolds,
then the open metric ball $U_r(p)$ in $X$ is a homology $n$-manifold.
\end{lem}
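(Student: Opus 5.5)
The statement to prove is Lemma~\ref{lem: limhm}: if every ball $U_r(p_k)$ is a homology $n$-manifold, then so is the limit ball $U_r(p)$. The plan is to reduce this, via Proposition~\ref{prop: hmrecog}, to an infinitesimal statement about spaces of directions.

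\textbf{Setup.} First, $U_r(p)$ is an open subset of a $\GCBA$ space. Indeed, $X$ is a proper geodesic $\CAT(\kappa)$ space. Each $U_r(p_k)$ is a homology manifold and hence $\GCBA$, so it is locally geodesically complete. Geodesic completeness passes to the pointed limit on $U_r(p)$, by the same ultralimit argument as in Subsection~\ref{ssec: usc}. By Proposition~\ref{prop: hmrecog} it then suffices to show that $\Sigma_xX$ has the homology of $\Sph^{n-1}$ for every $x\in U_r(p)$.

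\textbf{Main step: homology of $\Sigma_xX$.} Fix $x\in U_r(p)$ and choose $x_k\in U_r(p_k)$ with $x_k\to x$.

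1. Take a small radius $s$ with $U_s(x)\subset U_r(p)$, and with $U_s(x_k)\subset U_r(p_k)$ for large $k$. Tiny balls of bounded capacity keep $B_s(x_k)$ uniformly doubling (\cite[Proposition 5.2]{lytchak-nagano1}).

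2. The small spheres $S_t(x)$ are homotopy equivalent to $\Sigma_xX$ for small $t$, by the radial retraction along geodesics in the $\GCBA$ setting. Likewise $U_t(x)-\{x\}\simeq S_t(x)$.

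3. Hence $H_\ast(\Sigma_xX)\cong H_{\ast}(U_t(x),U_t(x)-\{x\})$ shifted by one, so it suffices to compute the local homology of $X$ at $x$. It must be $H_\ast(\R^n,\R^n-\{0\})$.

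4. Compute that local homology by comparison with $X_k$. The balls $B_t(x_k)$ are uniformly locally geometrically contractible, since $\CAT(\kappa)$ balls contract along geodesics with uniform control. So Petersen's homotopy stability \cite{petersen}, applied in the $\LGC$ class to balls and annuli, yields homotopy equivalences between $B_t(x_k)$, $B_t(x_k)-U_\tau(x_k)$ and the corresponding sets in $X$ for $\omega$-large $k$. This compares the pair $(B_t(x),B_t(x)-U_\tau(x))$ with $(B_t(x_k),B_t(x_k)-U_\tau(x_k))$.

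5. The latter pair computes the local homology of $X_k$ near $x_k$, by excision and the same radial contraction at $x_k$. This equals $H_\ast(\R^n,\R^n-\{0\})$ by the homology manifold hypothesis.

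\textbf{Main obstacle.} Petersen stability needs a uniform $\LGC$ function and uniformly bounded dimension. The dimension bound should follow from $\dim U_r(p_k)=n$ together with \eqref{eqn: lscdim}. The harder point is that the annulus $B_t-U_\tau$ is not obviously $\LGC$ near its inner boundary.

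My first attempt is to avoid this issue: replace the annulus by the sphere $S_t(x_k)$. Spheres are homotopy equivalent to $\Sigma_{x_k}X_k$ with control uniform in $k$ (\cite{lytchak-nagano1}, via contraction of $B_t\setminus\{x\}$ onto $S_t$). The comparison of $S_t(x_k)$ with $S_t(x)$ would then use that spheres in tiny balls are uniformly $\LGC$, which follows from the geodesic contractions. The homology of $S_t(x_k)$ is that of $\Sph^{n-1}$ by Proposition~\ref{prop: hmrecog}, and this passes to $S_t(x)\simeq\Sigma_xX$.
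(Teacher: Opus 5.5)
There is a genuine gap: your argument needs control uniform in $k$, and none of the tools you cite provide it.

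\textbf{Where the argument breaks.} The results of \cite{lytchak-nagano1} that identify $S_t(y)$ or $U_t(y)-\{y\}$ with $\Sigma_yX$ (your ``radial contraction'') apply only below a radius on which $d_y$ is a $(1,\delta)$-strainer map on the punctured ball, as in Proposition~\ref{prop: 1dr}. That radius depends on the point and on the space, and it can collapse along $x_k\to x$. Take $X_k=X$ the Euclidean cone over a circle of length $3\pi$, with $x$ the vertex and $x_k\to x$. The direction $(x_k)_x'$ has antipodal set of diameter $\pi$ in $\Sigma_xX$, so by Lemma~\ref{lem: p1dsusp} it is not a $(1,\delta)$-suspender. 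Hence the admissible radius at $x_k$ is at most $d(x_k,x)\to 0$.

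Consequently, at a scale independent of $k$ you have neither step~5 (computing $H_\ast(B_t(x_k),B_t(x_k)-U_\tau(x_k))$ by ``the same radial contraction at $x_k$'') nor ``$S_t(x_k)\simeq\Sigma_{x_k}X_k$ with control uniform in $k$''. The homology manifold hypothesis controls each $X_k$ only infinitesimally at each point.

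Two further claims also fail:
\begin{itemize}
\item Spheres in tiny balls are not uniformly $\LGC$ ``by the geodesic contractions''. Distance spheres are not convex, and geodesic contraction leaves them. In a metric tree (a $\GCBA$ space) a sphere can contain two points at arbitrarily small distance, which rules out any contractivity function.
\item Petersen's theorem gives homotopy equivalences of spaces, not maps of pairs compatible with inclusions. So step~4 would not compare relative homology even if the annuli were uniformly $\LGC$.
\end{itemize}

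\textbf{What is missing.} Two ingredients.

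First, fix the scale at the limit point. Since $U_r(p)$ is $\GCBA$, Proposition~\ref{prop: 1dr} makes $d_x$ a $(1,\delta)$-strainer map on an annulus $\{t/2<d_x<2t\}$. Realize opposite strainers by extending geodesics (Lemma~\ref{lem: amdsusp}), then argue by contradiction with Lemma~\ref{lem: usstab}, which uses the local geodesic completeness of $U_r(p_k)$. This shows that for $\omega$-large $k$, $d_{x_k}$ is a $(1,2\delta)$-strainer map on the corresponding annuli, with straining radii bounded below. Then $S_t(x_k)$ and $S_t(x)$ are fibers of strainer maps, hence uniformly locally contractible by \cite[Theorems 1.11 and 9.1]{lytchak-nagano1}. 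By local geodesic completeness $S_t(x_k)\to S_t(x)$, and Petersen's theorem gives $S_t(x_k)\simeq S_t(x)\simeq\Sigma_xX$.

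Second, the homology of $S_t(x_k)$ at this $k$-independent radius must come from a global consequence of the homology manifold hypothesis, not from $\Sigma_{x_k}X_k$. One option is duality in the contractible (hence orientable) homology $n$-manifold $M=U_{r'}(x_k)$:
\[
H_q(M,M-K)\cong\check H^{n-q}(K).
\]
Apply it to $K=B_t(x_k)$ and $K=S_t(x_k)$, using $M-S_t(x_k)=U_t(x_k)\sqcup(M-B_t(x_k))$. This yields $\check H^\ast(S_t(x_k))\cong H^\ast(\Sph^{n-1})$. By the first step $S_t(x_k)$ is compact, finite-dimensional and locally contractible, so \v{C}ech and singular theories agree and $H_\ast(S_t(x_k))\cong H_\ast(\Sph^{n-1})$.

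With these two points your sphere strategy closes up. As written, both the transfer to the limit and the computation for $X_k$ are missing.
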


We next show the following:

\begin{lem}
\label{lem: limnw}
Let $r \in (0,D_{\kappa}/2)$.
Take a sequence $(X_k,p_k)$ of pointed purely $n$-dimensional
proper geodesic $\GCBA$ $\CAT(\kappa)$ spaces
converging to some pointed metric space $(X,p)$
in the pointed Gromov--Hausdorff topology.
If all the $n$-wall sets $W_n(U_r(p_k))$ in $U_r(p_k)$
are empty,
then the $n$-wall set $W_n(U_r(p))$ in $U_r(p)$ is empty.
\end{lem}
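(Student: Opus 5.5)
We argue by contradiction, using the relaxed characterization of Theorem \ref{thm: dwrt} (equivalently Lemmas \ref{lem: rnwcd} and \ref{lem: rnwngb}) and the stability of strainers under ultralimits (Lemma \ref{lem: usstab}). Fix $\delta^{\ast}$ and $\delta$ as in Theorem \ref{thm: dwrt} for $n$. By Lemma \ref{lem: limpd} the limit $X$ is purely $n$-dimensional, and $U_r(p)$ is a purely $n$-dimensional open subset of the $\GCBA$ space $X$. Suppose $W_n(U_r(p))$ is non-empty. By Lemma \ref{lem: walldwall} some point $x \in U_r(p)$ lies in $W_{n,\delta,\delta^{\ast}}(U_r(p))$. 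In particular $x$ is $(n-1,\delta)$-regular. By Theorem \ref{thm: dwst}, after moving $x$ slightly, there is a neighborhood $U_0 \subset U_r(p)$ of $x$ homeomorphic to $\R^{n-1}\times T^1_{l_0}$ with $S(U_0)=S_{n,\delta^{\ast}}(U_0)$ of positive $\Haus^{n-1}$-measure.

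Next we transfer this structure to $X_k$. Take an $(n-1,\delta)$-strainer $(a_1,\dots,a_{n-1})$ at a small ball around $x$, with opposite strainer $(b_i)$. Pick $x_k,a_{i,k},b_{i,k}\in X_k$ converging to $x,a_i,b_i$. Since $U_r(p_k)$ is locally geodesically complete, Lemma \ref{lem: usstab} shows that $(a_{i,k})$ strains a uniform ball around $x_k$ for $\omega$-large $k$. Hence $\varphi_k=(d_{a_{1,k}},\dots,d_{a_{n-1,k}})$ are $(n-1,\delta)$-strainer maps on balls $U_s(x_k)$ of uniform radius, and they converge to the strainer map $\varphi$ on $U_s(x)$. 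By hypothesis $W_n(U_r(p_k))=\emptyset$, so Theorem \ref{thm: dwrt} gives $W_{n,\delta,\delta^{\ast}}(U_r(p_k))=\emptyset$. Every point of $U_s(x_k)$ is $(n-1,\delta)$-regular, so $S_{n,\delta^{\ast}}(U_s(x_k))=\emptyset$ and $U_s(x_k)\subset R_{n,\delta^{\ast}}$. By Lemma \ref{lem: llstr1}(2),(3) every fiber slice $\Pi^k_y\cap U_{s'}(y)$ of $\varphi_k$ then has no topological singular point, so it is an arc of length comparable to its diameter (Proposition \ref{prop: bilipfiber}).

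The contradiction comes from the fibers. Near $x$ in the limit, the slice $\Pi_x\cap U_{s'}(x)$ is homeomorphic to $T^1_{l_0}$ with $l_0\ge 3$ (Lemma \ref{lem: fibershape}). It has $l_0$ endpoints $x_1,\dots,x_{l_0}$ on $S_{s'}(x)$, each at intrinsic distance about $s'$ from $x$ and pairwise at intrinsic distance about $2s'$. The fibers $\Pi^k_{x_k}$ converge to $\Pi_x$ in the Hausdorff sense, by the uniform $(1+\delta^{\ast})$-openness and Lipschitz bounds of $\varphi_k$ (Proposition \ref{prop: almsubm}). Their intrinsic metrics are uniformly $(1+\delta^{\ast})$-comparable with the ambient metrics. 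Choose points of $\Pi^k_{x_k}$ near $x$ and near each $x_i$. Because the arcs $\Pi^k_{x_k}\cap U_{s'}(x_k)$ carry intrinsic metrics that are almost ambient, a subsequence converges (in the sense of Lemma \ref{lem: llstr2}) to a subset of $\Pi_x$ that is a $(1+\delta^{\ast})$-bi-Lipschitz image of an interval and still passes through $x$. Now look at three endpoints $x_1,x_2,x_3$. Every point of $\Pi_x\cap B_{s'}(x)$ is a limit of points of the approximating arcs, so the limit object would have to cover all three branches. But an almost-isometric image of an interval cannot contain three branches of length about $s'$ meeting at $x$: in an arc, one of three points at mutual distance about $2s'$ lies between the other two, which would force a triangle equality $d(x_1,x_3)\approx d(x_1,x_2)+d(x_2,x_3)\approx 4s'$. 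This contradicts $d(x_1,x_3)\le(1+\delta^{\ast})2s'$.

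The main obstacle is exactly this last step. We must guarantee that each branch of the tripod really is approximated by the single arc $\Pi^k_{x_k}\cap U_{s'}(x_k)$, and not by pieces of the fiber that leave the ball. We would handle this with the uniform radius from Lemma \ref{lem: llstr1}, using the same radius $s'$ for all $k$, and with the Hausdorff convergence of slices already used in the proofs of Lemmas \ref{lem: projsing} and \ref{lem: fibershape}. If this turns out to be delicate, an alternative is to use homology instead: $U_s(x_k)$ is then an $(n-1)$-parameter family of arcs, and one would show it is a topological $n$-manifold, so that Lemma \ref{lem: limhm} forces $U_s(x)$ to be a homology manifold. That contradicts the topology of $\R^{n-1}\times T^1_{l_0}$, or equivalently Proposition \ref{prop: hmnw}.
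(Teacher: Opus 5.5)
Your reduction step is sound and uses the same ingredients as the paper. You transfer the $(n-1,\delta)$-strainer at $x$ to the $X_k$ via Lemma \ref{lem: usstab}, and then use Theorem \ref{thm: dwrt} to get $S_{n,\delta^{\ast}}(U_s(x_k))=\emptyset$. Lemma \ref{lem: usstab} is pointwise, so obtaining a uniform radius $s$ needs a short contradiction argument with points $y_k\to x$. That argument is exactly Case~2 of the paper's proof: points $y_k\in S_{n,\delta^{\ast}}(U_{s_k}(x_k))$ with $s_k\to 0$ would be $(n,\delta,\delta^{\ast})$-wall points, which Theorem \ref{thm: dwrt} excludes.

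\textbf{The gap is in your main contradiction via fibers.} Lemma \ref{lem: llstr1} and Proposition \ref{prop: bilipfiber} are statements about one strainer map on one space. Three quantities depend on $\varphi_k$ and $X_k$:
\begin{enumerate}
\item the radius below which fiber slices are finite trees;
\item the constant $\mu$, which also needs a capacity bound;
\item the radius on which the intrinsic fiber metric is $(1+\delta^{\ast})$-comparable to $d_X$.
\end{enumerate}
Nothing in the paper makes any of these uniform along $X_k\to X$. Lemma \ref{lem: llstr2} likewise concerns fibers of a single map in a single space, not a Gromov--Hausdorff convergent sequence.

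Without a uniform $s'$ you do not know that $\Pi^k_{x_k}\cap U_{s'}(x_k)$ is a single arc. Without a uniform bi-Lipschitz constant, the Arzel\`a--Ascoli limit of the arcs is only a $1$-Lipschitz curve, and such a curve can cover a tripod by backtracking. So the ``tripod inside an arc'' contradiction is not reached as written. You also need Lemma \ref{lem: fibershape}, which requires the specific map of Assumption \ref{assumption: nice} at the moved point.

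Your fallback, however, closes the proof at once, and it is precisely the paper's Case~1. Since $U_s(x_k)\subset R_{n,\delta^{\ast}}(U_r(p_k))$, Proposition \ref{prop: gfullstr} (for $\delta^{\ast}$ small) already says these balls are Lipschitz $n$-manifolds; no analysis of ``families of arcs'' is needed. Lemma \ref{lem: limhm}, applied to $(X_k,x_k)\to(X,x)$, then makes $U_s(x)$ a homology $n$-manifold. Proposition \ref{prop: hmnw} contradicts $x\in W_n(U_s(x))$. For this you need neither Theorem \ref{thm: dwst} nor moving $x$. Make this the main line and drop the fiber argument.
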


\begin{proof}
By Lemma \ref{lem: limpd},
the limit space $X$ is purely $n$-dimensional.

Suppose that we find an $n$-wall point $x$ in $W_n(U_r(p))$.
Take a sequence $(x_k)$ with $x_k \in X_k$
converging to the point $x$ in $X$.
For $n \in \N$,
let $\delta^{\ast} \in (0,1)$ be sufficiently small,
and let $\delta \in (0,\delta^{\ast})$ be small enough for 
$\delta^{\ast}$.
To get a contradiction,
we divide into the following two cases:

\emph{Case 1}.
Assume that
for some $s \in (0,D_{\kappa}/2)$ 
the open metric balls $U_s(x_k)$
are contained in $R_{n,\delta^{\ast}}(U_r(p_k))$
for all sufficiently large $k$.
Then all the balls $U_s(x_k)$ are topological $n$-manifolds.
By Lemma \ref{lem: limhm},
the ball $U_s(x)$ is a homology $n$-manifold.
From Proposition \ref{prop: hmnw}
it follows that $W_n(U_s(x))$ is empty.
This is a contradiction.

\emph{Case 2}.
Assume that Case 1 does not occur.
In this case,
we may assume that for some sequence $(s_k)$ with $s_k \to 0$
we find some points $y_k$ in 
$S_{n,\delta^{\ast}}(U_r(p_k)) \cap U_{s_k}(x_k)$
for all $k$.
Since the sequence $(y_k)$
converges to the point $x$ in $X$,
the stability of the strainers 
in Lemmas \ref{lem: usstab} implies that
$y_k \in R_{n-1,\delta}(U_r(p_k))$ for all sufficiently large $k$;
in particular,
we have
$y_k \in W_{n,\delta,\delta^{\ast}}(U_r(p_k))$.
From Theorem \ref{thm: dwrt}
we deduce that 
$W_n(U_r(p_i))$ is non-empty.
This is also a contradiction.

In Cases 1 and 2,
we obtain a contradiction.
Therefore we conclude the lemma.
\end{proof}

From Theorem \ref{thm: wrt} and Lemma \ref{lem: limnw}
we derive the following:

\begin{prop}
\label{prop: limn-2}
Let $(X_k,p_k)$
be a sequence of pointed 
purely $n$-dimensional proper geodesic $\CAT(\kappa)$ $\GCBA$ spaces
converging to some metric space $(X,p)$
in the pointed Gromov--Hausdorff topology.
Then for every $r \in (0,D_{\kappa}/2)$ the following hold:
\begin{enumerate}
\item
if for all $k$
we have $\dim S_n(U_r(p_k)) \le n-2$,
then we have $\dim S_n(U_r(p)) \le n-2$;
\item
if for all $k$
we have $\dim_{\mathrm{H}} S_n(U_r(p_i)) \le n-2$,
then we have $\dim_{\mathrm{H}} S_n(U_r(p)) \le n-2$;
\item
if for all $k$
we have $\dim S(U_r(p_i)) \le n-2$,
then we have $\dim S(U_r(p)) \le n-2$;
\item
if for all $k$
we have $\dim_{\mathrm{H}} S(U_r(p_i)) \le n-2$,
then we have $\dim_{\mathrm{H}} S(U_r(p)) \le n-2$.
\end{enumerate}
\end{prop}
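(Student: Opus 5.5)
The plan is to reduce all four items to Lemma \ref{lem: limnw}, using Theorem \ref{thm: wrt} as the dictionary between the dimension conditions and the emptiness of the $n$-wall set. Fix $r \in (0,D_{\kappa}/2)$. By Lemma \ref{lem: limpd}, the limit $X$ is purely $n$-dimensional, and therefore so is every open ball $U_r(p)$ and each $U_r(p_k)$, since open subsets of purely $n$-dimensional spaces are purely $n$-dimensional. This places Theorem \ref{thm: wrt} in force both for $U = U_r(p_k)$ and for $U = U_r(p)$; the limit is a $\GCBA$ space because it is a proper, geodesically complete $\CAT(\kappa)$ space.

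For each of the items (1)--(4), the hypothesis on $U_r(p_k)$ is one of the conditions (3), (4), (5), (6) of Theorem \ref{thm: wrt}, respectively. By the equivalence with condition (1), each hypothesis is equivalent to $W_n(U_r(p_k)) = \emptyset$ for all $k$. Lemma \ref{lem: limnw} then gives $W_n(U_r(p)) = \emptyset$. Applying Theorem \ref{thm: wrt} again, now to $U_r(p)$, converts this back into the corresponding dimension condition. In fact all four conclusions follow at once, whichever hypothesis was assumed.

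The only point I expect to need care is checking that the hypotheses of Lemma \ref{lem: limnw} and Theorem \ref{thm: wrt} really hold for the balls in the limit. Two things must be confirmed. First, that $U_r(p)$ is purely $n$-dimensional, which follows from Lemma \ref{lem: limpd}. Second, that the limit space is $\GCBA$: geodesic completeness passes to pointed Gromov--Hausdorff limits of proper spaces, as recalled in the preliminaries. Once these are checked, the proof is a formal composition of the two results.
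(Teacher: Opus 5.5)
Your proposal is correct and is essentially the paper's argument. The paper derives the proposition directly from Theorem~\ref{thm: wrt} and Lemma~\ref{lem: limnw}, exactly as you do: it translates each dimension hypothesis into $W_n(U_r(p_k))=\emptyset$, passes to the limit, and translates back for $U_r(p)$, using the pure $n$-dimensionality and $\GCBA$ property of the limit that you check explicitly.
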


\subsection{Infinitesimal regularity of codimension two}

For the proof of Theorem \ref{thm: iwrt},
we verify the following:

\begin{lem}
\label{lem: conenw}
Let $Z$ be a purely $(n-1)$-dimensional $\GCBA$ $\CAT(1)$ space
that is not a singleton.
Let $C_0(Z)$ be the Euclidean cone over $Z$.
Then the $n$-wall set $W_n(C_0(Z))$ in $C_0(Z)$
is empty if and only if
the $(n-1)$-wall set $W_{n-1}(Z)$ in $Z$ is empty.
\end{lem}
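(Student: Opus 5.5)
Since the statement is purely infinitesimal, the plan is to compare the tangent spaces of $C_0(Z)$ at the vertex $o$ and away from it with those of $Z$. The key fact is standard for Euclidean cones: for a point $v = tz$ with $t>0$ and $z \in Z$, the tangent space $T_v C_0(Z)$ is isometric to $\R \times T_zZ$, because $\Sigma_v C_0(Z)$ is isometric to the spherical suspension $\Sph^0 \ast \Sigma_z Z$ (radial directions). At the vertex, $T_o C_0(Z) = C_0(Z)$ itself. Also $C_0(Z)$ is purely $n$-dimensional by Proposition \ref{prop: pure}, and it is a $\GCBA$ space, since $Z$ is a compact, geodesically complete $\CAT(1)$ space which is not a singleton.

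\emph{Step 1: off the vertex.} For $v = tz$ with $t>0$, the de Rham type splitting gives: $T_vC_0(Z) \cong \R^k \times Y$ if and only if $T_zZ \cong \R^{k-1} \times Y'$. One direction is immediate from $T_v C_0(Z) \cong \R \times T_zZ$. For the converse, I would use Proposition \ref{prop: sphcore}. The spherical core of $\Sigma_v C_0(Z) = \Sph^0 \ast \Sigma_z Z$ is the join of $\Sph^0$ with the spherical core of $\Sigma_zZ$, since suspenders of a join decompose. Hence $v$ is $k$-regular if and only if $z$ is $(k-1)$-regular, and therefore $v \in W_n(C_0(Z))$ if and only if $z \in W_{n-1}(Z)$. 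This shows that $W_{n-1}(Z) \neq \emptyset$ implies $W_n(C_0(Z)) \neq \emptyset$, and that a nonempty wall set of the cone off the vertex forces $W_{n-1}(Z) \neq \emptyset$.

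\emph{Step 2: the vertex.} It remains to show that if $o \in W_n(C_0(Z))$, then $W_{n-1}(Z)$ is nonempty. In that case $C_0(Z) \cong \R^{n-1} \times T^1_l$ with $l \ge 3$, so $Z \cong \Sph^{n-2} \ast T^0_l$. Provided $n \ge 2$, any point of $Z$ in the $\Sph^{n-2}$ factor away from the join-lines has tangent space $\R^{n-2} \times T^1_l$. It is therefore $(n-2)$-regular and $(n-1)$-singular, i.e. it lies in $W_{n-1}(Z)$. Alternatively, any $v \neq o$ on the $\R^{n-1}$ factor lies in $W_n(C_0(Z))$, which reduces this case to Step 1.

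\emph{Step 3: the edge case $n=1$.} Here $Z$ is a finite discrete set, $W_0$ is empty by convention ($S_{0}$ is empty), and $W_1(C_0(Z))$ is empty precisely when $|Z| = 2$. I expect this to need a separate convention check: for $n=1$ a wall at the vertex has no corresponding point in $Z$, so the statement should be read with $n \ge 2$ or with the convention adjusted.

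The main obstacle is the converse direction in Step 1: the splitting $\Sph^0 \ast \Sigma \cong \Sph^{k-1} \ast Z_0$ must force $\Sigma \cong \Sph^{k-2} \ast Z_0'$. I would handle it through the spherical core and Proposition \ref{prop: sphcore}, which gives the uniqueness of the maximal spherical factor.
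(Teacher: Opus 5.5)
Your proposal is correct and follows the paper's proof. Away from the vertex both arguments use $T_{tz}C_0(Z)\cong\R\times T_zZ$, and at the vertex both use $C_0(Z)\cong\R^{n-1}\times T^1_l$, i.e.\ $Z\cong\Sph^{n-2}\ast T^0_l$ (the paper writes $\Sph^{n-1}$, a typo); your spherical-core argument via Proposition~\ref{prop: sphcore} just makes explicit the cancellation of the $\R$ factor that the paper takes for granted. Your caveats are also accurate: compactness of $Z$ is implicitly needed for $C_0(Z)$ to be $\GCBA$, and the equivalence genuinely fails for $n=1$ (for $Z=T^0_3$ the vertex of $C_0(Z)=T^1_3$ is a $1$-wall point while $W_0(Z)$ is empty), so the lemma should be read with $n\ge 2$.
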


\begin{proof}
The Euclidean cone $C_0(Z)$ over $Z$
is a purely $n$-dimensional $\GCBA$ $\CAT(0)$ space. 

We assume that $W_n(C_0(Z))$ is empty.
Suppose that $W_{n-1}(Z)$ is non-empty.
Then we find a point $z_0$ in $Z$ at which
$T_{z_0}Z$ isometrically splits as $\R^{n-2} \times T_{l_0}^1$
for some $l_0 \in \N$ with $l_0 \ge 3$.
Hence for the point $[(z_0,1)] \in C_0(Z)$
the tangent space $T_{[(z_0,1)]}C_0(Z)$ 
isometrically splits as $\R^{n-1} \times T_{l_0}^1$.
This implies $[(z_0,1)] \in W_n(C_0(Z))$,
which is a contradiction.
Thus $W_{n-1}(Z)$ is empty.

Conversely,
we next assume that $W_{n-1}(Z)$ is empty.
Suppose that $W_n(C_0(Z))$ is non-empty.
Then we find $w_0$ in $C_0(Z)$
at which 
$T_{w_0}C_0(Z)$ isometrically splits as $\R^{n-1} \times T_{l_0}^1$
for some $l_0 \in \N$ with $l_0 \ge 3$.
In the case where $w_0$ is the vertex $0$ of $C_0(Z)$,
since $C_0(Z)$ is isometric to $T_0C_0(Z)$,
the Euclidean cone $C_0(Z)$ isometrically splits as 
$\R^{n-1} \ast T_{l_0}^1$.
Hence $Z$ isometrically splits as 
$\Sph^{n-1} \ast T_{l_0}^0$.
This implies that $W_{n-1}(Z)$ is non-empty,
which is a contradiction.
In the case where $w_0$ is distinct to the vertex $0$ of $C_0(Z)$,
we find $z_0 \in Z$ with $w_0 = [(z_0,1)]$
so that $T_{z_0}Z$ isometrically splits as $\R^{n-1} \times T_{l_0}^1$.
This implies $z_0 \in W_{n-1}(Z)$,
which is a contradiction.
Thus $W_n(C_0(Z))$ is empty.
\end{proof}

Next we show the following:

\begin{lem}\label{lem: iwrtnw}
Let $X$ be a $\GCBA$ space,
and let $U$ be a purely $n$-dimensional open subset of $X$.
Then the following are equivalent:
\begin{enumerate}
\item
the $n$-wall set $W_n(U)$ in $U$ is empty;
\item
for every $x \in U$
the $n$-wall set $W_n(T_xX)$ in $T_xX$ is empty;
\item
for every $x \in U$
the $(n-1)$-wall set $W_{n-1}(\Sigma_xX)$ in $\Sigma_xX$ is empty.
\end{enumerate}
\end{lem}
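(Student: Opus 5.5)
The plan is to prove (2) $\Leftrightarrow$ (3) pointwise, and then link (1) with (2) through the local definitions together with the blow-up behaviour of tangent spaces.

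\emph{(2) $\Leftrightarrow$ (3).} Fix $x \in U$. By Proposition \ref{prop: pure}, $\Sigma_xX$ is pure-dimensional, and since $U$ is purely $n$-dimensional it is purely $(n-1)$-dimensional. It is a compact geodesically complete $\CAT(1)$ space, hence $\GCBA$. It is not a singleton when $n \ge 1$. Since $T_xX = C_0(\Sigma_xX)$, Lemma \ref{lem: conenw} gives $W_n(T_xX) = \emptyset$ if and only if $W_{n-1}(\Sigma_xX) = \emptyset$.

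\emph{(2) $\Rightarrow$ (1).} Suppose $x \in W_n(U)$. Then $T_xX$ is isometric to $\R^{n-1} \times T_l^1$ with $l \ge 3$. The vertex $0$ of $T_xX$ satisfies $T_0(T_xX) \cong T_xX$, so $0 \in W_n(T_xX)$, which contradicts (2). In fact every point of $\R^{n-1} \times \{0\}$ is a wall point.

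\emph{(1) $\Rightarrow$ (2).} Suppose some $T_xX$ with $x \in U$ contains an $n$-wall point $w$. The plan is to push this back to $U$ in three steps.
\begin{enumerate}
\item By Lemma \ref{lem: walldwall}, applicable since $\dim_{\mathrm{G}} T_xX = n$, we have $w \in W_{n,\delta,\delta^{\ast}}(T_xX)$.
\item Theorem \ref{thm: dwst} then yields an open set $U_0 \subset T_xX$, homeomorphic to $\R^{n-1} \times T_{l_0}^1$, with $S(U_0) = S_{n,\delta^{\ast}}(U_0)$. In particular $S_{n,\delta^{\ast}}(T_xX)$ contains points $w_0$ that are $(n-1,\delta)$-strained; by cone scaling these can be taken at distance about $1$ from $0$.
\item Realize $(T_xX,0)$ as the pointed Gromov--Hausdorff limit of the rescalings $((1/t)X, x)$ as $t \to 0$, and choose $y_t \to w_0$.
\end{enumerate}
The stability Lemma \ref{lem: usstab} shows that $y_t$ is $(n-1,\delta)$-strained for small $t$. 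The remaining task is to show that $y_t$ (or a point near it) is $(n,\delta^{\ast})$-singular. For this I would argue exactly as in Case 2 of Lemma \ref{lem: limnw}, reversing the roles of limit and sequence:
\begin{itemize}
\item If some neighborhoods of $y_t$ of uniform rescaled size lay in $R_{n,\delta^{\ast}}$, they would be topological $n$-manifolds by Proposition \ref{prop: gfullstr}.
\item Lemma \ref{lem: limhm} would then make a ball around $w_0$ in $T_xX$ a homology $n$-manifold.
\item This contradicts Proposition \ref{prop: hmnw}, since $w_0$ is a singular point of a neighborhood of the form $\R^{n-1} \times T_{l_0}^1$, whose local homology is not that of $\R^n$.
\end{itemize}
Hence there are points of $S_{n,\delta^{\ast}}(U)$ arbitrarily close to $y_t$. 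By stability these are still $(n-1,\delta)$-regular, so $W_{n,\delta,\delta^{\ast}}(U) \neq \emptyset$. Theorem \ref{thm: dwrt}, specifically (1) $\Leftrightarrow$ (5), then gives $W_n(U) \neq \emptyset$, which contradicts (1).

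\emph{Main obstacle.} The delicate step is the transfer in (1) $\Rightarrow$ (2): converting the Case 1 / Case 2 dichotomy into the blow-up direction. One must ensure that the balls in the rescaled spaces lie inside $U$, and that the homology-manifold argument applies to uniform-size balls rather than single points; this requires choosing the radii in Lemma \ref{lem: limhm} below $D_\kappa/2$ after rescaling.
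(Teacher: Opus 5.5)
Your proposal is correct, and its overall decomposition matches the paper's. You get (2)$\Leftrightarrow$(3) from Lemma \ref{lem: conenw}, (2)$\Rightarrow$(1) from the vertex of $T_xX\cong\R^{n-1}\times T_l^1$, and (1)$\Rightarrow$(2) by blowing up at $x$.

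The difference is in (1)$\Rightarrow$(2), where the paper does not reopen the Case 1/Case 2 argument. It observes that for large $\lambda$ the rescaled balls $U_1^{\lambda X}(x)$ lie in $U$, so their $n$-wall sets are empty. Lemma \ref{lem: limnw} then gives $W_n(U_1(0_x))=\emptyset$. Finally, dilations of the cone $T_xX$ are isometries fixing $0_x$, which yields $W_n(T_xX)=\emptyset$.

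What you wrote is an inlined proof of Lemma \ref{lem: limnw} for the blow-up sequence. It spares you the cone-scaling step, since a wall point at distance $R$ from $0_x$ is approximated from $U_{tR}(x)\subset U$. The cost is length, and it does not use a lemma the paper has just established.

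Your detour through Lemma \ref{lem: walldwall} and Theorem \ref{thm: dwst} to produce $w_0$ is unnecessary. Run the dichotomy at $w$ itself. Case 1 then contradicts Proposition \ref{prop: hmnw} directly, because $w\in W_n(U_s(w))$. In Case 2, stability at $w$ gives the relaxed wall points, since $w$ is $(n-1)$-regular and hence $(n-1,\delta)$-strained.

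At $w_0$, by contrast, Proposition \ref{prop: hmnw} does not literally apply, because $w_0$ is only an $(n,\delta,\delta^{\ast})$-wall point. What actually gives the contradiction there is your local homology computation: at a point of $\R^{n-1}\times\{0\}$ in $\R^{n-1}\times T_{l_0}^1$, the degree-$n$ local homology is $\Z^{l_0-1}\neq\Z$.
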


\begin{proof}
By Lemma \ref{lem: conenw},
it suffices prove the equivalence (1) $\Leftrightarrow$ (2).

First we prove (1) $\Rightarrow$ (2).
Assume that $W_n(U)$ is empty.
Let $x \in U$ be arbitrary.
Considering the Gromov--Hausdorff blow-up limit
\[
\lim_{\lambda \to \infty} (\lambda X, x) \to (T_xX,0_x),
\]
we find a sufficiently large $\lambda_0 \in (0,\infty)$
such that for every $\lambda \in (\lambda_0,\infty)$
the open metric ball $U_1^{\lambda X}(x)$
in $\lambda X$ of radius $1$ around $x$
is contained in $U$.
Hence the $n$-wall set $W_n(U_1^{\lambda X}(x))$ 
in $U_1^{\lambda X}(x)$ is empty.
From Lemma \ref{lem: limnw} it follows that
the $n$-wall set $W_n(U_1(0_x))$ in 
the open ball $U_1(0_x)$ in $T_xX$ is empty. 
This implies that the $n$-wall set $W_n(T_xX)$
in the whole space $T_xX$ is empty.
Thus we see (1) $\Rightarrow$ (2).

Next we show (2) $\Rightarrow$ (1).
Assume that for any $x \in U$ 
the $n$-wall set $W_n(T_xX)$ in $T_xX$ is empty.
Suppose that $W_n(U)$ is non-empty.
Then we find a point $x_0$ in $U$
at which $T_{x_0}X$ isometrically splits as $\R^{n-1} \times T_{l_0}^1$
for some $l_0 \in \N$ with $l_0 \ge 3$.
Since $T_{0_{x_0}}T_{x_0}X$ is isometric to $T_{x_0}X$,
the vertex $0_{x_0}$ is an $n$-wall point in $T_{x_0}X$.
This is a contradiction.
Hence $W_n(U)$ is empty.
Thus we see (2) $\Rightarrow$ (1).
\end{proof}

\begin{proof}[Proof of Theorem \emph{\ref{thm: iwrt}}]
Combining Theorem \ref{thm: wrt} and Lemma \ref{lem: iwrtnw}
leads to the theorem.
\end{proof}

\section{Applications}

\subsection{Low-dimensional GCBA spaces without wall singularity}

We first quote the local topological regularity theorem 
established by Lytchak and the author \cite[Theorem 1.1]{lytchak-nagano2}
in the following form:

\begin{thm}
\label{thm: loctopreg}
\emph{(\cite{lytchak-nagano2})}
Let $U$ be an open subset of a connected locally compact 
$\CBA(\kappa)$ space $X$.
Then the following are equivalent:
\begin{enumerate}
\item
$U$ is a topological $n$-manifold;
\item
for every $x \in U$
the space $\Sigma_xX$ is homotopy equivalent to $\Sph^{n-1}$;
\item
for every $x \in U$
the space $T_xX$ is homeomorphic to $\R^n$.
\end{enumerate}
\end{thm}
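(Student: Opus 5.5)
The plan is to prove the cycle (1) $\Rightarrow$ (2) $\Rightarrow$ (1) first, by induction on $n$. Then (3) is attached through the cone structure $T_xX = C_0(\Sigma_xX)$. The basic tool throughout is the local conical structure from \cite{lytchak-nagano1}: for $x \in U$ and all sufficiently small $r$, the punctured ball $U_r(x) - \{x\}$ is homotopy equivalent to the sphere $S_r(x)$, which in turn is homotopy equivalent to $\Sigma_xX$. Moreover, for small $r' < r$ the inclusion $U_{r'}(x) - \{x\} \hookrightarrow U_r(x) - \{x\}$ is a homotopy equivalence. Recall also that $\Sigma_xX$ is a compact $\GCBA$ space, hence an $\ANR$ of finite dimension, so it has the homotopy type of a CW complex.

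For (1) $\Rightarrow$ (2), suppose $x$ has a neighbourhood homeomorphic to $\R^n$. Choose $r' < r$ small and a Euclidean ball $B$ (in the chart) with $U_{r'}(x) \subset B \subset U_r(x)$. The homotopy equivalence $U_{r'}(x) - \{x\} \hookrightarrow U_r(x) - \{x\}$ then factors through $B - \{x\} \simeq \Sph^{n-1}$. Hence $\Sigma_xX$ is homotopy dominated by $\Sph^{n-1}$. This domination gives two things: $\Sigma_xX$ has the homology of $\Sph^{n-1}$ (which Proposition \ref{prop: hmrecog} also gives directly), and for $n \ge 3$ it is simply connected, being a retract up to homotopy of a simply connected space. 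The Hurewicz and Whitehead theorems for CW-type spaces then yield $\Sigma_xX \simeq \Sph^{n-1}$. The cases $n \le 2$ are handled directly, since there $\Sigma_xX$ is a finite set or a graph.

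(3) $\Rightarrow$ (2) is elementary. $T_xX - \{0\}$ radially deformation retracts onto $\Sigma_xX$. On the other hand, $\R^n$ minus any point is homotopy equivalent to $\Sph^{n-1}$, so no assumption is needed on where the homeomorphism sends the vertex.

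For (2) $\Rightarrow$ (1), I first apply Proposition \ref{prop: hmrecog}: it shows that $U$ is a homology $n$-manifold and that each $\Sigma_xX$ is a homology $(n-1)$-manifold. The next step is the induction. The points of $\Sigma_xX$ are again $\GCBA$ points, with spaces of directions $\Sigma_\xi\Sigma_xX$. Using (1) $\Rightarrow$ (2) in dimension $n-1$ together with the inductive hypothesis, I want to show that $\Sigma_xX$ is a topological $(n-1)$-manifold homotopy equivalent to $\Sph^{n-1}$. By the (generalized) Poincar\'e conjecture it is then homeomorphic to $\Sph^{n-1}$, so $T_xX \cong \R^n$. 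This also gives (2) $\Rightarrow$ (3).

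The main obstacle is passing from the homotopy type of $\Sigma_xX$ to manifold structure, both for $\Sigma_xX$ and for $U$ itself. For $n \le 3$ I would invoke Moore's theorem and Thurston's theorem cited above. For $n \ge 5$ I would try to verify the disjoint disks property for the homology manifold $U$, using the conical structure and the simple connectivity of small punctured neighbourhoods. Edwards' cell-like approximation theorem would then apply, with the Quinn obstruction vanishing because $U$ has manifold points, the regular set $R_n$ being open and nonempty. The case $n = 4$ is the most delicate; there I would instead aim to show directly that small balls are cones over $\Sigma_xX \cong \Sph^{3}$, which would give a local homeomorphism to $\R^4$.
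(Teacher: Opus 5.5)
Your arguments for (1) $\Rightarrow$ (2) and (3) $\Rightarrow$ (2) are fine, apart from the remark on geodesic completeness at the end.

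\textbf{First gap: $\Sigma_xX$ need not be a manifold.} The step that fails is your claim, used for both (2) $\Rightarrow$ (1) and (2) $\Rightarrow$ (3), that $\Sigma_xX$ is a topological $(n-1)$-manifold and hence, by the Poincar\'e conjecture, homeomorphic to $\Sph^{n-1}$. Already for $n=5$ this is false, even when $U$ is a manifold.

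Here is a counterexample. Let $P$ be a homology $3$-sphere with $\pi_1(P)\neq 1$ carrying a $\CAT(1)$ metric, for instance the all-right spherical metric on a flag triangulation. Let $X := C_0(\Sph^0 \ast P)$, which is isometric to $\R \times C_0(P)$. This is a proper, geodesically complete $\CAT(0)$ space, and it is a topological $5$-manifold. Indeed, $\R \times C_0(P)$ is open in $\R \times (\Sph^0 \ast P)$, which is the double suspension of $P$ minus its two outer suspension points. The double suspension is homeomorphic to $\Sph^5$ by the Cannon--Edwards theorem. But $\Sigma_0X = \Sph^0 \ast P$, although homotopy equivalent to $\Sph^4$ as (2) predicts, is not a manifold at its suspension points: their small punctured neighbourhoods are homotopy equivalent to $P$. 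This is why the theorem is phrased via homotopy type and via $T_xX \cong \R^n$ rather than $\Sigma_xX \cong \Sph^{n-1}$.

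Your induction breaks exactly here. From (2), Proposition \ref{prop: hmrecog} only tells you that the spaces $\Sigma_\xi(\Sigma_xX)$ have the homology of $\Sph^{n-2}$, not its homotopy type; in the example, $\Sigma_\xi(\Sph^0 \ast P) = P$ at a suspension point $\xi$. So the inductive hypothesis does not apply to $\Sigma_xX$. Meanwhile, (1) $\Rightarrow$ (2) in dimension $n-1$ presupposes that $\Sigma_xX$ is already a manifold.

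To get (3), apply (2) $\Rightarrow$ (1) to the $\GCBA$ space $T_xX$ itself. Its space of directions is $\Sigma_xX$ at the vertex, and $\Sph^0 \ast \Sigma_\xi\Sigma_xX$ at $t\xi \neq 0$. The latter is the suspension of a connected space with the homology of $\Sph^{n-2}$, hence a homotopy $(n-1)$-sphere for $n \ge 3$. So $T_xX$ is a contractible $n$-manifold that is simply connected at infinity, hence homeomorphic to $\R^n$.

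\textbf{Second gap: the topology of (2) $\Rightarrow$ (1) for $n \ge 4$.} Simple connectivity of small punctured balls is a condition at single points. It gives no control over $2$-disks near a possibly large non-manifold set, so it does not yield the disjoint disks property; your plan for $n \ge 5$ names the target but supplies no mechanism. For $n=4$, the conical structure of \cite{lytchak-nagano1} presents small balls as cones over the metric spheres $S_r(x)$, and these are only homotopy equivalent to $\Sigma_xX$. Knowing $\Sigma_xX \cong \Sph^3$ (which does hold for $n=4$, by Thurston and Perelman) tells you nothing about $S_r(x)$.

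What is missing is a proof that the non-manifold set is discrete. This is the content of \cite[Theorem 1.2]{lytchak-nagano2}, quoted in Section~10: a homology $n$-manifold with an upper curvature bound is a topological manifold off a locally finite set $E$. Since (2) makes $U$ a homology manifold by Proposition \ref{prop: hmrecog}, this reduces (2) $\Rightarrow$ (1) to a point $x \in E$. There $U_r(x)-\{x\}$ is a manifold whose end at $x$ has neighbourhoods $U_s(x)-\{x\} \simeq \Sigma_xX \simeq \Sph^{n-1}$, with the inclusions being homotopy equivalences. For $n \ge 4$ this end is tame and simply connected, so end theory (Siebenmann, and Freedman--Quinn in dimensions $4$ and $5$) gives a collar $N \times (0,1]$ with $N$ a homotopy $(n-1)$-sphere. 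Hence $N \cong \Sph^{n-1}$ and $x$ is a manifold point. No disjoint disks argument and no Poincar\'e conjecture for $\Sigma_xX$ are needed.

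\textbf{Geodesic completeness.} The statement assumes only local compactness, whereas the conical structure and Proposition \ref{prop: hmrecog} are results about $\GCBA$ spaces. Under (1), local geodesic completeness follows because $U$ is a homology manifold. Under (2), you must establish it before using these tools.
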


From Theorem \ref{thm: loctopreg} and
our infinitesimal regularity theorem \ref{thm: iwrt}
of codimension two,
we derive the following:

\begin{prop}
\label{prop: ldnowall}
Let $X$ be a $\GCBA$ space,
and let $U$ be a purely $n$-dimensional open subset of $X$.
Assume $\dim S(U) \le n-2$.
\begin{enumerate}
\item
If $n = 2$,
then $U$ is a topological $2$-manifold.
\item
If $n = 3$,
then for every $x \in U$ the space of directions $\Sigma_xX$ at $x$
is a closed topological $2$-manifold.
\end{enumerate}
\end{prop}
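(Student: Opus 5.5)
The plan is to combine Theorem~\ref{thm: iwrt} with the local topological regularity theorem~\ref{thm: loctopreg}, and to work through the spaces of directions.

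\emph{Equivalent infinitesimal condition.} Since $\dim S(U) \le n-2$, Theorem~\ref{thm: iwrt} gives $\dim S(\Sigma_xX) \le n-3$ for every $x \in U$. By Proposition~\ref{prop: pure}, each $\Sigma_xX$ is a compact, geodesically complete, purely $(n-1)$-dimensional $\CAT(1)$ space.

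\emph{Case $n=2$.} Here $\dim S(\Sigma_xX) \le -1$, so $S(\Sigma_xX)$ is empty. Hence $\Sigma_xX$ is a compact topological $1$-manifold, that is, a finite disjoint union of circles.

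\begin{itemize}
\item Each circle component is a closed geodesic in a $\CAT(1)$ space, so it has length $\ge 2\pi$.
\item If there were two or more components, then $T_xX$ would be a wedge of planes glued at the vertex. The vertex would then be a non-manifold point of $T_xX$.
\end{itemize}

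To rule this out, I would apply Theorem~\ref{thm: iwrt} to the GCBA space $T_xX$, or simply invoke the equivalence (1)$\Leftrightarrow$(2) in Theorem~\ref{thm: iwrt}, which gives $\dim S(T_xX) \le 0$.

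A $0$-dimensional singular set is not yet excluded, so a cleaner route is needed. I would use Lemma~\ref{lem: iwrtnw}: if $\Sigma_xX$ is disconnected, then $x$ has a neighborhood that is a union of two pieces glued only at $x$. The better argument is homotopy-theoretic. Fix a connected component of $U$ near $x$. Points $y \ne x$ in a small ball have $\Sigma_yX$ a single circle, since otherwise $y$ would be isolated in $S(U)$, and the component count is detected by the topology of small punctured balls. A punctured small ball around $x$ is homotopy equivalent to $\Sigma_xX$, with one component for each circle. That by itself gives no contradiction, though.

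So I will instead use the fact that in dimension $2$ the purely $2$-dimensional hypothesis and $\dim S(U)\le 0$ force $S(U)$ to be a discrete set of points. Near such a point $x$, the set $U$ is a wedge of discs, each bounded by a circle in $\Sigma_xX$.

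The key obstacle is to exclude wedges, i.e. to show that $\Sigma_xX$ is connected and therefore homotopy equivalent to $\Sph^1$. This would yield (2) of Theorem~\ref{thm: loctopreg} and hence (1). I expect a wedge point to violate the claimed conclusion while satisfying the hypotheses. So I would first check whether the paper intends $U$ to be such that wedge points are excluded, for instance via connectedness of small punctured neighborhoods. If they are excluded, $\Sigma_xX \simeq \Sph^1$ and Theorem~\ref{thm: loctopreg} finishes the case.

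\emph{Case $n=3$.} Each $\Sigma_xX$ is purely $2$-dimensional and GCBA with $\dim S(\Sigma_xX) \le 0$. Applying case $n=2$ to $U=\Sigma_xX$, as a $\CAT(1)$ GCBA space, shows that $\Sigma_xX$ is a topological $2$-manifold. It is closed because it is compact and has no boundary, since geodesic completeness excludes boundary points.

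The main difficulty is the same in both cases: in the $n=2$ argument, proving that the links $\Sigma_xX$ are connected.
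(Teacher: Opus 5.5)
\textbf{The statement is false, so your gap cannot be filled.} The step you could not complete is the connectedness of $\Sigma_xX$. Your suspicion about wedge points is correct.

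For part (1), let $X=\R^2\vee\R^2$ be two Euclidean planes glued at their origins $o$, and take $U=X$.
\begin{enumerate}
\item By gluing along a point, $X$ is a proper $\CAT(0)$ space.
\item It is geodesically complete: a geodesic reaching $o$ can be continued into either plane, because directions in different planes make angle $\pi$.
\item It is purely $2$-dimensional.
\item Every point other than $o$ is a $2$-manifold point. The point $o$ is not, since its small punctured neighbourhoods are disconnected. Hence $S(U)=\{o\}$ and $\dim S(U)=0=n-2$.
\end{enumerate}
Nevertheless $U$ is not a $2$-manifold. Indeed $\Sigma_oX=\Sph^1\sqcup\Sph^1$ is a closed $1$-manifold that is not homotopy equivalent to $\Sph^1$.

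For part (2), take $X=\R\times(\R^2\vee\R^2)$. It is a purely $3$-dimensional $\GCBA$ space with $S(X)=\R\times\{o\}$, so $\dim S(X)=1=n-2$. But at $x=(t,o)$ the space $\Sigma_xX=\Sph^0\ast(\Sph^1\sqcup\Sph^1)$ is two round $2$-spheres glued at their two poles, which is not a $2$-manifold. Both examples are consistent with Theorem~\ref{thm: iwrt}. That theorem only gives $\dim S(\Sigma_xX)\le n-3$, which says nothing about connectedness.

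\textbf{The paper's proof has the same gap.} It follows exactly your route: Theorem~\ref{thm: iwrt} makes $\Sigma_xX$ a closed $1$-manifold, Theorem~\ref{thm: loctopreg} finishes (1), and (2) is obtained by applying (1) to $\Sigma_xX$. It handles the critical step by simply asserting that this closed $1$-manifold ``is homeomorphic to $\Sph^1$'', and that assertion is where it breaks.

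\textbf{What would make it true.} The proposition needs an extra hypothesis forcing the relevant links to be connected:
\begin{enumerate}
\item for (1), that $\Sigma_xX$ is connected for every $x\in U$;
\item for (2), that $\Sigma_\xi(\Sigma_xX)$ is connected for all $x\in U$ and $\xi\in\Sigma_xX$.
\end{enumerate}
With such a hypothesis your proposal is a complete proof. Each link is then a compact connected $1$-manifold, hence $\Sph^1$, and Theorem~\ref{thm: loctopreg} gives (1). Part (2) follows by applying (1) to $\Sigma_xX$: it is a compact, geodesically complete, purely $2$-dimensional $\CAT(1)$ space whose singular set has dimension $\le 0$, by Theorem~\ref{thm: iwrt} and Proposition~\ref{prop: pure}.

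Without the extra hypothesis, your argument proves only this much in case (1): every $\Sigma_xX$ is a finite disjoint union of circles, and, by Theorem~\ref{thm: loctopreg}, $U$ is a $2$-manifold exactly when all of these are connected.
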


\begin{proof}
(1)
Assume $n = 2$.
Then for every $x \in U$ 
by Theorem \ref{thm: iwrt}
the space $\Sigma_xX$ is a closed topological $1$-manifold,
and so it is homeomorphic to $\Sph^1$.
Due to the local topological regularity theorem
\ref{thm: loctopreg},
we see that $U$ is a topological $2$-manifold.

(2)
Assume $n = 3$.
Then for every $x \in U$ 
by Theorem \ref{thm: iwrt}
the space $\Sigma_xX$ is a purely $2$-dimensional $\GCBA$ space
with $\dim S(\Sigma_xX) \le 0$.
Hence (2) implies that $\Sigma_xX$ is a closed topological $2$-manifold.
\end{proof}

\subsection{Volume pinching theorems for CAT(1) spaces}

We say that a triple of points in a $\CAT(1)$ space is a
\emph{tripod}
if the three points have pairwise distance $\pi$.
Lytchak and the author proved a 
\emph{capacity sphere theorem}
\cite[Theorem 1.5]{lytchak-nagano2}
for $\CAT(1)$ spaces
stating that
if a compact, geodesically complete $\CAT(1)$ space
admits no tripod,
then it is homeomorphic to a sphere.
As its application,
Lytchak and the author also proved a 
\emph{volume sphere theorem}
\cite[Theorem 8.3]{lytchak-nagano2}
for $\CAT(1)$ spaces
stating that
if a purely $m$-dimensional, compact, geodesically complete
$\CAT(1)$ space $Z$ satisfies
$\Haus^m(Z) < (3/2) \Haus^m(\Sph^m)$.
The author provided the following characterization
(\cite[Theorem 1.1]{nagano4}):

\begin{thm}
\label{thm: just3/2}
\emph{(\cite{nagano4})}
If a purely $m$-dimensional, compact, 
geodesically complete $\CAT(1)$ space $Z$ satisfies
\[
\Haus^m(Z) = \frac{3}{2} \Haus^m(\Sph^m),
\]
then $Z$ is either homeomorphic to $\Sph^m$
or isometric to $\Sph^{m-1} \ast T_3^0$.
If in addition $Z$ has a tripod,
then $Z$ is isometric to the $m$-triplex or $\Sph^{m-1} \ast T_3^0$,
where the $m$-triplex is a concrete example
of $\CAT(1)$ spaces defined in \cite[Example 1.1]{nagano4}
that is homeomorphic to $\Sph^m$.
\end{thm}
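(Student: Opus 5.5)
The plan is to split according to whether $Z$ contains a tripod, and to handle the tripod case by a volume-counting argument followed by a rigidity analysis.

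\emph{No tripod.} If $Z$ admits no tripod, the capacity sphere theorem \cite[Theorem 1.5]{lytchak-nagano2} applies directly and $Z$ is homeomorphic to $\Sph^m$. So from now on I assume $Z$ has a tripod $\{\xi_1,\xi_2,\xi_3\}$.

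\emph{Volume counting.} Since $d_Z(\xi_i,\xi_j)=\pi$ for $i\ne j$, the open balls $U_{\pi/2}(\xi_i)$, $i=1,2,3$, are pairwise disjoint. For a purely $m$-dimensional, compact, geodesically complete $\CAT(1)$ space I will use the Bishop--Gromov type comparison of \cite{nagano2, nagano4}:
\[
\Haus^m\left(U_{\pi/2}(p)\right) \ge \frac{1}{2}\Haus^m(\Sph^m)
\]
for every $p$. Its proof goes through the radial map along geodesics from $p$, which geodesic completeness makes surjective onto a full hemisphere's worth of directions. Summing over the three balls and comparing with the hypothesis gives the following.
\begin{enumerate}
\item Each $U_{\pi/2}(\xi_i)$ has exactly half the volume of $\Sph^m$.
\item The complement $Z-\bigcup_i U_{\pi/2}(\xi_i)$ is $\Haus^m$-null. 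By pure $m$-dimensionality it therefore has empty interior, so $Z=\bigcup_i B_{\pi/2}(\xi_i)$.
\end{enumerate}

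\emph{Rigidity.} The next step, which I expect to be the main obstacle, is the equality case of the volume comparison. I would show that equality forces $\Sigma_{\xi_i}Z$ to be isometric to $\Sph^{m-1}$ and every geodesic from $\xi_i$ to be extendable in a unique way up to length $\pi/2$. The exponential-type map from the closed hemisphere $\Sph^m_+$ onto $B_{\pi/2}(\xi_i)$ is then $1$-Lipschitz by $\CAT(1)$ comparison and preserves volume. A volume-preserving $1$-Lipschitz surjection between such spaces should be an isometry, and proving this rigorously is where I expect the real work: it should combine an equality case in the Jacobian estimate with the fact that branching of geodesics would create extra measure. The outcome is that $Z$ is a union of three closed round hemispheres $H_i=B_{\pi/2}(\xi_i)$, glued isometrically along their boundary great spheres $S_i=\partial H_i\cong\Sph^{m-1}$.

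\emph{Classification of the gluing.} Each $S_i$ is covered by $S_j\cup S_k$ with $\{i,j,k\}=\{1,2,3\}$, since points near $S_i$ outside $H_i$ lie in the other two hemispheres. The $\CAT(1)$ condition, namely the absence of closed geodesics of length $<2\pi$, together with geodesic completeness, restricts how the three great spheres can be identified. I would show that only two patterns survive.
\begin{enumerate}
\item All three boundaries are identified with a single $\Sph^{m-1}$. This gives $\Sph^{m-1}\ast T_3^0$, which is the spherical join of the common boundary with $\{\xi_1,\xi_2,\xi_3\}$.
\item Each $S_i$ is split by an equator $\Sph^{m-2}$ into two half-spheres, one glued to $S_j$ and the other to $S_k$. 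This is exactly the $m$-triplex of \cite[Example 1.1]{nagano4}, which is homeomorphic to $\Sph^m$.
\end{enumerate}
Combining these two outcomes with the no-tripod case yields both the dichotomy in the theorem and the refined statement when a tripod exists.
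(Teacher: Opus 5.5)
The paper does not prove this statement; it quotes it from \cite{nagano4}, so I judge your outline on its own terms. The reduction is sound. Without a tripod, the capacity sphere theorem applies. With a tripod $\{\xi_1,\xi_2,\xi_3\}$, disjointness of the balls $U_{\pi/2}(\xi_i)$, the volume lower bound, and pure dimensionality give $\Haus^m(U_{\pi/2}(\xi_i))=\frac12\Haus^m(\Sph^m)$ and $Z=\bigcup_i B_{\pi/2}(\xi_i)$.

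\textbf{First gap: the rigidity step fails as written.} $\CAT(1)$ comparison says the angle at $\xi_i$ is at most the comparison angle. This makes the \emph{logarithm} $x\mapsto(d(\xi_i,x),x'_{\xi_i})$, from $B_{\pi/2}(\xi_i)$ to the spherical cone over $\Sigma_{\xi_i}Z$, $1$-Lipschitz. The exponential map, where it is single-valued, is therefore distance non-decreasing, not $1$-Lipschitz as you claim. Moreover, ``a volume-preserving $1$-Lipschitz surjection is an isometry'' is false in this generality: a disk with a slit, carrying its intrinsic metric, maps $1$-Lipschitz onto the disk with the same area. Equality of total volumes by itself only gives measure-theoretic information, such as that the logarithm preserves $\Haus^m$ of Borel sets. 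What you actually need is the following.
\begin{enumerate}
\item Monotonicity of $r\mapsto\Haus^m(B_r(\xi_i))/\omega_1^m(r)$ forces $\Haus^{m-1}(\Sigma_{\xi_i}Z)=\Haus^{m-1}(\Sph^{m-1})$. You then need the rigidity theorem that a purely $(m-1)$-dimensional compact geodesically complete $\CAT(1)$ space of this volume is isometric to $\Sph^{m-1}$.
\item You need the equality case of the volume comparison: $\Haus^m(B_r(p))=\omega_1^m(r)$ forces the logarithm to be injective with $1$-Lipschitz inverse.
\end{enumerate}
Both are genuine theorems that must be proved or cited. You located the difficulty correctly, but the mechanism you propose does not deliver it.

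\textbf{Second gap: the classification of the gluing is only asserted.} A priori the sets $A_{ij}:=H_i\cap H_j=S_i\cap S_j$ are arbitrary closed sets with $S_i=A_{ij}\cup A_{ik}$. ``No closed geodesics shorter than $2\pi$'' does not by itself tell you which configurations occur. The missing idea is convexity.
\begin{enumerate}
\item For $u,v\in S_i$ with $d(u,v)<\pi$, the minor great-circle arc of $S_i$ is a geodesic of $H_i$, hence of $Z$, and it is the unique one. So for $u,v\in A_{ij}$ the arcs in $S_i$ and in $S_j$ coincide.
\item Thus $A_{ij}$ is $\pi$-convex in the round sphere $S_i$, i.e.\ $A_{ij}=K\cap\Sph^{m-1}$ for a closed convex cone $K\subset\R^m$.
\item If two closed convex cones cover $\R^m$, then either one of them is $\R^m$ or they are complementary closed half-spaces. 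To see this, separate a point outside one cone from it by a hyperplane.
\end{enumerate}
In the first case some $S_i$ sits isometrically inside some $S_j$. An isometric embedding $\Sph^{m-1}\to\Sph^{m-1}$ is onto, so $S_i=S_j$. Then $S_k\subset S_i\cup S_j$ forces $S_1=S_2=S_3$, and $Z\cong\Sph^{m-1}\ast T_3^0$.

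In the second case every $A_{ij}$ is a closed half-sphere bounded by the great $(m-2)$-sphere $T=S_1\cap S_2\cap S_3$. Normalize the identifications $H_i\cong\Sph^m_+$ along $A_{12}$ and $A_{13}$. The remaining gluing along $A_{23}$ is then an isometry between complementary half-spheres fixing $T$ pointwise, hence the reflection. This gives exactly the cyclic gluing you identify with the $m$-triplex.

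With these two pieces supplied, your outline becomes a proof.
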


A function $\rho \colon [0,r) \to [0,\infty)$ with $\rho(0) = 0$ is 
said to be a \emph{contractivity function} 
if $\rho$ is continuous at $0$,
and if $\rho(t) \ge t$ for all $t \in [0,r)$.
For a contractivity function $\rho \colon [0,r) \to [0,\infty)$,
a metric space $X$ is said to be $\LGC(\rho)$
if for every $x \in X$ and for every $s \in (0,r)$
the ball $B_s(x)$ is contractible inside 
$B_{\rho(s)}(x)$.
Every $\CAT(\kappa)$ space is $\LGC(\id_{[0,D_{\kappa})})$.

We review the Petersen homotopic stability theorem
\cite[Theorem A]{petersen}
for $\LGC$ spaces.
Let $\rho$ be a contractivity function.
Then for every $m \in \N$ there exists $\delta \in (0,\infty)$
depending only on $m$ and $\rho$ such that
if two compact $\LGC(\rho)$ spaces of dimension at most $m$
have Gromov--Hausdorff distance less than $\delta$,
then they are homotopy equivalent.

By the same idea as \cite[Proposition 5.6]{nagano4},
we show the following:

\begin{lem}
\label{lem: wnwvpcat}
For every $m \in \N$,
there exists $\delta \in (0,\infty)$
such that if a purely $m$-dimensional, compact,
geodesically complete $\CAT(1)$
space $Z$ with $\dim S(Z) \le m-2$ satisfies 
\eqref{eqn: nwvptcata},
then $Z$ is homotopy equivalent to $\Sph^m$.
\end{lem}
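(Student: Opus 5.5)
We argue by contradiction with a Gromov--Hausdorff compactness argument, in the spirit of \cite[Proposition 5.6]{nagano4}. Suppose that for some $m$ there is a sequence $(Z_k)$ of purely $m$-dimensional, compact, geodesically complete $\CAT(1)$ spaces with $\dim S(Z_k) \le m-2$ and $\Haus^m(Z_k) < \frac{3}{2}\Haus^m(\Sph^m) + 1/k$, none homotopy equivalent to $\Sph^m$. A uniform volume bound for compact geodesically complete $\CAT(1)$ spaces of dimension $m$ gives a uniform bound on capacity; this is the argument of \cite{lytchak-nagano1, nagano4}. Since $\diam Z_k = \pi$ as well, a subsequence converges to a compact geodesically complete $\CAT(1)$ space $Z$. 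By Lemma \ref{lem: limpd}, $Z$ is purely $m$-dimensional. Volume is continuous under such convergence of $\GCBA$ spaces with uniformly bounded capacity (\cite{lytchak-nagano1}), so $\Haus^m(Z) \le \frac{3}{2}\Haus^m(\Sph^m)$.

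Next we exclude wall singularity in the limit. By Theorem \ref{thm: wrt}, the hypothesis $\dim S(Z_k) \le m-2$ means exactly that $W_m(Z_k) = \emptyset$. Lemma \ref{lem: limnw}, applied to small balls $U_r(p_k)$ with $r < \pi/2$ covering $Z$, then shows $W_m(Z) = \emptyset$; equivalently, Proposition \ref{prop: limn-2}(3) gives $\dim S(Z) \le m-2$ directly. Now we apply the volume sphere theorem \cite[Theorem 8.3]{lytchak-nagano2} and Theorem \ref{thm: just3/2}. If $\Haus^m(Z) < \frac{3}{2}\Haus^m(\Sph^m)$, then $Z$ is homeomorphic to $\Sph^m$. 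If equality holds, then $Z$ is either homeomorphic to $\Sph^m$ or isometric to $\Sph^{m-1}\ast T_3^0$. The latter has $m$-wall points, since any point on the $\Sph^{m-1}$ factor has tangent cone $\R^{m-1}\times T_3^1$. This contradicts $W_m(Z)=\emptyset$, so in either case $Z$ is homeomorphic to $\Sph^m$.

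Finally, every $\CAT(1)$ space is $\LGC(\id_{[0,\pi)})$, and all the spaces involved have dimension $m$. The Petersen homotopic stability theorem \cite[Theorem A]{petersen} therefore shows that $Z_k$ is homotopy equivalent to $Z \simeq \Sph^m$ for all large $k$, which is a contradiction.

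The main obstacle is the compactness step together with the volume continuity: we need a uniform capacity bound derived from the volume bound, and the fact that $\Haus^m$ does not jump up in the limit. We take both from the structure theory of \cite{lytchak-nagano1} and the argument of \cite[Proposition 5.6]{nagano4}. A secondary point is that Lemma \ref{lem: limnw} is stated for pointed proper $\CAT(\kappa)$ spaces, whereas $Z_k$ is only $\CAT(1)$ with diameter $\pi$. We handle this by applying it locally on balls of radius less than $\pi/2$, which are $\CAT(1)$ and convex.
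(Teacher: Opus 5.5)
Your proposal is correct and is essentially the paper's proof: precompactness with volume convergence, Theorem~\ref{thm: just3/2} at the borderline volume, Proposition~\ref{prop: limn-2} (equivalently Lemma~\ref{lem: limnw}) to rule out $\Sph^{m-1}\ast T_3^0$ as the limit, and then Petersen's stability theorem. The paper additionally discards the cases $\Haus^m(Z_i)<\frac{3}{2}\Haus^m(\Sph^m)$ before passing to the limit, and reduces to spaces with a tripod via the capacity sphere theorem; that reduction is unnecessary for you, since you only use the first assertion of Theorem~\ref{thm: just3/2} (homeomorphic to $\Sph^m$ or isometric to $\Sph^{m-1}\ast T_3^0$), which needs no tripod.

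One small slip: $\diam Z_k=\pi$ need not hold. For $m=1$, a circle of length $L\in(2\pi,3\pi)$ satisfies all the hypotheses and has diameter $L/2>\pi$. The uniform diameter bound you need for precompactness still follows from the volume bound by a packing argument, as in the precompactness results the paper cites.
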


\begin{proof}
Suppose that the lemma does not hold true.
By virtue of the volume sphere theorem 
\cite[Theorem 8.3]{lytchak-nagano2},
we may suppose that
there exists a sequence of purely $m$-dimensional
compact geodesically complete $\CAT(1)$ spaces $Z_i$, $i = 1, 2, \dots$,
with $\dim S(Z_i) \le m-2$
satisfying 
$\lim_{i \to \infty} \Haus^m(Z_i) = (3/2) \Haus^m(\Sph^m)$
such that
each $Z_i$ is not homotopy equivalent to $\Sph^m$.
Due to the capacity sphere theorem 
\cite[Theorem 1.5]{lytchak-nagano2},
we may assume that each $Z_i$ has a tripod.
The sequence $Z_i$, $i = 1, 2, \dots$,
has a convergent subsequence $Z_j$, $j = 1, 2, \dots$,
tending to some compact metric space $Z$
in the Gromov--Hausdorff topology
(see \cite[Proposition 3.4]{nagano4}, 
and \cite[Proposition 6.5]{nagano1}).
By Lemma \ref{lem: limpd},
the limit $Z$ is a purely $m$-dimensional, compact,
geodesically complete $\CAT(1)$ space.
Since each $Z_j$ has a tripod,
the limit $Z$ has a tripod too.
From the volume convergence theorem
\cite[Theorem 3.3]{nagano4}
(\cite[Theorem 1.1]{nagano2})
we derive
$\Haus^m(Z) = (3/2) \Haus^m(\Sph^m)$.

Theorem \ref{thm: just3/2} then implies that
the limit $Z$ is either isometric to the $m$-triplex or 
$\Sph^{m-1} \ast T_3^0$.
Proposition \ref{prop: limn-2} implies $\dim S(Z) \le m-2$,
and hence $Z$ must be isometric to the $m$-triplex.
The $m$-triplex $Z$ is homeomorphic to $\Sph^m$.
The Petersen homotopic stability theorem \cite[Theorem A]{petersen}
for $\LGC$ spaces tells us that
$Z_j$ is homotopy equivalent to $\Sph^m$ 
for all sufficiently large $j$.
This is a contradiction.
\end{proof}

We provide the following criterion 
(compare with \cite[Theorem 1.3]{nagano4}).

\begin{prop}
\label{prop: nwmfdrvolg}
For every $n \in \N$ there exists $\delta \in (0,\infty)$
with the following property:
Let $X$ be a purely $n$-dimensional, proper,
geodesically complete 
$\CAT(\kappa)$ space with $\dim S(X) \le n-2$,
and let $U$ be an open subset of $X$.
If for every $x \in U$ there exists $r \in (0,D_{\kappa})$ satisfying
\begin{equation}
\frac{\Haus^n(B_r(x))}{\omega_{\kappa}^n(r)} < \frac{3}{2} + \delta,
\label{eqn: nwmfdrvolga}
\end{equation}
then $U$ is a topological $n$-manifold.
\end{prop}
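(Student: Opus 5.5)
The plan is to reduce to the local topological regularity theorem \ref{thm: loctopreg}: it suffices to show that for every $x \in U$ the space of directions $\Sigma_xX$ is homotopy equivalent to $\Sph^{n-1}$. For this I will apply Lemma \ref{lem: wnwvpcat} (with $m = n-1$) to $Z = \Sigma_xX$. So $\delta$ will be the constant of Lemma \ref{lem: wnwvpcat} for $m = n-1$, possibly shrunk further.

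\emph{Hypotheses on $Z$.} Since $X$ is proper, geodesically complete and purely $n$-dimensional, Proposition \ref{prop: pure} shows that $\Sigma_xX$ is a purely $(n-1)$-dimensional, compact, geodesically complete $\CAT(1)$ space. Since $\dim S(X) \le n-2$, Theorem \ref{thm: iwrt} gives $\dim S(\Sigma_xX) \le n-3 = (n-1)-2$.

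\emph{Volume bound.} It remains to verify
\[
\Haus^{n-1}(\Sigma_xX) < \tfrac{3}{2}\Haus^{n-1}(\Sph^{n-1}) + \delta'
\]
for the $\delta'$ required by Lemma \ref{lem: wnwvpcat}. I would use the Bishop--Gromov type relative volume comparison for $\CAT(\kappa)$ spaces (\cite[Proposition 6.3]{nagano2}, \cite[Proposition 3.2]{nagano4}). It says that $t \mapsto \Haus^n(B_t(x))/\omega_\kappa^n(t)$ is nondecreasing on $(0,D_\kappa)$. As $t \to 0$ it tends to the density $\Haus^n(B_1(0_x)) / \omega_0^n(1) = \Haus^{n-1}(\Sigma_xX)/\Haus^{n-1}(\Sph^{n-1})$, using the volume convergence theorem under blow-up to $T_xX$. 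Monotonicity together with \eqref{eqn: nwmfdrvolga} at the given $r$ then yields
\[
\Haus^{n-1}(\Sigma_xX) < \left(\tfrac{3}{2} + \delta\right)\Haus^{n-1}(\Sph^{n-1}).
\]
Choosing $\delta$ so that $\delta\,\Haus^{n-1}(\Sph^{n-1})$ is at most the constant of Lemma \ref{lem: wnwvpcat} gives the required inequality.

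\emph{Conclusion and main obstacle.} Lemma \ref{lem: wnwvpcat} now shows that $\Sigma_xX$ is homotopy equivalent to $\Sph^{n-1}$ for every $x \in U$, and Theorem \ref{thm: loctopreg} shows that $U$ is a topological $n$-manifold. The step I expect to need the most care is the monotonicity and limit identification of the volume ratio. The comparison is quoted in the paper for $\CAT(0)$ and Euclidean growth, so I must cite or adapt the $\CAT(\kappa)$ version with $\omega_\kappa^n$ for radii below $D_\kappa$, which is where geodesic completeness is used. I also need $n \ge 2$; for $n = 1$ the statement is trivial, since $S(U)$ is then empty.
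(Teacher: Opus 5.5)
Your proposal is correct and follows the paper's proof step for step. You bound the density $\Haus^{n-1}(\Sigma_xX)/\Haus^{n-1}(\Sph^{n-1})$ by the volume ratio at radius $r$, use Proposition \ref{prop: pure} and Theorem \ref{thm: iwrt} to verify the hypotheses of Lemma \ref{lem: wnwvpcat} for $\Sigma_xX$, and conclude with Theorem \ref{thm: loctopreg}. The only difference is that the paper gets the density inequality by citing \cite[Lemma 3.6]{nagano4} instead of rederiving it from relative volume comparison and blow-up volume convergence, and it leaves implicit the two details you make explicit: rescaling $\delta$ by $\Haus^{n-1}(\Sph^{n-1})$ and the trivial case $n=1$.
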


\begin{proof}
For $n \in \N$,
let $\delta \in (0,\infty)$ be sufficiently small.
Let $X$ be a purely $n$-dimensional, proper, geodesically complete 
$\CAT(\kappa)$ space
with $\dim S(X) \le n-2$,
and let $U$ be an open subset of $X$.
Assume that for every $x \in U$ 
there exists $r \in (0,D_{\kappa})$ satisfying \eqref{eqn: nwmfdrvolga}.
Then 
\[
\frac{\Haus^{n-1}(\Sigma_xX)}{\Haus^{n-1}(\Sph^{n-1})} \le
\frac{\Haus^n(B_r(x))}{\omega_{\kappa}^n(r)} < \frac{3}{2} + \delta
\]
(see \cite[Lemma 3.6]{nagano4}).
From Theorem \ref{thm: iwrt} and Proposition \ref{prop: pure},
it follows that
$\Sigma_xX$ is a purely $(n-1)$-dimensional, compact,
geodesically complete $\CAT(1)$ space 
with $\dim S(\Sigma_xX) \le n-3$.
Lemma \ref{lem: wnwvpcat} implies that
$\Sigma_xX$ is homotopy equivalent to $\Sph^{n-1}$.
From the local topological regularity theorem \ref{thm: loctopreg},
we see that $U$ is a topological $n$-manifold.
\end{proof}

\begin{proof}[Proof of Theorem \ref{thm: nwvptcat}]
For $m \in \N$,
let $\delta \in (0,\infty)$ be small enough.
Let $Z$ be a purely $m$-dimensional, compact, geodesically complete 
$\CAT(1)$ space
with $\dim S(Z) \le m-2$ satisfying
\eqref{eqn: nwvptcata}.
The assumption \eqref{eqn: nwvptcata} 
together with the relative volume comparison
\cite[Proposition 3.2]{nagano4}
(\cite[Proposition 6.3]{nagano2}) tells us that
for any $z \in Z$ and $r \in (0,\pi)$ we have
\[
\frac{\Haus^m(B_r(z))}{\omega_1^m(r)} \le
\frac{\Haus^m(Z)}{\Haus^m(\Sph^m)} < 
\frac{3}{2} + \frac{\delta}{\Haus^m(\Sph^m)}.
\]
From Proposition \ref{prop: nwmfdrvolg}
it follows that
$Z$ is a topological $m$-manifold,
provided $\delta$ is small enough.
The volume sphere theorem for $\CAT(1)$ homology manifold
\cite[Theorem 1.2]{nagano2} leads to Theorem \ref{thm: nwvptcat}.
\end{proof}

\subsection{Asymptotic topological regularity of CAT(0) spaces}

Let $X$ be a purely $n$-dimensional, proper, 
geodesically complete $\CAT(0)$ space.
Then the following are equivalent
(\cite[Proposition 3.8]{nagano5}):
(1) $X$ is doubling;
(2) $X$ has the Gromov--Hausdorff asymptotic cone;
(3) for some $c \in [1,\infty)$ we have $\mathcal{G}_0^n(X) \le c$;
in this case,
for every $x \in X$ 
\begin{equation}
\frac{\Haus^{n-1}(\Sigma_xX)}{\Haus^{n-1}(\Sph^{n-1})} \le
\frac{\Haus^{n-1}(\partial_{\mathrm{T}}X)}{\Haus^{n-1}(\Sph^{n-1})} =
\mathcal{G}_0^n(X),
\label{eqn: nwatrcatb}
\end{equation}
where $\partial_{\mathrm{T}}X$ is the Tits boundary $X$.

\begin{proof}[Proof of Theorem \ref{thm: nwatrcat}]
For $n \in \N$,
let $\delta \in (0,\infty)$ be small enough.
Let $X$ be a purely $n$-dimensional, compact, geodesically complete 
$\CAT(0)$ space
with $\dim S(X) \le n-2$
satisfying
\eqref{eqn: nwatrcata}.
Then by \eqref{eqn: nwatrcatb}
for every $x \in X$ we have
\[
\frac{\Haus^{n-1}(\Sigma_xX)}{\Haus^{n-1}(\Sph^{n-1})} 
< \frac{3}{2} + \delta.
\]
From Theorem \ref{thm: iwrt} and Proposition \ref{prop: pure}
it follows that
$\Sigma_xX$ is a purely $(n-1)$-dimensional, compact, 
geodeically complete $\CAT(1)$ space 
with $\dim S(\Sigma_xX) \le n-3$.
From Proposition \ref{prop: nwmfdrvolg}
we conclude that $X$ is a topological $n$-manifold.
The asymptotic topological regularity theorem
\cite[Theorem 1.4]{nagano5} leads to 
Theorem \ref{thm: nwatrcat}.
\end{proof}


\end{document}